\pgfplotsset{compat=1.17}
\numberwithin{equation}{section}
\newcounter{dummy} \numberwithin{dummy}{section}
\newtheorem{corollary}[dummy]{Corollary}
\newtheorem{proposition}[dummy]{Proposition}
\newtheorem{theorem}[dummy]{Theorem}
\newtheorem{definition}[dummy]{Definition}
\newtheorem{lemma}[dummy]{Lemma}
\newtheorem*{remark}{Remark}
\newcommand{\N}{\ensuremath{\mathbb{N}}}
\newcommand{\R}{\ensuremath{\mathbb{R}}}
\newcommand{\Z}{\ensuremath{\mathbb{Z}}}
\newcommand{\C}{\ensuremath{\mathbb{C}}}
\newcommand{\PP}{\ensuremath{\mathbb{P}}}
\newcommand{\F}{\mathcal{F}}
\DeclareMathOperator{\supp}{supp}
\newcommand{\norm}[1]{\left\lVert#1\right\rVert}
 \newcommand\restrict[2]{
   \left.\kern-\nulldelimiterspace 
   #1
   \littletaller 
   \right|_{#2}%
   }
 \newcommand{\littletaller}{\mathchoice{\vphantom{\big|}}{}{}{}}
 \newcommand*{\scaleddelims}[3]{%
   \ensuremath{%
     \mathpalette{\@scaleddelims{#1}{#2}}{#3}%
   }%
 }   
 \newcommand*{\@scaleddelims}[4]{%
   \begingroup
     #3%
     \sbox0{$\m@th#3\vphantom{A}#4$}%
     \setbox2\vbox{\hbox{$\m@th#3#1$}\kern\z@}%
     \setbox4\vbox{\hbox{$\m@th#3#2$}\kern\z@}%
     \setbox6\hbox{$#3\vcenter{}$}%
     \ifx\downharpoonleft#1\relax  
       \let\DelimLeft=L%
     \else\ifx\upharpoonleft#1%
       \let\DelimLeft=L%
     \else\ifx\downharpoonright#1%
       \let\DelimLeft=R%
     \else\ifx\upharpoonright#1%
       \let\DelimLeft=R%
     \fi\fi\fi\fi
     \ifx\downharpoonleft#2\relax
       \let\DelimRight=L%
     \else\ifx\upharpoonleft#2\relax
       \let\DelimRight=L%
     \else\ifx\downharpoonright#2\relax
       \let\DelimRight=R%
     \else\ifx\upharpoonright#2\relax
       \let\DelimRight=R%
     \fi\fi\fi\fi
     \ifx\DelimLeft L%
       \wd2=.6\wd2
     \fi
     \ifx\DelimRight L%
       \wd4=.6\wd4
     \fi
     \ifx\DelimLeft R%
       \sbox2{\kern-.4\wd2\box2}%
     \fi
     \ifx\DelimRight R%
       \sbox4{\kern-.4\wd4\box4}%
     \fi
     \dimen0=\ht0 %
     \advance\dimen0 by -\ht6 %
     \dimen2=\dp0 %
     \advance\dimen2 by \ht6 %
     \ifdim\dimen2>\dimen0 %
       \dimen0=\dimen2 %
     \else
       \dimen0=\dimen0 %
     \fi
     \dimen2=\ht6 %
     \advance\dimen2 by -\dimen0 %
     \dimen0=2\dimen0 %
     \def\DelimCorr{%
       \mskip.5\thinmuskip
       \nonscript\mskip.5\thinmuskip
     }%
     \mathopen{%
       \ifx\DelimLeft R\DelimCorr\fi
       \raisebox{\dimen2}{\resizebox{!}{\dimen0}{\box2}}%
       \ifx\DelimLeft L\DelimCorr\fi
     }%
     \begingroup
       #3#4%
     \endgroup
     \mathclose{%
       \ifx\DelimRight R\DelimCorr\fi
       \raisebox{\dimen2}{\resizebox{!}{\dimen0}{\box4}}%
       \ifx\DelimRight L\DelimCorr\fi
     }%
   \endgroup
 }\makeatother
 \newcommand{\restr}[2]{#1\scaleddelims{\kern-0.5\nulldelimiterspace\upharpoonright}{\vphantom{.}}{_{#2}}}
\newcommand{\diff}{\ensuremath{\operatorname{d}\!}}
\title{Radon-Nikodym derivative of inhomogeneous Brownian last passage percolation}
\author{Pantelis Tassopoulos}
\address{Department of Pure Mathematics and Mathematical Statistics, 
University of Cambridge, Cambridge, United Kingdom}
\email{pkt28@cam.ac.uk}
\author{Sourav Sarkar}
\address{Department of Pure Mathematics and Mathematical Statistics, 
University of Cambridge, Cambridge, United Kingdom}
\email{ss2871@cam.ac.uk}
\begin{document}

\subjclass[2010]{$82B23$, $82C22$ and $60H15$}
\date{}

\begin{abstract}
We show that the Radon-Nikodym derivative of the law of the spatial increments (with endpoints away from the origin) of inhomogeneous Brownian last passage percolation (LPP) with non-decreasing initial data against the Wiener measure $\mu$ on compacts is in $L^{\infty-}(\mu)$; and for any fixed $p>1$, the $L^p$ norm is at most of the order $O_p(\mathrm{e}^{d_pm^2\log m})$ for some $p$-dependent constant $d_p>0$.

Furthermore, when the initial data is homogeneous, we establish optimal growth on $L^p$ norms ($\asymp O(\exp(dm^2))$) of the Radon-Nikodym derivative of the Brownian LPP (i.e. top line of an $m$-level Dyson Brownian motion) away from the origin, as the number of curves $m$ tends to infinity, for all $p>1$ sufficiently large.

As an application of our framework, we show that the Radon-Nikodym derivative of certain toy models for the KPZ fixed point lies in $L^{\infty-}(\mu)$, inspired by its variational characterisation in terms of the directed landscape.
\end{abstract}

\maketitle

\tableofcontents

\section{Introduction}
For a collection of $m\in \N$ independent Brownian motions $B_1, \ldots, B_m$ defined on a compact interval $[0,t]$ for $t>0$, one can define their last passage percolation (\textbf{LPP} for short) value,
\[
B[(0,m)\to (t,1)]
\]
as the maximum `length' across all non-increasing integer valued paths $\pi:[0,t]\to \N$ with $\pi(0) = m$ and $\pi(t)=1$. Here the length of such a path with respect to the random environment is the total cost incurred by traversing each curve in the  environment; where for every index $i$ from $1$ to $m$, the contribution of the $i$-th Brownian motion to the cost is the increment in the value of the curve along the path, see Definition \ref{Definition: last passage} and Figure \ref{fig: last passage}. 

One can construct the Brownian LPP (or \textbf{BLPP} for short) by starting from $B_m$ and recursively reflecting upwards an independent Brownian motion off it. More generally, when one carries this recursive construction of upward reflections for Brownian motions with initial values $g_1
\ge \ldots 
\ge g_m$, one obtains at the $k^{\mathrm{th}}$ iteration 
\[
\displaystyle\max_{k\le \ell \le m}(g_{\ell} + B[(0,\ell)\to (t, k)])\,.
\]
We refer to the resulting process at the final $m$-th iteration, namely,
\begin{equation}\label{eq: inhom BLPP}
\displaystyle\max_{1\le \ell \le m}(g_{\ell} + B[(0,\ell)\to (t, 1)])
\end{equation}
as the \textbf{inhomogeneous Brownian LPP} and the tuple
\[
(\displaystyle\max_{k\le \ell \le m}(g_{\ell} + B[(0,\ell)\to (t, k)]))_{k=1}^m
\]
as the \textbf{Brownian TASEP}, where TASEP stands for totally asymmetric simple exclusion process. 

Intuitively, one can think of this system as a collection of interacting Brownian particles with asymmetric (upward) collisions. Indeed, Brownian TASEP can be obtained as the low density limit of the totally asymmetric simple exclusion process (TASEP), see \cite{gorin2015limits}. Briefly, in this model, one considers an infinite sequences of particles with positions
\[
\dots<X_t(2)<X_t(1)<X_t(0)<X_t(-1)<X_t(-2)<\dots \quad \text{ on } \Z\cup\pm \infty.
\] 
The evolution of the system is driven by each particle independently attempting to occupy its right-adjacent spot, provided it is \textit{empty}, with exponential waiting time with rate \(1\). To each integer \(u\), one can associate \(X^{-1}_t(u) = \min\{k\in \Z: X_t(k)\leq u\}\), that the label of the right-most particle with position up to \(u\). This is an example of an exactly solvable model and was one of the first models shown to lie in the so-called KPZ universality class. This universality class is a collection of random growth models, polymer models and stochastic PDEs all having similar qualitative characteristics. For an introduction to the KPZ universality class, see\cite{quastel2011introduction}, \cite{ferrari2010random}, \cite{romik2015surprising}, \cite{corwin2016kardar}, \cite{weiss2017reflected}, \cite{ganguly2021random} and \cite{ZygourasalgKPZ}.

When the initial data is homogeneous, that is $g_i=0$ for all $i=1,\ldots,m$, the above construction coincides with that of Dyson's Brownian motion through the RSK correspondence and was established in \cite{O2002representation}. In that case, for all $p>1$ sufficiently large, we establish optimal growth on $L^p$ norms ($\asymp O(\exp(dm^2))$) of the Radon-Nikodym derivative of the top level of Dyson Brownian motion away from the origin, as the number of levels tends to infinity by observing that the measure of Dyson Brownian motion paths can be constructed by the $h$-transform of a Brownian motion conditioned to never leave the Weyl chamber.

Now if the Brownian motion starting points are arbitrary, in \cite[Theorem 4.3]{Sarkar2021Brownian}, it was shown that away from zero, inhomogeneous Brownian LPP is absolutely continuous with respect to Brownian motion on compacts. When the initial conditions are non-increasing, we substantially strengthen this comparison and show using diffusion interlacing arguments and the theory of Markov processes that the Radon-Nikodym derivative of the law of the spatial increments (with endpoints away from zero) of the inhomogeneous BLPP against the Wiener measure $\mu$ on compacts is in $L^{\infty-}(\mu)$. Also, for any fixed $p>1$, one has that the $L^p$ norm is at most of the order $O_p(\mathrm{e}^{d_pm^2\log m})$ for some $p$-dependent constant $d_p>0$, which is the main technical result of this paper (Theorem \ref{thm: tasep bm abs cont}). We state it below.

\begin{theorem}(Radon-Nikodym derivative estimates)\label{thm: main informal} Fix $m\ge 1$, and let $H(\cdot)$ denote the inhomogeneous Brownian LPP started from initial data $g_1
\ge \ldots 
\ge g_m$ as defined in \eqref{eq: inhom BLPP}. Then, for all $0<\ell<r<\infty$, we have that the Radon-Nikodym derivative
     of the law of $H(\cdot)$ against a rate two Brownian motion starting from the origin $\mu$ on $[\ell,r]$ is in $L^{\infty-}(\mu\vert_{[\ell, r]})$. In particular, with $\xi_{\ell, r, m, \underline{b}}$ denoting the law of $H$ as defined above on $[\ell, r]$,
     \begin{equation*}
         \norm{\frac{\diff \xi_{\ell, r, m, \underline{b}}}{\diff\mu\vert_{[\ell, r]}}}_{L^p(\mu\vert_{[\ell, r]})} =  O_p(\mathrm{e}^{d_pm^2\log m}),\quad \mbox{for all}\quad p>1.
     \end{equation*}
     for some universal in $m\in \N$ (though possibly $p$-dependent) constant $d_p>0$ for all $p>1$.

\end{theorem}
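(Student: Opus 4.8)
The plan is to combine the spatial Markov property of the Brownian TASEP with the fact that, started from a strictly ordered configuration, the underlying ensemble of curves arises as a Doob $h$-transform of a system of independent (respectively reflected) Brownian motions, and then to bound the $L^p$ norm of the resulting Radon--Nikodym density by a Gaussian moment computation. Throughout, the comparison is made at the level of increments, so that both measures are anchored at the origin.

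First I would use that the process $s \mapsto (H_k(s))_{k=1}^m$ (the Brownian TASEP, an obliquely reflected diffusion in the Weyl chamber $W = \{y_1 \ge \dots \ge y_m\}$ started from $\underline b = \vec g$) is Markov: it then suffices to control, for a.e.\ value $\vec x$ of the configuration $\vec H(\ell)$, the conditional law of $H_1$ on $[\ell,r]$ given $\vec H(\ell) = \vec x$, and to integrate the bound so obtained against the law of $\vec H(\ell)$. Because $\ell > 0$ the configuration is a.s.\ strictly ordered, $x_1 > \dots > x_m$, so $\vec x \in W^\circ$; translation invariance lets us fix $x_1 = 0$, so only the gap vector enters. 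The law of $\vec H(\ell)$ is a one-time marginal of the ensemble, hence has an explicit density on $W$ with Gaussian tails and vanishing like a power of $\mathrm{dist}(\,\cdot\,,\partial W)$ near $\partial W$ (in the homogeneous case the GUE-type density $\propto h(\vec x)^2\,\mathrm e^{-|\vec x|^2/2\ell}$); this controls negative moments of $\mathrm{dist}(\vec H(\ell),\partial W)$ up to a definite, $m$-independent order.

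Next, conditionally on the (full interlacing) array at time $\ell$ being $\widehat{\vec x}$, the ensemble on $[\ell,r]$ is a Warren-type system of interlacing diffusions that is the Doob $h$-transform --- with a fixed harmonic function $h$, a product of coordinate differences over the array --- of a family $A$ of (reflected) Brownian motions killed when the interlacing constraints are first violated. It follows that, conditionally, the law of $H_1$ on $[\ell,r]$ is absolutely continuous with respect to Brownian motion started from $x_1$, with density
\[
  \omega \;\longmapsto\; G_{\widehat{\vec x}}(\omega) \;:=\; \mathbb{E}\!\left[\, \frac{h(A(r))}{h(\widehat{\vec x})}\; \mathbf{1}\{A \text{ stays interlaced on } [\ell,r]\}\;\middle|\; H_1 = \omega \text{ on } [\ell,r] \,\right].
\]
In the homogeneous case this collapses, via O'Connell's representation \cite{O2002representation}, to $h$ the Vandermonde determinant and $A$ a family of $m$ independent Brownian motions; it is the genuinely larger array needed for non-trivial $\underline b$ that is responsible for the extra factor of $\log m$.

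Finally I would estimate $\|G_{\widehat{\vec x}}\|_{L^p}$ and integrate over $\widehat{\vec x}$. Since conditional expectation contracts $L^p$, one has $\|G_{\widehat{\vec x}}\|_{L^p} \le h(\widehat{\vec x})^{-1}\big(\mathbb{E}_A[\,|h(A(r))|^p\,\mathbf{1}\{A \text{ interlaced on }[\ell,r]\}]\big)^{1/p}$. The polynomial $h$ has degree $O(m^2)$ and $A(r)$ is Gaussian with variance $O(r-\ell)$ centred at $\widehat{\vec x}$, so a Gaussian moment estimate of degree $O(m^2)$ produces the factor $\mathrm{e}^{d_p m^2\log m}$ together with a polynomial prefactor in $|\widehat{\vec x}|$ of degree $O(m^2)$, which is harmless against the Gaussian tail of the law of $\vec H(\ell)$. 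The genuine obstacle is the factor $h(\widehat{\vec x})^{-1}$, whose large powers are \emph{not} integrable against the law of $\vec H(\ell)$: near $\partial W$ the indicator cannot be discarded, and one must instead show that $\mathbb{P}(A \text{ stays interlaced on }[\ell,r])$ decays at precisely the rate $h(\widehat{\vec x})$ while, on that event, the coordinate differences of $A(r)$ pinched near zero force a further power of $\mathrm{dist}(\widehat{\vec x},\partial W)$; the net effect is that $h(\widehat{\vec x})^{-1}\big(\mathbb{E}_A[\,|h(A(r))|^p\mathbf{1}\{\cdots\}]\big)^{1/p}$ remains bounded --- in fact vanishes --- as $\widehat{\vec x}\to\partial W$, leaving at most an $m$-independent negative power of $\mathrm{dist}(\widehat{\vec x},\partial W)$ to be absorbed by the time-$\ell$ density. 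Carrying out this cancellation uniformly near $\partial W$, while keeping the $m$-dependence of every constant under control, is the main difficulty; it is exactly where the Vandermonde case is sharp enough to remove the $\log m$. Given it, Minkowski's inequality over the law of $\vec H(\ell)$ together with one use of H\"older's inequality --- which costs an $\varepsilon$ and hence yields $L^{\infty-}$ rather than $L^{\infty}$, with a $p$-dependent constant $d_p$ --- gives $\|\mathrm{d}\xi_{\ell,r,m,\underline b}/\mathrm{d}\mu\|_{L^p(\mu)} = O_p(\mathrm{e}^{d_p m^2\log m})$ for all $p>1$.
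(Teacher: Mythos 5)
You correctly identify the two organizing ideas the paper uses — the Markov property of Brownian TASEP (Proposition~\ref{prop: generalised Markov}) and the identification of the TASEP with the diagonal of Warren's interlacing process (Proposition~\ref{prop: interlacing inhm blpp equality}) — and you are right that the argument proceeds by conditioning at time $\ell$ and then estimating. But the middle of the proof is missing, and in a way that is not merely a gap in rigor: the object you propose to estimate isn't the one the paper has access to.

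Concretely, the full interlacing Warren process is \emph{not} a Doob $h$-transform of a family of (possibly reflected) Brownian motions killed upon violating interlacing; it is defined by the singular SDEs~\eqref{gts:sde} with local-time reflections, and its transition kernel has the determinantal form of Lemma~\ref{lemma: density Warren} in terms of the functions $\Phi_r^{(k)}$ (iterated integrals/derivatives of the heat kernel), not an $h$-transform kernel $h(y)/h(x)\,p_t(x,y)\mathbf 1\{\text{no exit}\}$. Only the diagonal (Dyson) process admits the clean $h$-transform description, and that is precisely the ingredient Proposition~\ref{prop: top line pitman pointwise bound general case} exploits. So the conditional density $G_{\widehat{\vec x}}$ you write down is not justified; and because the paper never claims an $h$-transform form for the Warren process, the cancellation of $h(\widehat{\vec x})^{-1}$ near $\partial W$ that you flag as ``the main difficulty'' is not actually the obstruction the paper faces. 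The paper instead computes the Radon--Nikodym derivative of the inhomogeneous TASEP against the \emph{homogeneous} one as a ratio of two Warren-kernel determinants (Theorem~\ref{thm: Radon-Nikodym  density inhom BLPP}), where the Vandermonde factors appear in both numerator and denominator and cancel at the level of divided differences (Proposition~\ref{prop: div diff G}) --- there is no singular $h^{-1}$ term to fight. The technical heart is then (a) bounding the divided-difference determinants uniformly via Hadamard's inequality (Lemma~\ref{lemma: estimates G}), which is where the $\mathrm e^{O(m^2\log m)}$ factor actually originates (combinatorial factorials from Hermite polynomial derivatives, not the ``larger array''), and (b) the integral estimates over the Gelfand--Tsetlin cone (Proposition~\ref{prop: integral ratio estimate} and the Appendix), after which one \emph{chains} this inhomogeneous-vs-Dyson estimate through the Dyson-vs-Brownian estimate of Proposition~\ref{prop: top line pitman pointwise bound general case} (Lemma~\ref{lemma: ptwise bound Radon-Nikodym  Warren}). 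Your proposal collapses these two steps into one via a nonexistent $h$-transform, and leaves unaddressed exactly the determinant and cone-integral estimates that carry the proof.
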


The key observation that enables us to proceed with the proof is that the inhomogeneous Brownian LPP with non-increasing initial data is a version of the regular conditional distribution of the Warren process  (see \cite{warren2007dyson}), in the Gelfand-Tsetslin cone. This arises from the iteration of a certain diffusion interlacing procedure consisting of upward and downward Skorokhod reflections to `interlace' newly added independent Brownian motions between existing processes. One can thus obtain explicit densities, and owing to the Markovian nature of the inhomogeneous Brownian LPP, the proof of Theorem \ref{thm: tasep bm abs cont} is reduced to estimating the ratio of densities using the estimates obtained for Dyson Brownian motion. The final ingredient consists of several integral estimates, relegated to the Appendix, that are crucial in estimating the Radon-Nikodym derivative.

As an application, we provide a framework to obtain $L^p$ estimates for the Radon-Nikodym derivative of various toy models for the KPZ fixed point inspired by its variational characterisation in terms of the directed landscape. 

Moreover, in \cite[Theorem 6.6]{tassopoulos2025quantitativebrownianregularitykpz}, we use these estimates to obtain a form of quantitative Brownian regularity for the spatial increments of the KPZ fixed point started from arbitrary, that is, all finitary initial data (see \cite{Sarkar2021Brownian} for a definition of finitary initial data). We believe our arguments can be strengthened to obtain $L^p$ norm estimates for the Radon-Nikodym derivative of spatial increments of the KPZ fixed point on compacts for all such initial data.

\subsection{Organisation of the paper}
In Section \ref{sec: notation}, we set up the notation we will be using throughout. In Section \ref{sec: preliminaries} we provide necessary background material. Section \ref{sec: dyson bm estimates} is devoted to establishing pathwise Radon-Nikodym derivative estimates for the top line of Dyson Brownian motion, see Proposition \ref{prop: top line pitman pointwise bound general case}. This leads to a characterisation of the growth of $L^p$ norms for all $p>1$ sufficiently large of the Radon-Nikodym derivative away from the origin, as the number of curves in the Dyson Brownian motion tends to infinity, see Theorem \ref{thm: melon lb rn}. Then, in Section \ref{sec: inhom blpp} we introduce Brownian TASEP and investigate some of its structural properties leading to its characterisation as a semi-martingale satisfying a singular stochastic differential equation, see Theorem \ref{thm: semi-mg decomp blpp}. Next, we introduce the Warren process in Section \ref{sec: BM gelfand-tsetslin cone} and establish its close connection to inhomogeneous BLPP. This allows us to compute explicit densities of the iterated reflection processes, and obtain an explicit expression for the Radon-Nikodym derivative of inhomogeneous BLPP against that of Dyson Brownian motion in Theorem \ref{thm: Radon-Nikodym  density inhom BLPP}. In Section \ref{sec: BM tasep rn estimates}, the main result of this paper is established in Theorem \ref{thm: tasep bm abs cont}. It provides pathwise and $L^{\infty-}$ estimates for the Radon-Nikodym derivative of inhomogeneous BLPP with respect to Brownian motion on compacts and the $L^p$ norm of the derivative for any $p>1$. The technical integral estimates used here have been relegated to the Appendix (Section \ref{app: int est}). Finally, in Section \ref{sec: misc}, we apply our estimates to obtain pathwise and $L^{\infty-}$ estimates for a `toy' model of the KPZ fixed point, namely for inhomogeneous BLPP of ``random depth''; see Theorem \ref{thm: tasep bm abs cont}. Finally, we briefly discuss \cite[Theorem 6.6]{tassopoulos2025quantitativebrownianregularitykpz}, and how we use the estimates in this paper to obtain a form of quantitative Brownian regularity for the spatial increments of the KPZ fixed point started from arbitrary, or ($t$-)finitary (for $t>0$) initial data, that are exactly those initial data that prevent a finite time `blow-up' of the KPZ fixed point at times up to $t>0$.

\subsection{Related work}
In \cite{warren2007dyson, assiotis2019interlacing}, the authors construct systems of diffusions with singular drifts corresponding to collisions induced by `interlacing' various diffusion semigroups (see also \cite{rogers1981markov}). In particular, they construct Brownian motion in the Gelfand-Tsetslin cone, whose diagonal section is shown to be identical in \cite[Proposition 6]{warren2007dyson} in law to Dyson Brownian motion. Various determinantal formulae are obtained for transition densities related to such interlaced diffusions. This is a crucial input in the proof of Theorem \ref{thm: tasep bm abs cont}.

In \cite[Theorem 4.3]{Sarkar2021Brownian} it is shown that general inhomogeneous BLPP is locally absolutely continuous with respect to Brownian motion away from the origin. In our work, Theorem \ref{thm: tasep bm abs cont} is a considerable strengthening of the comparison of inhomogeneous BLPP against Brownian motion, giving the aforementioned $L^{\infty-}$ control against the Wiener measure.

In \cite[Theorem 6.6]{tassopoulos2025quantitativebrownianregularitykpz}, we use these estimates to obtain a form of quantitative Brownian regularity for the spatial increments of the KPZ fixed point started from finitary initial data. 

\subsection{Acknowledgement} SS would like to thank B\'alint Vir\'ag for some initial helpful discussions. 

\section{Notation}\label{sec: notation}

We introduce some notation and conventions we will be using throughout.

When in some estimates a constant appears that will depend on some parameters $a,b,c,\cdots$, it will be denoted by $C_{a,b,c,\cdots}$ unless otherwise specified. Constants without subscripts are deemed to be universal, unless otherwise stated. Additionally, for ease of notation, such constants are allowed to change from line to line. Moreover, for ease of notation such constants may be dropped and instead replaced with the symbols $\lesssim_{a,b,c,\cdots}(\equiv O_{a,b,c,\cdots}(\cdot))$ and $\gtrsim_{a,b,c,\cdots}$ for some parameters $a,b,c,\cdots$ which stand for $\le C_{a,b,c,\cdots}$ and $\ge C'_{a,b,c,\cdots}$ for some constants $C_{a,b,c,\cdots}, C'_{a,b,c,\cdots}$ respectively.

We take the set of natural numbers $\N$ to be $\{1,2, \ldots \}$. For $k\in \N$, we use an underbar to denote a $k$-vector, that is, $\underline{x}\in \R^k$. We denote the integer interval $\{i,i+1, \ldots, j\}$ by $\llbracket i, j\rrbracket$. A $k$-vector $\underline x = (x_1, \ldots, x_k)\in \R^k$ is called a $k$-decreasing list if $x_1>x_2> \ldots >x_k$. For a set $I\subseteq \R$, let $I^k_> \subseteq I^k$ be the set of $k$-decreasing lists of elements of $I$, and $I^k_{\geq}$ be the analogous set of $k$-non-increasing lists.

The symbols $\cdot \land \cdot , \cdot \lor \cdot$ denote $\min\{\cdot, \cdot \}$ and $\max\{\cdot, \cdot \}$ respectively. For any $a
\in \R$, $a_+$ denotes $a\lor 0$.

We now turn to some notational conventions for the path spaces that will be used throughout. For general domains of paths $J$, we denote the space of continuous paths, in the usual topologies, by $C_{*,*}(J, \R)$. More specifically, if the domain is an interval $[a,b]\subseteq \R$, we denote the space of continuous functions with domain $[a,b]$ which vanish at $a$ by $C_{0,*}([a,b], \R)$. For random functions taking values in these spaces, we will always endow them with their respective Borel $\sigma$-algebras generated by the topologies of uniform convergence (which makes them into Polish spaces). Similarly, for $k\ge 1, a < b$, define $C^k_{*,*}([a,b], \R):= \bigtimes_{i=1}^k C_{*,*}([a,b], \R)$ and equip it with the product of the uniform topologies. 

For $k\in \N$, $a<b$ and $\underline{x},\underline{y}\in \R^k_{>}$, the \textit{non-crossing} event on any fixed union of finite sub-intervals $J \subseteq [a,b]$ is denoted by
\begin{equation}\label{eq: noint}
\mathrm{NoInt}(J) \coloneqq \Big\{g \in C^k_{*, *}(J, \R): \forall r\in J,  g_{i}(r) > g_{j}(r) \textrm{ for all } 1\leq i<j\leq k\Big\}\,.
\end{equation} 

We say that a Brownian motion or a Brownian bridge has \textit{rate $v$} if its quadratic variation in an interval $[s,t]$ is equal to $v(t-s)$. We say that a Dyson's Brownian motion or a Brownian $k$-melon has rate $v$ if the component Brownian motions have rate $v$. From now on, all Brownian motions are rate two unless stated otherwise. 

\section{Preliminaries}\label{sec: preliminaries}
We will now collect some basic definitions and provide necessary background material that will be crucial in setting up pathwise Radon-Nikodym derivative estimates for Brownian last passage percolation (BLPP). To this end, we start with the definition of random line ensembles and last passage percolation (LPP), that underlie the constructions we will be henceforth considering.

\begin{definition}[Random ensemble]\label{def: random ensemble}
    Let $\Sigma$ be a (possibly infinite) interval of $\Z$, and let $\Lambda$ be an interval of $\R$. Consider the set \(X:=C^\Sigma\) of continuous functions $f:\Sigma\times \Lambda \rightarrow \R$. We endow it with the topology of uniform convergence on compact subsets of $\Sigma\times\Lambda$. Let $\mathscr{C}$ denote the sigma-field  generated by Borel sets in $X$.

    A {\it $\Sigma$-indexed line ensemble} $L$ is a random variable defined on a probability space $(\Omega,\mathfrak{B},\PP)$, taking values in $X$ such that $L$ is a $(\mathfrak{B},\mathscr{C})$-measurable function. Furthermore, we write $L_i:=(L(\omega))(i,\cdot)$ for the line indexed by $i\in\Sigma$. 
\end{definition}

\subsection{Last passage percolation} We begin with the collection of some preliminary facts regarding last passage percolation (sometimes abbreviated as LPP in the paper) over ensembles of functions.

Formally, let \(I\subset \mathbb{Z}\) be a possibly finite index set and define the space \(C^{I}\) of sequences of continuous functions with real domains, that is, the space
\[f: \mathbb{R}\times I\to \mathbb{R}\quad (x,i)\mapsto f_i(x)\,.\]

\begin{definition}[Path] Let \(x\leq y \in \mathbb{R}\), and \(m\leq k \in \mathbb{Z}\) respectively. A \textbf{path}, from \((x,k)\) to \((y,m)\) is a non-increasing  function \(\pi: [x,y] \to \mathbb{N}\) which is cadlag on \((x,y)\) and takes the values \(\pi(x)= k\) and \(\pi(y)= m\).
\label{def: path}
\end{definition}

\begin{remark}
    The convention that the paths be non-increasing is so that they match the natural indexing of the Airy line ensemble, see the seminal paper of \cite{Corwin2014Brownian}. 
\end{remark}

We now define an important quantity associated to each such path, namely, its \textit{length} as the sum of increments of \(f\) along \(\pi\). This also leads one to naturally define a derived quantity, namely the \textit{last passage value}.

\begin{definition}[Length]\label{def: length} Let \(x\leq y \in \mathbb{R}\) and \(m < k\in\mathbb{Z}\). For each \(m\leq i <k\), let \(t_{k-i}\) denote the jump of the path \(\pi\), on an ensemble \((f_i)_{i\in I}\), from \(f_{i+1}\) to \(f_{i}\). Then the length of \(\pi\) is defined as
\[
\ell(\pi) = f_m(y)-f_m(t_{k-m}) + \displaystyle\sum_{i = 1}^{k-m-1}(f_{k-i}(t_{i+1})-f_{k-i}(t_{i}))+f_{k}(t_{1})-f_{k}(x)\,.
\]
\end{definition}

\begin{figure}
  \centering
\begin{tikzpicture}[scale=1.0]

\def\startx{1}
\def\endx{9}
\def\startg{2.5}
\def\endg{7.5}
\def\spacing{2.0}
\def\amp{0.15}     
\def\nsegments{100}

\pgfmathsetseed{2025}

\pgfmathsetmacro{\tOne}{\startg+1*(\endg-\startg)/4}
\pgfmathsetmacro{\tTwo}{\startg+2*(\endg-\startg)/4}
\pgfmathsetmacro{\tThree}{\startg+3*(\endg-\startg)/4}

\foreach \i in {1,...,4} {

  \def\coords{}%
  \foreach \k in {0,...,\nsegments} {
    \pgfmathsetmacro{\xx}{\startx+(\k/\nsegments)*(\endx-\startx)}
    \pgfmathsetmacro{\trend}{\i*\spacing - 2.5 + 0.3*(\i-2)*(\xx-5)/4}
    \pgfmathsetmacro{\noise}{(2*rnd-1)*\amp}
    \pgfmathsetmacro{\yy}{\trend+\noise}
    \xdef\coords{\coords (\xx,\yy)}%
  }
  \draw[thick, black!60] plot coordinates {\coords};

  \pgfmathsetmacro{\segA}{\startg+(\i-1)*(\endg-\startg)/4}
  \pgfmathsetmacro{\segB}{\startg+\i*(\endg-\startg)/4}
  \def\bluecoords{}%
  \foreach \k in {0,...,12} {
    \pgfmathsetmacro{\xx}{\segA+(\k/12)*(\segB-\segA)}
    \pgfmathsetmacro{\trend}{\i*\spacing - 2.5 + 0.3*(\i-2)*(\xx-5)/4}
    \pgfmathsetmacro{\noise}{(2*rnd-1)*0.05} 
    \pgfmathsetmacro{\yy}{\trend+\noise}
    \xdef\bluecoords{\bluecoords (\xx,\yy)}%
  }
  \draw[thick, blue] plot coordinates {\bluecoords};

  \pgfmathsetmacro{\xmid}{0.5*(\segA+\segB)}
  \pgfmathsetmacro{\ytoplabel}{\i*\spacing - 2.5 + 0.6}
  \node[blue, above] at (\xmid, \ytoplabel) {$\Delta_{\i}$};

  \ifnum\i>1
    \ifnum\i<5
      \pgfmathsetmacro{\jumpx}{\startg+(\i-1)*(\endg-\startg)/4}
      \pgfmathsetmacro{\ytop}{(\i-1)*\spacing - 2.5 + 0.3*((\i-1)-2)*(\jumpx-5)/4}
      \pgfmathsetmacro{\ybot}{\i*\spacing - 2.5 + 0.3*(\i-2)*(\jumpx-5)/4}
      \draw[blue, dashed] (\jumpx,\ytop) -- (\jumpx,\ybot);
      \draw[blue, thick, fill=white] (\jumpx,\ytop) circle (2pt);
      \fill[blue] (\jumpx,\ybot) circle (2pt);
    \fi
  \fi
}

\pgfmathsetmacro{\ystart}{1*\spacing - 2.5 + 0.3*(1-2)*(\startg-5)/4}
\pgfmathsetmacro{\yend}{4*\spacing - 2.5 + 0.3*(4-2)*(\endg-5)/4}
\fill[blue] (\startg,\ystart) circle (2pt);
\fill[blue] (\endg,\yend) circle (2pt);

\node[left] at (\startg, \ystart+0.4) {$(x,4)$};
\node[right] at (\endg, \yend-0.2) {$(y,1)$};

\draw[->] (\startg-0.5,-1.5) -- (\endg+0.5,-1.5);

\pgfmathsetmacro{\ytOne}{2*\spacing - 2.5 + 0.3*(2-2)*(\tOne-5)/4}
\pgfmathsetmacro{\ytTwo}{3*\spacing - 2.5 + 0.3*(3-2)*(\tTwo-5)/4}
\pgfmathsetmacro{\ytThree}{4*\spacing - 2.5 + 0.3*(4-2)*(\tThree-5)/4}

\foreach \px/\py/\lab in {
    \startg/\ystart/$x$,
    \tOne/\ytOne/$t_1$,
    \tTwo/\ytTwo/$t_2$,
    \tThree/\ytThree/$t_3$,
    \endg/\yend/$y$
} {
  \draw[dotted] (\px,-1.5) -- (\px,\py);
  \filldraw[black] (\px,-1.5) circle (2pt);
  \node[below] at (\px,-1.5) {\lab};
}

\end{tikzpicture}  \caption{Visualisation of a possible path (\color{blue}blue\color{black}) `embedded' on the underlying environment (random ensemble $F:[x,y]\times \llbracket 1, 4\rrbracket\to \R$), here \((F_1, F_2, F_3, F_4)\) from top to bottom, and \(m = 1, k = 4\). Here $\Delta_1 = F_4(t_1)-F_4(x)$, $\Delta_2 = F_3(t_2)-F_3(t_1)$, $\Delta_3 = F_2(t_3)-F_2(t_2)$, $\Delta_4 = F_1(y)-F_1(t_3)$ and $k = \sum_{i=1}^4 \Delta_i.$}
\label{fig: last passage}
\end{figure}
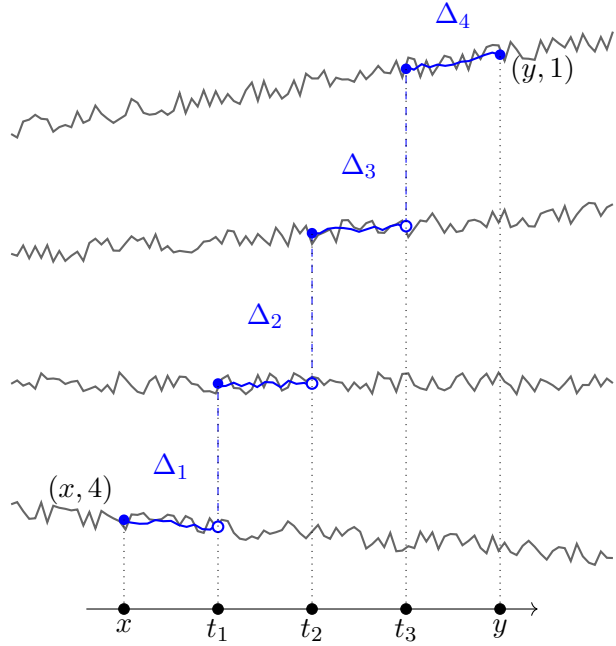

\begin{definition}[Last passage value]\label{Definition: last passage}
    With \(x\leq y, m<k\) as before and \(f\in C^{I}\), define the \textbf{last passage value} of \(f\) from \((x,k)\) to \((y,m)\) as
    \[
    f[(x,k)\to(y,m)] \coloneqq \displaystyle \sup_{\pi}k(\pi)\,,
\]

where the supremum is over precisely the paths \(\pi\) from \((x,k)\) to \((y,m)\).
\end{definition}
\begin{remark}
    Any path \(\pi\) from \((x,k)\) to \((y,m)\) such that its length is equal to its last passage value is called a \textbf{geodesic}. To establish the existence of geodesics one can proceed by first noticing that the length of a path \(\ell(\pi)\), can be viewed as a function on the subset \(\mathcal{Z}\) of non-increasing cadlag functions with fixed endpoints, of the space of cadlag functions \(\mathbf{D}\coloneqq \mathbf{D}([x,y], \N)\). When endowed with respect to the  Skorokhod topology, which is metrisable, the above function is continuous. Since \(\mathcal Z\) is closed with respect to the above topology of \textquotedblleft jump times\textquotedblright, a compactness argument using Arzela-Ascoli, see \cite[ch. 3]{billingsley2013convergence}, implies that the supremum over admissible paths is indeed attained.
\end{remark}

Last passage percolation enjoys the following \textbf{metric composition law}, Lemma 3.2 in DOV \cite{DOV}.

\begin{lemma}[Metric composition law]\label{Lemma: Metric Composition}
    Let \(x\leq y \in \mathbb{R}\), \(m < \ell\in\mathbb{Z}\) and \(f\in C^I\). If \(k\in \{m, \dots, \ell\}\), then we have
    \[
    f[(x,\ell)\to(y,m)] = \displaystyle \sup_{z\in[x,y]}(f[(x,\ell)\to(z,k)]+f[(z,k)\to(y,m)])\,,
    \]
    and if \(k\in \{m+1, \dots, \ell\}\), then 
    \[
    f[(x,\ell)\to(y,m)] = \displaystyle \sup_{z\in[x,y]}(f[(x,\ell)\to(z,k)]+f[(z,k-1)\to(y,m)])\,.
    \]
    Furthermore for any \(z\in [x,y]\), 
    \begin{equation}\label{eq: composition}
    f[(x,\ell)\to(y,m)] = \displaystyle \sup_{k\in \{m, \dots, \ell\}}(f[(x,\ell)\to(z,k)]+f[(z,k)\to(y,m)])
    \end{equation}
\end{lemma}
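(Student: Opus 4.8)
This is the standard ``cut a geodesic at a crossing'' argument, and I would prove the first identity in detail, then indicate how the other two reduce to the same mechanism. Everything rests on one elementary fact about the length functional of Definition~\ref{def: length}: if $\pi$ is a path from $(x,\ell)$ to $(y,m)$ and $z\in[x,y]$, then the restrictions $\restr{\pi}{[x,z]}$ and $\restr{\pi}{[z,y]}$ are again paths in the sense of Definition~\ref{def: path} --- after, if necessary, resetting their common value at the single point $z$ to a legal level, which is harmless since a level $i$ occupied only at $\{z\}$ contributes $f_i(z)-f_i(z)=0$ --- and the length is additive under this cut, $\ell(\pi)=\ell(\restr{\pi}{[x,z]})+\ell(\restr{\pi}{[z,y]})$, because each summand in Definition~\ref{def: length} is the increment of some $f_i$ over the maximal subinterval on which $\pi\equiv i$, and such a subinterval either lies in $[x,z]$, lies in $[z,y]$, or is split in two by $z$ with the increments telescoping. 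Dually, two paths with a common endpoint $(z,k)$, resp.\ a pair of endpoints $(z,k)$ and $(z,k-1)$, concatenate into a single path whose length is the sum of the two lengths.

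Given this, the inequality ``$\ge$'' in every one of the three identities is immediate: fix $z$ and the relevant level, choose near-optimal paths for the two pieces (near-optimality suffices; alternatively one invokes the existence of geodesics from the Remark after Definition~\ref{Definition: last passage}), concatenate them into a path from $(x,\ell)$ to $(y,m)$, and conclude $f[(x,\ell)\to(y,m)]\ge f[(x,\ell)\to(z,k)]+f[(z,k)\to(y,m)]$ for every $z$ and every admissible $k$, resp.\ the analogous bound with $(z,k-1)$. Taking the supremum over $z$ (or over $k$ for \eqref{eq: composition}) gives ``$\ge$'' in all three cases.

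For ``$\le$'' I would take an \emph{arbitrary} path $\pi$ from $(x,\ell)$ to $(y,m)$ and cut it at the appropriate place. For the first identity, with $k$ fixed, set $z:=\inf\{t\in[x,y]:\pi(t)\le k\}$ (nonempty since $\pi(y)=m\le k$), so that $\restr{\pi}{[x,z]}$, with its endpoint value reset to $k$, is a path to $(z,k)$ and $\restr{\pi}{[z,y]}$, with its initial value reset to $k$, is a path from $(z,k)$; additivity gives $\ell(\pi)=\ell(\restr{\pi}{[x,z]})+\ell(\restr{\pi}{[z,y]})\le f[(x,\ell)\to(z,k)]+f[(z,k)\to(y,m)]\le\sup_{z'\in[x,y]}(\cdots)$, and taking the supremum over $\pi$ yields ``$\le$''. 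For \eqref{eq: composition} the point $z$ is the one held fixed and I would cut there, taking $k:=\pi(z)\in\{m,\dots,\ell\}$, the level $\pi$ occupies at $z$ (no reset is needed). For the second identity, set instead $z:=\inf\{t:\pi(t)\le k-1\}$ (nonempty since $k-1\ge m$), reset $\restr{\pi}{[x,z]}$ to end at level $k$ and $\restr{\pi}{[z,y]}$ to start at level $k-1$, and argue as before; the matching ``$\ge$'' direction concatenates an optimal path to $(z,k)$ with one from $(z,k-1)$, the descent from level $k$ to level $k-1$ occurring exactly at the junction $z$.

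The only point that needs genuine care --- and the one I would write out in full --- is the bookkeeping at the cut: a cadlag non-increasing integer path may skip levels, so after restriction one must legitimately redefine $\pi$ at the single point $z$ and then verify that no increment appearing in Definition~\ref{def: length} changes and none is lost, using repeatedly that a skipped level contributes $0$ on both sides of the cut. Once this is in place, all three identities follow from the two observations above with no further work.
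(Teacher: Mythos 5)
The paper does not give its own proof of this lemma; it is stated as a citation of Lemma~3.2 in \cite{DOV}, so there is no internal argument to compare against. Your cut--and--concatenate proof is the standard one and, as written, is correct: the only genuine delicacy is exactly the bookkeeping you isolate at the cut point, namely that after restricting $\pi$ to $[x,z]$ and $[z,y]$ and redefining the single value at $z$ one must check that no increment in Definition~\ref{def: length} is gained or lost, which works because a level occupied only on the singleton $\{z\}$ contributes $f_i(z)-f_i(z)=0$ to every length on either side. Two small points you may want to make explicit when writing it up: (i) the cut point $z=\inf\{t\in[x,y]:\pi(t)\le k\}$ can equal $x$ (forcing $k=\ell$) or $y$, and then one of the two pieces is the degenerate path on a single point with last--passage value $0$, which is consistent with the formula; and (ii) in the concatenation step you are correct that one need not invoke existence of geodesics --- taking $\varepsilon$--near--optimal paths for each piece and letting $\varepsilon\to 0$ already gives the ``$\ge$'' inequality, so the Remark after Definition~\ref{Definition: last passage} is a convenience rather than a prerequisite here.
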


\subsection{The Pitman transform}

Recall that with $f=(f_1,f_2)$ where $f_i:[0,\infty)\mapsto \R$ for $i=1,2$, for $f\in C^2_{*,*}([0,\infty))$, we define $\mathrm \mathrm{W}f=(\mathrm{W} f_1,\mathrm{W} f_2)\in C^2_{*,*}([0,\infty))$, the \textbf{Pitman transform} of $f$ as follows. For $x<y\in [0,\infty)$, define the maximal gap size
\[G(f_1,f_2)(x,y):=\max\left(\max_{s\in [x,y]}\big(f_2(s)-f_1(s)\big)\,,\,0\right)\,.\]
Then define
\begin{equation}\label{eq: pitmantrans}
\mathrm{W} f_1(t)=f_1(t)+G(f_1,f_2)(0,t)\,,
\end{equation}
$$\mathrm{W} f_2(t)=f_2(t)-G(f_1,f_2)(0,t)\,,$$
for all $t\in [0,\infty)$.

One can express the top line of the Pitman transform in terms of last passage values. 

\begin{lemma}\label{lemma: Pitman melon}
    Let \(f\in C^2_+\) and let \(Wf = (Wf_1, Wf_2)\) be as above. Then for all \(t\in[0,\infty)\),
    \[
    Wf_1(t) = \displaystyle \max_{i=1,2}\{f_i(0) + f [(0, i) \to (t, 1)]\} \,.
    \]
\end{lemma}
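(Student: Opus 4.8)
The plan is to prove the identity by unwinding the definitions of the Pitman transform and the last passage value, and matching terms. First I would write out what the right-hand side is for each of the two possible values of $i$. For $i=2$, a path from $(0,2)$ to $(t,1)$ jumps from $f_2$ to $f_1$ at some time $s\in[0,t]$, so $f[(0,2)\to(t,1)] = \sup_{s\in[0,t]}\big(f_1(t)-f_1(s)+f_2(s)-f_2(0)\big)$ by Definition \ref{def: length}; adding $f_2(0)$ gives $\sup_{s\in[0,t]}\big(f_1(t)-f_1(s)+f_2(s)\big)$. For $i=1$, the only path from $(0,1)$ to $(t,1)$ is constant, so $f_1(0)+f[(0,1)\to(t,1)] = f_1(0)+f_1(t)-f_1(0) = f_1(t)$, which is the $s=0$ case of the $i=2$ expression if we artificially allow $s=0$ (since then $f_1(t)-f_1(0)+f_2(0)$ — wait, that is not quite $f_1(t)$). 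The cleaner bookkeeping: the $i=1$ term equals $f_1(t)$, while the $i=2$ term equals $f_1(t) + \sup_{s\in[0,t]}\big(f_2(s)-f_1(s)\big)$.

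Next I would observe that the maximum over $i=1,2$ of these two quantities is exactly $f_1(t) + \max\big(\sup_{s\in[0,t]}(f_2(s)-f_1(s)),\,0\big)$, because $f_1(t)$ corresponds to the ``$0$'' inside the max. But this is precisely $f_1(t) + G(f_1,f_2)(0,t) = \mathrm{W}f_1(t)$ by the definition in \eqref{eq: pitmantrans}. So the identity follows once the two length computations are in place. I would present this as: (i) reduce $f[(0,i)\to(t,1)]$ to an explicit supremum using Definition \ref{Definition: last passage} and Definition \ref{def: length}; (ii) add the boundary term $f_i(0)$; (iii) take the max over $i$ and recognise $G$.

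The only genuine subtlety — the step I would treat most carefully — is the degenerate/boundary behaviour of paths: confirming that for $i=1$ the sole admissible path is the constant path $\pi\equiv 1$ (so its length is $f_1(y)-f_1(x)$ with $x=0$, $y=t$), and that for $i=2$ the jump time $s$ ranges over the closed interval $[0,t]$, including the endpoints, so that the case $s=0$ (jump immediately, path equals $1$ on $(0,t]$) and $s=t$ are both captured; this endpoint inclusion is what makes $G$'s outer ``$\max(\cdot,0)$'' come out right, since the constant-at-level-$1$ path is recovered at $s=0$ and contributes the term without any gap. I would also remark that continuity of $f$ guarantees the suprema are attained, consistent with the existence-of-geodesics remark, though attainment is not actually needed for the identity. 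Everything else is term matching, so this should be short; the main obstacle is purely notational care with the cadlag path conventions rather than any real analytic difficulty.
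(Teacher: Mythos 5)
Your proof is correct and follows essentially the same route as the paper: both unwind $Wf_1(t)=f_1(t)+G(f_1,f_2)(0,t)$ and match the two terms in the outer $\max(\cdot,0)$ to the $i=1$ and $i=2$ contributions. The only cosmetic difference is that you compute $f[(0,i)\to(t,1)]$ directly from Definitions \ref{def: length} and \ref{Definition: last passage}, whereas the paper cites the metric composition law \eqref{eq: composition} to make the same identification.
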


\begin{proof}
    By definition, 
    \[
    Wf_1(t) = f_1(t)+G(f_1, f_2)(0,t)\]
    \[
    = f_1(t) + \max\{\displaystyle \max_{s\in[x,y]}(f_2(s)-f_1(s)), 0 \}
    \]
    \[
    = \max\{\displaystyle \max_{s\in[x,y]}(f_2(s)+f_1(t)-f_1(s)), f_1(t) \}\,.
    \]
    From \ref{eq: composition}, we get \(f_1(t) = f_1(0) + f [(0, 1) \to (t, 1)]\) and \[ \displaystyle \max_{s\in[0,t]}(f_2(s) + f_1(t) - f_1(s))\} = f_2(0) + f [(0, 2) \to (t, 1)]\,.\] Combining the above gives the result. 
\end{proof}

Particularly in the case where \(f_1(0) = f_2(0) = 0\), we obtain that 
\[
Wf_1(t) = f [(0, 2) \to (t, 1)]\,.
\]
\(Wf\) is commonly referred to as the \textbf{2-melon} (which will be generalised in the following section to so-called $n$-melons) of $f$, since paths in \(Wf\) avoid each other and thus resemble the stripes of a watermelon. For an illustration involving two Brownian motions, see Figure \ref{fig: pitman}.

\begin{figure}
\centering
\includegraphics[width = 0.6\linewidth]{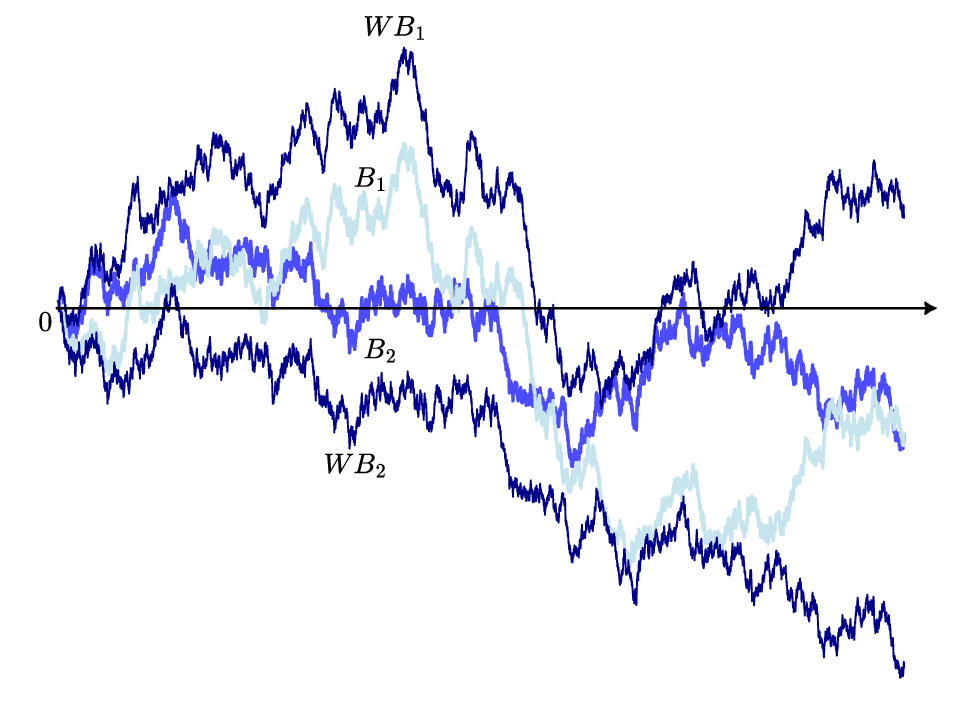}
\caption{An illustration of the Pitman transform $WB$ of two Brownian motions $B_1, B_2$.}
    \label{fig: pitman}
\end{figure}

\subsection{Dyson Brownian motion} 

Fix any $\epsilon, t > 0$ and let $B^n$ be the collection of $n$ independent Brownian motions with initial conditions $B^n_i(0) = 0$ conditioned not to intersect on $[\epsilon, t]$ (note the non-intersection event has positive probability). Then, as $\epsilon \searrow 0$, $t \nearrow \infty$, Kolmogorov's extension theorem gives that the $B^n$ converges in law to a limiting process, namely, $n$-level Dyson Brownian motion.

An alternative construction is to first take $x\in \R^n_{>}$ and with $ \PP_x$ denoting the law of $n$ independent Brownian motions $B$ started at $x$ and $\hat{\PP}_x$ the law of the $h$-transform of $B$ started at $x$ where $h(x_1, x_2, \cdots, x_n) = \prod_{1\le i < j\le n}(x_i-x_j)_+$. Then the weak limit of $\hat{\PP}_x$ as $\R^n_> \ni x \to 0$ can be realised as a random ensemble with law on paths $\hat{\PP}_{0^+}$ which agrees with the $n$-level Dyson Brownian motion starting from the origin. The advantage of this construction is that it is more amenable to Radon-Nikodym estimates.

It is worth mentioning that the Dyson Brownian motion was initially described in \cite{dyson1962brownian} as the eigenvalues of $n \times n$ time-dependent Hermitian matrices with entries independent complex-valued Brownian motion. 

\subsection{Melons}\label{subsec: melons}An application of the above that is of interest is that of two independent standard Brownian motions (starting from zero) \(B = (B_1, B_2)\).  Let \(\hat{B}=(\hat{B}_1, \hat{B}_2)\) be two independent Brownian motions conditioned not to collide, in the sense of Doob (a $2$-Dyson Brownian motion). Then, the law of the melon \(WB\) as defined above in (\ref{eq: pitmantrans}) is the same as that of \(\hat{B}\). In \cite{O2002representation}, a generalisation was proved for \(n\) Brownian motions, using a continuous analogue of the Robinson–Schensted–Knuth (RSK) correspondence, where each level in the \textit{$n$-melon} $WB^n = (WB^n_1, WB^n_2, \cdots, WB^n_n)$ is obtained from a family of $n$ Brownian motions by a sequence of deterministic operations that are analogous to the sorting algorithm `bubble sort' where the top curve $WB^n_1$ coincides with the top level of an $n$-Dyson Brownian motion. The term melon comes from the ordering of paths: for some continuous $n-$tuple $f$, $(Wf)^n_1 \ge (Wf)^n_2 \ge \dots \ge (Wf)^n_n$ and their initial value which is $0$, which means they look like stripes on a watermelon. When clear from context, we will abuse notation and drop the superscript, writing instead $Wf$. 

The RSK correspondence, group representation theoretic combinatorial procedure has helped establish strong connections between the theory of random matrices and certain combinatorial models, including last passage percolation, 
random growth models and queueing systems, see \cite{baik1999distribution} and \cite{johansson2000shape}, amongst many others. See K\"{o}nig, \cite{konig2005orthogonal} for a recent survey. For more details on the RSK correspondence, the interested reader can consult \cite{ZygourasalgKPZ} (see also  \cite{dauvergne2022rsk} and \cite{oconnell2003conditioned}).

In particular, \cite[Proposition 4.1]{DOV} gives an important property of melon paths in that they preserve last passage values (with no restriction on their starting point). In particular, 
\[
WB[(0,n)\to (t,1)] = B[(0,n)\to (t,1)]\,,\forall t\ge 0\,.
\]
Using the fact that $WB^n(0) = \underline{0}$ and the ordering of melon paths, one gets that the left-hand-side of the above equation is just $WB_1(t)$. Thus the top line of melon paths is completely characterised in terms of Brownian last passage percolation. For a more complete definition of melons involving the remaining lines, see \cite[sec. 2]{DOV} and \cite{O2002representation}.

After appropriate rescaling, \(WB^n\) converges in law to a non-intersecting ensemble on \(C^{\mathbb{N}}\) (with respect to the product of the uniform-on-compact topologies on \(C^{\mathbb{N}}\)), called the parabolic Airy line ensemble; see Theorem 2.1 in \cite{DOV} for more on this. 

\section{Dyson Brownian motion: Radon-Nikodym derivative estimates}\label{sec: dyson bm estimates}
 In this section, we consider iterating the Pitman transform on a family of independent Brownian motions starting from the origin. We obtain pathwise, as well as optimal $L^p$ Radon-Nikodym derivative estimates for all $p>1$ against Brownian motion on compacts away from zero. This approach is inspired by \cite{O2002representation} (see Subsection \ref{subsec: melons}), wherein the authors introduce a `sorting' procedure for a family of independent Brownian motions involving Pitman transforms and (with inputs from queueing theory) show that BLPP is equal in distribution with the top curve in a Dyson ensemble, \cite[Theorem 7]{O2002representation}. This allows us to exploit the representation of the Dyson ensemble as the Doob $h$-transform of Brownian motions conditioned to not leave the Weyl chamber, and obtain the desired estimates.
 
We first establish pointwise estimates for the Radon-Nikodym derivatives of the top line of the Dyson Brownian motion in Proposition \ref{prop: top line pitman pointwise bound general case} leading to Theorem \ref{thm: melon lb rn} establishing optimal growth of the $L^p$ norms of the Radon-Nikodym derivative of Brownian LPP away from the origin, as the number of curves tends to infinity.

\begin{proposition}\label{prop: top line pitman pointwise bound general case}Fix $n\in \N$, $0<\ell<r$ and let $H(\cdot)$ denote the top line of an $n$-Dyson Brownian motion. Then the law of $H$ restricted to $[\ell,r]$ is absolutely continuous with respect to that of a standard Brownian motion starting from $0$ restricted to $[\ell,r]$. The Radon-Nikodym derivative of $H$ at a path $\xi$ is bounded above by
\begin{equation*}
    \frac{c^{n(n-1)}n^{n(n-1)}}{\prod_{j=1}^{n-1}j!}\cdot (\xi(\ell)_+/\sqrt{\ell}+1)^{n-1}\cdot (\xi(r)_+/\sqrt{r}+1)^{n-1}.
\end{equation*}
for some universal constant $c>1$.
\end{proposition}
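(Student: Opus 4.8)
The plan is to realise the top line $H$ of the $n$-Dyson Brownian motion as the top coordinate of the Doob $h$-transform of $n$ independent Brownian motions conditioned to stay in the Weyl chamber, and then compute the Radon-Nikodym derivative against a single Brownian motion by disintegrating the law of the full $n$-tuple. Concretely, writing $\hat{\PP}_{0^+}$ for the law of the $n$-level Dyson Brownian motion obtained as the weak limit of the $h$-transformed laws $\hat{\PP}_x$ (with $h(x) = \prod_{i<j}(x_i - x_j)_+$), the density of $\hat{\PP}_{0^+}$ restricted to $[\ell, r]$ against $\PP_{0^+}^{\otimes n}$ (product Wiener measure, suitably started) is, up to the harmonic normalisation, a ratio of the form $h(\xi_1(\ell), \dots, \xi_n(\ell)) / (\text{const}\cdot \ell^{n(n-1)/4})$ coming from the entrance law at time $\ell$, together with a similar non-collision factor at the right endpoint $r$. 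The first step is therefore to write down this density cleanly using the known transition densities for non-colliding Brownian motion (Karlin–McGregor / the $h$-transform formalism), being careful with the rate-two normalisation the paper uses.

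Next, to pass from the density of the full ensemble to the density of just the top line $H = \xi_1$, I would integrate out the lower $n-1$ coordinates. The key structural input is that, conditionally on the top line, the remaining lines form a process whose law is absolutely continuous in a controlled way, but more efficiently one can bound the marginal density of $\xi_1$ by taking the supremum/integral of the joint density over admissible configurations of $\xi_2, \dots, \xi_n$ lying below $\xi_1$ and above each other. The Vandermonde factor $h$ at the endpoints is what must be integrated; since $0 > \xi_n(\ell) $ need not hold — the ordering is $\xi_1 \ge \dots \ge \xi_n$ with $\xi_i(0)=0$ — the relevant integral is over the region $\{\xi_1(\ell) \ge y_2 \ge \dots \ge y_n\}$ (and similarly at $r$) of the product of Gaussian-type weights times the Vandermonde determinant. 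The bound $(\xi(\ell)_+/\sqrt\ell + 1)^{n-1}$ should emerge because each of the $n-1$ factors $(\xi_1(\ell) - y_j)$ in $h$, after the Gaussian integration against the lower coordinates, contributes something of size $O(\sqrt\ell + \xi_1(\ell)_+)$, i.e. $\sqrt\ell\,(\xi_1(\ell)_+/\sqrt\ell + 1)$, with the $\sqrt\ell$ powers cancelling against the $\ell^{n(n-1)/4}$-type normalisation. The combinatorial prefactor $c^{n(n-1)} n^{n(n-1)} / \prod_{j=1}^{n-1} j!$ will come from: the product of $j!$'s in the denominator being the normalising constant of the $\beta=2$ Vandermonde ensemble (a Selberg-type integral / the GUE partition function), the $n^{n(n-1)}$ absorbing polynomial-in-$n$ losses from crude bounding of the Gaussian moment integrals, and $c^{n(n-1)}$ absorbing universal constants raised to the number $O(n^2)$ of pairwise factors.

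The main obstacle I anticipate is controlling the integral of the Vandermonde weight against the lower coordinates \emph{uniformly} and getting the dependence on $\xi(\ell)_+$ and $\xi(r)_+$ exactly right (linear in each, to the power $n-1$) rather than with spurious extra polynomial or exponential factors. This requires a careful change of variables $y_j \mapsto \xi_1(\ell) - z_j$ (or centring at an appropriate point), after which the factors in $h$ split into those involving $\xi_1(\ell)$ and those among the $z_j$'s; the former give the claimed $(\xi(\ell)_+/\sqrt\ell + 1)^{n-1}$ after noting $|\xi_1(\ell) - y_j| \le |\xi_1(\ell)| + |y_j|$ and bounding Gaussian moments $\int |y_j|^k e^{-y_j^2/(4\ell)}\,dy_j \lesssim (\sqrt\ell)^{k+1} (k+1)!^{1/2}$ or so, while the latter reassemble into the partition-function constant $\prod j!$. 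A secondary technical point is justifying the interchange of the limit $x \to 0^+$ (defining $\hat\PP_{0^+}$) with these integral manipulations, and handling the entrance law at $\ell$ correctly — but this is exactly the setup the paper has prepared with the $h$-transform construction, so it should go through by dominated convergence once the integrand bounds above are in place. I would also double-check the two endpoints $\ell$ and $r$ are genuinely symmetric in the argument: the factor at $\ell$ is an \emph{entrance} density (limit from $0$) while the factor at $r$ is the non-collision probability $h$ normalised differently, but both contribute the same shape $(\,\cdot_+/\sqrt{\,\cdot\,} + 1)^{n-1}$, which is what the statement asserts.
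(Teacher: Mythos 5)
Your proposal reproduces the paper's proof in all its essential features: realise $H$ via the Doob $h$-transform, use the entrance-law formula $\hat\PP_{0^+}(A) = \PP_0[C_t h(B_t)^2 \hat\PP_{B_t}(\theta_t A)]$ at $t=\ell$ and drop the non-intersection indicator so the right endpoint contributes a factor $h(B(r))$, then integrate out the lower $n-1$ coordinates using Gaussian moment bounds of order $n\sqrt\ell$ and $n\sqrt r$, with the $\prod_{j<n} j!$ coming from the normalisation $C_\ell$. The one implementational detail you do not spell out is that the paper organises the Gaussian integration by a generalised H\"older inequality in $L^{n(n-1)}$, which is simply a clean way of carrying out the moment bounds you already sketched, so the two arguments are essentially identical.
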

\begin{proof}

Let $x\in \R^n_{>}$ and $ \PP_x$ denote the law of a Brownian motion $B$ started at $x$ and $\hat{\PP}_x$ the law of the $h$-transform of $B$ started at $x$ where $h(x_1, x_2, \cdots, x_n) = \prod_{1\le i < j\le n}(x_i-x_j)_+$. With $T = \inf\{t\ge 0: B\; \text{hits} \; \partial r^n_>\}$, we have for $A\in \mathcal{F}_t$, the natural filtration of $B$, 
\begin{equation*}\label{eq: dyson rn basic estimate}
    \hat{\PP}_x(A) = \mathbb{E}_x\left[\frac{h(B_t)}{h(x)}\mathbf{1}_{\{t<T\}\cap A}\right] \le \mathbb{E}_x\left[\frac{h(B_t)}{h(x)}\mathbf{1}_{A}\right]
\end{equation*}
where $\mathbb{E}_x$ denotes the expectation with respect to $\PP_x$. Now also observe that we have established a pointwise bound on the Radon-Nikodym derivative of $\hat{\PP}_x \ll \PP_x$ on $C_{*,*}([0,t])$ for all $t\ge 0$. It is also not hard to see that upon taking the limit $\R^n_> \ni x \to 0$, $H$ can be realised as the top curve of a random ensemble with $\hat{\PP}_{0^+}$ satisfying (\cite{O2002representation})
\begin{equation*}
    \hat{\PP}_{0^+}(A) = \PP_0 \left[C_t h(B_t)^2 \hat{\PP}_{B_t}(\theta_t A)\right], \, A\in \sigma(B_u, u\ge t),
\end{equation*}
where $\theta_t$ is the shift operator on path space $A\in \sigma(B_u, u\ge t)\mapsto \theta_t A\in \sigma(B_u, u\ge 0)$ and 
\begin{equation*}
    C_t = \left[t^{n(n-1)/2}\prod_{j=1}^{n-1}j!\right]^{-1}
\end{equation*}
Now let $0<\ell<r$ be given and let $A\subseteq C_{*,*}([\ell,r])$ be Borel measurable, then with $t = \ell>0$, we compute
\begin{equation*}
\begin{array}{cc}
     & \hat{\PP}_{0^+}(A) = \PP_0 \left[C_\ell h(B(\ell))^2 \hat{\PP}_{B_t}(\theta_L A)\right]\\
     & \le \mathbb{E}_0\left[C_\ell h(B(\ell))^2 \mathbb{E}_{0}\left[\frac{h(\bar{B}(r-\ell)+B(\ell))}{h(B(\ell))}\mathbf{1}_{\theta_L A}\right]\right] \\
     & = \mathbb{E}_0\left[C_\ell h(B(\ell)) \mathbb{E}_{0}\left[h(\bar{B}(r-\ell)+B(\ell))\mathbf{1}_{\theta_L A}\right]\right] \\
     & = \mathbb{E}_0\left[C_\ell h(B(\ell))h(B(r))\mathbf{1}_{A}\right]
\end{array}
\end{equation*}
    where $\bar{B}$ denotes a standard Brownian motion independent from $B$ and for the last equality the Markov property for Brownian motion was used (independent increments). Thus, the Radon-Nikodym derivative of $H$ against standard Brownian motion on $[\ell,r]$ is pointwise bounded for $\mu$-a.a. paths $\xi$ by (essentially computing the marginal)
\begin{align*}
& C_\ell \mathbb{E}_0\left[ h(\xi(\ell), B_2(\ell), \cdots, B_n(\ell)) \cdot h(\xi(r), B_2(r), \cdots, B_n(r)) \right] \\
&= C_\ell \mathbb{E}_0\left[ \prod_{1 < j \le n} (\xi(\ell) - B_j(\ell))_+ \cdot \prod_{1 < j \le n} (\xi(r) - B_j(r))_+ \cdot \prod_{2 \le i < k \le n} (B_i(\ell) - B_k(\ell))_+ (B_i(r) - B_k(r))_+ \right] \\
&\le C_\ell \mathbb{E}_0 \Bigg[ \prod_{1 < j \le n} (\xi(\ell)_+ + B_j(\ell)_-) \prod_{1 < j \le n} (\xi(r)_+ + B_j(r)_-) \\
&\quad \times \prod_{2 \le i < k \le n} (B_i(\ell)_+ + B_k(\ell)_-) (B_i(r)_+ + B_k(r)_-) \Bigg] \\
&\le C_\ell \prod_{1 < j \le n} \left(\xi(\ell)_+ + \| B_j(\ell)_- \|_{n(n-1)} \right) \cdot \left(\xi(r)_+ + \| B_j(r)_- \|_{n(n-1)} \right) \\
&\quad \times \prod_{2 \le i < k \le n} \left( \| B_i(\ell)_- \|_{n(n-1)} + \| B_k(\ell)_- \|_{n(n-1)} \right) \cdot \left( \| B_i(r)_- \|_{n(n-1)} + \| B_k(r)_- \|_{n(n-1)} \right) \,,
\end{align*}
by generalised H\"{o}lder, where $\norm{\cdot}_{n(n-1)}$ denotes the $L^{n(n-1)}(\PP)$ norm. Now observe that for all $1\le j \le n$, 
\begin{equation*}
    \norm{B_j(\ell)_-}_{n(n-1)} \le cn\sqrt{\ell},
\end{equation*}
where $c>0$ is a universal constant, and similarly for $B(r)$ (which follows from the asymptotics of the moments of the gaussian distribution). Thus, we have the further estimate for the Radon-Nikodym  derivative  for $\mu$-a.a. paths $\xi$
\begin{equation*}
    \begin{array}{cc}
         & \le \displaystyle C_\ell \prod_{1 < j\le n}(\xi(\ell)_++cn\sqrt{\ell})\cdot (\xi(r)_++cn\sqrt{r})\cdot \prod_{2\le i < k\le n}(cn\sqrt{\ell}+cn\sqrt{\ell})\cdot(cn\sqrt{r}+cn\sqrt{r})\\
     & \le \frac{c^{n(n-1)}n^{n(n-1)}}{\prod_{j=1}^{n-1}j!}\cdot(\xi(\ell)_+/\sqrt{\ell}+1)^{n-1}\cdot (\xi(r)_+/\sqrt{\ell}+1)^{n-1}.
    \end{array}
\end{equation*}
\end{proof}

\begin{remark}
    These pathwise estimates also yield some uniform estimates for the Radon-Nikodym derivative processes of homogeneous BLPP against Brownian motion on compacts, see Section \ref{sec: uniform path blpp hom} in the Appendix.
\end{remark}

A simple calculation gives the prefactor in the pointwise bound of the Radon-Nikodym derivative in Proposition \ref{prop: top line pitman pointwise bound general case}
\[
 \frac{c^{n(n-1)}n^{n(n-1)}}{\prod_{j=1}^{n-1}j!} = O(\mathrm{e}^{dn^2})\,,
\]
for some $d>0$. It turns out that for large enough $p>1$, one can show that this growth is optimal, which is the content of the following theorem.

\begin{theorem}\label{thm: melon lb rn}
Fix $n\in \N$,  $0<\ell<r$ and let $H(\cdot)$ be the top line of an $n-$Dyson Brownian motion. Then the Radon-Nikodym derivative of the law of $H$ against the Wiener measure $\mu$ restricted to $[\ell,r]$ satisfies the following asymptotic behaviour for all $p>1$ sufficiently large 
\begin{equation*}
   \norm{\frac{\diff \mathrm{Law}H}{\diff \mu}}_{L^p(\mu)}\ge c_p \mathrm{e}^{d_p n^2}\,,\qquad n\ge 1\,.
\end{equation*}
for some $p$-dependent constants $c_p, d_p>0$.
\end{theorem}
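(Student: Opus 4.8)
The plan is to establish the matching lower bound by producing, for each $n$, a specific event $A_n$ on path space whose Wiener measure is at least $\mathrm{e}^{-Cn^2}$ (for some constant independent of $n$) but on which the Radon-Nikodym derivative is at least of order $\mathrm{e}^{dn^2}$; combining these two estimates via $\|\frac{\diff\,\mathrm{Law}\,H}{\diff\mu}\|_{L^p(\mu)}^p \ge \int_{A_n}(\frac{\diff\,\mathrm{Law}\,H}{\diff\mu})^p\,\diff\mu \ge \mathrm{e}^{dpn^2}\mu(A_n)$ then gives the claim once $p$ is large enough that $dp$ beats the loss $C$ from $\mu(A_n)$. So the heart of the matter is a good lower bound on the density along a well-chosen family of paths.

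First I would return to the exact density formula appearing inside the proof of Proposition \ref{prop: top line pitman pointwise bound general case}: for $\mu$-a.e.\ path $\xi$, the Radon-Nikodym derivative of $H$ against $\mu$ on $[\ell,r]$ equals
\begin{equation*}
C_\ell\,\mathbb{E}_0\!\left[\prod_{1<j\le n}(\xi(\ell)-B_j(\ell))_+ \cdot \prod_{1<j\le n}(\xi(r)-B_j(r))_+ \cdot \prod_{2\le i<k\le n}(B_i(\ell)-B_k(\ell))_+(B_i(r)-B_k(r))_+\right],
\end{equation*}
where $C_\ell = [\ell^{n(n-1)/2}\prod_{j=1}^{n-1}j!]^{-1}$. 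The key point is that this is an \emph{exact} identity, so to get a lower bound I only need to lower bound the expectation on a favorable slice of the $B_j$'s. I would choose $\xi$ with $\xi(\ell)$ and $\xi(r)$ both of order $n$ (say $\xi(\ell)=an\sqrt{\ell}$, $\xi(r)=an\sqrt{r}$ for a suitable absolute constant $a$), and restrict the expectation to the event that the vector $(B_2(\ell),\dots,B_n(\ell))$ is a well-separated decreasing list living in $[-an\sqrt{\ell},0]$ with consecutive gaps of order $\sqrt{\ell}$, and similarly at time $r$ with gaps matched so that the Brownian increments between $\ell$ and $r$ are of typical size. On such an event every factor in the product is bounded below by a constant multiple of $\sqrt{\ell}$ or $\sqrt{r}$, there being $2(n-1) + 2\binom{n-1}{2} = n(n-1)$ factors in total, which produces $\ell^{n(n-1)/2}$-type growth that exactly cancels the $C_\ell$ prefactor's denominator involving $\ell$; what survives is $(\mathrm{const})^{n(n-1)}/\prod_{j=1}^{n-1}j!$ times the probability of the separation event, and the latter — asking $n-1$ independent Gaussians to fall into prescribed order-$1/n$-width windows — costs only $\mathrm{e}^{-Cn^2\log n}$... which at first glance looks too lossy. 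The resolution is to use the correct scaling: one wants the windows to have width of order $\sqrt\ell$ (not $\sqrt\ell/n$) spread over a range of order $n\sqrt\ell$, so the separation probability is genuinely $\mathrm{e}^{-Cn^2}$, not $\mathrm{e}^{-Cn^2\log n}$, matching the $1/\prod j! = \mathrm{e}^{-\Theta(n^2\log n)}$ — no wait, $\prod_{j=1}^{n-1}j! = \mathrm{e}^{\Theta(n^2\log n)}$, so one needs the Gaussian separation event \emph{and} the product of factors together to beat this. I would therefore be more careful: take the $B_j(\ell)$ to be roughly $-j\sqrt\ell$ (range $\sim n\sqrt\ell$), giving gaps $B_i(\ell)-B_k(\ell)\sim (k-i)\sqrt\ell$, so the product $\prod_{i<k}(B_i-B_k)_+ \sim \prod_{i<k}(k-i)\sqrt\ell = \sqrt\ell^{\,\binom{n-1}{2}}\prod_{j=1}^{n-2}j!$, which supplies exactly the factorials needed to cancel $1/\prod j!$ up to $\mathrm{e}^{O(n^2)}$; and the probability that the $n-1$ Gaussians land within $O(\sqrt\ell)$ of the points $-j\sqrt\ell$, $j=1,\dots,n-1$, is $\prod_j \Theta(\mathrm{e}^{-j^2/2}) = \mathrm{e}^{-\Theta(n^3)}$ — too small again.

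So the genuinely delicate part, and what I expect to be the main obstacle, is calibrating the spacing of the conditioning windows so that three competing $\mathrm{e}^{\pm\Theta(\cdot)}$ quantities balance: the Vandermonde-type product of gaps (which wants large, spread-out $B_j$'s), the Gaussian cost of forcing the $B_j$'s into those windows (which wants them near the origin), and the prefactor $1/\prod_{j=1}^{n-1}j! = \mathrm{e}^{-\Theta(n^2\log n)}$. The right choice is to scatter the $B_j(\ell)$ on the scale $\sqrt{n\ell}$ with spacing $\sqrt{\ell\cdot n/j}$ or similar, i.e.\ roughly $B_j(\ell) \approx -\sqrt{j\ell}$ in magnitude so that the conditioning cost is $\sum_j j/2 = \Theta(n^2)$ while the gaps still have product of order $(\sqrt\ell)^{\binom{n-1}{2}}$ times something of size $\mathrm{e}^{\Theta(n^2\log n)}$; then $C_\ell$ absorbs the $\ell$-powers, the $\prod j!$ cancels the combinatorial gap product up to $\mathrm{e}^{O(n^2)}$, the Gaussian cost contributes $\mathrm{e}^{-\Theta(n^2)}$, and the net density lower bound is $\mathrm{e}^{d_0 n^2}$ while $\mu(A_n)\ge \mathrm{e}^{-C_0 n^2}$ (the conditioning is an event on $\xi(\ell),\xi(r)$ only, which under $\mu$ costs $\mathrm{e}^{-\Theta(n^2)}$). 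Finally, assembling $\|\cdot\|_{L^p}^p \ge \mathrm{e}^{d_0 p n^2}\cdot \mathrm{e}^{-C_0 n^2}$ and taking $p > C_0/d_0$ yields $\|\cdot\|_{L^p(\mu)} \ge c_p\,\mathrm{e}^{d_p n^2}$ with $d_p = d_0 - C_0/p > 0$, as claimed. I would double-check the $n=1$ base case separately (the density is then $1$, so the bound is trivial for appropriate $c_p$) and make sure all constants are tracked uniformly in $n$.
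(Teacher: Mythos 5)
Your strategy of testing the $L^p$ norm against a carefully chosen event $A_n$ is a reasonable general plan, but it is a genuinely different route from the paper's, which never localizes to a good event at all: the paper writes the $L^p$ norm as a Karlin--McGregor-type double integral over $\R^n_<\times\R^n_<$, applies the Harish-Chandra/Itzykson--Zuber formula and a Cauchy--Schwarz bound to decouple the two time-marginals, and then recognizes the resulting integrals as Mehta integrals evaluable in closed form via Gamma functions, from which the $\mathrm{e}^{d'n^2\log p - d_{\ell,r}n^2}$ growth drops out directly. This bypasses exactly the delicate three-way balance you identify as the main obstacle, since the exact Selberg/Mehta evaluation handles the competition between the Vandermonde power, the Gaussian weight, and the factorial prefactor in one shot; your approach instead has to win that balance by hand through a judicious choice of windows, which is precisely where the proposal stalls.

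There is also a concrete gap in the starting point. You assert that the Radon--Nikodym derivative at $\xi$ \emph{equals} $C_\ell\,\mathbb{E}_0[\prod(\xi(\ell)-B_j(\ell))_+\prod(\xi(r)-B_j(r))_+\prod(B_i-B_k)_+(\cdots)]$ and call this ``the key point.'' It does not: that expression is the \emph{upper bound} obtained in Proposition \ref{prop: top line pitman pointwise bound general case} precisely by \emph{discarding} the indicator $\mathbf{1}\{t<T\}$ of no intersection on $[\ell,r]$ (the positive parts at the two endpoints are strictly weaker than pathwise non-intersection). For an upper bound on the density this is fine, but you need a \emph{lower} bound on $\rho$ on $A_n$, so you must retain $\mathbf{1}(\mathrm{NoInt}_\xi)$ and hence pay the probability that the $n-1$ bridges you've pinned stay ordered below $\xi$ throughout $[\ell,r]$ --- another factor of order $\mathrm{e}^{-\Theta(n^2)}$ that does not appear in your bookkeeping. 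Beyond that, the calibration $B_j(\ell)\approx -\sqrt{j\ell}$ is not actually verified to leave a net positive coefficient of $n^2$ after the $n^2\log n$ terms cancel: the Gaussian cost $\sim\mathrm{e}^{-n^2/8}$, the loss in $\prod_{i<k}(\sqrt k-\sqrt i)$ relative to $\prod_j j!$ (which you need to track at $O(n^2)$ precision, not just $O(n^2\log n)$), the missing non-intersection cost, and the only polynomially helpful boundary factors $(an)^{2(n-1)}$ all have to be summed, and a back-of-envelope computation of the constant $-\int_{0<x<y<1}\log(\sqrt y-\sqrt x)\,dx\,dy$ suggests the sign comes out the wrong way for this particular choice. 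The proposal as written does not establish the claimed $\mathrm{e}^{d_0 n^2}$ pointwise lower bound, so it does not close.
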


\begin{proof}
    Now, arguing as in the proof of Proposition \ref{prop: top line pitman pointwise bound general case}, we can obtain that the Radon-Nikodym derivative of $H$ against standard Brownian motion on $[\ell,r]$ on paths $\xi$ by 
\begin{equation*}
    C_\ell\mathbb{E}_0\left[ h(\xi(\ell), B_2(\ell), \cdots, B_n(\ell))\cdot h(\xi(r), B_2(r), \cdots, B_n(r))\cdot \mathbf{1}(\mathrm{NoInt}([\ell, r]))(\xi, B_{2:n})\right]\,,
\end{equation*}
where
\begin{equation*}
    C_\ell = \left[\ell^{n(n-1)/2}\prod_{j=1}^{n-1}j!\right]^{-1}\,.
\end{equation*}
This gives the norm estimates
\[
\norm{\frac{\diff \mathrm{Law}H^n}{\diff \mu}}_{L^p(\mu)} =  C_\ell\mathbb{E}_0\left[ h^p(B(\ell))\cdot h^p(B(r))\cdot \mathbf{1}(\mathrm{NoInt}([\ell, r])(B))\right]^{1/p}\,,\qquad p>1\,.
\]
Now, conditioning on $B(\ell)$ and using the Karlin-McGregor formula (see \cite{karlin1959coincidence}) and a Monotone class argument allows us to express the above as
\begin{align}\label{eq: L^p expression}
    C_\ell\left(\displaystyle\int_{\R^n_<\times \R^n_<} h^p(x)h^p(y)\mathrm{det}\big(\mathrm{e}^{-\frac{(x_i-y_j)^2}{4(r-\ell)}}\big)_{1\le i , j \le n}\mathrm{e}^{-\frac{\norm{x}^2}{4\ell}}\diff x\diff y\right)^{1/p}\,.
\end{align}
Now, the Harish-Chandra formula from \cite{harish1957differential} and the multi-linearity of the determinant give
\begin{align*}
\det\biggl(\mathrm{e}^{-\frac{(x_i - y_j)^2}{4(r - \ell)}}\biggr)_{1 \le i,j \le n} 
= & \frac{
    \mathrm{e}^{-\frac{\|x\|^2}{4(r - \ell)}} \mathrm{e}^{-\frac{\|y\|^2}{4(r - \ell)}} 
}{
    (4(r - \ell))^{\frac{n(n-1)}{2}} \prod_{j=1}^{n-1} j!
} \\
& \times \int_{\mathrm{U}(m)} 
    \mathrm{e}^{\frac{1}{4(r - \ell)} \mathrm{tr} \bigl( \mathrm{diag}(\underline{x}) U \mathrm{diag}(\underline{y}) U^* \bigr)} 
    \, \mu_{\mathrm{U}(m)}(\mathrm{d}U) \, h(x) h(y) \,,
\end{align*}

where $\mu_{\mathrm{U}(m)}$ is the normalised Haar measure on the group $\mathrm{U}(m)$ of unitary transformations on $\C^m$. Observe that for any $U\in \mathrm{U}(m)$ we can estimate using Cauchy-Schwarz
 \begin{equation*}
     |\mathrm{tr}(\mathrm{diag}(\underline{x})U\mathrm{diag}(\underline{y})U^*)|\le \norm{x}_2\cdot\norm{y}_2\le \frac{1}{2}\norm{x}_2^2+ \frac{1}{2}\norm{y}_2^2\,.
 \end{equation*}
 Thus, we can estimate equation (\ref{eq: L^p expression}) as a product of two integrals (where we have expanded the domains of integral to $\R^n$ by symmetry)
 \[
(\ref{eq: L^p expression}) \ge \frac{c_{\ell,r} \mathrm{e}^{-d_{\ell,r} n^2}}{\left(\prod_{j=1}^{n}j!\right)^2} \left(\displaystyle\int_{\R^n} \Delta^p(x)\mathrm{e}^{-c'_{\ell,r}\norm{x}^2}\diff x\right)^{1/p}\cdot \left(\displaystyle\int_{\R^n} \Delta^p(y)\mathrm{e}^{-c''_{\ell,r}\norm{y}^2}\diff y\right)^{1/p}\,,
\]
where for $x\in \R^n$, $\Delta(x) = \prod_{1\le i < j\le n}|x_i-x_j|$. Observe that the RHS above is the product of two Mehta integrals, see \cite[Eq. (1.1)]{forrester2008importance}, which can be evaluated exactly in terms of the Gamma function resulting in the estimates
\[
(\ref{eq: L^p expression}) \ge c_{\ell,r, p} \mathrm{e}^{-d_{\ell,r} n^2}\frac{\left(\prod_{j=1}^{n}\Gamma\left(1+j\frac{p}{2}\right)\right)^{2/p}}{\left(\prod_{j=1}^{n}j!\right)^2}\ge c_{\ell,r, p} \mathrm{e}^{d'n^2\log p -d_{\ell,r} n^2}\,,
\]
by the asymptotics of the Gamma function $\Gamma(\cdot)$, for some positive constants $c_{\ell,r, p}, c'_{\ell,r}, c''_{\ell,r},  d_{\ell,r}, d'$, which concludes the proof.
\end{proof}

\begin{remark}
    Note that Proposition \ref{prop: 3} in Section \ref{sec: inc reg} in the Appendix shows that in principle the growth for the Radon-Nikodym derivative of spatial increments of the top line of Dyson Brownian motion can be improved, though at present it is not clear how to translate the estimates thus obtained to this context.
\end{remark}

\section{Inhomogeneous Brownian LPP}\label{sec: inhom blpp}
In this section, we will consider the operation of iterating Skorokhod reflections on a family of independent Brownian motions $B_1, \cdots, B_n$ with non-decreasing initial conditions $g_1 \ge g_2 \ge \cdots \ge g_n$ (Brownian TASEP). In particular, we will uncover the Markovian structure of Brownian TASEP in Proposition \ref{prop: generalised Markov} and then in Theorem \ref{thm: semi-mg decomp blpp} uncover its semi-martingale structure by showing that it solves a particular system of singular stochastic differential equations.

Now let $n\in \N$ and consider the Brownian LPP (or BLPP) $(H_{k})_{k=1}^n$ as in Proposition \ref{prop: top line pitman pointwise bound general case}. A simple induction unravelling the definition of the Pitman transform (\ref{eq: pitmantrans}) gives 
\[
(H_{n-k+1})_{k=1}^n(\cdot)\stackrel{d}{=} (B[(0, 1)\to (\cdot, k)])_{k=1}^n
\]
as processes on $[0,\infty)$, where $B$ is a family of independent Brownian motions starting from the origin. Now, by the metric composition law enjoyed by LPP, we obtain 
\[
B[(0, 1)\to (\cdot+1, k)] = \max_{1\le \ell \le k}(B[(0, 1)\to (1, \ell )] + B[(1,\ell)\to (\cdot+1, k)])\quad \mathrm{on}\quad [0,\infty)\,.
\]
Thus, by independence of increments of Brownian motion, conditioning on $(B[(0, 1)\to (1, \ell )])_{1\le \ell\le k}\equiv(b_\ell)_{1\le \ell\le k}$, we are hinted to the Markovian structure of $(H_k)_{k=1}^n$ (with respect to its own filtration, to be made precise below) and are led to the following definition of Brownian LPP with \textbf{inhomogeneous} boundary data, as a way of studying its regular conditional distributions, or Markov kernel.

\begin{definition}(Brownian TASEP)\label{def: brownian tasep}
    Fix $m\ge 1$, $B_1, \cdots, B_m$ be independent Brownian motions starting from the origin, $\underline{g} = (g_\ell)_{\ell =1}^m \in\R^m_{\ge}$ and define the \textbf{Brownian TASEP} started from initial data $(g_\ell)_{\ell =1}^m$ as the random ensemble
     $$\displaystyle\max_{k\le \ell \le m}(g_{\ell} + B[(0,\ell)\to (y, k)])\,, \qquad  y\in [0,\infty)\,, k\in\llbracket 1, m \rrbracket$$ where $B[(0,\cdot')\to (\cdot, 1)]$ denotes Brownian LPP. 

     Furthermore, the process 
     \[
     \displaystyle\max_{1\le \ell \le m}(g_{\ell} + B[(0,\ell)\to (y, 1)])\,, \qquad  y\in [0,\infty)
     \]
     is called \textbf{inhomogeneous BLPP} started from initial data $\underline{g}$.
\end{definition}

We start with a lemma that gives a deterministic procedure for constructing Brownian TASEP of any depth in terms of iterated Skorokhod reflections of independent Brownian motions. This will be useful in obtaining a better understanding of its structural properties.

\begin{lemma}\label{lemma: iterated skorokhod}
    Let $B = (B_1, B_2, \cdots)\in C^{\N}$ be an ensemble of independent rate two Brownian motions starting from the origin. Fix $m\ge 2$ and $\underline{g}=(g_1,g_2,\cdots, g_m)\in\R^m_\ge$ and let $H(\cdot)$ denote an inhomogeneous Brownian LPP started from initial data $\underline{g}=(g_1,g_2,\cdots, g_m)\in\R^m_\ge$. Then, $H$ satisfies
    \[
    H = W\mathcal{L}_1(y)\qquad \mathrm{ for\quad all}\, y\ge 0\,,
    \]
    where $\underline{g}' = (g_2,\cdots, g_m)\in\R^{m-1}$, $\mathcal{L} = (g_1+B_1, H')$ and for all $y\ge 0$,
    \[
    H'(y) \coloneqq \displaystyle\max_{2\leq \ell\leq m}(g_\ell+B[(0,\ell)\to(y,2)]) \,.
    \] 
    In other words, $H$ is the top line of the melon of a Brownian motion starting from $g_1$ and an inhomogeneous BLPP starting from data $\underline{g}'$.
\end{lemma}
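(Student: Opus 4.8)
The plan is to prove the identity $H = W\mathcal{L}_1$ pathwise, directly from the definitions of inhomogeneous BLPP, the Pitman transform, and Lemma \ref{lemma: Pitman melon}. First I would observe that by Lemma \ref{lemma: Pitman melon} applied to the pair $\mathcal{L} = (g_1 + B_1, H')$, one has for all $y \ge 0$
\[
W\mathcal{L}_1(y) = \max\Big\{ (g_1 + B_1)(0) + \mathcal{L}[(0,1)\to(y,1)],\ H'(0) + \mathcal{L}[(0,2)\to(y,1)] \Big\},
\]
so the task reduces to identifying these two LPP values over the two-line ensemble $\mathcal{L}$ with the two groups of terms appearing in the definition $H(y) = \max_{1 \le \ell \le m}(g_\ell + B[(0,\ell)\to(y,1)])$: namely the $\ell = 1$ term, and the max over $2 \le \ell \le m$.

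The first term is immediate: a path from $(0,1)$ to $(y,1)$ in a two-line ensemble never jumps, so $\mathcal{L}[(0,1)\to(y,1)] = \mathcal{L}_1(y) - \mathcal{L}_1(0) = (g_1 + B_1)(y) - g_1 = B_1(y)$, and $B_1(y) = B[(0,1)\to(y,1)]$ since a single-curve LPP is just the increment of that curve. Hence the $\ell=1$ contribution matches. For the second term, a path from $(0,2)$ to $(y,1)$ in the ensemble $\mathcal{L}$ starts on line $\mathcal{L}_2 = H'$, jumps at some time $s \in [0,y]$ to line $\mathcal{L}_1 = g_1 + B_1$, so
\[
\mathcal{L}[(0,2)\to(y,1)] = \sup_{s\in[0,y]}\Big( H'(s) - H'(0) + (g_1+B_1)(y) - (g_1+B_1)(s) \Big) = \sup_{s\in[0,y]}\big( H'(s) + B_1(y) - B_1(s) \big) - H'(0),
\]
using $H'(0) = g_2$ (the value of the depth-$(m-1)$ BLPP at time $0$, which equals $g_2$). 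Adding $H'(0)$ back, the second argument of the max becomes $\sup_{s\in[0,y]}\big(H'(s) + B_1(y) - B_1(s)\big)$. It then remains to show this equals $\max_{2\le \ell\le m}(g_\ell + B[(0,\ell)\to(y,1)])$; I would do this by expanding $H'(s) = \max_{2\le\ell\le m}(g_\ell + B[(0,\ell)\to(s,2)])$, pulling the $\sup_s$ inside the $\max_\ell$, and recognising that $\sup_{s\in[0,y]}\big( B[(0,\ell)\to(s,2)] + (B_1(y)-B_1(s)) \big) = B[(0,\ell)\to(y,1)]$ via the metric composition law (Lemma \ref{Lemma: Metric Composition}) — a geodesic from $(0,\ell)$ to $(y,1)$ last touches line $2$ at some time $s$, after which it must follow line $1$, contributing exactly $B_1(y) - B_1(s)$.

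The main obstacle, and the step requiring the most care, is the bookkeeping in this last identification: one must check that the decomposition of a geodesic from $(0,\ell)$ to $(y,1)$ "at the final jump onto line $1$" is legitimate — i.e. that the metric composition law (\ref{eq: composition}) with the choice $k \in \{1,2\}$ applies cleanly, that the segment on line $1$ from $(s,1)$ to $(y,1)$ contributes precisely $B_1(y)-B_1(s)$ with no further jumps, and that taking suprema/maxima in either order is valid (which is routine, since all quantities are finite and the index set $\{2,\dots,m\}$ is finite while $[0,y]$ is compact with the relevant functions continuous). I would also need to confirm the boundary values $H'(0) = g_2$ and, for consistency, $H(0) = g_1$, which follow directly from Definition \ref{def: brownian tasep} since $B[(0,\ell)\to(0,k)] = 0$ forces the max at time $0$ to be attained at the smallest admissible index. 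Once these pieces are assembled, matching the two arguments of the Pitman max with the two pieces of the defining max for $H$ completes the proof.
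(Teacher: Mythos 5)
Your proof is correct, and it is essentially the same argument the paper has in mind when it says the lemma follows from a ``straightforward application of the metric composition law'': you apply Lemma~\ref{lemma: Pitman melon} to $\mathcal{L}$, compute the two last-passage values over $\mathcal{L}$ directly, and identify the second one with $\max_{2\le\ell\le m}(g_\ell + B[(0,\ell)\to(y,1)])$ by the metric composition law. The only cosmetic difference is that you route through Lemma~\ref{lemma: Pitman melon} rather than invoking the Pitman formula~\eqref{eq: pitmantrans} directly, but those are interchangeable since Lemma~\ref{lemma: Pitman melon} is itself a restatement of~\eqref{eq: pitmantrans} via~\eqref{eq: composition}.
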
 
\begin{proof}
    Straightforward application of Metric composition law for last passage values, Lemma \ref{Lemma: Metric Composition}.
\end{proof}

We obtain some elementary structural properties of the laws of Brownian TASEP and in doing so, obtain a natural generalisation of Lemma $4.2$ in \cite{Sarkar2021Brownian} by showing that Brownian TASEP enjoys a description as a Markov process.
\begin{proposition}\label{prop: generalised Markov}
     Fix $m\ge 1$, a sequence $(g_\ell)_{\ell =1}^m \in\R^m_{\ge}$ and let $(H_1, H_2, \cdots, H_m)(\cdot)$ denote Brownian TASEP starting from initial data $\underline{g}$. Then, we have for all $A\subseteq C_{*, *}^m ([x,y])$ Borel measurable, the following
\begin{equation*}
\begin{array}{cc}
     &\mathbb{E}\left[ \mathbf{1}((H_1, \cdots, H_{m})\in A) | (H_1, \cdots, H_m)|_{[0,x]}\right]\\ &=  \mathbb{E}\left[ \mathbf{1}((H_1, \cdots, H_{m})\in A) | (H_1(x), \cdots, H_m(x))\right].
\end{array}
\end{equation*}
In other words, the process $(H_k)_{k=1}^m$ is a continuous Markov process with respect to its own filtration on $[0,\infty)$.
\end{proposition}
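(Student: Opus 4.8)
The plan is to use the metric composition law for last passage percolation (Lemma~\ref{Lemma: Metric Composition}) to exhibit a self-similar decomposition of Brownian TASEP: for every $0\le s\le t$ the restriction of $(H_1,\dots,H_m)$ to $[s,t]$ will be realised as a fixed measurable functional of the vector $(H_1(s),\dots,H_m(s))$ together with the increments of the driving Brownian motions after time $s$, which are independent of $\mathcal F_s:=\sigma(B_i(u):u\le s,\ 1\le i\le m)$. Granting this, the Markov property is a consequence of the standard ``freezing'' lemma for conditional expectations, once one notes that the $\sigma$-algebra generated by the whole path on $[0,x]$ lies between $\sigma(H_1(x),\dots,H_m(x))$ and $\mathcal F_x$.

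\textbf{Step 1 (self-similar decomposition).} First I would fix $0\le s\le t$ and apply the composition identity \eqref{eq: composition} at the intermediate time $s$ to each last passage value appearing in $H_k(t)=\max_{k\le\ell\le m}(g_\ell+B[(0,\ell)\to(t,k)])$, obtaining
\[
B[(0,\ell)\to(t,k)]=\max_{k\le j\le\ell}\big(B[(0,\ell)\to(s,j)]+B[(s,j)\to(t,k)]\big).
\]
Substituting and interchanging the two nested maxima (rewriting $\max_{k\le\ell\le m}\max_{k\le j\le\ell}$ as $\max_{k\le j\le m}\max_{j\le\ell\le m}$) collapses the inner maximum over $\ell$ back into $H_j(s)$ and yields, for all $1\le k\le m$ and $t\ge s$,
\[
H_k(t)=\max_{k\le j\le m}\big(H_j(s)+B[(s,j)\to(t,k)]\big).
\]
For $t'\in[s,t]$ the quantity $B[(s,j)\to(t',k)]$ depends only on $\{B_i(u)-B_i(s):u\in[s,t'],\ k\le i\le j\}$, hence is a Borel function of the post-$s$ increment ensemble $B^{(s)}:=(B_i(\cdot)-B_i(s))_{1\le i\le m}$ (which is independent of $\mathcal F_s$), and it is jointly continuous in $(t',B^{(s)})$ by the Arzela--Ascoli argument recorded after Definition~\ref{Definition: last passage}. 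Therefore there is a Borel map $\Phi_{s,t}$, built from finitely many such last passage functionals and maxima, with
\[
(H_1,\dots,H_m)\big|_{[s,t]}=\Phi_{s,t}\big((H_1(s),\dots,H_m(s)),\,B^{(s)}\big);
\]
in particular each $H_k$ is continuous on $[0,\infty)$, being a finite maximum of continuous last passage values (alternatively, one may appeal to Lemma~\ref{lemma: iterated skorokhod}).

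\textbf{Step 2 (freezing lemma).} Next I would write $\mathcal G_x:=\sigma\big((H_1,\dots,H_m)|_{[0,x]}\big)$ and $Z_x:=(H_1(x),\dots,H_m(x))$. The path on $[0,x]$ is $\mathcal F_x$-measurable, so $\sigma(Z_x)\subseteq\mathcal G_x\subseteq\mathcal F_x$, whereas $B^{(x)}$ is independent of $\mathcal F_x$ and hence of both $\mathcal G_x$ and $\sigma(Z_x)$. Applying the freezing lemma to the bounded measurable functional $\mathbf 1_A\circ\Phi_{x,y}$ first with respect to $\mathcal G_x$ and then with respect to $\sigma(Z_x)$ gives
\[
\mathbb E\big[\mathbf 1\big((H_1,\dots,H_m)\in A\big)\,\big|\,\mathcal G_x\big]=\psi(Z_x)=\mathbb E\big[\mathbf 1\big((H_1,\dots,H_m)\in A\big)\,\big|\,Z_x\big],\qquad \psi(\underline z):=\mathbb E\big[\mathbf 1_A\big(\Phi_{x,y}(\underline z,B^{(x)})\big)\big],
\]
which is exactly the assertion of the proposition.

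The main obstacle is Step~1, and specifically the interchange of the order of the two nested maxima: the metric composition law naturally produces a maximum over intermediate heights $j$ and initial heights $\ell$, and only after swapping their roles does one see that the future of the system is governed by the \emph{whole} vector $(H_1(s),\dots,H_m(s))$ rather than by $H_k(s)$ alone --- which is precisely why the full TASEP is Markov although its individual lines need not be. The remaining points are routine bookkeeping: the joint measurability and continuity of $\Phi_{s,t}$, and the inclusion $\sigma(Z_x)\subseteq\mathcal G_x\subseteq\mathcal F_x$ that allows the freezing lemma to be applied to the path-generated $\sigma$-algebra $\mathcal G_x$ and not merely to $\mathcal F_x$.
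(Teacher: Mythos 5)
Your proposal is correct, but it takes a genuinely different route from the paper. The paper proves Proposition~\ref{prop: generalised Markov} by induction on $m$: the base case is the Markov property of Brownian motion, and the inductive step peels off the top curve via Lemma~\ref{lemma: iterated skorokhod} (writing $H_1 = W\hat{\mathcal L}_1$ with $\hat{\mathcal L} = (g_1+B_1, H_2)$), expresses $H_1|_{[x,y]}$ as a measurable functional of $(H_1(x), H_2|_{[x,y]}, (B_1(\cdot)-B_1(x))|_{[x,y]})$ via the Pitman transform formula, and then invokes the induction hypothesis on $H_{2:m+1}$ together with disintegration of measures. Your argument instead obtains the identity $H_k(t) = \max_{k\le j\le m}(H_j(s) + B[(s,j)\to(t,k)])$ directly from the metric composition law of Lemma~\ref{Lemma: Metric Composition} — after the key interchange of the nested maxima, which you correctly identify as the crux — and then applies the standard freezing lemma once the path $\sigma$-algebra is sandwiched as $\sigma(Z_x)\subseteq \mathcal G_x\subseteq\mathcal F_x$. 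This avoids the induction altogether and exhibits the Markov kernel in one step; it essentially makes rigorous the heuristic the paper itself gives just before Definition~\ref{def: brownian tasep}, whereas the paper's proof instead leans on the recursive Pitman-transform structure that is reused elsewhere (notably in Theorem~\ref{thm: semi-mg decomp blpp}). Both approaches are sound; yours is arguably the more direct and self-contained, while the paper's sets up machinery it wants again.
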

\begin{proof}
    We proceed via induction.
    
     \underline{$m=1$:} This case follows from the Markov property of Brownian motion and the fact that $H_1 = g_1 + B_1$.
    
    \underline{$m\ge 2$:} Suppose now the claim holds for some $m\ge 2$. Now we wish to show it for $m+1$. That is fix $(g_\ell)_{\ell =1}^{m+1} \in\R^{m+1}_{\ge}$ and let 
     let $(H_1, H_2, \cdots, H_m)(\cdot)$ denote Brownian TASEP starting from initial data $\underline{g}$. Observe that from Lemma \ref{lemma: iterated skorokhod}, we can express 
    \[
    H_{1}(y) = W\hat{\mathcal{L}}_1(y)\,,
    \]
where $\hat{\mathcal{L}} = (g_1+B_1, H_2)$ and $B_1$ is a rate two Brownian motion starting from the origin independent of $H_2 = \displaystyle\max_{2\le \ell \le m+1}(g_{\ell} + B[(0,\ell)\to (\cdot, 2)])$. Now, from the formula of the top line of the Pitman transform we have
\begin{equation*}
    \begin{array}{cc}
        &H_{1}(z) = B_1(z)-B_1(x)\\
        &+\displaystyle\max\left(\max_{x\leq r\leq z}(H_2(r)-B_1(r)+B_1(x)),H_{1}(x)\right), \quad z\in [x,y].
    \end{array}
\end{equation*}
Hence, we can express 
\begin{equation*}
    H_{1}|_{[x,y]}(\cdot) = F(H_{1}(x), H_2|_{[x,y]}, (B_1(\cdot)-B_1(x))|_{[x, y]})(\cdot),
\end{equation*}
where $F$ is a measurable functional and $(B_1(\cdot)-B_1(x))|_{[x, y]}$ is independent of $H_2$ and $H_{1}|_{[0,x]}$.

Now, fix $A\subseteq C_{*, *} ([x,y])$ Borel measurable, then by the induction hypothesis applied to $H_{2:m+1}$, we have
\begin{equation*}
\begin{array}{cc}
     &\mathbb{E}\left[ \mathbf{1}(H_{1}\in A) \Big| H_{1:m+1}|_{[0,x]}\right]\\ &=  \mathbb{E}\left[ \mathbf{1}(F(H_1(x), H_2|_{[x,y]}, (B_1(\cdot)-B_1(x))|_{[x, y]})(\cdot)\in A) \Big| H_{1:m+1}|_{[0,x]}\right]\\
     & =  \mathbb{E}\left[ \mathbf{1}(H_1\in A) \Big| H_{1:m+1}(x)\right],
\end{array}
\end{equation*}
by disintegration of measures, see \cite{kallenberg1997foundations}, and independence, and a monotone class argument now allows us to complete the proof.
\end{proof}

\begin{remark}\label{rmk: markov generator}
    Note that the Markov generator does not depend on the choice of sequence $(g_\ell)_{\ell=1}^\infty$ and that it is a homogeneous-time Markov process. It turns out that it can be computed explicitly using determinants and so called diffusion interlacing arguments, as will be done in the forthcoming sub-sections.
\end{remark}

\subsection{Semi-martingale decomposition of Brownian TASEP}

The construction of Brownian TASEP by inductively iterating the Skorokhod reflection on a family of Brownian motions suggests it has a natural decomposition as a semi-martingale. That this is so is the content of the following theorem which follows from an inductive argument and the deterministic Lemma 2.1 in \cite{revuz2013continuous}. 

\begin{theorem}(Semi-martingale decomposition of Brownian TASEP)\label{thm: semi-mg decomp blpp}
     Fix $m\ge 1$, a sequence $(g_\ell)_{\ell =1}^m \in\R^m_{\ge}$, a family of independent Brownian motions $B_1, \cdots, B_m$ and let $(H_1, H_2, \cdots, H_m)(\cdot)$ be a Brownian TASEP started from initial data $\underline{g}$. Then, there exist continuous non-decreasing processes $(\alpha^k(\cdot))_{k=2}^m$ such that for all $2\le k \le m$, the Stieltjes measure $\diff \alpha^{k-1}$ is almost surely supported on the set $\{H_{k-1} = H_k\}$ and
     \begin{equation*}
     \begin{array}{cc}
          & H_{k-1}(t) = B_{k-1}(t) + \alpha^{k-1}(t) \\
          & = B_{k-1}(t)+ \displaystyle\int_{(0,t]}\mathbf{1}([T_{k-1},\infty))(s)\diff \alpha_{k-1}(s),\, \text{a.s. for all }\, t\ge 0, 
     \end{array}
     \end{equation*}
     where $T_{k-1} = \inf\{t\ge 0:  H_{k-1}(t) =  H_{k}(t)\}$ and is almost surely positive. In other words, for all $1\le k \le m$, $H_k$ is a semi-martingale.
\end{theorem}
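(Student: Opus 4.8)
The plan is to induct on the depth $m$, peeling off the top line of the Brownian TASEP at each stage by means of Lemma~\ref{lemma: iterated skorokhod} and recognising its compensator as the Skorokhod reflection term of the Brownian motion $g_1+B_1$ above the moving barrier $H_2$. For $m=1$ there is nothing to decompose and $H_1=g_1+B_1$ is a Brownian motion, hence a continuous semimartingale; likewise the bottom line $H_m=g_m+B_m$ is always a Brownian motion. For the inductive step, assume the statement in depth $m-1$ and let $(H_1,\dots,H_m)$ be a Brownian TASEP started from $\underline{g}=(g_1,\dots,g_m)\in\R^m_{\ge}$ driven by $B_1,\dots,B_m$. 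Since the last-passage paths defining $H_2,\dots,H_m$ never rise above level $2$, the reduced ensemble $(H_2,\dots,H_m)$ depends only on $B_2,\dots,B_m$ and, after relabelling, is itself a Brownian TASEP started from $(g_2,\dots,g_m)$; the inductive hypothesis therefore supplies its semimartingale decomposition and the non-decreasing processes $\alpha^2,\dots,\alpha^{m-1}$ with the asserted support and integral representations. It remains to produce $\alpha^1$ and to decompose $H_1$.

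By Lemma~\ref{lemma: iterated skorokhod}, $H_1=W\mathcal{L}_1$ with $\mathcal{L}=(g_1+B_1,H_2)$, so the definition of the Pitman transform \eqref{eq: pitmantrans} gives
\[
H_1(t)=\bigl(g_1+B_1(t)\bigr)+G(g_1+B_1,H_2)(0,t),\qquad t\ge 0.
\]
Put $\alpha^1(t)\coloneqq G(g_1+B_1,H_2)(0,t)=0\vee\sup_{0\le s\le t}\bigl(H_2(s)-g_1-B_1(s)\bigr)$, a continuous non-decreasing process with $\alpha^1(0)=0\vee(g_2-g_1)=0$ since $g_1\ge g_2$, so that $H_1=(g_1+B_1)+\alpha^1$ (the form in the statement, reading its ``$B_1$'' as the Brownian motion $g_1+B_1$ started at $g_1$). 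To determine the support of $d\alpha^1$, set $\varphi(t)\coloneqq g_1+B_1(t)-H_2(t)$, a continuous function with $\varphi(0)=g_1-g_2\ge 0$; one checks directly that $\alpha^1(t)=\sup_{0\le s\le t}\varphi(s)^-$ and $\varphi(t)+\alpha^1(t)=H_1(t)-H_2(t)$. Hence, by the deterministic Skorokhod reflection lemma (Lemma~2.1 in \cite{revuz2013continuous}), $(H_1-H_2,\alpha^1)$ is the Skorokhod decomposition of $\varphi$ reflected at the origin: in particular $H_1-H_2\ge 0$ and the Stieltjes measure $d\alpha^1$ is almost surely carried by $\{t:H_1(t)=H_2(t)\}$. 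Continuity of $H_1-H_2$ and $(H_1-H_2)(0)=g_1-g_2$ give $T_1=\inf\{t\ge 0:H_1(t)=H_2(t)\}>0$ a.s.\ when $g_1>g_2$, and since $d\alpha^1$ is atomless and avoids $[0,T_1)$ the process $\alpha^1$ is constant there, equal to $\alpha^1(0)=0$, whence $\alpha^1(t)=\int_{(0,t]}\mathbf{1}_{[T_1,\infty)}(s)\,d\alpha^1(s)$.

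For the semimartingale assertion it is enough to work with the filtration generated by $B_1,\dots,B_m$: each $B_k$ is then a rate-two Brownian motion (by independence of the driving motions), hence a continuous local martingale, while $g_1+\alpha^1$ is continuous, adapted and of finite variation, so $H_1=g_1+B_1+\alpha^1$ is a continuous semimartingale, and $H_2,\dots,H_m$ are so by the inductive hypothesis. Since every $B_k$ remains a rate-two Brownian motion, and every $H_j$ is adapted, with respect to the natural filtration of $(H_1,\dots,H_m)$ (by L\'evy's characterisation, or Stricker's theorem), the decompositions hold there too, which closes the induction. The only delicate step is the identification carried out above --- that the Pitman compensator $G(g_1+B_1,H_2)(0,\cdot)$ coincides with the Skorokhod reflection term of $\varphi$ and that the reflected path is exactly $H_1-H_2$; granting this, the support property, the positivity of $T_1$, and the integral representation follow from the uniqueness in the Skorokhod lemma, and the passage to the lower lines is routine via the observation that $(H_2,\dots,H_m)$ is again a Brownian TASEP.
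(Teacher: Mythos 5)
Your proof is correct and follows the same strategy as the paper's: an induction on the depth $m$, applying the Skorokhod reflection lemma to the pair $(g_{k-1}+B_{k-1}, H_k)$ furnished by Lemma~\ref{lemma: iterated skorokhod} to extract the compensator $\alpha^{k-1}$ at each level. You spell out carefully what the paper compresses into a single ``quick induction'' sentence, and you correctly flag that $T_{k-1}>0$ requires the strict inequality $g_{k-1}>g_k$---a point the theorem's statement, which permits ties in $\underline{g}$, passes over silently.
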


\begin{proof}
    Let $B_1, B_2$ be two continuous-time processes such that $B_1(0)\ge B_2(0)$. Then, the Skorokhod reflection lemma, Lemma 2.1 in \cite{revuz2013continuous} gives that the process
    \begin{equation*}
        Z(t) = B_1(t)-B_2(t) + \alpha(t), \, t\ge 0
    \end{equation*}
    is positive and $\alpha(t) = \sup_{s\le t}((B_2(s)-B_1(s))\lor 0)$ is continuous, non-decreasing and the (random) measure $\diff \alpha$ induced by the increments of $\alpha$ is almost surely supported on the set $\{Z = 0\} = \{WB_1 = B_2\}$ upon noticing that 
    \begin{equation*}
        \sup_{s\le t}((B_2(s)-B_1(s))\lor 0) =  \sup_{s\le t}(B_2(s)-B_1(s))\lor 0,\, \text{for all}\, t\ge 0.
    \end{equation*}
    Thus, we have the semi-martingale decomposition of the top line of the Pitman transform 
    \begin{equation*}
          WB_1(t) = B_1 + \alpha(t) = B_{k+1}(t)+ \displaystyle\int_{(0,t]}\mathbf{1}([T,\infty))(s)\diff \alpha(s),\, \text{a.s. for all} \, t\ge 0, 
     \end{equation*}
     where $T = \inf\{t\ge 0:  WB_1 = B_2\}$ and is almost surely positive. A quick induction, replacing $B_1$ with $B_{k-1}$, $B_2$ with $H_{k}$ and $\alpha$ with $\alpha_{k-1}$ in the inductive step allows us to conclude.
\end{proof}

\section{Diffusion interlacing and Brownian motion in the Gelfand-Tsetslin cone}\label{sec: BM gelfand-tsetslin cone}
Having obtained some elementary structural properties of the Brownian TASEP, we aim to compute explicitly its transition probability densities. Remark \ref{rmk: markov generator} would then give an explicit representation of the Radon-Nikodym derivative of Brownian TASEP as a ratio of the aforementioned transition densities. This would then allow us transfer the pathwise estimates for Brownian LPP to Brownian TASEP. To this end, we now introduce the Warren process and establish its close connection to Brownian TASEP using Theorem \ref{thm: semi-mg decomp blpp}. 

Let ${\mathbf K}$ be the cone of  points ${\mathbf x}= 
\bigl(x^{1},x^{2}, \ldots x^{n}\bigr)$ with  $x^{k}=\bigl(x^{k}_1,x^{k}_2, \ldots ,x^{k}_k\bigr) \in {\R}^k$ satisfying 
the inequalities 
\begin{equation*}
 x^{k+1}_{i} \leq x^{k}_i \leq x^{k+1}_{i+1}.
\end{equation*}
${\mathbf K}$ is sometimes called the Gelfand-Tsetlin cone, and arises in representation theory.
We will consider a process ${\mathbf X}(t)=\bigl( X^{1}(t), X^{2}(t), \ldots X^{n}(t)\bigr)$ taking values in ${\mathbf K}$ so that 
\begin{equation*}
\label{gts:sde}
X^{k}_i(t)= x^{k}_i+\gamma^{k}_i(t)+ L^{k,-}_i(t)-L^{k,+}_i(t),
\end{equation*}
where $\bigl(\gamma^k_i(t); t \geq 0 \bigr)$ for $1\leq k \leq n, 1 \leq i \leq k $ are independent Brownian motions, and 
$\bigl(L^{k,+}_i(t); t \geq 0 \bigr)$ and  $\bigl(L^{k,1}_i(t); t \geq 0 \bigr)$ are continuous, increasing processes growing only when 
$X^k_i(t)=X^{k-1}_i(t)$ and $X^k_i(t)=X^{k-1}_{i-1}(t)$ respectively,  the exceptional cases  $L^{k,+}_k(t)$ and $L^{k,-}_1(t)$  being 
identically zero for all $k$.

Now, observe that upon extracting the `diagonal' subprocess $(X^1_1, \cdots, X^n_n)$, the semi-martingale decomposition of Brownian TASEP just established in Proposition \ref{thm: semi-mg decomp blpp} and the deterministic Skorokhod Lemma, \cite[Lemma 2.1]{revuz2013continuous}, give through a quick induction argument the following proposition. 

\begin{proposition}\label{prop: interlacing inhm blpp equality}
The process $(X^n_n, \cdots, X^1_1)(\cdot)$ as the same law on paths as a Brownian TASEP started from the origin.
\end{proposition}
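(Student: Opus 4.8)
The plan is to realise both the diagonal $(X^n_n,\dots,X^1_1)$ of the Warren process $\mathbf X$ started from $\mathbf x = 0$ and the Brownian TASEP from the origin as the image, under one and the same deterministic Borel map $\Phi$, of an i.i.d.\ family of $n$ rate-two Brownian motions started at $0$, and then to conclude by matching the two input families in law. Here $\Phi$ sends a tuple of paths $(w_1,\dots,w_n)$ to $(Y_1,\dots,Y_n)$ defined recursively by $Y_1 = w_1$ and $Y_k = W(w_k, Y_{k-1})_1$ for $2 \le k \le n$, where $W(\cdot,\cdot)_1$ is the top line of the Pitman transform \eqref{eq: pitmantrans}; $\Phi$ is measurable because each step is a continuous operation on path space.

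The first point is that the diagonal $(X^1_1,\dots,X^n_n)$ is an \emph{autonomous} subsystem of $\mathbf X$. Inspecting the defining equation, the entry $X^k_k$ is pushed only upward, by $L^{k,-}_k$, since the downward term $L^{k,+}_k$ is identically zero (there is no coordinate $X^{k-1}_k$), and $L^{k,-}_k$ increases only on $\{X^k_k = X^{k-1}_{k-1}\}$. Furthermore, the order constraint $X^{k-1}_{k-1} \le X^k_k$ is maintained by pushing the upper path $X^k_k$ up and never by pushing $X^{k-1}_{k-1}$ down (again because $L^{k-1,+}_{k-1} \equiv 0$), so the diagonal is a one-directional chain in which $X^k_k$ is a function of $\gamma^k_k$ and $X^{k-1}_{k-1}$ alone, with no dependence on off-diagonal entries. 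Writing $X^k_k(t) - X^{k-1}_{k-1}(t) = (\gamma^k_k(t) - X^{k-1}_{k-1}(t)) + L^{k,-}_k(t)$ and invoking uniqueness in the deterministic Skorokhod reflection lemma (Lemma~2.1 of \cite{revuz2013continuous}, recalled in the proof of Theorem~\ref{thm: semi-mg decomp blpp}), one obtains, for $\mathbf x = 0$, that $X^1_1 = \gamma^1_1$ and $X^k_k = W(\gamma^k_k, X^{k-1}_{k-1})_1$; that is, $(X^1_1,\dots,X^n_n) = \Phi(\gamma^1_1,\dots,\gamma^n_n)$.

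Next I would run the analogous argument for the Brownian TASEP $(H_1,\dots,H_m)$ with $\underline g = 0$ and $m = n$. By Theorem~\ref{thm: semi-mg decomp blpp}, $H_{k-1} = B_{k-1} + \alpha^{k-1}$ with $\alpha^{k-1}$ continuous, non-decreasing and $\diff\alpha^{k-1}$ supported on $\{H_{k-1} = H_k\}$; together with $H_{k-1} \ge H_k$ (a consequence of the metric composition law, Lemma~\ref{Lemma: Metric Composition}) and $H_{k-1}(0) = 0 = H_k(0)$, the same Skorokhod uniqueness gives $H_m = B_m$ and $H_{k-1} = W(B_{k-1}, H_k)_1$, with $B_{k-1}$ independent of $H_k$ since $H_k$ is a function of $B_k,\dots,B_m$ only. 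Reading this chain from the bottom line upward, $(H_m, H_{m-1},\dots,H_1) = \Phi(B_m, B_{m-1},\dots,B_1)$.

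Finally, $(\gamma^1_1,\dots,\gamma^n_n)$ and $(B_n, B_{n-1},\dots,B_1)$ are each a family of $n$ independent rate-two Brownian motions started at the origin, hence equal in law, so applying $\Phi$ yields $(X^1_1,\dots,X^n_n) \stackrel{d}{=} (H_m,\dots,H_1)$, i.e.\ $(X^n_n,\dots,X^1_1) \stackrel{d}{=} (H_1,\dots,H_m)$, which is the Brownian TASEP from the origin. The one genuinely delicate step is the autonomy claim, together with the observation that the reflection term $L^{k,-}_k$ is \emph{pinned down} by the deterministic Skorokhod map and not merely compatible with it; once this is in hand the remainder is index bookkeeping and the measurability of $\Phi$.
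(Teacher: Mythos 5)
Your proof is correct and takes essentially the same route as the paper's, which invokes the diagonal extraction, the deterministic Skorokhod lemma, and a quick induction; you have merely made explicit the autonomy of the diagonal chain (that $L^{k,+}_k \equiv 0$ and that $X^{k-1}_{k-1}$ is pushed only by level $k-2$, never by level $k$) and packaged the induction into a single deterministic map $\Phi$. Both sides are then read off as iterated Pitman transforms of i.i.d.\ Brownian inputs, exactly as the paper intends.
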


This identification allows one to use the following lemma from \cite{warren2007dyson} to obtain an explicit form of the one point marginals of homogeneous BLPP, which is the content of the following lemma.

\begin{lemma}(\cite[Proposition 8]{warren2007dyson})\label{lemma: density Warren} Fix $r>0$, $m\in \N$ and let $(H_1, H_2, \cdots, H_m)(\cdot)$ be a Brownian TASEP started from the origin. Then, the density of  $q_r$ of $(H_1(r), H_{2}(r), \cdots, H_m (r))$ on $\R^m_\ge$ against the Lebesgue measure is given by
\begin{equation*}
    q_r(x_1, \cdots, x_m) = \det \bigl\{ \Phi_r^{(i-j)}(x_{m-j+1}); 1 \leq i,j \leq m \bigr\}.
\end{equation*}
where for $\Phi^{(m)}_t$ denotes the $m$th order ($m\ge 1$) iterated integral of the rate two Gaussian density $\varphi_t$ $t>0$,
\begin{equation*}
\begin{array}{cc}
      \Phi^{(m)}_t(y) & = \displaystyle\int_{-\infty}^y \frac{(y-x)^{m-1}}{(m-1)!} \varphi_t(x)dx = \int_0^\infty \frac{z^{m-1}}{(m-1)!} \varphi_t(z-y)dz \\
     & = \displaystyle\varphi_t(y)\int_0^\infty \frac{z^{m-1}}{(m-1)!} \mathrm{e}^{-\frac{z^2}{4t}}\mathrm{e}^{\frac{yz}{2t}}dz,
\end{array}
\end{equation*}
and for $k \geq 0 $ $\Phi^{(-k)}_t$ denotes the $k$th order derivative of $\varphi_t$, $t > 0$, which is equal to
\begin{equation*}
    \Phi^{(-k)}_t(y) = (-1)^k \left(\frac{1}{4t}\right)^{\frac{k}{2}}H_k\left(\frac{y}{\sqrt{4t}}\right) \varphi_t (y)
\end{equation*}
where $(H_n)_{n\in \N}$ are the Hermite polynomials as defined in \cite{weisstein2002hermite}.
\end{lemma}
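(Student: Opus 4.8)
The plan is to use the identification of Brownian TASEP started from the origin with the diagonal of Brownian motion in the Gelfand--Tsetlin cone already recorded in Proposition \ref{prop: interlacing inhm blpp equality}, and thereby to reduce the statement to Warren's computation of one-time marginals; one then only has to recall why that computation produces the determinantal formula. First, by Proposition \ref{prop: interlacing inhm blpp equality} (with $n=m$), the vector $(H_1(r),\dots,H_m(r))$ has the same law as $(X^m_m(r),\dots,X^1_1(r))$, the time-$r$ value of the diagonal of Brownian motion $\mathbf X$ in the cone $\mathbf K$ started from the degenerate configuration $\mathbf 0$ --- the latter interpreted as the entrance law obtained as the weak limit as the starting point tends to $\mathbf 0$ from the interior of $\mathbf K$. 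Hence it suffices to compute the density of this diagonal marginal, which is exactly \cite[Proposition~8]{warren2007dyson}; the remaining steps sketch its proof.

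Second, the substance lies in the intertwining structure underlying the construction of $\mathbf X$. Let $Q^{(k)}_t$ denote the Karlin--McGregor (non-colliding) transition semigroup of $k$ independent rate-two Brownian motions on the Weyl chamber $\R^k_<$, and let $\Lambda_k$ be the Markov kernel from $\R^k_<$ to $\R^{k-1}_<$ carried by the interlacing set. The intertwining relations between $Q^{(k)}$ and $Q^{(k-1)}$ through $\Lambda_k$, combined with the Rogers--Pitman criterion for Markov functions, give: (i) that $\mathbf X$ is consistently defined and that its top row $X^m$ is an $m$-level Dyson Brownian motion; (ii) an explicit formula for the density of $\mathbf X(r)$ on $\mathbf K$ started from $\mathbf 0$, as the Karlin--McGregor density of the top row, suitably $h$-transformed, composed with the interlacing kernels $\Lambda_k$; and (iii) that the relevant coordinate subprocesses are themselves Markov with explicit kernels. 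One then obtains the density of the diagonal at time $r$ by integrating the density of $\mathbf X(r)$ over all off-diagonal coordinates of the cone.

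Third, this iterated integration collapses to the claimed $m\times m$ determinant via the Lindstr\"om--Gessel--Viennot lemma (equivalently, repeated Cauchy--Binet). Integrating over an interlacing variable confined to an interval $[x^{k+1}_i,x^{k+1}_{i+1}]$ acts on the Gaussian-type kernels appearing in the Karlin--McGregor determinant as a primitive, raising the order of $\Phi$ by one, which is how the iterated integrals $\Phi^{(m)}_r$ are generated; the Vandermonde antisymmetrisation inherent in the non-colliding structure, together with the degenerate start from $\mathbf 0$, generates derivatives, i.e.\ the negative orders $\Phi^{(-k)}_r$. Organising the resulting signed multiple sum by Lindstr\"om--Gessel--Viennot produces exactly $\det\{\Phi^{(i-j)}_r(x_{m-j+1})\}_{1\le i,j\le m}$, and the closed forms for $\Phi^{(m)}_r$ and $\Phi^{(-k)}_r$ recorded in the statement are then direct computations (an iterated integral of, respectively the $k$-th derivative of, the rate-two Gaussian density $\varphi_r$, the latter producing a Hermite polynomial times $\varphi_r$).

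The main obstacle is the second step: setting up the intertwinings and verifying the Markov-function criterion rigorously, and in particular making precise sense of and controlling the entrance law from the degenerate point $\mathbf 0$, since the relevant densities blow up as $t\to 0$. All of this is carried out in \cite{warren2007dyson}, so that, granted Proposition \ref{prop: interlacing inhm blpp equality}, the lemma is \cite[Proposition~8]{warren2007dyson}. The third step is essentially bookkeeping, but the order-tracking through the iterated integrations must be carried out carefully so as to land on precisely the index pattern $i-j$.
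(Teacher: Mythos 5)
Your proposal matches the paper's approach: the paper likewise establishes the identification of Brownian TASEP (from the origin) with the diagonal of the Gelfand--Tsetlin/Warren process via Proposition~\ref{prop: interlacing inhm blpp equality}, and then simply cites \cite[Proposition~8]{warren2007dyson} for the determinantal density. Your additional sketch of Warren's own argument (intertwinings, entrance law, Lindstr\"om--Gessel--Viennot bookkeeping) is a reasonable synopsis of the cited proof, so nothing is missing.
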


\begin{remark}
    Observe that $\Phi^{(m)}_t$, $m\in \Z, t>0$ are real analytic. 
\end{remark}

The following lemma finally allows one to obtain an explicit form of transition kernel of Brownian TASEP by a conditioning argument, that is, the essential uniqueness of regular conditional distributions and the metric composition law enjoyed by LPP.

\begin{lemma}\label{lemma: Warren Markov density}
Fix $r>0$, Fix $m\ge 1$, a sequence $\underline{b} = (b_\ell)_{\ell =1}^m \in\R^m_{\ge}$, let $(H_1, H_2, \cdots, H_m)(\cdot)$ be a Brownian TASEP started from initial data $\underline{b}$ and  let $(G_1, G_2, \cdots, G_m)(\cdot)$ be a Brownian TASEP started from the origin. Then, we have that $\mathrm{Law}(H_\ell)_{\ell =1}^m(r)$ is a version of the regular conditional distribution\\
\[\mathrm{Law}\left((G_\ell)_{\ell =1}^m(r + 1)\right. \;\mathrm{conditioned\quad on }\;\left.(G_\ell)_{\ell =1}^m(1) = (b_\ell)_{\ell =1}^m\right)\] 
and in particular 
\begin{equation*}
     \PP((H_1, H_2, \cdots, H_m)(r)\in A)  = \displaystyle\int_{A} q_r(x_1, \cdots, x_m; b_1, \cdots, b_m)\lambda(\diff x_1\cdots \diff x_m), \quad A\in \mathcal{B}(\R^m_\ge)
\end{equation*}
for all $r>0$, where $q_r$ is as in Lemma \ref{lemma: density Warren}.
\end{lemma}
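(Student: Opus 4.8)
The plan is to deduce the statement from Proposition \ref{prop: generalised Markov} (the Markov property of Brownian TASEP), Proposition \ref{prop: interlacing inhm blpp equality} (which identifies the TASEP started from the origin with the diagonal of the Warren process), Lemma \ref{lemma: density Warren} (the explicit one-time density of the origin-started TASEP), and the metric composition law, Lemma \ref{Lemma: Metric Composition}. The first and main point is that the transition mechanism of the Markov process $(H_1,\ldots,H_m)$ does not depend on the initial data $\underline g$, a fact already flagged in Remark \ref{rmk: markov generator}; indeed, writing the TASEP started from data $\underline b$ as $\max_{k\le\ell\le m}(b_\ell + B[(0,\ell)\to(\cdot,k)])$ and applying the metric composition law to split each last passage value at the horizontal coordinate $1$, one obtains
\[
b_\ell + B[(0,\ell)\to(\cdot+1,k)] = \max_{\ell\le j\le k}\bigl(b_\ell + B[(0,\ell)\to(1,j)] + B[(1,j)\to(\cdot+1,k)]\bigr),
\]
so that, taking the maximum over $\ell$ and setting $c_j := \max_{1\le\ell\le j}(b_\ell + B[(0,\ell)\to(1,j)])$, the process $(H_k(\cdot))$ on $[0,\infty)$ is expressed as a fixed measurable functional of the increments $(B[(1,j)\to(1+\cdot,k)])$ of the environment past coordinate $1$ and of the vector $\underline c = (c_1,\ldots,c_m)$. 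These increments are independent of $\sigma$-algebra generated by the environment on $[0,1]$, hence in particular of $\underline c$. This exhibits a single Markov kernel $p_r((H_1,\ldots,H_m)(0);\,\cdot\,)$, shared by all TASEPs regardless of starting configuration, which I would record via a monotone class / disintegration argument exactly as in the proof of Proposition \ref{prop: generalised Markov}.

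Next I would apply this to the origin-started TASEP $(G_1,\ldots,G_m)$: by the Markov property (Proposition \ref{prop: generalised Markov}) and the essential uniqueness of regular conditional distributions, the law of $(G_\ell)_{\ell=1}^m(1+r)$ given $(G_\ell)_{\ell=1}^m(1)=(b_\ell)_{\ell=1}^m$ equals $p_r((b_\ell)_{\ell=1}^m;\,\cdot\,)$ for $q_1$-almost every $\underline b\in\R^m_\ge$, where $q_1$ is the density from Lemma \ref{lemma: density Warren}. Since that same kernel $p_r$, evaluated at the deterministic starting point $(b_\ell)_{\ell=1}^m$, is by the first paragraph exactly the law of $(H_\ell)_{\ell=1}^m(r)$ for the TASEP started from $\underline b$, the two laws coincide. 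One mild technical point: the displayed identity of regular conditional distributions only holds a priori for $q_1$-a.e. $\underline b$, whereas the lemma asserts it for every $\underline b\in\R^m_\ge$; to upgrade it I would use the real-analyticity of $\Phi^{(m)}_t$ noted in the remark after Lemma \ref{lemma: density Warren}, together with continuity of both sides in $\underline b$ (the functional in $\underline c$ is continuous, and the $c_j$ depend continuously on $\underline b$), so that an identity valid on a dense set extends to all of $\R^m_\ge$.

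Finally, to get the displayed integral formula for $\PP((H_1,\ldots,H_m)(r)\in A)$ with the claimed density $q_r(x_1,\ldots,x_m;b_1,\ldots,b_m)$, I would simply define $q_r(\,\cdot\,;\underline b)$ to be the density of the kernel $p_r(\underline b;\,\cdot\,)$ with respect to Lebesgue measure $\lambda$ on $\R^m_\ge$; its existence and absolute continuity follow because, by the identification of regular conditional distributions above and Lemma \ref{lemma: density Warren}, $p_r(\underline b;\,\cdot\,)$ is the conditional law of an absolutely continuous vector, so $q_r(x;\underline b) = q_{1+r}^{\mathrm{joint}}(b,x)/q_1(b)$ where $q_{1+r}^{\mathrm{joint}}$ is the joint density of $((G_\ell)(1),(G_\ell)(1+r))$, itself expressible through the Warren process transition densities of \cite{warren2007dyson}. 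The main obstacle, and the step deserving the most care, is the first paragraph: pinning down that the functional representation of $(H_k)$ in terms of $\underline c$ and the forward environment increments is genuinely initial-data-independent and measurable, and that the resulting kernel is the \emph{same} object that appears when one conditions the origin-started process — i.e. correctly matching the ``max over last passage values up to coordinate $1$'' with the generic starting configuration. Everything after that is bookkeeping with disintegration of measures and the analyticity-based density extension.
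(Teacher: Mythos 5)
Your proposal is correct and takes essentially the same route as the paper: split the last passage values at horizontal coordinate $1$ via metric composition, exhibit a single initial-data-free Markov kernel from the independence of Brownian increments past $1$, identify that kernel with the regular conditional distribution of the origin-started process via essential uniqueness, and upgrade from $q_1$-a.e.\ $\underline b$ to all $\underline b$ by continuity of $q_r$ in its arguments together with continuity of LPP in the initial data (the paper leans on Proposition~\ref{prop: support density warren} to assert that the $q_1$-null set is Lebesgue-null before densifying). One note on your displayed decomposition: since paths are non-increasing in the index, the intermediate index at coordinate $1$ should run over $k\le j\le\ell$, not $\ell\le j\le k$, and correspondingly $c_j$ should be $\max_{j\le\ell\le m}\bigl(b_\ell + B[(0,\ell)\to(1,j)]\bigr)$ rather than a maximum over $1\le\ell\le j$; these are just index typos that do not affect the argument.
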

\begin{proof}
    Observe that one can express $H_k(y) = \max_{m-k+1\le \ell \le m}(b_{k} + B[(0,\ell)\to (y, k)])$, for $y\in [0,\infty)$, $1\le k \le m$, where $B[(\cdot, \cdot)\to (\cdot, \cdot)]$ denotes Brownian LPP. By translation invariance we have that $H_k(y) \stackrel{d}{=}\max_{m-k+1\le \ell \le m}(b_{\ell} + B[(1,\ell)\to (y+1, k)])$, $y\ge 0$. If we replace $b_{\ell}$ with $B[(0, 0)\to (1, m-\ell+1)]$, by metric composition, we obtain that \[
    \max_{m-k+1\le \ell \le m}(B[(0, 0)\to (1, \ell )] + B[(1,\ell)\to (y+1, k)]) = B[(0, 0)\to (y+1, k)] \stackrel{d}{=}G(y+1).
    \]
    Thus, by independence of increments of Brownian motion, we see that for any $r>0$ and choice of $(b_\ell)_{1\le \ell\le m}$, the law of $(H_\ell)_{1}^m(r)$ is a version of the regular conditional distribution of $\mathrm{Law}\left((G_\ell)_{\ell =1}^m(r + 1)\,\text{conditioned}\, (G_\ell)_{\ell =1}^m(1)\right)$.
    
    From Lemma \ref{lemma: density Warren}, we have that $(G_\ell)_{\ell =1}^m$ is a Markov process with transition density $q_r$ of $(H_k)_{k=1}^{m}(r) $ on $R^m_\ge \times \R^m_\ge$. Hence, we have by uniqueness of regular conditional distributions on Polish spaces, see \cite{kallenberg1997foundations}, that for almost all with respect to the law of $(G_k)_{k=1}^{m}(1)$ $\stackrel{d}{=}(b_\ell)_{\ell = 1}^m\in \R^m_\ge $
    \begin{equation*}\label{eq: markov reg cond}
        \PP((H_\ell)_{\ell=1}^m(r)\in A) =  \displaystyle\int_A q_r(x_1, \cdots, x_m; b_1, \cdots, b_m)\lambda(\diff x_1\cdots \diff x_m).
    \end{equation*}
    for all Borel $A\subseteq \R^m_\ge$. We now claim that this equality in distribution holds true for all $(b_\ell)_{\ell = 1}^m\in \R^m_\ge$. Indeed, observe by Proposition \ref{prop: support density warren} that the law of $(G_\ell)_{\ell = 1}^m$ is mutually absolutely continuous with respect to the Lebesgue measure and so the above equality holds for almost all $(b_\ell)_{\ell = 1}^m\in \R^m_\ge$ with respect to the Lebesgue measure. Now, fix arbitrary $(b_\ell)_{\ell = 1}^m\in \R^m_\ge$ and by density obtain a sequence $((b^n_\ell)_{\ell = 1}^m)_{n\in\N}\subseteq \R^m_\ge, n\in \N$ converging to $(b_\ell)_{\ell = 1}^m\in \R^m_\ge$ such that \eqref{eq: markov reg cond} holds for all $n\in \N$. To show \eqref{eq: markov reg cond} holds for the limit, simply observe that the Markov density $q_r$ is continuous in all of its arguments, use dominated convergence and the convergence in distribution 
    \begin{equation*}
        \max_{m-k+1\le \ell \le m}(b^n_{k} + B[(0,\ell)\to (r, k)]) \stackrel{d}{\longrightarrow}  \max_{m-k+1\le \ell \le m}(b_{k} + B[(0,\ell)\to (r, k)]),\quad n\to \infty
    \end{equation*}
    which holds by continuity; conclude noting that limits in distribution on Polish spaces are unique.
\end{proof}

\begin{remark}
    The same argument shows that $\mathrm{Law}(H_\ell)_{\ell =1}^m(\cdot)$ is a version of the regular conditional distribution $\mathrm{Law}\left((G_\ell)_{\ell =1}^m(\cdot + 1)\, \mathrm{conditioned\, on }\, (G_\ell)_{\ell =1}^m(1)\right)$ for all understood as random continuous functions on $[0,\infty)$. Particularly, as a Markov process it has the Markov transition densities $q_r, r>0$ as in Lemma \ref{lemma: density Warren}. Alternatively, we could have used the semi-martingale decomposition of the above ensemble in terms of independent Brownian motions and local times and use the argument in \cite[Proposition 8]{warren2007dyson}. For an illustration, see Figure \ref{fig: Brownian LPP conditioned}.
\end{remark}

\begin{figure}
    \centering
    \includegraphics[width=0.6\linewidth]{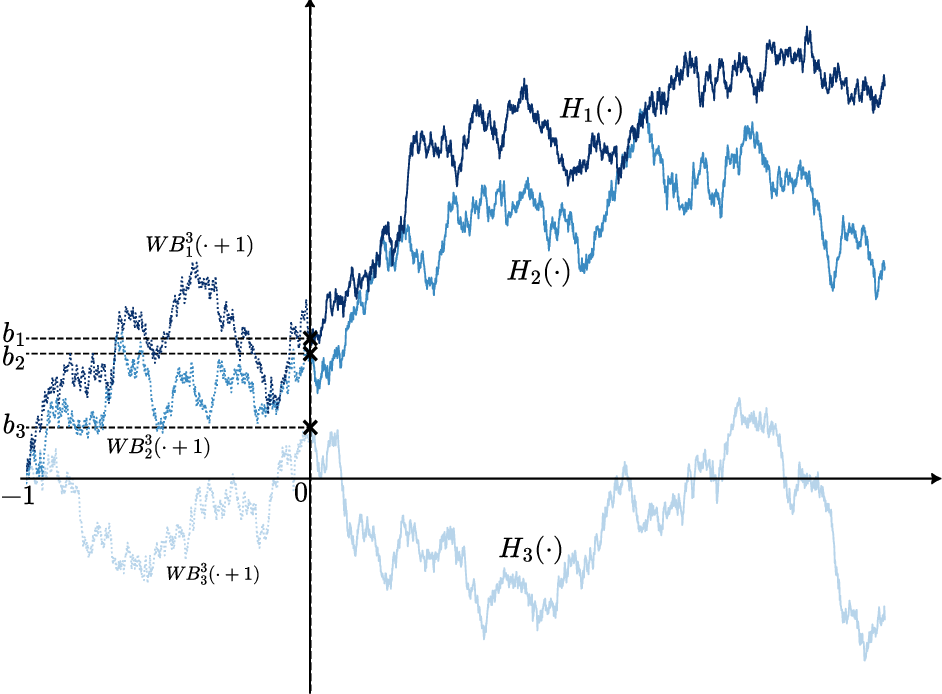}
    \caption{Here, the contents of Lemma \ref{lemma: Warren Markov density} with $m=3$ are schematically depicted. It states that the law of the inhomogeneous Brownian LPP on the positive reals $(H_\ell)_{\ell =1}^3(\cdot)$ is a version of the regular conditional distribution of the Brownian melon $WB^3(\cdot + 1)$ conditioned on $WB^3(1) = (b_\ell)_{\ell =1}^3$. Thus, if one `traces back' the inhomogeneous Brownian LPP by one unit to the left one recovers the Brownian $3$-melon $WB^3(\cdot + 1)$.}
    \label{fig: Brownian LPP conditioned}
\end{figure}

Thus, from the above propositions, we are able to compute the density of the non-homogeneous ensemble in the following lemma.

\begin{lemma}
    Fix $r>0$, $m\ge 1$, a sequence $\underline{b}= (b_\ell)_{\ell =1}^m \in\R^m_{\ge}$ and let $(H_1, H_2, \cdots, H_m)(\cdot)$ be a Brownian TASEP started from initial data $\underline{b}$. Then, the density  $q_r(x_1, \cdots, x_m)$ of $(H_k)_{k=1}^{m}(r) $ on $\R^m_\ge $ against the Lebesgue measure is given by
\begin{equation*}
    q_r(x_1, \cdots, x_m; b_1, \cdots, b_m) = \det \bigl\{ \Phi_r^{(j-i)}(x_{m-i+1}-b_{m-j+1}); 1 \leq i,j \leq m \bigr\},\, x\in \R^m_>
\end{equation*}
with $\Phi^{(m)}_r$, $\Phi^{(-m)}_r$, $m\in \N, r>0$ as in Lemma \ref{lemma: density Warren}.
\end{lemma}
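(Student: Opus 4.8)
The plan is to identify the claimed density with the transition density of the Markov process $(G_\ell)_{\ell=1}^m$ and then compute that transition density. By Lemma \ref{lemma: Warren Markov density} and the remark following it, the law of $(H_\ell)_{\ell=1}^m(r)$ started from $\underline b$ is a version of the regular conditional law $\mathrm{Law}\big((G_\ell)_{\ell=1}^m(r+1) \mid (G_\ell)_{\ell=1}^m(1) = \underline b\big)$; since $(G_\ell)_{\ell=1}^m$ is a time-homogeneous continuous Markov process (Proposition \ref{prop: generalised Markov}, Remark \ref{rmk: markov generator}), this conditional law is exactly its transition density $p_r(\underline b,\cdot)$. So the task reduces to showing $p_r(\underline b,\underline x) = \det\{\Phi_r^{(j-i)}(x_{m-i+1}-b_{m-j+1})\}_{1\le i,j\le m}$, and a simultaneous reversal of the row and column indices rewrites the right-hand side as the more transparent $\widetilde q_r(\underline x;\underline b) := \det\{\Phi_r^{(p-q)}(x_p - b_q)\}_{1\le p,q\le m}$.

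The main route is to re-run the computation behind \cite[Proposition 8]{warren2007dyson} with general initial data, as flagged in the remark after Lemma \ref{lemma: Warren Markov density}. By the argument of Proposition \ref{prop: interlacing inhm blpp equality} (whose use of the Skorokhod lemma and Theorem \ref{thm: semi-mg decomp blpp} is insensitive to the starting point), the Brownian TASEP started from $\underline b$ is the reversed diagonal of the Warren system of Section \ref{sec: BM gelfand-tsetslin cone} started from an interlacing configuration with diagonal $\underline b$. The generator of that system --- the reflecting-wall structure carried by the local times $L^{k,\pm}_i$ --- does not depend on the initial configuration, so its transition density is the fundamental solution of the same Kolmogorov equations with the same reflecting boundary conditions, only with the Gaussian weights $\varphi_r(\cdot)$ replaced by their translates $\varphi_r(\cdot-b)$. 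Carrying Warren's determinantal ansatz, built from the iterated integrals and derivatives $\Phi_r^{(k)}$ of the Gaussian, through with these shifts propagates them into the entries $\Phi_r^{(j-i)}(x_{m-i+1}-b_{m-j+1})$, which is the asserted formula.

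As an independent consistency check one can argue at the level of the kernel semigroup. The scalar composition identity
\[
\int_{\R}\Phi_s^{(a)}(u-w)\,\Phi_t^{(b)}(w-v)\,\diff w \;=\; \Phi_{s+t}^{(a+b)}(u-v),\qquad a,b\in\Z,\ s,t>0,
\]
holds --- for $a,b\ge 0$ by Fubini together with $\varphi_s*\varphi_t=\varphi_{s+t}$ and a Beta-function evaluation, and for negative orders by differentiating under the integral using $\tfrac{\diff}{\diff y}\Phi^{(k)}_t=\Phi^{(k-1)}_t$ and the rapid decay of the Gaussian and its derivatives (real-analyticity of the $\Phi^{(k)}_t$, noted after Lemma \ref{lemma: density Warren}, handles the mixed-sign cases). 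Feeding this into a Cauchy--Binet-type expansion of the determinant --- the genuinely determinantal part of the argument --- yields the Chapman--Kolmogorov relation for $\widetilde q_r$; moreover $\widetilde q_r(\cdot;\underline 0)$ coincides, after the same index reversal and transposing the determinant, with the from-the-origin density $q_r$ of Lemma \ref{lemma: density Warren}, so disintegration against $q_1$ reproduces the correct time-$(1+r)$ marginals, and $\Phi_r^{(0)}=\varphi_r\to\delta_0$ as $r\downarrow 0$ gives $\widetilde q_r(\cdot;\underline b)\,\diff\underline x\Rightarrow\delta_{\underline b}$. With the Feller property of $(G_\ell)_{\ell=1}^m$ this pins down $p_r=\widetilde q_r$, and relabelling indices gives the stated formula.

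The substantive step --- and the expected main obstacle --- is the determinantal solution of the Kolmogorov equations, concretely the verification that the shifted ansatz still satisfies the reflecting boundary conditions at the walls $\{x_k=x_{k+1}\}$; this is exactly where the interplay between the positive-order kernels $\Phi^{(m)}_r$ (iterated integrals) and the negative-order kernels $\Phi^{(-k)}_r$ (Hermite-weighted Gaussians) is genuinely used rather than being bookkeeping. The remaining points are routine: the index reversals $i\mapsto m-i+1$, justifying Fubini once the derivatives of Gaussians enter, and checking that the integrals --- most conveniently written over all of $\R^m$ for the determinant manipulations --- are in fact supported on the chamber $\R^m_\ge$, as they must be since $(H_\ell)_{\ell=1}^m(r)$ takes values there.
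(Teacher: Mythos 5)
Your proposal correctly identifies the paper's (implicit) route: reduce via Lemma \ref{lemma: Warren Markov density} to the transition density of the Warren process and then read off the determinantal formula from \cite[Proposition 8]{warren2007dyson}, with the index reversal $p=m-i+1$, $q=m-j+1$ (and transposition of the determinant) reconciling the paper's notation with the transparent form $\det\{\Phi_r^{(p-q)}(x_p-b_q)\}_{p,q}$. The paper leaves the lemma as immediate from the cited propositions, whereas you additionally supply a Chapman--Kolmogorov consistency check via the scalar composition identity and an Andr\'eief-type expansion, plus a sketch of re-running Warren's derivation with shifted Gaussian weights; these are sound (keeping in mind the $m!$ bookkeeping that arises because the integrand is symmetric in the intermediate variable, so the integral over $\R^m$ is $m!$ times that over the chamber), but are not strictly needed once one notes that Warren's result already furnishes the transition density from an arbitrary interlacing starting configuration, not merely the entrance law from the origin recorded in Lemma \ref{lemma: density Warren}.
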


We also record the following proposition which gives mutual absolute continuity of the Lebesgue densities $q_r$, being real analytic in several variables.

\begin{proposition}\label{prop: support density warren}
    Fix $r>0$, $m\ge 1$, a sequence $\underline{b}= (b_\ell)_{\ell =1}^m \in\R^m_{\ge}$ and let $(H_1, H_2, \cdots, H_m)(\cdot)$ be a Brownian TASEP started from initial data $\underline{b}$. Then, the support is
\begin{equation*}
    \supp (\nu_r) = \overline{\{\underline{x}\in \R^m_>: \nu_r(\mathrm{B}_{\underline{x}, \epsilon})>0\,, \forall \epsilon > 0 \}} = \R^m_\ge\,,
\end{equation*}
where for $x\in \R^m_>$ and $\epsilon > 0$, $B_{x, \epsilon}$ denotes the open ball of radius $\epsilon$ centred at $x$. 

Furthermore, the law of $(H_\ell)_{\ell = 1}^m$ is mutually absolutely continuous with respect to (equivalent to) the Lebesgue measure on $\R^m_\ge$.
\end{proposition}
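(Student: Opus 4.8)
The plan is to read off both assertions directly from the explicit determinantal Lebesgue density $q_r(\,\cdot\,;\underline{b})$ of $\nu_r$ on $\R^m_\ge$ supplied by the preceding lemma, using crucially that the functions $\Phi^{(k)}_r$, $k\in\Z$, are real analytic (as recorded in the Remark above).

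First I would argue that $q_r(\,\cdot\,;\underline{b})$ is a real analytic function on all of $\R^m$: each matrix entry $\Phi_r^{(j-i)}(x_{m-i+1}-b_{m-j+1})$ is the composition of the real analytic function $\Phi_r^{(j-i)}$ with an affine map of the coordinates, hence real analytic on $\R^m$, and the determinant is a polynomial in these entries, so real analyticity is preserved. In particular $q_r$ is real analytic on the open, convex (hence connected) cone $\R^m_>$. Since $q_r$ is a probability density on $\R^m_\ge$ it integrates to $1$, so it is not identically zero on $\R^m_>$; therefore, by the fact that a non-trivial real analytic function on a connected open set has zero set of Lebesgue measure zero, the set $Z:=\{x\in\R^m_>:q_r(x;\underline{b})=0\}$ is Lebesgue-null. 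Equivalently, $q_r>0$ Lebesgue-a.e.\ on $\R^m_>$, and of course $\lambda(\R^m_\ge\setminus\R^m_>)=0$.

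From here the two claims follow quickly. For mutual absolute continuity: $\nu_r\ll\lambda|_{\R^m_\ge}$ holds by the very definition of the density $q_r$, while for any Borel $A\subseteq\R^m_\ge$ with $\lambda(A)>0$ one has $\lambda(A\setminus Z)>0$ and hence $\nu_r(A)\ge\int_{A\setminus Z}q_r\,\diff\lambda>0$, giving $\lambda|_{\R^m_\ge}\ll\nu_r$. For the support statement: given $x\in\R^m_>$ and $\epsilon>0$, the set $B_{x,\epsilon}\cap\R^m_>$ is open and nonempty, hence of positive Lebesgue measure, so $\lambda\bigl((B_{x,\epsilon}\cap\R^m_>)\setminus Z\bigr)>0$ and therefore $\nu_r(B_{x,\epsilon})\ge\int_{(B_{x,\epsilon}\cap\R^m_>)\setminus Z}q_r\,\diff\lambda>0$. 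Thus every point of $\R^m_>$ lies in $\{x\in\R^m_>:\nu_r(B_{x,\epsilon})>0\ \forall\epsilon>0\}$; taking closures yields $\R^m_\ge=\overline{\R^m_>}\subseteq\supp(\nu_r)$, and the reverse inclusion is clear since $\nu_r$ is carried by $\R^m_\ge$, so $\supp(\nu_r)=\R^m_\ge$.

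I expect no serious obstacle here: the only points requiring a little care are the verification that $q_r$ is genuinely real analytic (so that the measure-zero zero-set argument applies) and that it is not the zero function — the former resting on the cited analyticity of the $\Phi^{(k)}_r$ together with stability of real analyticity under composition with affine maps and under finite sums and products, and the latter on the normalisation of $q_r$ as a probability density coming from the preceding lemma.
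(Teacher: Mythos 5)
Your proof is correct, and it takes a genuinely different route from the paper's for the support assertion. The paper establishes $\supp(\nu_r)=\R^m_\ge$ by a purely probabilistic argument: it writes $H_k(y)=B[(0,m)\to(y,m-k+1)]$, notes that last passage values are continuous on $C^m_{*,*}([0,r])$ in the uniform topology, and uses the standard fact that a Brownian motion started from the origin lies uniformly $\delta$-close to any prescribed continuous path with positive probability. Feeding in the linear ensemble $L(n,x)=a_{m-n+1}x$ with $(a_\ell)\in\R^m_>$ gives positive mass near $(ra_1,\dots,ra_m)$ for any such $(a_\ell)$, whence the support is all of $\R^m_\ge$. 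Only afterwards does the paper observe that $\overline{\{q_r>0\}}=\supp(\nu_r)=\R^m_\ge$ forces $q_r\not\equiv 0$, and then invokes real analyticity to conclude the zero set of $q_r$ is Lebesgue-null and hence that $\nu_r$ and Lebesgue measure on $\R^m_\ge$ are equivalent. You instead obtain $q_r\not\equiv 0$ directly from $\int_{\R^m_\ge}q_r\,\diff\lambda=1$, invoke analyticity at that point, and then read off both the support and the mutual absolute continuity from ``$q_r>0$ Lebesgue-a.e.\ on $\R^m_>$.'' Your route is shorter and entirely self-contained given the density formula; the paper's probabilistic argument for the support has the minor advantage of not depending on the determinantal density at all (it uses only the LPP representation and continuity of last passage values), which matters because the paper's density formula for general $\underline{b}$ itself relies on the $\underline{b}=0$ case of this proposition via Lemma \ref{lemma: Warren Markov density}. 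As long as you are invoking the preceding lemma's density formula only after it has been established for the relevant $\underline{b}$, your argument is sound.
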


\begin{proof}
 Observe that one can express $H_k(y) = B[(0,m)\to (y, m-k+1)]$, for $y\in [0,\infty)$, $1\le k \le m$, where $B[(\cdot, \cdot)\to (\cdot, \cdot)]$ denotes Brownian LPP. Furthermore, fix $r>0$ and observe that LPP is continuous in the product uniform topology on $C^n_{*,*}([0,r])$. It is also a well known fact that with positive probability a Brownian motion starting from zero can approximate any continuous function starting from the origin on $[0,r]$. Thus, by independence, we have for all $\delta > 0$ that
 \begin{equation*}
     \PP(\norm{B_1(t)-a_1t}_{\infty,[0,r]}<\delta,\cdots, \norm{B_m(t)-a_mt}_{\infty,[0,r]} <\delta) >0
 \end{equation*}
 where $(a_\ell)_{\ell = 1}^m\in \R^m_>$. Now, observe that with the ensemble $L: \llbracket 1, m \rrbracket \times [0,\infty):(n,x)\mapsto L(n, x) = a_{m-n+1}x$, the last passage percolation is simply $L[(0,m)\to (r, k)] = r a_k \in \R^m_>$, by the ordering of the coefficients. The above combined easily imply that 
\begin{equation*}
    \R^m_{<}\subseteq \supp (\nu_r) = \overline{\{\underline{x}\in \R^m_>: \nu_r(\mathrm{B}_{\underline{x}, \epsilon})>0 \forall \epsilon > 0 \}} = \R^m_\ge.
\end{equation*}
 Now, $ \overline{\{q>0\}} = \supp (\nu_r) = \R^m_\ge$ and we have that $q\not\equiv 0$ on $\R^m_\ge$; since the density $q_r(x_1, \cdots, x_m)$ is real analytic (in several variables), its zero set $\{q = 0\}$ must have zero Lebesgue measure on $\R^m_\ge$. We thus conclude that the law of $(H_\ell)_{\ell = 1}^m$ is mutually absolutely continuous (that is, equivalent) with respect to the Lebesgue measure on $\R^m_\ge$, as required.
\end{proof}

We thus have the following theorem which gives an expression for the Radon-Nikodym  derivative of the inhomogeneous ensemble at a point and the homogeneous one at zero.
\begin{theorem}\label{thm: Radon-Nikodym  density inhom BLPP}
Fix $r>0$, $m\ge 1$, a sequence $\underline{b}= (b_\ell)_{\ell =1}^m \in\R^m_{\ge}$  let $(H_1, H_2, \cdots, H_m)(\cdot)$ be a Brownian TASEP started from initial data $\underline{b}$ and  let $(G_1, G_2, \cdots, G_m)(\cdot)$ be a Brownian TASEP started from the origin. Then, the Radon-Nikodym  derivative between the distributions of two processes at $r>0$ on $\R^m_\ge$ is almost surely given by
\begin{equation*}
      \frac{\diff \mathrm{Law}(H_\ell)_{\ell =1}^m(r)}{\diff\mathrm{Law}(G_\ell)_{\ell =1}^m(r)}(x_1, \cdots, x_m)  = \frac{\det \bigl\{ \Phi_r^{(i-j)}(x_{m-j+1}-b_{m-i+1}); 1 \leq i,j \leq m \bigr\}}{\det \bigl\{ \Phi_r^{(i-j)}(x_{m-j+1}); 1 \leq i,j \leq m \bigr\}}
\end{equation*}
\begin{equation}\label{eq: RN ratio}
     = \left(\displaystyle\prod_{j=1}^n\mathrm{e}^{-\frac{b_j^2}{4r}} \right)\frac{\det \bigl\{ \mathrm{e}^{\frac{x_{m-j+1}b_{m-i+1}}{2r}}{F_r^{(i-j)}}(x_{m-j+1}-b_{m-i+1}); 1 \leq i,j \leq m \bigr\}}{\det \bigl\{ F_r^{(i-j)}(x_{m-j+1}); 1 \leq i,j \leq m \bigr\}}
    ,\, x\in \R^m_>
\end{equation}
with $H_m, \Phi^{(m)}_r$, $\Phi^{(-m)}_r$, $m\in \N, r>0$ as in Lemma \ref{lemma: density Warren} and 
\begin{equation*}
    F_r^{k}(y) = \begin{cases}
        \int_0^\infty \frac{z^{k-1}}{(k-1)!} \mathrm{e}^{-\frac{z^2}{4r}}\mathrm{e}^{\frac{yz}{2r}}\diff z, \quad & k\ge 1\\
        (-1)^k \left(\frac{1}{4r}\right)^{-\frac{k}{2}}H_{-k}\left(\frac{y}{\sqrt{4r}}\right), \quad & k \le 0.
    \end{cases}
\end{equation*}
\end{theorem}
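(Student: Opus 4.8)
The plan is to combine the two density formulas already established and perform an elementary algebraic simplification, keeping careful track of Gaussian factors. By Lemma~\ref{lemma: Warren Markov density} (and the remark following it), the law of the Brownian TASEP $(G_\ell)_{\ell=1}^m$ started from the origin, evaluated at time $r$, has Lebesgue density $q_r(x_1,\dots,x_m)=\det\{\Phi_r^{(i-j)}(x_{m-j+1})\}_{1\le i,j\le m}$, while by the preceding lemma the law of the inhomogeneous Brownian TASEP started from $\underline b$, evaluated at time $r$, has density $q_r(x_1,\dots,x_m;b_1,\dots,b_m)=\det\{\Phi_r^{(j-i)}(x_{m-i+1}-b_{m-j+1})\}_{1\le i,j\le m}$; after relabelling indices (transposing the matrix, which does not change the determinant) this is $\det\{\Phi_r^{(i-j)}(x_{m-j+1}-b_{m-i+1})\}_{1\le i,j\le m}$. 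Since Proposition~\ref{prop: support density warren} guarantees both laws are mutually absolutely continuous with respect to Lebesgue measure on $\R^m_\ge$, the Radon--Nikodym derivative is almost surely the ratio of these two determinants, which is exactly the first displayed equality in the statement.

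For the second equality I would extract the Gaussian prefactors from each $\Phi_r^{(k)}$. First recall from Lemma~\ref{lemma: density Warren} that for $k\ge 1$ one has $\Phi_r^{(k)}(y)=\varphi_r(y)\int_0^\infty\frac{z^{k-1}}{(k-1)!}\mathrm{e}^{-z^2/4r}\mathrm{e}^{yz/2r}\,\diff z=\varphi_r(y)F_r^{k}(y)$, and for $k\le 0$ one has $\Phi_r^{(k)}(y)=(-1)^{-k}(4r)^{k/2}H_{-k}(y/\sqrt{4r})\varphi_r(y)=\varphi_r(y)F_r^{k}(y)$ with the convention given in the statement; so uniformly in $k\in\Z$ we may write $\Phi_r^{(k)}(y)=\varphi_r(y)F_r^{k}(y)$. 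Substituting $y=x_{m-j+1}$ in the denominator and $y=x_{m-j+1}-b_{m-i+1}$ in the numerator, each entry acquires a factor $\varphi_r$ of its argument. In the denominator determinant, the $(i,j)$ entry carries $\varphi_r(x_{m-j+1})$, a factor depending only on the column index $j$; by multilinearity of the determinant this pulls out as $\prod_{j=1}^m\varphi_r(x_{m-j+1})=\prod_{j=1}^m\varphi_r(x_j)$. In the numerator, the $(i,j)$ entry carries $\varphi_r(x_{m-j+1}-b_{m-i+1})$; expanding $\varphi_r(x-b)=\varphi_r(x)\varphi_r(b)\mathrm{e}^{xb/2r}$ (using the rate-two normalisation $\varphi_r(x)=(4\pi r)^{-1/2}\mathrm{e}^{-x^2/4r}$, so that $\varphi_r(x-b)/\varphi_r(x)=\varphi_r(b)\cdot(4\pi r)^{1/2}\mathrm{e}^{xb/2r}$ up to the obvious constant, which I would compute explicitly), one sees the $(i,j)$ entry of the numerator equals $\varphi_r(x_{m-j+1})\cdot\mathrm{e}^{-b_{m-i+1}^2/4r}\cdot\mathrm{e}^{x_{m-j+1}b_{m-i+1}/2r}F_r^{i-j}(x_{m-j+1}-b_{m-i+1})$ up to the common normalising constant. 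Here $\varphi_r(x_{m-j+1})$ depends only on $j$ and $\mathrm{e}^{-b_{m-i+1}^2/4r}$ depends only on $i$, so both pull out of the determinant: the column factors give $\prod_{j=1}^m\varphi_r(x_j)$, cancelling exactly against the denominator prefactor, and the row factors give $\prod_{i=1}^m\mathrm{e}^{-b_{m-i+1}^2/4r}=\prod_{j=1}^m\mathrm{e}^{-b_j^2/4r}$. What remains is precisely the claimed expression~\eqref{eq: RN ratio}.

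The only points requiring a little care are bookkeeping ones: matching the index convention (the transpose between the two density lemmas), checking that the $F_r^k$ convention for $k\le 0$ in the statement is consistent with the Hermite-polynomial form of $\Phi_r^{(k)}$ in Lemma~\ref{lemma: density Warren} including the power of $4r$ and the sign $(-1)^{-k}=(-1)^k$, and verifying the identity $\varphi_r(x-b)=\varphi_r(x)\varphi_r(b)(4\pi r)^{1/2}\mathrm{e}^{xb/2r}$ so that the extracted constants genuinely cancel between numerator and denominator. I do not anticipate any real obstacle: mutual absolute continuity is handed to us by Proposition~\ref{prop: support density warren}, the determinant formulas are handed to us by the two lemmas, and the rest is multilinearity of the determinant applied to row- and column-separable factors. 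The mild ``main obstacle'' is simply being disciplined about which index labels rows versus columns so that the row factors $\mathrm{e}^{-b_{m-i+1}^2/4r}$ and the column factors $\varphi_r(x_{m-j+1})$ are pulled out correctly and the latter cancel.
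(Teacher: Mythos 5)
Your proposal is correct and takes the same route as the paper: invoke Proposition~\ref{prop: support density warren} for mutual absolute continuity, then take the pointwise ratio of the determinantal densities from Lemma~\ref{lemma: density Warren} and the unnamed lemma preceding the theorem. The paper's proof is in fact just these two sentences and omits the algebraic extraction of the Gaussian row/column factors yielding \eqref{eq: RN ratio}, which you carry out correctly (including the transposition matching the index conventions and the identity $\varphi_r(x-b)=\varphi_r(x)\,\mathrm{e}^{-b^2/4r}\mathrm{e}^{xb/2r}$).
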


\begin{proof}
    Using Proposition \ref{prop: support density warren}, we see that the law of $(H_\ell)_{\ell =1}^m(r)$ for all choices of non-decreasing $(b_\ell)_{\ell =1}^m$ are mutually absolutely continuous with respect to the Lebesgue measure on $\R^m_>$. Hence, one can take their pointwise ratios as the Radon-Nikodym derivative of the Markov processes defined above to conclude.
\end{proof}

\section{Inhomogeneous BLPP: Radon-Nikodym derivative estimates}\label{sec: BM tasep rn estimates}

In this section, the main result of this paper providing pathwise and $L^{\infty-}$ estimates for the Radon-Nikodym derivative of inhomogeneous BLPP with respect to Brownian motion on compacts, namely Theorem \ref{thm: tasep bm abs cont}, is established. Technical integral estimates have been relegated to the Appendix (Section \ref{app: int est}). 

Recall that in Theorem \ref{thm: Radon-Nikodym  density inhom BLPP}, we obtained an explicit form of the Radon-Nikodym derivative of Brownian TASEP with respect to its homogeneous counterpart in terms of the transition densities given in Lemma \ref{lemma: density Warren}. Hence, to obtain pathwise Radon-Nikodym derivative bounds, we now estimate the transition densities of the Warren process. In \cite{warren2007dyson}, the author deduces that the distribution of the diagonal section of
${\mathbf X}(t)$ given by the SDEs (\ref{gts:sde}) has the density
\begin{equation*}
\label{entrance:gts}
\boldsymbol{\mu}^n_t({\mathbf x}) = (2\pi)^{-n/2} (2t)^{-n^2/2} \exp \left\{ -\sum_{i} (x^n_i)^2/(4t)\right\} \left\{ \displaystyle\prod_{i<j} (x^n_j-x^n_i) 
\right\},\, t > 0
\end{equation*}
with respect to the Lebesgue measure on ${\mathbf K}$. We thus obtain by taking the marginal, the following proposition. 

\begin{proposition}\label{prop: lb density 1}
    Fix $\ell>0$, $n\in \N $ and let $q_\ell (\cdot; \underline{0})$ be as in Lemma \ref{lemma: density Warren}. Then we have for all $(x^1_1, \cdots, x^n_n)\in \R^n_{\le}$
    \begin{equation*}
        q_{\ell}(x^n_n, \cdots, x^1_1; \underline{0}) =  \displaystyle\int_{\boldsymbol{K}} \boldsymbol{\mu}^n_{r}(\underline{x})\diff\, \left(x^k_j\, \mathrm{where}\, \left.\begin{cases}
       &2\le k \le n\\
       &1\le j \le k-1
   \end{cases}\right\} \right)
\end{equation*}
where 
\begin{equation*}
\boldsymbol{\mu}^n_{r}(x_n, x_{n-1}, \cdots, x_2, x_1) = (2\pi)^{-n/2} (2r)^{-n^2/2}(4\pi r)^{n/2} \displaystyle\prod_{i=1}^n \phi_r\big(x^n_i\big) \cdot\left\{ \displaystyle\displaystyle\prod_{i<j} (x^n_j-x^n_i) 
\right\}.
\end{equation*}
\end{proposition}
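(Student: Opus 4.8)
The plan is to read the identity off from Warren's description of the law of the Gelfand--Tsetlin diffusion together with the elementary fact that marginal densities are obtained by integrating out the surplus coordinates; no new analytic input is needed.

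First I would assemble the two facts already in hand. By \cite{warren2007dyson} (the formula displayed just before the proposition), the Gelfand--Tsetlin diffusion $\mathbf X(\cdot)$ started from the origin has, at any time $\ell>0$, a density $\boldsymbol\mu^n_\ell$ with respect to Lebesgue measure on $\mathbf K$, and this density depends on $\mathbf x\in\mathbf K$ only through its top row $x^n=(x^n_1,\dots,x^n_n)$. On the other hand, by Proposition~\ref{prop: interlacing inhm blpp equality} the diagonal sub-process $(X^n_n,\dots,X^1_1)(\cdot)$ has the same law on path space as a Brownian TASEP started from the origin, so by Lemma~\ref{lemma: density Warren} its time-$\ell$ marginal is precisely the vector whose density on $\R^n_\ge$ is $q_\ell(\cdot;\underline 0)$. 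Thus $q_\ell(\cdot;\underline 0)$ and $\boldsymbol\mu^n_\ell$ describe, respectively, the diagonal sub-vector of $\mathbf X(\ell)$ and the whole of $\mathbf X(\ell)$, and the proposition is exactly the assertion that the former density is the marginal of the latter.

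Next I would carry out the marginalisation. Coordinatise $\mathbf K\subset\R^{n(n+1)/2}$ by the $n$ diagonal entries $(x^1_1,\dots,x^n_n)$ (which lie in $\R^n_\le$) together with the $n(n-1)/2$ off-diagonal entries $(x^k_j)_{2\le k\le n,\,1\le j\le k-1}$; for a fixed diagonal these range over the bounded polytope cut out by the interlacing inequalities $x^{k+1}_i\le x^k_i\le x^{k+1}_{i+1}$. Since $\boldsymbol\mu^n_\ell\ge 0$ is integrable on $\mathbf K$, Tonelli's theorem identifies the density, with respect to Lebesgue measure on $\R^n_\le$, of the push-forward of the measure $\boldsymbol\mu^n_\ell\,\mathrm{d}\mathbf x$ under the projection onto the diagonal coordinates with the inner integral of $\boldsymbol\mu^n_\ell$ over the off-diagonal entries; together with the previous paragraph this yields the stated formula for $q_\ell(\cdot;\underline 0)$. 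It then remains only to rewrite $\boldsymbol\mu^n_\ell$ in the displayed product form: writing $\phi_\ell$ for the rate-two Gaussian density, one has $\prod_{i=1}^n\phi_\ell(x^n_i)=(4\pi\ell)^{-n/2}\exp\{-\sum_i(x^n_i)^2/(4\ell)\}$, so multiplying by $(4\pi\ell)^{n/2}$ reconstitutes the Gaussian factor in $\boldsymbol\mu^n_\ell$ while leaving the Vandermonde product $\prod_{i<j}(x^n_j-x^n_i)$ untouched, which is exactly the second displayed expression.

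I do not anticipate a genuine obstacle here; the only point requiring care will be the bookkeeping of the Gelfand--Tsetlin coordinates — checking that the $n(n+1)/2$ entries split as the $n$ diagonal entries $x^k_k$ plus the $n(n-1)/2$ entries $x^k_j$ with $j<k$, that the integrand $\boldsymbol\mu^n_\ell$ ignores every row below the top one (so the integration over the lower off-diagonal entries merely contributes the Lebesgue volume of a fibre of interlacing patterns), and that the resulting object is still a genuine function of the full diagonal because the polytope over which one integrates depends on it through the interlacing constraints. Everything beyond this is standard measure theory and elementary algebra.
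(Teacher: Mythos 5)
Your proposal is correct and is exactly the argument the paper intends: the paper offers no separate proof of this proposition, merely stating that it follows "by taking the marginal" of Warren's formula for the law of $\mathbf X(\ell)$ on $\mathbf K$ once one knows (Proposition~\ref{prop: interlacing inhm blpp equality} and Lemma~\ref{lemma: density Warren}) that the diagonal subvector has density $q_\ell(\cdot;\underline 0)$. Your write-up supplies precisely the bookkeeping — the identification via Proposition~\ref{prop: interlacing inhm blpp equality}, the Tonelli marginalisation over the off-diagonal coordinates, the observation that $\boldsymbol\mu^n_\ell$ depends only on the top row, and the normalisation rewriting with $\phi_\ell$ — that the paper's one-line assertion leaves implicit.
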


Now, by estimating the determinants in the definition of the densities $q_r, r>0$, see Lemma \ref{lemma: density Warren}, we arrive at the following proposition.
\begin{proposition}\label{prop: lb density 2}
     Fix $r>0$, $m\ge 1$, a sequence $(b_\ell)_{\ell =1}^m \in\R^m_{>}$ with $b_m = 0$. Then we have for $\ell > 0$ and $(x^1_1, \cdots, x^n_n)\in \R^n_{\le}$ the pointwise upper bound on $q_\ell(x_{1:m}; b_{1:m})$ as defined in Lemma \ref{lemma: density Warren}
\begin{equation*}
        q_r(x^n_n, \cdots x^1_1; b_1, \cdots, b_n)\le \displaystyle\int_{\boldsymbol{K}} \boldsymbol{\nu}^n_{r}(\underline{x})\diff\, \left(x^k_j\, \mathrm{where}\, \left.\begin{cases}
       &2\le k \le n\\
       &1\le j \le k-1
   \end{cases}\right\} \right)
\end{equation*}
where 
\begin{equation*}
\begin{array}{cc}
     \boldsymbol{\nu}^n_r(x_1^n, \cdots, x^n_n) & = \displaystyle\prod^n_{i=1}\exp\big(-b_i^2/(4r)\big) \big((4r)^{(i-n)/2}\big)^n \cdot\displaystyle\prod_{i=1}^n \phi_r\big(x^n_i\big) \cdot\displaystyle\prod_{i<j} (x^n_j-x^n_i)  \\
     &  \cdot \displaystyle\sum_{\stackrel{0\le k_i\le n-i}{1\le i \le n}}G^{\underline{k}}(x^n_1,\cdots, x^n_n)\displaystyle\prod^n_{i=1}\left(2|b_{n-i+1}|/\sqrt{4r}\right)^{n-i-k_i}{\binom{n-i}{k_i}}
\end{array}
\end{equation*}
and 
\begin{equation*}
   G^{\underline{k}}(x^n_1,\cdots, x^n_n) = \left\vert\frac{\mathrm{det}\big(H_{k_i}\big(x^n_j/\sqrt{4r}\big)\cdot \exp\big(x^n_j b_{n-i+1}/(2r)\big)_{1\le i, j\le n}}{\displaystyle\prod_{i<j} (x^n_j-x^n_i) }\right\vert
\end{equation*}
where $\underline{k} = (0\le k_i\le n-i)_{1\le i \le n}$.
\end{proposition}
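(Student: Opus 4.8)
The plan is to carry the argument behind Proposition~\ref{prop: lb density 1} --- the Gelfand--Tsetlin cone representation of the homogeneous diagonal density --- through the inhomogeneous case, dragging the boundary data $\underline b$ along, and then to dominate the shifted quantities that appear. First I would start from the explicit determinantal form of $q_r(\,\cdot\,;\underline b)$ established above (with the kernels $\Phi_r^{(\cdot)}$ of Lemma~\ref{lemma: density Warren}), written after transposing as $\det\bigl(\Phi_r^{(i-j)}(x_{n-j+1}-b_{n-i+1})\bigr)_{1\le i,j\le n}$, so that row $i$ carries the single shift $b_{n-i+1}$: the entries on and above the diagonal are constant multiples of Hermite polynomials $H_{j-i}\bigl((x_{n-j+1}-b_{n-i+1})/\sqrt{4r}\bigr)$ of orders $0,1,\dots,n-i$, while those strictly below it are iterated Gaussian integrals $F_r^{\,i-j}(x_{n-j+1}-b_{n-i+1})$. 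Applying the Gaussian shift identity $\varphi_r(y-b)=\varphi_r(y)\,\mathrm{e}^{yb/(2r)}\,\mathrm{e}^{-b^2/(4r)}$ in each entry, the factor $\varphi_r(x_{n-j+1})$ depends on the column alone and $\mathrm{e}^{-b_{n-i+1}^2/(4r)}$ on the row alone, so they pull out of the determinant as the products $\prod_i\varphi_r(x^n_i)$ and $\prod_i\mathrm{e}^{-b_i^2/(4r)}$, and what remains in position $(i,j)$ is $\mathrm{e}^{x_{n-j+1}b_{n-i+1}/(2r)}$ times the (now Gaussian-free) Hermite or iterated-integral kernel above. I would also record that $b_n=0$ together with the decreasing ordering of $\underline b$ forces $b_i\ge 0$, so $|b_{n-i+1}|=b_{n-i+1}$ throughout, and that this normalisation is the generic case by translation-covariance of the construction.

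Next, to remove the shifts from the Hermite block I would use the addition theorem $H_s(u+v)=\sum_{k=0}^s\binom sk H_k(u)(2v)^{s-k}$ with $u=x_{n-j+1}/\sqrt{4r}$ and $v=-b_{n-i+1}/\sqrt{4r}$; this writes the $(i,j)$-entry with $j\ge i$ as a finite combination of the unshifted kernels $H_k(x_{n-j+1}/\sqrt{4r})\,\mathrm{e}^{x_{n-j+1}b_{n-i+1}/(2r)}$, $0\le k\le j-i\le n-i$, with coefficients built from $\binom{j-i}{k}$ and powers of $b_{n-i+1}/\sqrt{4r}$. Expanding every row in this way and regrouping by the Hermite order chosen in each row (using multilinearity of the determinant) converts $\det$ into a sum over tuples $\underline k=(k_i)_{1\le i\le n}$ with $0\le k_i\le n-i$, the $\underline k$-term being an explicit scalar times $\det\bigl(H_{k_i}(x^n_j/\sqrt{4r})\,\mathrm{e}^{x^n_j b_{n-i+1}/(2r)}\bigr)_{i,j}$; a triangle-inequality bound on the scalars, arranged to collapse to $\prod_i\binom{n-i}{k_i}\bigl(2b_{n-i+1}/\sqrt{4r}\bigr)^{n-i-k_i}$, together with dividing and multiplying by the top-row Vandermonde $\prod_{i<j}(x^n_j-x^n_i)$, then produces exactly $\prod_{i<j}(x^n_j-x^n_i)\sum_{\underline k}G^{\underline k}(x^n)\prod_i\binom{n-i}{k_i}\bigl(2b_{n-i+1}/\sqrt{4r}\bigr)^{n-i-k_i}$.

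The genuinely delicate point --- and the step I expect to be the main obstacle --- is the strictly sub-diagonal block of iterated Gaussian integrals $F_r^{\,k}$, $k\ge 1$: unlike the Hermite entries these are not polynomials in $x$, admit no finite Hermite expansion, and if bounded crudely would destroy the Gaussian decay in $x^n$ that $\boldsymbol\nu^n_r$ must retain. I would handle them exactly as their counterparts are handled in the homogeneous Proposition~\ref{prop: lb density 1}: rather than expand them, re-interpret $q_r(\,\cdot\,;\underline b)$ as an integral over the Gelfand--Tsetlin cone $\mathbf K$ of an explicit, now $b$-shifted, density, in which the $F_r^{\,i-j}$ are nothing but the integrations of the shifted interlacing lower rows over their (truncated) ranges; dominating those $b$-shifted interlacing weights on $\mathbf K$ by the unshifted ones times the binomial corrections $\binom{n-i}{k_i}\bigl(2b_{n-i+1}/\sqrt{4r}\bigr)^{n-i-k_i}$ then reproduces both the outer $\int_{\mathbf K}$ and the $b$-power prefactors, and it is this step that pins down their exact shape.

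Finally I would reassemble: restoring the stripped factors $\prod_i\mathrm{e}^{-b_i^2/(4r)}$ and $\prod_i\varphi_r(x^n_i)$, the $\bigl((4r)^{(i-n)/2}\bigr)^n$ powers carried by the normalisations of the $\Phi_r^{(\cdot)}$, and the cone-volume factor from the previous step turns the bound into $\int_{\mathbf K}\boldsymbol\nu^n_r(\underline x)\,\diff(\text{off-diagonal coordinates})$, exactly as stated. Apart from the sub-diagonal block, the only remaining difficulty is bookkeeping: keeping track of the relabelling $m-j+1\leftrightarrow j$, of which $b_i$ sits in which row of the determinant, and of the signs, so that the ranges $0\le k_i\le n-i$ and the prefactor $\prod_i\binom{n-i}{k_i}(2b_{n-i+1}/\sqrt{4r})^{n-i-k_i}$ come out on the nose.
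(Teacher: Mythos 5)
Your proposal does capture the key ingredients of the paper's proof---the Gelfand--Tsetlin cone representation, the Gaussian shift identity, the Hermite addition theorem, and multilinearity of the determinant---and you correctly flag the iterated-integral entries as the point at which a naive Hermite expansion breaks down. But the order in which you deploy these tools is wrong in a way that matters, and one of your described steps (``dominating the $b$-shifted interlacing weights by the unshifted ones'') is not actually part of the argument. The paper begins by writing \emph{every} entry $\Phi_r^{(i-j)}(x^j_j - b_{n-i+1})$ with $j<n$ as an iterated integral of $\Phi_r^{(i-n)}(x^n_j - b_{n-i+1})$ over the chain $x^j_j \ge x^{j+1}_j \ge \cdots \ge x^n_j$; this holds regardless of the sign of $i-j$, not just for the sub-diagonal block. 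Column multilinearity then pulls these integrals outside the determinant, giving $\int_{\boldsymbol D}\det(\Phi_r^{(i-n)}(x^n_j - b_{n-i+1}))$, and the antisymmetry-of-the-integrand argument (Weiss et al., Lemma~5.6) replaces the box $\boldsymbol D$ by the cone $\boldsymbol K$. Only after this reduction are \emph{all} entries of the form (Hermite polynomial)$\times$(Gaussian), so that the Gaussian shift and the addition theorem $H_{n-i}(u+v)=\sum_k\binom{n-i}{k}H_k(u)(2v)^{n-i-k}$ can be applied uniformly across a row, and row multilinearity yields the sum over $\underline k$. The stated inequality then arises simply by taking absolute values term-by-term (producing $|b_{n-i+1}|$ and the $|\det|$ in $G^{\underline k}$).

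Your narrative instead proposes to first apply the addition theorem to ``the Hermite block'' and then repair the sub-diagonal block separately. That cannot work as stated: determinant multilinearity expands an entire row (or column), and the sub-diagonal entries of a given row admit no finite Hermite expansion, so there is no clean way to expand only the above-diagonal part of a row. Your third paragraph does say the right thing (``re-interpret $q_r$ as an integral over the cone'') and you correctly identify this as the crux, but in the write-up you should promote it to the opening move, note that it applies to all entries, and drop the ``domination of interlacing weights'' framing in favour of the direct multilinearity-plus-absolute-values computation. With that reordering, the rest of your plan (the Gaussian shift pulling out $\prod_i \varphi_r(x^n_i)$ and $\prod_i \mathrm e^{-b_i^2/(4r)}$, the binomial prefactors and $(4r)$-powers coming from $\Phi^{(i-n)}$'s normalisation, and the bookkeeping of indices) is essentially what the paper does.
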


\begin{proof}
Recall the definition of ${\mathbf K}$ as the cone of  points $\mathbf{x}= 
\bigl(x^{1},x^{2}, \ldots x^{n}\bigr)$ with  $x^{k}=\bigl(x^{k}_1,x^{k}_2, \ldots ,x^{k}_k\bigr) \in {\R}^k$ satisfying 
the inequalities 
\begin{equation*}
 x^{k+1}_{i} \leq x^{k}_i \leq x^{k+1}_{i+1}.
\end{equation*}
Notice that for all $1\le i \le n$ and $1\le j \le n-1$, we have 
\begin{equation*}
    \Phi^{(i-j)}(x^j_j-b_{n-i+1}) = \displaystyle\int^{x^j_j}_{-\infty}\int^{x^{j+1}_{j}}_{-\infty}\cdots\displaystyle\int^{x^{n-1}_j}_{-\infty}\Phi^{(i-n)}(x^n_j-b_{n-i+1}) \diff x^n_{j}\cdots \diff x^{j+2}_j \diff x^{j+1}_j
\end{equation*}
Now, by the multi-linearity of the determinant, we obtain
\begin{equation*}
\begin{array}{cc}
    q_\ell(x^m_m, \cdots, x^1_1; b_{1:m}) & = \displaystyle\int_{\boldsymbol{D}}\mathrm{det}\big(\Phi^{(i-n)}(x^n_j-b_{n-i+1})\big)_{1\le i, j\le n} \diff\, \left(x^k_j\, \mathrm{where}\, \left.\begin{cases}
       &2\le k \le n\\
       &1\le j \le k-1
   \end{cases}\right\} \right)
    \end{array}
\end{equation*}
where
\begin{equation*}
    \boldsymbol{D} = \{x^k_i, 1\le i\le k \le n :  x^{k+1}_i\le x^k_i\; \mathrm{for }\; 1\le k \le n-1 \}
\end{equation*}
Since the determinant in the integral above is antisymmetric in $(x_1^n, \cdots, x^n_n)$, we conclude using \cite[Lemma 5.6]{Weiss_2017} that 
\begin{equation*}
   q_{\ell}(x^n_n, \cdots, x^1_1; b_1, \cdots, b_n) = \displaystyle \int_{\boldsymbol{K}}\mathrm{det}\big(\Phi^{(i-n)}(x^n_j-b_{n-i+1})\big)_{1\le i, j\le n}\diff\, \left(x^k_j\, \mathrm{where}\, \left.\begin{cases}
       &2\le k \le n\\
       &1\le j \le k-1
   \end{cases}\right\} \right)
\end{equation*}
for all $(x^1_1, \cdots, x^n_n)\in \R^n_{\le}$. Now using an identity satisfied by Hermite polynomials, see \cite{weisstein2002hermite}, observe that for $1\le i, j \le n$
\begin{equation*}
    \begin{array}{cc}
         & \Phi^{(i-n)}(x^n_j-b_{n-i+1}) = (-1)^{i-n}(4r)^{(i-n)/2}H_{n-i}\big((x^n_j-b_{n-i+1})/\sqrt{4r}\big)\phi_{r}\big(x^n_j-b_{n-i+1}\big)\\
         & = (-1)^{i-n}(4r)^{(i-n)/2}\displaystyle\sum^{n-i}_{k=0}{\binom{n-i}{k}}H_{k}\big(x^n_j/\sqrt{4r}\big)\left(-2b_{n-i+1}/\sqrt{4r}\right)^{n-i-k}\\
         & \cdot \phi_{r}\big(x^n_j)\exp\big(x^n_j b_{n-i+1}/(2r)\big)\exp\big(-b_{n-i+1}^2/(4r)\big)
    \end{array}
\end{equation*}
Thus, by multi-linearity, we obtain
\begin{equation*}
    \begin{array}{cc}
        &\mathrm{det}\big(\Phi^{(i-n)}(x^n_j-b_{n-i+1})\big)_{1\le i, j\le n}\\
        & = \displaystyle\sum_{\stackrel{0\le k_i\le n-i}{1\le i \le n}} \mathrm{det}\bigg({\binom{n-i}{k}_i}(-1)^{i-n}(4r)^{(i-n)/2}H_{k_i}\big(x^n_j/\sqrt{4r}\big)\left(-2b_{n-i+1}/\sqrt{4r}\right)^{n-i-k_i}\\
         & \displaystyle\cdot \phi_{r}\big(x^n_j)\exp\big(x^n_j b_{n-i+1}/(2r)\big)\exp\big(-b_{n-i+1}^2/(4r)\big)\bigg)_{1\le i, j\le n}\\[2ex]
         & = \displaystyle\prod^n_{i=1}\exp\big(-b_{n-i+1}^2/(4r)\big)\cdot  \prod^n_{j=1}\phi_{r}\big(x^n_j)\displaystyle\sum_{\stackrel{0\le k_i\le n-i}{1\le i \le n}}\big((-1)^{i-n}(4r)^{(i-n)/2}\big)^n\\
         & \displaystyle\cdot \displaystyle\prod ^n_{i=1}{\binom{n-i}{k}_i}\left(-2b_{n-i+1}/\sqrt{4r}\right)^{n-i-k_i}\cdot \mathrm{det}\big(H_{k_i}\big(x^n_j/\sqrt{4r}\big)\cdot \exp\big(x^n_j b_{n-i+1}/(2r)\big)\big)_{1\le i, j\le n}\\
    \end{array}
\end{equation*}
Observe now that the map
\begin{equation*}
(x^n_1, \cdots, x^n_n)\mapsto \mathrm{det}\big(H_{k_i}\big(x^n_j/\sqrt{4r}\big)\cdot \exp\big(x^n_j b_{n-i+1}/(2r)\big)_{1\le i, j\le n}
\end{equation*}
is antisymmetric and analytic in all variables in $\R^n_{\le}$. Thus, by multilinearity, that is, the map
\begin{equation*}
    (x_1^n, \cdots, x^n_n)\mapsto \frac{\mathrm{det}\big(H_{k_i}\big(x^n_j/\sqrt{4r}\big)\cdot \exp\big(x^n_j b_{n-i+1}/(2r)\big)_{1\le i, j\le n}}{\displaystyle\prod_{i<j} (x^n_j-x^n_i) }
\end{equation*}
for all $(x_1^n, \cdots, x^n_n)\in \R^n_{<}$, can be expressed in terms of divided differences and by smoothness has a continuous extension up to the boundary of $\R^n_{\le}$.
\end{proof}

We thus have the following  estimating the ratio of transition densities of inhomogeneous and homogeneous BLPP in terms of more analytically tractable functions.
\begin{proposition}\label{prop: density estimate Warren}
Fix $r>0$, $n\in \N$, a sequence $(b_\ell)_{\ell =1}^n\in \R^n_{\ge}$ such that $b_n = 0$  and let ${\mathbf K}$ be the cone of points $\mathbf{x}= 
\bigl(x^{1},x^{2}, \ldots x^{n}\bigr)$ with  $x^{k}=\bigl(x^{k}_1,x^{k}_2, \ldots ,x^{k}_k\bigr) \in {\R}^k$ satisfying 
the inequalities 
\begin{equation*}
 x^{k+1}_{i} \leq x^{k}_i \leq x^{k+1}_{i+1}.
\end{equation*}
Then we have that with $q_r$ as in Lemma \ref{lemma: density Warren} the following pointwise estimate
    \begin{equation*}
\begin{array}{cc}
    &\frac{q_r(x^n_n, \cdots x^1_1; b_{1:n})}{q_r(x^n_n, \cdots x^1_1; \underline{0})}\le \frac{\displaystyle\int_{\boldsymbol{K}} \boldsymbol{\nu}^n_{r}(\underline{x})\diff\, \left(x^k_j\, \mathrm{where}\, \left.\begin{cases}
       &2\le k \le n\\
       &1\le j \le k-1
   \end{cases}\right\} \right)}{\displaystyle\int_{\boldsymbol{K}} \boldsymbol{\mu}^n_r (\underline{x})\diff\, \left(x^k_j\, \mathrm{where}\, \left.\begin{cases}
       &2\le k \le n\\
       &1\le j \le k-1
   \end{cases}\right\} \right)}
   \end{array}
\end{equation*}
where 
\begin{equation*}\label{eq: nu}
\begin{array}{cc}
     \boldsymbol{\nu}^n_r(x_1^n, \cdots, x^n_n) & = \displaystyle\prod^n_{i=1}\exp\big(-b_i^2/(4r)\big) \big((4r)^{(i-n)/2}\big)^n\cdot G^{\underline{k}}(x^n_1,\cdots, x^n_n)\\
     &  \cdot \displaystyle\sum_{\stackrel{0\le k_i\le n-i}{1\le i \le n}}\displaystyle\prod ^n_{i=1}\left(2|b_{n-i+1}|/\sqrt{4r}\right)^{n-i-k_i}{\binom{n-i}{k}_i}\cdot\displaystyle\prod_{i=1}^n \phi_r\big(x^n_i\big) \cdot\displaystyle\prod_{i<j} (x^n_j-x^n_i)  
\end{array}
\end{equation*}
\begin{equation*}
\boldsymbol{\mu}^n_r(x_1^n, \cdots, x^n_n) = (2\pi)^{-n/2} (2r)^{-n^2/2}(4\pi r)^{n/2} \displaystyle\prod_{i=1}^n \phi_r\big(x^n_i\big) \cdot\left\{ \displaystyle\displaystyle\prod_{i<j} (x^n_j-x^n_i) 
\right\} 
\end{equation*}
and 
\begin{equation*}\label{eq: G^k}
   G^{\underline{k}}(x^n_1,\cdots, x^n_n) = \left\vert\frac{\mathrm{det}\big(H_{k_i}\big(x^n_j/\sqrt{4r}\big)\cdot \exp\big(x^n_j b_{n-i+1}/(2r)\big)_{1\le i, j\le n}}{\displaystyle\prod_{i<j} (x^n_j-x^n_i) }\right\vert
\end{equation*}
where $\underline{k} = (0\le k_i\le n-i)_{1\le i \le n}$ for all $(x^1_1, \cdots, x^n_n)\in \R^n_{\le}$.
\end{proposition}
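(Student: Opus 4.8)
The plan is to read the estimate off directly from the two preceding propositions. I would first handle the denominator. By Proposition~\ref{prop: lb density 1}, $q_r(x^n_n,\dots,x^1_1;\underline 0)$ is \emph{exactly} the marginal $\int_{\boldsymbol{K}}\boldsymbol{\mu}^n_r(\underline x)\,\diff\bigl(x^k_j\bigr)$ obtained from the joint density $\boldsymbol{\mu}^n_r$ of the Warren process by integrating out the off-diagonal coordinates $x^k_j$, $2\le k\le n$, $1\le j\le k-1$. Since $\boldsymbol{\mu}^n_r$ is, up to a positive constant, the product of strictly positive rate-two Gaussian densities in the top-row variables $x^n_1,\dots,x^n_n$ with the Vandermonde factor $\prod_{i<j}(x^n_j-x^n_i)$, this integral is strictly positive whenever $(x^1_1,\dots,x^n_n)$ lies in the interior $\R^n_{<}$ of $\R^n_{\le}$: the fibre of $\boldsymbol{K}$ over such a point has positive Lebesgue measure and the integrand is positive on it. This makes the pointwise ratio on the left-hand side meaningful on $\R^n_{<}$, and the asserted inequality then extends to all of $\R^n_{\le}$ by continuity, since both sides extend continuously to the boundary (compare Proposition~\ref{prop: support density warren} for the numerator and the divided-difference representation of $G^{\underline k}$ for the right-hand side).

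Next I would bound the numerator from above by Proposition~\ref{prop: lb density 2}, which gives $q_r(x^n_n,\dots,x^1_1;b_{1:n})\le\int_{\boldsymbol{K}}\boldsymbol{\nu}^n_r(\underline x)\,\diff\bigl(x^k_j\bigr)$ over the same marginal, with $\boldsymbol{\nu}^n_r$ of the displayed form. Recall the mechanism behind that bound: starting from the determinantal formula of Lemma~\ref{lemma: density Warren}, one rewrites the iterated-integral entries of the determinant as integrals, over the off-diagonal chain, of entries of the form $\Phi^{(i-n)}_r(x^n_j-b_{n-i+1})$, uses multilinearity of the determinant to pull the integral outside and --- by antisymmetry of the resulting determinant --- to restrict the domain to the cone $\boldsymbol{K}$, then expands $\Phi^{(i-n)}_r(x^n_j-b_{n-i+1})=(-1)^{i-n}(4r)^{(i-n)/2}H_{n-i}\bigl((x^n_j-b_{n-i+1})/\sqrt{4r}\bigr)\phi_r(x^n_j-b_{n-i+1})$ via the binomial identity for Hermite polynomials together with $\phi_r(x-b)=\phi_r(x)\exp(xb/(2r))\exp(-b^2/(4r))$; a final triangle inequality, applied after dividing the remaining determinant by $\prod_{i<j}(x^n_j-x^n_i)$ --- legitimate because that quotient is a divided difference, hence analytic and continuous up to the boundary of $\R^n_{\le}$ --- produces the absolute-value factors $G^{\underline k}$ and the sum over multi-indices $\underline k=(0\le k_i\le n-i)_{1\le i\le n}$.

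Dividing the numerator bound by the exact value of the denominator then yields the claimed pointwise estimate; the $\boldsymbol{\nu}^n_r$ appearing in the statement is the one from Proposition~\ref{prop: lb density 2}, with the understanding that each $G^{\underline k}$ sits inside the sum over $\underline k$ alongside $\prod_{i=1}^n\bigl(2|b_{n-i+1}|/\sqrt{4r}\bigr)^{n-i-k_i}\binom{n-i}{k_i}$. I do not expect a genuine difficulty in this step: the substantive work --- the cone representation of $q_r(\cdot;\underline 0)$ and the Hermite-expansion upper bound for $q_r(\cdot;b_{1:n})$ --- has already been carried out in Propositions~\ref{prop: lb density 1} and~\ref{prop: lb density 2}. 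The only points requiring care are verifying the non-vanishing of the denominator so the ratio is well defined, and keeping the finite sum over $\underline k$ and the signs consistent so that passing to absolute values inside $G^{\underline k}$ is valid; that bookkeeping is the mild ``hard part''.
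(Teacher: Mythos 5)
Your proposal is correct and follows exactly the route the paper takes: the proposition has no explicit proof in the paper precisely because it is a direct combination of Proposition~\ref{prop: lb density 1} (exact cone-integral representation of the denominator via the Warren entrance density) and Proposition~\ref{prop: lb density 2} (cone-integral upper bound on the numerator via the Hermite expansion and the triangle inequality). Your remarks on well-definedness of the ratio on $\R^n_<$ and on the placement of $G^{\underline k}$ inside the sum over $\underline k$ (a notational slip in the paper's displayed $\boldsymbol{\nu}^n_r$, which should read as in Proposition~\ref{prop: lb density 2}) are both accurate bookkeeping observations.
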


In the following proposition, we now turn to re-express the ratios of determinants (the denominators are Vandermonde determinants) into determinants of divided differences by a simple induction argument.

\begin{proposition}\label{prop: div diff G}
Fix $n\in \N$ and a non-decreasing sequence $(b_{i})^n_{i=1}$ with $b_n=0$ and for $\underline{k} = (0\le k_i\le n-i)_{1\le i \le n}$ consider the function
\begin{equation*}
   G^{\underline{k}}(x^n_1,\cdots, x^n_n) = \left\vert\frac{\mathrm{det}\big(H_{k_i}\big(x^n_j/\sqrt{4r}\big)\cdot \exp\big(x^n_j b_{n-i+1}/(2r)\big)_{1\le i, j\le n}}{\displaystyle\prod_{i<j} (x^n_j-x^n_i) }\right\vert \; \mathrm{for}\; (x^1_1, \cdots, x^n_n)\in \R^n_{<}.
\end{equation*}
Then with $f_i: x\mapsto H_{k_i}\big(x/\sqrt{4r}\big) \exp\big(x \cdot b_{n-i+1}/(2r))$ for $1\le i \le n$. Denote by
$f_i[y_1, y_2, \cdots y_{n-1}, y_n]$ the $n-$th divided difference of $f_i$ for $1\le i \le n$ which are defined as
\begin{equation*}\label{eq: divided differences recursion}
    \begin{array}{cc}
         f_i[y] &= f_i(y), \quad y \in \R \\
         f_i[y_1, y_2, \cdots y_{n-1}, y_n, y_{n+1}] &= \frac{f_i[y_2, \cdots y_{n-1}, y_n, y_{n+1}]-f_i[y_1, y_2, \cdots y_{n-1}, y_n,]}{y_{n+1}-y_1},\quad y_1<\cdots <y_{n+1}.
    \end{array}
\end{equation*}
Then, we have that
\begin{equation*}
\begin{array}{cc}
     & G^{\underline{k}}(x^n_1,\cdots, x^n_n) = \left\vert\mathrm{det}\big(f_i[x^n_1, \cdots, x^n_j]\big)_{1\le i, j\le n}\right\vert \quad \mathrm{for}\; (x^1_1, \cdots, x^n_n) \\
     & = \left\vert\mathrm{det}\big(f_i^{(j)}(\xi_{ij})\big)_{1\le i, j\le n}\right\vert \quad \mathrm{for}\; (x^1_1, \cdots, x^n_n)\in \R^n_{<},
\end{array}
\end{equation*}
where $\xi_{ij}\in [x^n_1, x^n_j]$.
\end{proposition}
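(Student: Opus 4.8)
## Proof proposal

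\textbf{Overall strategy.} The statement is a standard fact about "confluent" or "generalized" Vandermonde systems: the ratio of a determinant $\det(f_i(x_j))$ by the Vandermonde $\prod_{i<j}(x_j-x_i)$ equals the determinant of divided differences $\det(f_i[x_1,\dots,x_j])$, and the latter admits a mean-value representation with derivatives $f_i^{(j)}$ evaluated at intermediate points. The plan is to prove the first equality by column operations and induction on $n$, and the second by the classical fact that the $j$-th divided difference of a $C^{j}$ function equals $f^{(j)}(\xi)/j!$ (or, more conveniently here, to absorb constants and invoke the Hermite-Genocchi / mean value form directly). The absolute values on both sides make the bookkeeping of signs and the $j!$ normalizations irrelevant, which simplifies matters considerably.

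\textbf{Step 1: reduce the Vandermonde quotient to divided differences.} Fix $(x_1^n,\dots,x_n^n)\in\R^n_<$ and abbreviate $y_j:=x_j^n$. Consider the matrix $M=(f_i(y_j))_{1\le i,j\le n}$. Perform elementary column operations that do not change the determinant: replace column $C_j$ (for $j=n,n-1,\dots,2$, in that order) by $C_j \mapsto C_j - C_{j-1}$, then divide — as one does in the proof of the Vandermonde formula — scaling out the factors $(y_j - y_{j-1})$, and iterate. More cleanly, I would argue by induction on $n$: the Newton forward-difference identity gives, for each fixed row $i$, the expansion of $f_i(y_j)$ along the nodes $y_1,\dots,y_j$, namely $f_i(y_j)=\sum_{\ell=1}^{j} f_i[y_1,\dots,y_\ell]\,\prod_{s=1}^{\ell-1}(y_j-y_s)$. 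Writing $M = N \cdot V$ where $N=(f_i[y_1,\dots,y_j])_{i,j}$ is \emph{upper}-triangular-completed-by-divided-differences and $V=(\prod_{s=1}^{j-1}(y_j-y_s))_{j}$-type matrix is lower triangular with the Newton basis polynomials, one reads off $\det M = \det N \cdot \det V$ and $\det V = \prod_{i<j}(y_j-y_i)$ (since $V$ is triangular with diagonal entries $\prod_{s<j}(y_j-y_s)$, whose product telescopes to the full Vandermonde product). Taking absolute values yields $G^{\underline k}(y_1,\dots,y_n) = |\det N| = |\det(f_i[x_1^n,\dots,x_j^n])_{1\le i,j\le n}|$.

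\textbf{Step 2: pass from divided differences to derivatives at intermediate points.} Each $f_i$ is real-analytic (a Hermite polynomial times an exponential), hence $C^\infty$ on $\R$. The classical mean value theorem for divided differences gives $f_i[y_1,\dots,y_j] = f_i^{(j-1)}(\eta_{ij})/(j-1)!$ for some $\eta_{ij}\in[y_1,y_j]=[x_1^n,x_j^n]$. Substituting into $\det N$ and pulling the scalars $1/(j-1)!$ out of column $j$ gives $|\det N| = \big(\prod_{j=1}^n \frac{1}{(j-1)!}\big)\,|\det(f_i^{(j-1)}(\eta_{ij}))_{i,j}|$. The statement as written has $f_i^{(j)}(\xi_{ij})$ rather than $f_i^{(j-1)}(\eta_{ij})/(j-1)!$; since the paper only needs the displayed formula up to re-indexing (and since a constant factor $\prod_j 1/(j-1)!$ would simply be absorbed into the surrounding estimates of Proposition \ref{prop: lb density 2} and Proposition \ref{prop: density estimate Warren}), I would either adopt the convention that the $j$-th divided difference is normalized by $j!$ and interpret $\xi_{ij}\in[x_1^n,x_j^n]$ accordingly, or note in a short remark that the identity holds with the stated $\xi_{ij}$ once one keeps track of the (harmless) factorial normalization. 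Either way, the continuous extension to the closed region $\R^n_\le$ follows from the divided-difference form in Step 1, which is manifestly continuous (indeed real-analytic) in $(y_1,\dots,y_n)$ on all of $\R^n$, including coincident nodes, by the Hermite–Genocchi integral representation.

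\textbf{Main obstacle.} The genuinely delicate point is not the algebra but matching conventions: the asymmetry between "$j$-th divided difference" (built from $j+1$ nodes) versus "built from $j$ nodes $y_1,\dots,y_j$", and between $f_i^{(j)}$ and $f_i^{(j-1)}$, together with the factorial normalizations. I would fix this once and for all at the start by declaring precisely which normalization of divided difference is in force (the one with $f_i[y_1,\dots,y_{j+1}]$ as in the recursion displayed in the statement), verify that $\det(f_i[y_1,\dots,y_j])_{1\le i,j\le n}$ with this convention genuinely equals $\det(f_i(y_j))/\prod_{i<j}(y_j-y_i)$ up to sign — this is exactly the $N\cdot V$ factorization above, and is where the induction does its work — and only then invoke the mean value theorem for divided differences. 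The second obstacle, minor by comparison, is justifying the continuous extension to the boundary $\R^n_\le$: this is immediate from the first representation (divided differences of a smooth function are smooth functions of the nodes) rather than from the determinant-ratio form, so it is important to state the two equalities in the right logical order.
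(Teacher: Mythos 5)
Your proof is correct and reaches the same conclusion as the paper's, but packages the algebra differently. The paper iterates column operations: it subtracts adjacent columns and divides by the "band" of Vandermonde factors $\prod_j (x^n_{j+1}-x^n_j)$, then $\prod_j(x^n_{j+2}-x^n_j)$, and so on, so that after each stage the matrix entries are divided differences of one higher order. You instead do it in one shot via the Newton forward-difference expansion $f_i(y_j)=\sum_{\ell\le j} f_i[y_1,\dots,y_\ell]\prod_{s<\ell}(y_j-y_s)$, reading it as $M=NV$ with $N_{i\ell}=f_i[y_1,\dots,y_\ell]$ and $V_{\ell j}=\prod_{s<\ell}(y_j-y_s)\mathbf{1}_{\ell\le j}$, and then computing $\det V$ as the telescoping product of diagonal entries. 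These are genuinely the same computation (the column operations \emph{are} right-multiplication by $V^{-1}$), but your factorization is cleaner and immediately exhibits the continuity of the divided-difference form across coincident nodes, which the paper only mentions in a remark. Two small corrections: $V$ as you defined it is \emph{upper} triangular (zero for $\ell>j$), not lower; and your observation about normalizations is well taken — the mean value theorem for divided differences gives $f_i[y_1,\dots,y_j]=f_i^{(j-1)}(\eta_{ij})/(j-1)!$, so the paper's displayed form $\det(f_i^{(j)}(\xi_{ij}))$ has an off-by-one in the derivative order and omits the $\prod_j 1/(j-1)!$ factor. This is a harmless slip in the stated conclusion: the downstream use in Lemma \ref{lemma: estimates G} only needs an upper bound up to $\mathrm{e}^{O(n^2\log n)}$, and restoring the missing $\prod_j 1/(j-1)!$ would only tighten the bound; but you are right that a careful statement should read $f_i^{(j-1)}(\xi_{ij})/(j-1)!$.
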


\begin{proof}
    First observe that by antisymmetry and multi-linearity, we have that
    \begin{equation*}
        \frac{\mathrm{det}(f_i(x^n_j))_{1\le i,j \le n}}{\prod^{n-1}_{j=1}(x^n_{j+1}-x^n_j)} = \mathrm{det}(\mathbf{1}_{j=1}f_i[x^n_1] + \mathbf{1}_{j>1}f_i[x^n_j, x^n_{j+1}])_{1\le i,j \le n}
    \end{equation*}
    and thus
    \begin{equation*}
    \begin{array}{cc}
         & \displaystyle\frac{\mathrm{det}(f_i(x^n_j))_{1\le i,j \le n}}{\prod^{n-1}_{j=1}(x^n_{j+1}-x^n_j)\cdot \prod^{n-2}_{j=1}(x^n_{j+2}-x^n_j)} = \frac{\mathrm{det}(\mathbf{1}_{j=1}f_i[x^n_1] + \mathbf{1}_{j>1}f_i[x^n_{j-1}, x^n_{j}])_{1\le i,j \le n}}{\prod^{n-2}_{j=1}(x^n_{j+2}-x^n_j)}\\[3ex]
        & = \mathrm{det}(\mathbf{1}_{j=1}f_i[x^n_1] + \mathbf{1}_{j=2}f_i[x^n_1,x^n_2] + \mathbf{1}_{j>2}[x^n_{j-2}, x^n_{j-1}, x^n_{j}])_{1\le i,j \le n}.
    \end{array}
    \end{equation*}
    Now, it becomes clear that using the recursive definition of the divided differences \eqref{eq: divided differences recursion}, we obtain the desired result. The final equality is a standard fact and can be found in any reference on the topic.
\end{proof}
\begin{remark}
    Thus, by the smoothness of the $f_i$, $1\le i \le n$, $G^{\underline{k}}$ has a continuous extension up to the boundary of $\R^n_{\le}$, $n\in \N$.
\end{remark}

In the following lemma, we use the previous proposition and Hadamard's inequality to estimate the above determinants of divided differences up to multiplicative constants by exponential and polynomial factors.

\begin{lemma}\label{lemma: estimates G}
    Fix $n\in \N$ and a non-decreasing sequence $(b_i)^n_{i=1}\in \R^n_{\ge}$ with $b_n=0$ and for $\underline{k} = (0\le k_i\le n-i)_{1\le i \le n}$ consider the function
\begin{equation*}
   G^{\underline{k}}(x^n_1,\cdots, x^n_n) = \left\vert\frac{\mathrm{det}\big(H_{k_i}\big(x^n_j/\sqrt{4r}\big)\cdot \exp\big(x^n_j b_{n-i+1}/(2r)\big)_{1\le i, j\le n}}{ \displaystyle\prod_{i<j} (x^n_j-x^n_i) }\right\vert \; \mathrm{for}\; (x^1_1, \cdots, x^n_n)\in \R^n_{<}.
\end{equation*}
Then, we have the following pointwise estimates
\begin{align*}
G^{\underline{k}}(x^n_1,\cdots, x^n_n) &\le \mathrm{e}^{O(n^2\log n)}\left(\frac{b_1}{2r}\lor 1\right)^{n^2}\cdot\exp\left(\frac{n(x^n_n)_+ \cdot b_1}{2r}\right)\left\langle \frac{(x^n_1)_- + (x^n_n)_+}{\sqrt{4r}}\right\rangle^{n^2}, \; 
\end{align*}
for $(x^n_1, \cdots, x^n_n)\in \R^n_{\le}$ where $\langle \cdot \rangle = \sqrt{\cdot ^2 + 1}$.
\end{lemma}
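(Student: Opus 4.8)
The plan is to feed the divided-difference representation of $G^{\underline k}$ from Proposition \ref{prop: div diff G} into Hadamard's inequality and then estimate a single matrix entry. Write $f_i(x)=H_{k_i}(x/\sqrt{4r})\,\mathrm{e}^{x b_{n-i+1}/(2r)}$ for $1\le i\le n$, so that by Proposition \ref{prop: div diff G} we have $G^{\underline k}(x^n_1,\dots,x^n_n)=\bigl|\det(d_{ij})_{1\le i,j\le n}\bigr|$ with $d_{ij}$ the divided difference of $f_i$ over the nodes $x^n_1,\dots,x^n_j$; by the mean value theorem for divided differences there is $\xi_{ij}\in[x^n_1,x^n_j]$ with $d_{ij}=f_i^{(j-1)}(\xi_{ij})/(j-1)!$. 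Since $(j-1)!\ge1$, Hadamard's inequality gives $G^{\underline k}\le\prod_{i=1}^n\bigl(\sum_{j=1}^n d_{ij}^2\bigr)^{1/2}\le n^{n/2}\prod_{i=1}^n\max_{1\le j\le n}|f_i^{(j-1)}(\xi_{ij})|$, with $n^{n/2}=\mathrm{e}^{O(n\log n)}$. Everything thus reduces to a bound on $|f_i^{(\ell)}(\xi)|$ uniform over $0\le\ell\le n-1$, $1\le i\le n$ and $\xi\in[x^n_1,x^n_n]$ (note $[x^n_1,x^n_j]\subseteq[x^n_1,x^n_n]$ because $x\in\R^n_{\le}$), raised to the $n$-th power.

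For that bound I would expand $f_i$ via the Leibniz rule, writing (with $H_{k_i}^{(a)}$ the $a$-th derivative of the Hermite polynomial $H_{k_i}$)
\[
f_i^{(\ell)}(\xi)=\sum_{a=0}^\ell\binom{\ell}{a}(4r)^{-a/2}\,H_{k_i}^{(a)}\!\Bigl(\tfrac{\xi}{\sqrt{4r}}\Bigr)\,\Bigl(\tfrac{b_{n-i+1}}{2r}\Bigr)^{\ell-a}\mathrm{e}^{\xi b_{n-i+1}/(2r)}.
\]
First localise $\xi$: since $x^n_1\le\xi\le x^n_n$ one has $|\xi|\le(x^n_1)_-\lor(x^n_n)_+\le M:=(x^n_1)_-+(x^n_n)_+$ (if $\xi\ge0$ then $\xi\le x^n_n\le(x^n_n)_+$; if $\xi<0$ then $|\xi|\le -x^n_1\le(x^n_1)_-$). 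Now bound the three $i$-dependent factors. Because $k_i\le n-i\le n$, the explicit coefficients of the Hermite polynomials have magnitude at most $n!\,2^n=\mathrm{e}^{O(n\log n)}$, and each of the (at most $n$) differentiations multiplies a coefficient by at most $n^n=\mathrm{e}^{n\log n}$, so $|H_{k_i}^{(a)}(y)|\le\mathrm{e}^{O(n\log n)}\langle y\rangle^{k_i}\le\mathrm{e}^{O(n\log n)}\langle y\rangle^{n}$ uniformly in $a$; combined with $(4r)^{-a/2}\le(1\lor(4r)^{-1/2})^{n}$ and the monotonicity of $\langle\cdot\rangle$ in $|\cdot|$ this bounds the first factor by $\mathrm{e}^{O(n\log n)}(1\lor(4r)^{-1/2})^{n}\langle M/\sqrt{4r}\rangle^{n}$. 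Since $(b_i)$ is non-increasing with $b_n=0$ we have $0\le b_{n-i+1}\le b_1$, so the second factor obeys $(b_{n-i+1}/(2r))^{\ell-a}\le(b_1/(2r)\lor1)^{n}$. For the third, $\mathrm{e}^{\xi b_{n-i+1}/(2r)}\le1$ when $\xi<0$ and $\le\mathrm{e}^{(x^n_n)_+b_1/(2r)}$ when $\xi\ge0$, hence $\le\mathrm{e}^{(x^n_n)_+b_1/(2r)}$ always. Finally $\sum_{a=0}^\ell\binom{\ell}{a}\le2^n$.

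Collecting these, $\max_{1\le j\le n}|f_i^{(j-1)}(\xi_{ij})|\le\mathrm{e}^{O(n\log n)}(1\lor(4r)^{-1/2})^{n}(b_1/(2r)\lor1)^{n}\langle M/\sqrt{4r}\rangle^{n}\,\mathrm{e}^{(x^n_n)_+b_1/(2r)}$, uniformly in $i$. Multiplying over $i=1,\dots,n$ and inserting the Hadamard factor $n^{n/2}$, the $i$-independent quantities get raised to the power $n$, while the product of the $n$ per-row $\mathrm{e}^{O(n\log n)}$ prefactors, together with $n^{n/2}$ and the purely $r$-dependent quantity $(1\lor(4r)^{-1/2})^{n^2}=\mathrm{e}^{O(n^2)}$, is absorbed into a single factor $\mathrm{e}^{O(n^2\log n)}$. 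This yields $G^{\underline k}\le\mathrm{e}^{O(n^2\log n)}(b_1/(2r)\lor1)^{n^2}\,\mathrm{e}^{n(x^n_n)_+b_1/(2r)}\,\langle((x^n_1)_-+(x^n_n)_+)/\sqrt{4r}\rangle^{n^2}$ on $\R^n_{<}$, and the inequality extends to the closure $\R^n_{\le}$ by the continuity of $G^{\underline k}$ recorded in the remark after Proposition \ref{prop: div diff G}.

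I expect the difficulty here to be organisational rather than conceptual. The key point to be careful about is that each of the $n$ rows of the determinant must contribute exactly one copy of the exponential $\mathrm{e}^{\xi_{ij}b_{n-i+1}/(2r)}$, bounded by $\mathrm{e}^{(x^n_n)_+b_1/(2r)}$ independently of $j$, before the product over rows is taken --- this is what makes the exponent of the final estimate $n(x^n_n)_+b_1/(2r)$ and not quadratic in $n$. The second point requiring attention is the control of the Hermite-polynomial coefficients after the rescaling $y=\xi/\sqrt{4r}$ and up to $n$ differentiations, so that the prefactor accumulated over all $n$ rows stays at $\mathrm{e}^{O(n^2\log n)}$ and no extra power of $n$ leaks into the exponent.
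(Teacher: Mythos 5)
Your proof is correct and follows essentially the same strategy as the paper's: convert $G^{\underline k}$ to a determinant of divided differences via Proposition \ref{prop: div diff G}, apply Hadamard's inequality, expand each entry by the Leibniz rule, control the Hermite derivatives by $\mathrm{e}^{O(n\log n)}\langle\cdot\rangle^n$, and extend to $\R^n_\le$ by continuity; the only cosmetic difference is that you apply Hadamard row-wise rather than column-wise. You are in fact slightly more careful than the paper at two points --- you keep track of the $1/(j-1)!$ in the mean-value form of the divided difference (which the paper tacitly absorbs by writing $f_i^{(j)}(\xi_{ij})$) and you account for the $(4r)^{-a/2}$ chain-rule factors, folding them into the $\mathrm{e}^{O(n^2\log n)}$ prefactor --- and one remark in your write-up is misleadingly worded: ``each of the (at most $n$) differentiations multiplies a coefficient by at most $n^n$'' should read that the \emph{cumulative} effect of up to $n$ differentiations is a factor at most $n^n$ (each single differentiation multiplies by at most the degree $\le n$), but the conclusion $\mathrm{e}^{O(n\log n)}$ is nonetheless correct.
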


\begin{proof}
Using Lemma \ref{prop: div diff G}, we have 
\begin{equation*}
      G^{\underline{k}}(x^n_1,\cdots, x^n_n) = \left\vert\mathrm{det}\big(f_i^{(j)}(\xi_{ij})\big)_{1\le i, j\le n}\right\vert \quad \mathrm{for}\; (x^1_1, \cdots, x^n_n)\in \mathrm{int}\R^n_{\le},
\end{equation*}
where 
\begin{equation*}
    f_i(x) =  H_{k_i}\big(x/\sqrt{4r}\big) \exp\big(x \cdot b_{n-i+1}/(2r)\big),\quad x\in \R,  1\le i \le n.
\end{equation*}
Now, using the Leibniz rule, we further see that
\begin{equation*}
\begin{array}{cc}
     f_i^{(j)}(x) & =  \displaystyle\frac{\diff ^j}{\diff x^j} \left(H_{k_i}\big(x/\sqrt{4r}\big) \exp\big(x \cdot b_i/(2r)\big)\right),\quad x\in \R \\
     &  = \displaystyle\sum_{k=0}^j {\binom{j}{k}}\left(\frac{b_{n-i+1}}{2r}\right)^{j-k}\displaystyle\frac{\diff ^j}{\diff x^j} \left(H_{k_i}\big(x/\sqrt{4r}\big)\right)\cdot \exp\big(x \cdot b_{n-i+1}/(2r)\big).
\end{array}
\end{equation*}
Now, to estimate the $G$, we use Hadamard's inequality for determinants to obtain
\begin{equation*}
    \begin{array}{cc}
        &G^{\underline{k}}(x^n_1,\cdots, x^n_n) \le  \displaystyle\prod_{j=1}^n \left(\displaystyle\sum_{i=1}^n (f_i^{(j)}(\xi_{ij}))^2\right)^{\frac{1}{2}}\\
        &\le \displaystyle\prod_{j=1}^n \left(\displaystyle\sum_{i=1}^n \left(\displaystyle\sum_{k=0}^j {\binom{j}{k}}\left(\frac{b_i}{2r}\right)^{j-k}\displaystyle\frac{\diff ^k}{\diff \xi_{ij}^k} \left(H_{k_i}\big(\xi_{ij}/\sqrt{4r}\big)\right)\cdot \exp\big(\xi_{ij} \cdot b_{n-i+1}/(2r)\big)\right)^2\right)^{\frac{1}{2}}.
    \end{array}
\end{equation*}
Note that the Hermite polynomials $H_k, k\ge 0$ can be expressed explicitly as 
     \begin{equation*}
         H_n(x) = n!\displaystyle\sum_{n=0}^{\lfloor n/2\rfloor}\frac{(-1)^{n}}{n!(n-2m)!}(2x)^{n-2m}\, \quad x\in \R, n\ge 0.
     \end{equation*}
From which we obtain the elementary estimates
     \begin{equation*}
     \begin{array}{cc}
          & |H_n(x)|\le n!2^n \big(1+|x|\big)^n,\quad x\in \R, \\
          & \left|\frac{\diff^k}{\diff x^k} H_k(x)\right|\le 2^n \cdot n!\langle x\rangle^{n},\quad x\in \R, 0\le k\le n,
     \end{array}
     \end{equation*}
     where $\langle \cdot \rangle = \sqrt{\cdot ^2 + 1}$.

     Now using the non-negativity and monotonicity of the $b_i$, we obtain
     \begin{equation*}
    \begin{array}{cc}
        &G^{\underline{k}}(x^n_1,\cdots, x^n_n)\\
        &\le  \left(\frac{b_1}{2r}\lor 1\right)^{n^2}\exp\big(\frac{n(x^n_n)_+ \cdot b_1}{2r}\big)\displaystyle\prod_{j=1}^n \left(\displaystyle\sum_{i=1}^n \left(\displaystyle\sum_{k=0}^j {\binom{j}{k}}2^{k_i} \cdot k_i!\left\langle \frac{(x^n_1)_- + (x^n_n)_+}{\sqrt{4r}}\right\rangle^{k_i} \right)^2\right)^{\frac{1}{2}}\\
        & \le \left(\frac{b_1}{2r}\lor 1\right)^{n^2}\exp\big(\frac{n(x^n_n)_+ \cdot b_1}{2r}\big)\left\langle \frac{(x^n_1)_- + (x^n_n)_+}{\sqrt{4r}}\right\rangle^{n^2}\displaystyle\prod_{j=1}^n \left(\displaystyle\sum_{i=1}^n \left(\displaystyle 2^{k_i+j} \cdot k_i!\right)^2\right)^{\frac{1}{2}}\\
        & \le 2^{n^2} \cdot (n!)^n\cdot 2^{(n+1)n/2}\left(\frac{b_1}{2r}\lor 1\right)^{n^2}\exp\big(\frac{n(x^n_n)_+ \cdot b_n}{2r}\big)\left\langle \frac{(x^n_1)_- + (x^n_n)_+}{\sqrt{4r}}\right\rangle^{n^2} \\
        &\le \mathrm{e}^{O(n^2\log n)}\left(\frac{b_1}{2r}\lor 1\right)^{n^2}\exp\big(\frac{n(x^n_n)_+ \cdot b_1}{2r}\big)\left\langle \frac{(x^n_1)_- + (x^n_n)_+}{\sqrt{4r}}\right\rangle^{n^2}\,.
    \end{array}
\end{equation*}
\end{proof}

Having estimated the $G^{\underline{k}}$ terms defined in (\ref{eq: G^k}) in terms of polynomial and exponential factors, the integral estimates in Section \ref{app: int est} in the Appendix allow us to further estimate (\ref{eq: RN ratio}) in the following proposition.  

\begin{proposition}\label{prop: integral ratio estimate}
 Fix $n\in \N$ and let $\boldsymbol{K}$ denote the Gelfand-Tsetslin cone of  points ${\mathbf x}= \bigl(x^{1},x^{2}, \ldots x^{n}\bigr)$ with  $x^{k}=\bigl(x^{k}_1,x^{k}_2, \ldots ,x^{k}_k\bigr) \in {\R}^k$ satisfying 
the inequalities 
\begin{equation*}
 x^{k+1}_{i} \leq x^{k}_i \leq x^{k+1}_{i+1}.
\end{equation*}
Then, for a given $\ell> 0$ and with $f(x^n_1, \cdots, x^n_n) = \prod^{n-1}_{i=1}\phi_\ell(x^n_i)$, 
\begin{equation*}
    \begin{array}{cc}
         & \frac{ \displaystyle\int_{\boldsymbol{K}}\langle (x^n_1)_-\rangle ^N f (x^n_1, \cdots, x^n_n)\displaystyle\prod_{i<j} (x^n_j-x^n_i) \diff\, \left(x^k_j\, \mathrm{where}\, \left.\begin{cases}
       &2\le k \le n\\
       &1\le j \le k-1
   \end{cases}\right\} \right)}{ \displaystyle\int_{\boldsymbol{K}}f (x^n_1, \cdots, x^n_n)\displaystyle\prod_{i<j} (x^n_j-x^n_i) \diff\, \left(x^k_j\, \mathrm{where}\, \left.\begin{cases}
       &2\le k \le n\\
       &1\le j \le k-1
   \end{cases}\right\} \right)} \\
         & \le 
   N!(2\ell)^{N/2}O_{\ell}(\mathrm{e}^{dn^2\log n})\left( \exp\left(\frac{(n-1)((x^n_n)_+)}{(2\ell)^{1/2}}\big)+\exp\big(\frac{(x^1_1)_-}{(2\ell)^{1/2}}\right)\right)
   \end{array}
\end{equation*}
for some universal constant $d>0$.
\end{proposition}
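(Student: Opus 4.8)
\emph{Proof strategy.} The numerator and denominator are integrals of the \emph{same} integrand over the \emph{same} domain $\boldsymbol{K}$, with the diagonal $x^1_1,\dots,x^n_n$ held fixed; the only difference is that the numerator carries the extra weight $\langle (x^n_1)_-\rangle^N$ on the left-most top-row variable $x^n_1$. Thus the quotient is a conditional expectation of $\langle(x^n_1)_-\rangle^N$ under the ``Warren fibre measure'' on the remaining (off-diagonal) coordinates given the diagonal. The plan is: (i) decouple $x^n_1$ from the rest of the integration; (ii) evaluate the one-dimensional $x^n_1$-integral to extract the prefactor $N!(2\ell)^{N/2}$ and the two exponential terms; and (iii) control the integration of the remaining off-diagonal coordinates, layer by layer, using the integral estimates of Section~\ref{app: int est}.

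For the decoupling, since $x^n_1\le x^n_j\le x^n_n$ for every $j\ge 2$, write
\[
\prod_{i<j}(x^n_j-x^n_i)=\Big(\prod_{2\le i<j\le n}(x^n_j-x^n_i)\Big)\prod_{j=2}^{n}(x^n_j-x^n_1)
\]
and bound each of the $n-1$ first-row factors by $(x^n_n)_+ + (x^n_1)_- + O(\sqrt\ell)$. The only other way $x^n_1$ couples to the cone is via the constraint $x^n_1\le x^{n-1}_1$, which merely widens the lower-level polytope as $x^n_1$ decreases and is handled by the monotone one-dimensional bounds of the Appendix. Reducing the full integral to the $x^n_1$-marginal this way, the $x^n_1$-dependent part of the numerator is controlled by
\[
\int_{-\infty}^{x^{n-1}_1}\langle(x^n_1)_-\rangle^N\big((x^n_n)_+ + (x^n_1)_- + O(\sqrt\ell)\big)^{n-1}\phi_\ell(x^n_1)\,\diff x^n_1 ,
\]
a Gaussian integral against a polynomial. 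Using $u^N\le N!\,e^{u}$ with $u=(x^n_1)_-/\sqrt{2\ell}$, and $a+b\le a\,e^{b/a}$ on the Vandermonde product, this integral is at most $N!\,(2\ell)^{N/2}\,O_\ell(e^{dn^2\log n})\exp\!\big((n-1)(x^n_n)_+/\sqrt{2\ell}\big)$; the second term $\exp\!\big((x^1_1)_-/\sqrt{2\ell}\big)$ arises from the complementary regime in which the held-fixed bottom value $x^1_1$ is so negative that $x^n_1\le x^1_1$ forces $(x^n_1)_-\ge (x^1_1)_-$, and a short case analysis produces the sum of the two exponentials.

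It then remains to integrate out the remaining off-diagonal coordinates — the rest of the top row $x^n_2,\dots,x^n_{n-1}$ and then the lower rows — with the diagonal fixed, applying the iterated estimates of Section~\ref{app: int est} row by row. Each row contributes a polynomial factor from its Vandermonde and polytope volume; bounding these by Hadamard's inequality and collecting the resulting factorials over the $n$ rows (exactly as in Lemma~\ref{lemma: estimates G} and Proposition~\ref{prop: top line pitman pointwise bound general case}) produces the factor $O_\ell(e^{dn^2\log n})$. Running the same scheme on the denominator, now bounding from below, matches it up to the same $O_\ell(e^{dn^2\log n})$, and dividing gives the claim.

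The main obstacle is this last step: the Vandermonde factor together with the interlacing constraints does not factorise over the integration variables, so a crude product bound would cost much more than $e^{O(n^2\log n)}$. The role of the Appendix estimates is precisely to propagate the control from one row of the Gelfand--Tsetlin cone to the next with only a factorial loss per row, and to keep the $x^n_1$-marginal clean enough that the Gaussian-moment scaling $N!\,(2\ell)^{N/2}$ is not spoiled by the coupling.
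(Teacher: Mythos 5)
Your high-level picture — the quotient is a conditional expectation of $\langle(x^n_1)_-\rangle^N$ on the Gelfand--Tsetlin fibre given the diagonal, and the $N!(2\ell)^{N/2}$ prefactor should come from Gaussian-moment scaling in the $x^n_1$ variable — is the right intuition, and you correctly identify that the Vandermonde/interlacing coupling is the real obstruction. But the execution you sketch diverges from what actually closes the gap, and in one place it points in the wrong direction.

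The paper's mechanism for ``propagating control row by row with only factorial loss'' is not Hadamard; Hadamard's inequality plays no role in this proposition. (It is used in Lemma~\ref{lemma: estimates G}, for a genuinely different integrand involving determinants of divided differences of Hermite polynomials, not for the cone-volume integral here.) What the paper does instead is: (i) Fubini the off-diagonal layers out first; (ii) decompose $\boldsymbol K$ into permutation cells so that the inner ``volume'' integral, with the top row held fixed and ordered, is an explicit sum of monomials in the consecutive gaps $(x^n_{j+1}-x^n_j)$; (iii) absorb the Vandermonde via $\prod_{i<j}(x^n_j-x^n_i)=\prod_{i}\sum_{j\ge i}(x^n_{j+1}-x^n_j)$, giving a finite sum over admissible multi-indices $\underline m$ of iterated integrals of the tractable form $\int_{x^n_1\le\cdots\le x^n_k\le x^1_1}\prod\phi_\ell(x^n_i)\prod (x^n_{j+1}-x^n_j)^{m_j}$; and (iv) apply the elementary inequality $(\sum_i a_i)/(\sum_i b_i)\le\sum_i a_i/b_i$ to reduce the ratio to a sum over $\underline m$ of one-dimensional ratio estimates, each controlled by Corollary~\ref{cor: explicit constants good bound}. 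The count of admissible $\underline m$ (partitions of $O(n^2)$) produces the $\mathrm{e}^{dn^2\log n}$ prefactor. Your sketch omits steps (ii)--(iv) entirely, and these are exactly where the difficulty lives: your statement that $x^n_1$ ``only'' couples through $x^n_1\le x^{n-1}_1$ and ``merely widens the lower-level polytope'' glosses over the fact that the polytope volume is a polynomial in all the top-row gaps, including the first, and this polynomial must be handled exactly so that both numerator and denominator decompose into \emph{matching} sums before one can compare them. The crude ``bound numerator above, denominator below, then divide'' plan would lose an unbounded factor on the tails unless the two bounds track the same Gaussian decay term by term, which is precisely what the admissible-multi-index decomposition plus the ratio inequality arranges. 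So the proposal has the right destination but is missing the combinatorial decomposition that is the proof's central idea.
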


\begin{proof}
    Now observe that for any strictly positive integrable function (rapidly decaying) $f(x^n_1, \cdots, x^n_n)$, we have by Fubini
\begin{equation*}\label{eq: int cone}
\begin{array}{cc}
     & \displaystyle\int_{\boldsymbol{K}}f (x^n_1, \cdots, x^n_n)\displaystyle\prod_{i<j} (x^n_j-x^n_i) \diff\, \left(x^k_j\, \mathrm{where}\, \left.\begin{cases}
       &2\le k \le n\\
       &1\le j \le k-1
   \end{cases}\right\} \right) \\
     & = \displaystyle\int_{x^n_1\le x^n_2\le \cdots\le x^n_{n-1}\le x^n_n}f (x^n_1, \cdots, x^n_n)\displaystyle\prod_{i<j} (x^n_j-x^n_i) \\
     & \cdot\displaystyle\int \mathbf{1}_K(\underline{x}_{\boldsymbol{K}})\diff\, \left(x^k_j\, \mathrm{where}\, \left.\begin{cases}
       &2\le k \le n-1\\
       &1\le j \le k-1
   \end{cases}\right\} \right)\diff x^n_1\cdots \diff x^n_{n-1}
\end{array}
\end{equation*}
where $\underline{x}_{\boldsymbol{K}}$ denotes a generic point in the Gelfand-Tsetslin cone $\boldsymbol{K}$. First observe that by expressing the ambient space $\R^{(n+1)n/2}$ into a union of totally ordered subsets $\R^{(n+1)n/2}_{\sigma}$, with $\sigma\in S^{(n+1)n/2}$ a permutation of indices, we obtain
\[
\boldsymbol{K} = \displaystyle\bigcup_{\sigma\in S^{(n+1)n/2}}\boldsymbol{K}\cap \R^{(n+1)n/2}_{\sigma}
\]
and observe that if for a fixed $\sigma\in $ $S^{(n+1)n/2}$, $\boldsymbol{K}\cap \mathrm{int}\R^{(n+1)n/2}_{\sigma}\neq \emptyset$, then $\mathrm{int}\R^{(n+1)n/2}_{\sigma}\subseteq\mathrm{int}\boldsymbol{K}$ since membership in $\boldsymbol{K}$ is completely determined by a partial order which is implied by the total order induced by $\sigma$, and taking the closure, $\R^{(n+1)n/2}_{\sigma}\subseteq\boldsymbol{K}$. We can thus express
\[
\boldsymbol{K} = \displaystyle\bigcup_{\sigma\in S^{(n+1)n/2}}\boldsymbol{K}_{\sigma}
\]
where $\boldsymbol{K}_{\sigma}$ is either $\R^{(n+1)n/2}_{\sigma}$ or a subset of the boundary $\partial\R^{(n+1)n/2}_{\sigma}$, which has zero Lebesgue measure.

Now with $x^1_1, \cdots, x^n_n$ fixed and strictly increasing, for any two $\sigma, \sigma^\prime \in S^{(n+1)n/2}$ distinct (preserving the above ordering), the induced Lebesgue measure of $\boldsymbol{K}_{\sigma}\cap \boldsymbol{K}_{\sigma^\prime}$ on the subset of $\R^{(n+1)n/2}$ with said $(x^i_i)_{i=1}^n$ fixed vanishes. To see this, by the above we can assume both $\boldsymbol{K}_{\sigma} = \R^{(n+1)n/2}_{\sigma}, \boldsymbol{K}_{\sigma^{\prime}} = \R^{(n+1)n/2}_{\sigma^\prime}$. Now, by a quick induction, for any two distinct permutations, of coordinates, there must be at least a pair of points that have been interchanged in the ordering (here this cannot be the `diagonal' terms $(x^i_i)_{i=1}^n$, being fixed and distinct). It could either be two non-diagonal diagonal entries or a combination of a diagonal entry and a non-diagonal entry. In either case, the intersection lies in a hyperplane with codimension at most one minus that of the ambient space ($\R^{(n+1)n/2-n}$) and so the induced Lebesgue measure is zero as claimed.  Hence, by inclusion-exclusion we see that the integral \ref{eq: int cone} can be expressed as
\begin{equation*}
\begin{array}{cc}
     &\displaystyle\int_{x^n_1\le x^n_2\le \cdots\le x^n_{n-1}\le x^n_n}f (x^n_1, \cdots, x^n_n)\displaystyle\prod_{i<j} (x^n_j-x^n_i) \\
     & \cdot\displaystyle\int \displaystyle\sum_{\stackrel{\sigma\in S^{(k+1)k/2},}{\mathrm{K_{\sigma}}}}\mathbf{1}_{\boldsymbol{K}_{\sigma}}\diff\, \left(x^k_j\, \mathrm{where}\, \left.\begin{cases}
       &2\le k \le n-1\\
       &1\le j \le k-1
   \end{cases}\right\} \right)\diff x^n_1\cdots \diff x^n_{n-1}.
\end{array}
\end{equation*}
Now each non-vanishing term
\begin{equation*}
    \displaystyle\int \mathbf{1}_{\boldsymbol{K}_{\sigma}}(\underline{x}_{\boldsymbol{K}})\diff\, \left(x^k_j\, \mathrm{where}\, \left.\begin{cases}
       &2\le k \le n-1\\
       &1\le j \le k-1
   \end{cases}\right\} \right)
\end{equation*}
can be partitioned into a sum over polynomial factors involving consecutive powers of $(x^n_{j+1}-x^n_j)$ for $1\le j \le n-1$ since all variables $x^k_i, 1\le i < k\le n-1$ are integrated out in every total order and each total order can be factorised into a product of indicators where variables are separated by the diagonal terms $x^i_i, 1\le i \le n $. Furthermore, we have that
\begin{equation*}
    \begin{array}{cc}
        \displaystyle\prod_{i<j} (x^n_j-x^n_i)  &= \displaystyle \prod_{i=1}^{n-1}\left(\displaystyle \sum_{j=i}^{n-1}(x^n_{j+1}-x^n_j)\right) \\
         & =\displaystyle \prod_{i=1}^{n-1}\left(\displaystyle \sum_{j=i}^{n-1}(x^n_{j+1}-x^n_j)\right)\,.
    \end{array}
\end{equation*}

Suppose now that $\prod_{i=1}^{n-1}f_i(x^n_i)$. Then the above allow us to express
\begin{equation*}
\begin{array}{cc}
     & \displaystyle\int_{\boldsymbol{K}}f (x^n_1, \cdots, x^n_n)\displaystyle\prod_{i<j} (x^n_j-x^n_i) \diff\, \left(x^k_j\, \mathrm{where}\, \left.\begin{cases}
       &2\le k \le n\\
       &1\le j \le k-1
   \end{cases}\right\} \right) \\
     & = \displaystyle \Bigg(\sum_{k=1}^{n-1}\sum_{\stackrel{m_1, m_2, \cdots, m_k}{\mathrm{admissible}}}\displaystyle\int_{x^n_1\le x^n_2\le \cdots\le x^n_k\le x^1_1}\prod_{i=1}^{k}f_i(x^n_i)\cdot(x^n_2-x^n_1)^{m_1}\cdot (x^n_3-x^n_2)^{m_2}\\
     & \cdots (b-x^n_k)^{m_k}\diff x^n_1\cdots \diff x^n_{k}\cdot \Omega^{\underline{m}}_k(x^n_{k+1}, \cdots, x^n_n; x^2_2, \cdots x^n_n)\Bigg)
\end{array}
\end{equation*}
for $k\le n-1$ and $m_1, \cdots, m_k\in \N$ such that $\sum_{j=1}^k m_j \le dn^2$ for some universal $d>0$,\\  $\Omega^{\underline{m}}_k(x^n_{k+1}, \cdots, x^n_n; x^2_2, \cdots x^n_n)>0$ in $\mathrm{int}\boldsymbol{K}$, $1\le k \le n-1$.

We will now need the elementary inequality for $(a_i)_{i=1}^n\in \R^n_{\ge 0}$, $(b_i)_{i=1}^n\in \R^n_{> 0}$
\begin{equation*}
    \frac{a_1+a_2+\cdots + a_n}{b_1+b_2+\cdots + b_n}\le \displaystyle\sum_{i=1}^n \frac{a_i}{b_i}
\end{equation*}
and can be easily shown using an induction argument. We thus estimate for all $N\in \N$
\begin{align*}
     & \frac{\displaystyle\int_{\boldsymbol{K}}\langle (x^n_1)_-\rangle ^N f (x^n_1, \cdots, x^n_n)\displaystyle\prod_{i<j} (x^n_j-x^n_i) \diff\, \left(x^k_j\, \mathrm{where}\, \left.\begin{cases}
       &2\le k \le n\\
       &1\le j \le k-1
   \end{cases}\right\} \right)}{\displaystyle\int_{\boldsymbol{K}}f (x^n_1, \cdots, x^n_n)\displaystyle\prod_{i<j} (x^n_j-x^n_i) \diff\, \left(x^k_j\, \mathrm{where}\, \left.\begin{cases}
       &2\le k \le n\\
       &1\le j \le k-1
   \end{cases}\right\} \right)} \\
     & \le \displaystyle \displaystyle \sum_{k=1}^{n-1}\sum_{\stackrel{m_1, m_2, \cdots, m_k}{\mathrm{admissible}}}\displaystyle\frac{\displaystyle\int_{x^n_1\le x^n_2\le \cdots\le x^n_k\le x^1_1}\langle (x^n_1)_-\rangle ^N \cdot \prod_{i=1}^{k}f_i(x^n_i)\cdot(x^n_2-x^n_1)^{m_1}\cdot \cdots \cdot (b-x^n_k)^{m_k}\diff x^n_1\cdots \diff x^n_{k}}{\displaystyle\int_{x^n_1\le x^n_2\le \cdots\le x^n_k\le x^1_1}\prod_{i=1}^{k}f_i(x^n_i)\cdot(x^n_2-x^n_1)^{m_1}\cdot \cdots \cdot (b-x^n_k)^{m_k}\diff x^n_1\cdots \diff x^n_{k}}
\end{align*}
Now, if we further stipulate that $f_i = \phi_{\ell}$, for $\ell>0$, $1\le i \le n-1$, we obtain
by repeated applications of Lemma \ref{lemma: integral estimate}, the estimate
\begin{equation*}
\begin{array}{cc}
     & \frac{\displaystyle\int_{\boldsymbol{K}}\langle (x^n_1)_-\rangle ^N f (x^n_1, \cdots, x^n_n)\displaystyle\prod_{i<j} (x^n_j-x^n_i) \diff\, \left(x^k_j\, \mathrm{where}\, \left.\begin{cases}
       &2\le k \le n\\
       &1\le j \le k-1
   \end{cases}\right\} \right)}{\displaystyle\int_{\boldsymbol{K}}f (x^n_1, \cdots, x^n_n)\displaystyle\prod_{i<j} (x^n_j-x^n_i) \diff\, \left(x^k_j\, \mathrm{where}\, \left.\begin{cases}
       &2\le k \le n\\
       &1\le j \le k-1
   \end{cases}\right\} \right)} \\
     & \le N!(2\ell)^{N/2}O(\mathrm{e}^{dn^2\log n})\left( \exp\left(\frac{n(x^n_n)_+}{(2\ell)^{1/2}}\big)+\exp\big(\frac{(x^1_1)_-}{(2\ell)^{1/2}}\right)\right) \displaystyle \sum_{k=1}^{n-1}\sum_{\stackrel{m_1, m_2, \cdots, m_k}{\mathrm{admissible}}}\mathbf{1}.
\end{array}
\end{equation*}
 Finally, observe that $\sum_{\stackrel{m_1, m_2, \cdots, m_k}{\mathrm{admissible}}}\mathbf{1}$ can be estimated from above by the number of partitions of $O(n^2)$, which famously has $O(\exp(c n))$ asymptotics, see \cite{hardy1918asymptotic}. We finally arrive at the estimate
\[\le N!(2\ell)^{N/2}O(\mathrm{e}^{dn^2\log n})\left( \exp\left(\frac{n(x^n_n)_+}{(2\ell)^{1/2}}\right)+\exp\left(\frac{(x^1_1)_-}{(2\ell)^{1/2}}\right)\right)
\]
for some universal constant $d>0$ (changing from line to line) using the bounds in Corollary \ref{cor: explicit constants good bound} and the remark therein, which concludes the proof.
\end{proof}

Now, combining Lemma \ref{lemma: estimates G} and Propositions \ref{prop: density estimate Warren} and  \ref{prop: integral ratio estimate}, we easily obtain the following proposition, where we are not in a position to estimate the ratio of densities in Theorem \ref{thm: Radon-Nikodym  density inhom BLPP} in terms of analytically tractable quantities, that is up to exponential and polynomial factors.

\begin{proposition}\label{prop: estimate density final}
Fix $r>0$, $n\in \N$, then for a sequence $\underline{b} = (b_\ell)_{\ell =1}^n \in \R^n_{\ge}$ such that $b_n = 0$ and let $(H_1, H_2, \cdots, H_m)(\cdot)$ be a Brownian TASEP started from initial data $\underline{b}$ and  let $(G_1, G_2, \cdots, G_m)(\cdot)$ be a Brownian TASEP started from the origin. As before, with $\boldsymbol{K}$ denote the Gelfand-Tsetslin cone of  points ${\mathbf x}= \bigl(x^{1},x^{2}, \ldots x^{n}\bigr)$ with  $x^{k}=\bigl(x^{k}_1,x^{k}_2, \ldots ,x^{k}_k\bigr) \in {\R}^k$ satisfying 
the inequalities 
\begin{equation*}
 x^{k+1}_{i} \leq x^{k}_i \leq x^{k+1}_{i+1}.
\end{equation*}
    Then with the densities $q_r$ as defined in Lemma \ref{lemma: density Warren}, we have the estimate
    \begin{equation*}
\begin{array}{cc}
    &\frac{q_r(x^n_n, \cdots x^1_1; b_{1:n})}{q_r(x^1_1, \cdots x^n_n; \underline{0})}\le \frac{\displaystyle\int_{\boldsymbol{K}} \boldsymbol{\nu}^n_{r}(\underline{x})\diff\, \left(x^k_j\, \mathrm{where}\, \left.\begin{cases}
       &2\le k \le n\\
       &1\le j \le k-1
   \end{cases}\right\} \right)}{\displaystyle\int_{\boldsymbol{K}} \boldsymbol{\mu}^n_r (\underline{x})\diff\, \left(x^k_j\, \mathrm{where}\, \left.\begin{cases}
       &2\le k \le n\\
       &1\le j \le k-1
   \end{cases}\right\} \right)}\\
    \le &O_r(\mathrm{e}^{dn^2\log n})\displaystyle\prod^n_{i=1}\exp\big(-b_i^2/(4r)\big)\left( \exp\left(\frac{n(x^n_n)_+}{(2r)^{1/2}}\big)+\exp\big(\frac{(x^1_1)_-}{(2r)^{1/2}}\right)\right)\\
   & \cdot\left(\frac{b_1}{2r}\lor 1\right)^{n^2}\exp\big(\frac{x^n_n \cdot \sum_{i=1}^n b_i}{2r}\big)\left( 1 + \frac{(x^n_n)_+}{\sqrt{4r}}\right)^{n^2}.
   \end{array}
\end{equation*}
\end{proposition}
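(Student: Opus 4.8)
The plan is to assemble the three estimates already in place --- Proposition~\ref{prop: density estimate Warren}, Lemma~\ref{lemma: estimates G} and Proposition~\ref{prop: integral ratio estimate} --- the only genuine work being the bookkeeping of the $n$- and $r$-dependent constants. Proposition~\ref{prop: density estimate Warren} gives the first displayed inequality for free: pointwise in the fixed diagonal coordinates $(x^1_1,\dots,x^n_n)\in\R^n_{\le}$, the density ratio is at most $\int_{\boldsymbol{K}}\boldsymbol{\nu}^n_r\,/\,\int_{\boldsymbol{K}}\boldsymbol{\mu}^n_r$, both integrals being over the same family of (sub-diagonal) coordinates of $\boldsymbol{K}$, so it remains to control this quotient. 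The observation that makes this routine is that $x^n_n$ (and $x^1_1$) are held fixed during that integration, so every factor of $\boldsymbol{\nu}^n_r$ and $\boldsymbol{\mu}^n_r$ depending on $x^n_n$ alone --- in particular the Gaussian $\phi_r(x^n_n)$, which cancels between numerator and denominator --- leaves the integral as a multiplicative constant.

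First I would insert the bound of Lemma~\ref{lemma: estimates G} for $G^{\underline{k}}$; being uniform in $\underline{k}$, it lets the $\underline{k}$-sum defining $\boldsymbol{\nu}^n_r$ decouple and, by the binomial theorem, collapse to $\prod_{i=1}^{n}\bigl(1+2b_{n-i+1}/\sqrt{4r}\bigr)^{n-i}$, which by monotonicity of $(b_i)$ and $b_n=0$ is at most $(1+b_1/\sqrt r)^{n(n-1)/2}\le O_r(\mathrm e^{dn^2})\,(b_1/(2r)\vee 1)^{n^2}$. Combining this with the scalar factors $\prod_j\mathrm e^{-b_j^2/(4r)}$, the powers $(4r)^{(i-n)/2}$ coming from the $\Phi^{(i-n)}$'s, and the Warren normalisation $(2\pi)^{-n/2}(2r)^{-n^2/2}(4\pi r)^{n/2}$ of $\boldsymbol{\mu}^n_r$, one is left with the prefactor
\[
\prod_{j}\mathrm e^{-b_j^2/(4r)}\cdot O_r(\mathrm e^{dn^2})\,\Bigl(\tfrac{b_1}{2r}\vee 1\Bigr)^{n^2}\exp\!\Bigl(\tfrac{n(x^n_n)_+b_1}{2r}\Bigr)
\]
(the last exponential being the $x^n_n$-dependent part of Lemma~\ref{lemma: estimates G}), times the reduced quotient
\[
\frac{\displaystyle\int_{\boldsymbol{K}}\Bigl\langle\tfrac{(x^n_1)_-+(x^n_n)_+}{\sqrt{4r}}\Bigr\rangle^{n^2}\prod_{i=1}^{n-1}\phi_r(x^n_i)\prod_{i<j}(x^n_j-x^n_i)\,\diff(\cdot)}{\displaystyle\int_{\boldsymbol{K}}\prod_{i=1}^{n-1}\phi_r(x^n_i)\prod_{i<j}(x^n_j-x^n_i)\,\diff(\cdot)}\,.
\]

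To finish, I would split the angle bracket by $\langle a+b\rangle^{n^2}\le 2^{n^2/2}\langle a\rangle^{n^2}\langle b\rangle^{n^2}$, pull the constant bound $\langle(x^n_n)_+/\sqrt{4r}\rangle^{n^2}\le(1+(x^n_n)_+/\sqrt{4r})^{n^2}$ out of the integral, and rescale $\langle(x^n_1)_-/\sqrt{4r}\rangle^{n^2}\le O_r(\mathrm e^{dn^2})\langle(x^n_1)_-\rangle^{n^2}$; what survives is exactly the quotient estimated by Proposition~\ref{prop: integral ratio estimate} with $f=\prod_{i=1}^{n-1}\phi_r$, $\ell=r$ and $N=n^2$, hence is at most $N!\,(2r)^{N/2}\,O_r(\mathrm e^{dn^2\log n})\bigl(\mathrm e^{(n-1)(x^n_n)_+/(2r)^{1/2}}+\mathrm e^{(x^1_1)_-/(2r)^{1/2}}\bigr)$, where $N!=(n^2)!=\mathrm e^{O(n^2\log n)}$ by Stirling and $(2r)^{N/2}=O_r(\mathrm e^{dn^2})$. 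Multiplying the three blocks together and relabelling $d$ gives the asserted bound, with $\exp(n(x^n_n)_+b_1/(2r))$ in place of the displayed $\exp(x^n_n\sum_i b_i/(2r))$; to recover the displayed (sharper) form one bounds, in the permutation expansion of the determinant defining $G^{\underline{k}}$, each term's exponential weight $\exp\bigl(\sum_j x^n_j b_{n-\sigma(j)+1}/(2r)\bigr)$ by $\exp\bigl(x^n_n\sum_j b_j/(2r)\bigr)$ via $x^n_j\le x^n_n$ and $b_j\ge 0$, instead of invoking Lemma~\ref{lemma: estimates G} verbatim. The one delicate point in all of this is precisely the constant-tracking: one must check that the Warren normalisation, the $(4r)^{(i-n)/2}$ scalings and the Stirling factor $(n^2)!$ together contribute only $\mathrm e^{O(n^2\log n)}$ --- and in particular only $r^{O(n^2)}$, not $r^{O(n^3)}$, from the powers of $r$ --- so that nothing overwhelms the target rate $O_r(\mathrm e^{dn^2\log n})$.
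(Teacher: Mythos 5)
Your decomposition matches the paper's intended argument exactly --- the paper gives no detailed proof here, only the instruction to ``combine'' Proposition~\ref{prop: density estimate Warren}, Lemma~\ref{lemma: estimates G} and Proposition~\ref{prop: integral ratio estimate}, and your careful bookkeeping of the $r$- and $n$-dependent constants (the $(n^2)!$ Stirling factor, the $(2r)^{N/2}$ scaling, the angle-bracket split and rescaling, the binomial collapse of the $\underline{k}$-sum) is the right way to carry that out. You have also correctly spotted a real discrepancy: Lemma~\ref{lemma: estimates G} as stated only yields the weaker factor $\exp\!\bigl(n(x^n_n)_+ b_1/(2r)\bigr)$, whereas the proposition asserts the strictly sharper $\exp\!\bigl(x^n_n\sum_i b_i/(2r)\bigr)$, so some modification of Lemma~\ref{lemma: estimates G} is genuinely required.

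However, the specific fix you propose --- bounding each term in the permutation expansion of $\det\bigl(H_{k_i}(x^n_j/\sqrt{4r})\exp(x^n_j b_{n-i+1}/(2r))\bigr)$ by $\exp\!\bigl(x^n_n\sum_j b_j/(2r)\bigr)$ --- does not quite work as stated, because the object $G^{\underline{k}}$ is that determinant \emph{divided by the Vandermonde} $\prod_{i<j}(x^n_j-x^n_i)$, which can be arbitrarily small; a term-by-term bound on the numerator alone tells you nothing about the quotient near coincident coordinates, which is precisely why Proposition~\ref{prop: div diff G} passes to divided differences before invoking Hadamard. The correct implementation of your idea is an elementary row-multiplier identity: write $\exp(x^n_j b_{n-i+1}/(2r)) = \exp(x^n_n b_{n-i+1}/(2r))\,\exp\bigl((x^n_j-x^n_n)b_{n-i+1}/(2r)\bigr)$ and factor $\exp(x^n_n b_{n-i+1}/(2r))$ out of row $i$, giving
\[
G^{\underline{k}}(x^n_1,\dots,x^n_n) \;=\; \exp\!\Bigl(\tfrac{x^n_n\sum_{i=1}^n b_i}{2r}\Bigr)\,\Bigl\lvert \det\bigl(\tilde f_i(x^n_j)\bigr)_{1\le i,j\le n}\big/\textstyle\prod_{i<j}(x^n_j-x^n_i)\Bigr\rvert,
\]
with $\tilde f_i(x)=H_{k_i}(x/\sqrt{4r})\exp\bigl((x-x^n_n)b_{n-i+1}/(2r)\bigr)$. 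Since $\xi_{ij}\in[x^n_1,x^n_j]\subseteq(-\infty,x^n_n]$ and $b_{n-i+1}\ge 0$, the exponential factor in $\tilde f_i^{(j)}(\xi_{ij})$ is now at most $1$, and the rest of the argument of Proposition~\ref{prop: div diff G} and Lemma~\ref{lemma: estimates G} (divided differences, Hadamard, Hermite bounds, Leibniz) goes through verbatim with the polynomial and factorial prefactors unchanged, producing exactly the bound Proposition~\ref{prop: estimate density final} asserts. With that one step repaired, the remainder of your argument is complete and correct.
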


The following lemma controls the Radon-Nikodym derivative of inhomogeneous BLPP against Brownian motion on compacts by chaining the estimates of the Radon-Nikodym derivative in Theorem \ref{thm: Radon-Nikodym  density inhom BLPP} to the estimates for Dyson Brownian motion in Proposition \ref{prop: top line pitman pointwise bound general case}.

\begin{lemma}\label{lemma: ptwise bound Radon-Nikodym  Warren}
Fix $r>0$, $m\ge 1$, a sequence $\underline{b}= (b_\ell)_{\ell =1}^m \in\R^m_{\ge}$ with $b_m = 0$ and for $k\in \llbracket 1, m \rrbracket$ let 
     $$H_k(y) = B^{\uparrow \underline{b}}_k(y)\, , \quad  y\in [0,\infty)\,.$$Furthermore, for $k\in \llbracket 1, m-1 \rrbracket$, let 
     $$G_k(y) = B^{\uparrow \underline{0}_n}_k(y)\, , \quad  y\in [0,\infty)\,.$$ Suppose that the almost sure pointwise bound holds
\begin{equation*}
      \frac{\diff \mathrm{Law}(H_i)_{i =1}^m(\ell)}{\diff\mathrm{Law}(G_i)_{i =1}^m(\ell)}(x_1, \cdots, x_m)  \le f((x_1)+, (x_m)_-) =  g((x_1)_+)\cdot \mathrm{e}^{d_m (x_m)_-}\cdot ((x_1)_++ (x_m)_-+b_1)^N
\end{equation*}
for some non-negative non-decreasing $g$, $N\in \N$ and $d_m>0$. Then, for any $0<\ell<r$, we have the following bound on the Radon-Nikodym  derivative of $H_1$ against rate two Brownian motion (starting from the origin) on paths on $[\ell, r]$ 
\begin{equation*}
    \begin{array}{cc}
     \le &c^N \cdot N^{N/2} \mathrm{e}^{d^2_m/2}\cdot g(\xi(\ell)_+)\cdot (\xi(\ell)_++ cm\sqrt{\ell}+b_1)^N \cdot  \frac{c^{m(m-1)}m^{m(m-1)}}{\prod_{j=1}^{m-1}j!}\\
    & \cdot (\xi(\ell)_+/\sqrt{\ell}+1)^m\cdot (\xi(r)_+/\sqrt{\ell}+1)^m\,.
    \end{array}
\end{equation*}
\end{lemma}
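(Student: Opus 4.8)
The plan is to factor the Radon--Nikodym derivative of $H_1$ on $[\ell,r]$ against rate-two Brownian motion through the homogeneous process. Write $(G_i)_{i=1}^m$ for the Brownian TASEP started from the origin; its top line $G_1$ agrees in law with the top line of an $m$-level Dyson Brownian motion (since $G_1(\cdot)=B[(0,m)\to(\cdot,1)]=WB^m_1(\cdot)$, by the representation discussed in Subsection~\ref{subsec: melons}). Since the time-$\ell$ marginals of $(H_i)_{i=1}^m$ and $(G_i)_{i=1}^m$ are both equivalent to Lebesgue measure on $\R^m_\ge$ (Proposition~\ref{prop: support density warren}) and $\mathrm{Law}(G_1|_{[\ell,r]})\ll\mu|_{[\ell,r]}$ (Proposition~\ref{prop: top line pitman pointwise bound general case}), the chain rule for Radon--Nikodym derivatives gives
\[
\frac{\diff \mathrm{Law}(H_1|_{[\ell,r]})}{\diff\mu|_{[\ell,r]}}(\xi)=\frac{\diff \mathrm{Law}(H_1|_{[\ell,r]})}{\diff \mathrm{Law}(G_1|_{[\ell,r]})}(\xi)\cdot\frac{\diff \mathrm{Law}(G_1|_{[\ell,r]})}{\diff\mu|_{[\ell,r]}}(\xi).
\]
For the second factor I would apply Proposition~\ref{prop: top line pitman pointwise bound general case} with $n=m$ and weaken the bound to the form in the statement: the exponents $m-1$ may be raised to $m$ since each base $\xi(\ell)_+/\sqrt\ell+1$ and $\xi(r)_+/\sqrt r+1$ is at least $1$, and $\sqrt r$ may be replaced by $\sqrt\ell$ in the second base since $r>\ell$. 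This produces the factor $\tfrac{c^{m(m-1)}m^{m(m-1)}}{\prod_{j=1}^{m-1}j!}(\xi(\ell)_+/\sqrt\ell+1)^m(\xi(r)_+/\sqrt\ell+1)^m$.

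The substance of the lemma is the first factor. By Proposition~\ref{prop: generalised Markov} the ensembles $(H_i)_{i=1}^m$ and $(G_i)_{i=1}^m$ are continuous Markov processes, and by the remark following it they are driven by the \emph{same} time-homogeneous transition semigroup; hence the laws of the restricted ensembles $(H_i)_{i=1}^m|_{[\ell,r]}$ and $(G_i)_{i=1}^m|_{[\ell,r]}$ on $C^m_{*,*}([\ell,r])$ are the push-forwards of their respective time-$\ell$ marginals under one common Markov kernel $\mathcal K$, and $\mathcal K(\underline x,\cdot)$ is supported on ensemble paths $(\eta_1,\dots,\eta_m)$ with $(\eta_i(\ell))_{i=1}^m=\underline x$. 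Since evaluation at $\ell$ is thus a deterministic functional of the ensemble path, a direct change-of-variables computation gives
\[
\frac{\diff \mathrm{Law}\big((H_i)_{i=1}^m|_{[\ell,r]}\big)}{\diff \mathrm{Law}\big((G_i)_{i=1}^m|_{[\ell,r]}\big)}(\eta_1,\dots,\eta_m)=\frac{\diff \mathrm{Law}\big((H_i(\ell))_{i=1}^m\big)}{\diff \mathrm{Law}\big((G_i(\ell))_{i=1}^m\big)}\big((\eta_i(\ell))_{i=1}^m\big)\le f\big((\eta_1(\ell))_+,(\eta_m(\ell))_-\big)
\]
by the hypothesised bound. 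Marginalising onto the top line (the Radon--Nikodym derivative of a marginal is the conditional expectation of that of the joint law) and pulling out $g((G_1(\ell))_+)=g(\xi(\ell)_+)$, which is constant on the conditioning event since there $G_1(\ell)=\xi(\ell)$, yields
\[
\frac{\diff \mathrm{Law}(H_1|_{[\ell,r]})}{\diff \mathrm{Law}(G_1|_{[\ell,r]})}(\xi)\le g(\xi(\ell)_+)\,\mathbb{E}\!\left[\big(\xi(\ell)_+ + (G_m(\ell))_- + b_1\big)^N\mathrm{e}^{d_m(G_m(\ell))_-}\,\middle|\,G_1|_{[\ell,r]}=\xi\right].
\]

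It then remains to bound this conditional expectation by $c^N N^{N/2}\mathrm{e}^{d_m^2/2}(\xi(\ell)_+ + cm\sqrt\ell + b_1)^N$. The structural input is Theorem~\ref{thm: semi-mg decomp blpp}: the bottom curve of the homogeneous ensemble receives no reflection, so $G_m$ is the seed rate-two Brownian motion, whence $(G_m(\ell))_-\le|G_m(\ell)|$ with $G_m(\ell)\sim\mathcal N(0,2\ell)$. Unconditionally, expanding the binomial and using $\mathbb{E}[(G_m(\ell))_-^k]\le(c\sqrt k\,\sqrt\ell)^k$ bounds $\mathbb{E}[(\xi(\ell)_+ + (G_m(\ell))_- + b_1)^N]$ by $(\xi(\ell)_+ + b_1 + c\sqrt N\sqrt\ell)^N\le c^N N^{N/2}(\xi(\ell)_+ + b_1 + cm\sqrt\ell)^N$, while the exponential factor is absorbed by a Cameron--Martin / Gaussian exponential-moment estimate producing the $\mathrm{e}^{d_m^2/2}$; the two are combined through Cauchy--Schwarz (or a single Cameron--Martin shift, adjusting constants), and the $cm\sqrt\ell$ can equivalently be traced to the estimate $\norm{(G_m(\ell))_-}_{m(m-1)}\le cm\sqrt\ell$ used in Proposition~\ref{prop: top line pitman pointwise bound general case}. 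Multiplying the two factors and collecting constants then gives the asserted bound. The main obstacle is precisely this last step: $G_m$ is \emph{not} independent of the conditioned top trajectory $G_1|_{[\ell,r]}$, so one cannot simply substitute the unconditional Gaussian law of $(G_m(\ell))_-$; what is genuinely needed is control of the conditional $N$-th and exponential moments of $(G_m(\ell))_-$ given the entire path $G_1|_{[\ell,r]}$, obtained either by a stochastic-domination argument reducing the conditioning to $G_1(\ell)$, or by directly bounding the conditional density of $G_m(\ell)$ against the ensemble at time $\ell$, and this is where the delicate part of the argument lies.
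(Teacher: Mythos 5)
You have correctly identified the first two structural steps that the paper also uses: factor the derivative through the homogeneous process $G_1$, and use the shared time-homogeneous Markov kernel to reduce the ensemble-level Radon--Nikodym derivative on $[\ell,r]$ to the time-$\ell$ marginal density ratio, bounded by hypothesis by $f\big((\eta_1(\ell))_+,(\eta_m(\ell))_-\big)$. You have also correctly identified that the place where your route stalls is genuine: marginalising onto the top line yields a conditional expectation of $(G_m(\ell))_-$ given the entire trajectory $G_1|_{[\ell,r]}=\xi$, and nothing in your argument controls that conditional law. The gap you name is not a technicality one can wave away with an unconditional Gaussian moment bound, because $G_m$ and the conditioned top curve are strongly dependent.

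The paper sidesteps this entirely by \emph{not} isolating $\frac{\diff\mathrm{Law}(H_1)}{\diff\mathrm{Law}(G_1)}$ as a standalone object. Instead it bounds $\PP(H_1\in A)$ directly for Borel $A$: by the Markov-kernel identity,
\[
\PP(H_1\in A)=\mathbb{E}\!\left[\frac{\diff \mathrm{Law}(H_i)_{i=1}^m(\ell)}{\diff\mathrm{Law}(G_i)_{i=1}^m(\ell)}\big(G_1(\ell),\ldots,G_m(\ell)\big)\,\mathbf{1}\{G_1\in A\}\right]\le\mathbb{E}\big[f(G_1(\ell)_+,G_m(\ell)_-)\,\mathbf{1}\{G_1\in A\}\big]\,,
\]
where the outer expectation is over the Dyson ensemble. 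It then applies the $h$-transform bound from Proposition~\ref{prop: top line pitman pointwise bound general case} \emph{to the whole nonnegative functional} $f(\,\cdot\,)\mathbf{1}\{\cdot\in A\}$ at once (via monotone class), obtaining
\[
\le \mathbb{E}_0\big[f(B_1(\ell)_+, B_m(\ell)_-)\,C_\ell\, h(B_{1:m}(\ell))\, h(B_{1:m}(r))\,\mathbf{1}\{B_1\in A\}\big]
\]
where now $B_1,\ldots,B_m$ are \emph{independent} rate-two Brownian motions. Conditioning on $B_1=\xi$ is now harmless: $B_2,\ldots,B_m$ are independent of $\xi$, so the pathwise Radon--Nikodym derivative is bounded by an unconditional Gaussian expectation, which the paper then dispatches with generalised H\"older exactly along the lines you sketched, producing the $c^N N^{N/2}\mathrm{e}^{d_m^2/2}$ and $cm\sqrt\ell$ factors from $\|B_j(\ell)_-\|_{2m(m-1)}$ and the Gaussian exponential moment. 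In short: the fix for your gap is to apply the Dyson-to-Brownian $h$-transform inequality \emph{before} trying to take any conditional expectation over the lower curves, so that the awkward dependence disappears by the independence of the $B_j$.
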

\begin{proof}
    Fix $A\subseteq C_{*,*}([\ell,r])$ Borel measurable and observe that since the ensemble $H_{1:m}^m(\cdot)$ and $G_{1:m}(\cdot)$ are Markov process with the same transition kernel, we compute using \cite[Theorem 7]{O2002representation} analogously to Proposition \ref{prop: top line pitman pointwise bound general case} and coupling the ensemble $G_{1:m}^m(\cdot)$ to the melon transform of independent Brownian motions generating it through the RSK correspondence:
    \[
    WB^m_m(\cdot) \le G_m(\cdot) \le G_1(\cdot) \le WB^m_1 (\cdot)
    \]
    to obtain
    \begin{equation*}
        \begin{array}{cc}
            \PP(H_1(\cdot)\in A) & =  \mathbb{E}\left[\frac{\diff \mathrm{Law}(H_i)_{i =1}^m(r)}{\diff\mathrm{Law}(G_i)_{i =1}^m(\ell)}(G_1(\ell), \cdots, G_m(\ell))\cdot\mathbf{1}(G_1(\cdot)\in A)\right]\\
            & \le  \mathbb{E}\left[f(G_1(\ell)_+, G_m(\ell)_-)\cdot\mathbf{1}(G_1(\cdot)\in A)\right]\\
            & \le  \mathbb{E}\left[f(B_1(\ell)_+, B_m(\ell)_-)\cdot C_\ell h(B_{1:m}(\ell))h(B_{1:m}(r))\cdot\mathbf{1}(B_1(\cdot)\in A)\right]
        \end{array}
    \end{equation*}
where
\begin{equation*}
    C_t = \left[t^{m(m-1)/2}\prod_{j=1}^{m-1}j!\right]^{-1}, \quad t> 0
\end{equation*}
and $h(x_1, x_2, \cdots, x_n) = \prod_{1\le i < j\le m}(x_i-x_j)_+$.\\
Thus, the Radon-Nikodym derivative of $H_n$ against standard Brownian motion on $[\ell,r]$ is pointwise bounded for $\mu$-a.a. paths $\xi$ by (essentially computing the marginal)

\begin{align*}
     &\quad  C_\ell\mathbb{E}_0\left[f(\xi(\ell)_+, B_m(\ell)_-)\cdot  h(\xi(\ell), B_2(\ell), \cdots, B_m(\ell))\cdot h(\xi(r), B_2(r), \cdots, B_m(r))\right] \\[2ex]
     & = \displaystyle C_\ell\mathbb{E}_0\left[f(\xi(\ell)_+, B_m(\ell)_-)\cdot \prod_{1 < j\le m}(\xi(\ell)-B_j(\ell))_+ \cdot \prod_{1 < j\le m}(\xi(r)-B_j(r))_+\right.\\
     &\left.\cdot\prod_{2\le i < k\le m}(B_i(\ell)-B_k(\ell))_+(B_i(r)-B_k(r))_+ \right] \\[2ex]
     & \le  \displaystyle C_\ell\mathbb{E}_0\left[f(\xi(\ell)_+, B_m(\ell)_-)\cdot \prod_{1 < j\le m}(\xi(\ell)_+ +B_j(\ell)_-) \right.\\
     & \left.\cdot \prod_{1 < j\le m}(\xi(r)_+-B_j(r)_-) \cdot\prod_{2\le i < k\le m}(B_i(\ell)_++B_k(\ell)_-)(B_i(r)_++B_k(r)_-) \right]\\[2ex]
     & = \displaystyle C_\ell\mathbb{E}_0\left[g((\xi(\ell))_+)\cdot \mathrm{e}^{d_m (B_m)(\ell)_-}\cdot ((\xi(\ell))_++ (B_m)(\ell)_-+b_1)^N\cdot \prod_{1 < j\le m}(\xi(\ell)_+ +B_j(\ell)_-)\right.\\
     &\displaystyle \left. \cdot \prod_{1 < j\le m}(\xi(r)_+-B_j(r)_-) \cdot\prod_{2\le i < k\le m}(B_i(\ell)_++B_k(\ell)_-)(B_i(r)_++B_k(r)_-) \right]\\[2ex]
     & \displaystyle  \le C_\ell \mathrm{e}^{d_m^2/2} g((\xi(\ell))_+)\mathbb{E}_0\left[\left(((\xi(\ell))_++ (B_m)(\ell)_-+b_1)^N\cdot \prod_{1 < j\le m}(\xi(\ell)_+ +B_j(\ell)_-)\right.\right.\\
     &  \displaystyle \left.\left. \cdot \prod_{1 < j\le m}(\xi(r)_+-B_j(r)_-) \cdot\prod_{2\le i < k\le m}(B_i(\ell)_++B_k(\ell)_-)(B_i(r)_++B_k(r)_-)\right)^2 \right]^{1/2}\\[2ex]
     &  \displaystyle \le \displaystyle C_\ell \mathrm{e}^{d_m^2/2} g((\xi(\ell))_+)\norm{\big(\xi(\ell)_+ + b_1+ B_m(\ell)_- \big)^N\cdot(\xi(\ell)_++ B_m(\ell)_-)}_{2m(m-1)})\\
     &  \displaystyle \cdot\prod_{1 < j < m}(\xi(\ell)_++\norm{B_j(\ell)_-}_{2m(m-1)})\cdot (\xi(r)_++\norm{B_j(r)_-}_{2m(m-1)})\\[2ex]
     &  \displaystyle \cdot \prod_{2\le i < k\le m}(\norm{B_i(\ell)_-}_{2m(m-1)}+\norm{B_k(\ell)_-}_{2m(m-1)})\cdot(\norm{B_i(r)_-}_{2m(m-1)}+\norm{B_k(r)_-}_{2m(m-1)})\\[2ex]
     &  \displaystyle \le  C_\ell \mathrm{e}^{d_m^2/2}\cdot c^N \cdot N^{N/2}\cdot g(\xi(\ell)_+)\cdot (\xi(\ell)_++ \norm{B_m(\ell)_-}_{2m(m-1)}+b_1)^N\cdot  (\xi(\ell)_++ \norm{B_m(\ell)_-}_{2m(m-1)})\\[1ex]
     & \displaystyle  \cdot\prod_{1 < j < m}(\xi(\ell)_++\norm{B_j(\ell)_-}_{2m(m-1)})\cdot (\xi(r)_++\norm{B_j(r)_-}_{2m(m-1)})\\[2ex]
     &  \displaystyle \cdot \prod_{2\le i < k\le m}(\norm{B_i(\ell)_-}_{2m(m-1)}+\norm{B_k(\ell)_-}_{2m(m-1)})\cdot(\norm{B_i(r)_-}_{2m(m-1)}+\norm{B_k(r)_-}_{2m(m-1)})\,,
\end{align*}
by generalised H\"{o}lder, where $\norm{\cdot}_{m2(m-1)}$ denotes the $L^{2m(m-1)}(\PP)$ norm and the fact that for a standard normal random variable $Z$, we have the estimates
\begin{equation*}
    \norm{(Z_-)^n}_{m} \le c^n \cdot n^{n/2}\norm{Z_-}_{m}^n\, , \quad n,m\in \N
\end{equation*}
for some universal constant $c> 0$ using the $O((\cdot)^{1/2})$ asymptotics of moments of $Z$. The latter also implies that for all $1\le j \le m$, 
\begin{equation*}
    \norm{B_j(\ell)_-}_{m(m-1)} \le cm\sqrt{\ell}\,,
\end{equation*}
where $c>0$ is a universal constant, and similarly for $B(r)$ (which follows from the asymptotics of the moments of the gaussian distribution). Thus, we have the further estimate for the Radon-Nikodym  derivative for $\mu$-a.a. paths $\xi$
\begin{equation*}
    \begin{array}{cc}
         & \le \displaystyle C_\ell\cdot c^N \cdot N^{N/2} \mathrm{e}^{d_m^2/2}\cdot g(\xi(\ell)_+)\cdot (\xi(\ell)_++ cm\sqrt{\ell}+b_1)^N\cdot   (\xi(\ell)_++ cm\sqrt{\ell})^m\cdot  (\xi(r)_++ cm\sqrt{\ell})^m\\
     & \cdot \prod_{2\le i < k\le m}(cm\sqrt{\ell}+cm\sqrt{\ell})\cdot(cm\sqrt{r}+cm\sqrt{r})\\[2ex]
     & \le c^N \cdot N^{N/2} \mathrm{e}^{d_m^2/2}\cdot g(\xi(\ell)_+)\cdot (\xi(\ell)_++ cm\sqrt{\ell}+b_1)^N \cdot  (\xi(\ell)_++ cm\sqrt{\ell})^m\cdot  (\xi(r)_++ cm\sqrt{\ell})^m\\
     & \cdot  \frac{c^{m(m-1)}m^{m(m-1)}}{\prod_{j=1}^{m-1}j!}\,.    
    \end{array}
\end{equation*}
\end{proof}

Now, we finally apply Lemma \ref{lemma: ptwise bound Radon-Nikodym  Warren} and Proposition \ref{prop: estimate density final} to obtain the desired pathwise and $L^{\infty-}$ comparison of inhomogeneous BLPP against Brownian motion on compacts, while also estimating from above the growth for all $p>1$ of $L^p$ norms of the Radon-Nikodym derivative as the number of lines tends to infinity. This is the content of the following Theorem.
\begin{theorem}\label{thm: tasep bm abs cont}
Fix $r>0$, $m\ge 1$, a sequence $\underline{b}=(b_\ell)_{\ell =1}^m \in\R^m_{\ge}$ with $b_m = 0$ and let $H(\cdot)$ denote inhomogeneous Brownian LPP started from $\underline{b}$. Then, for all $0<\ell<r<\infty$, we have that the Radon-Nikodym of the law of $H(\cdot)$ against a rate two Brownian motion starting from the origin $\mu$ on $[\ell,r]$ is in $L^{\infty-}(\mu\vert_{[\ell, r]})$. In particular, with $\xi_{\ell, r, m, \underline{b}}$ denoting the law of $H$ as defined above on $[\ell, r]$,
     \begin{equation*}
         \norm{\frac{\diff \xi_{\ell, r, m, \underline{b}}}{\diff\mu\vert_{[\ell, r]}}}_{L^p(\mu\vert_{[\ell, r]})} =  O_p(\mathrm{e}^{d_pm^2\log m}),\quad \mathrm{for all}\quad p>1.
     \end{equation*}
     for some universal in $m\in \N$ (though possibly $p$-dependent) constant $d_p>0$ for all $p>1$.
\end{theorem}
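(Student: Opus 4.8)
The plan is to obtain the theorem by concatenating the two principal ingredients already established: the density-ratio estimate of Proposition~\ref{prop: estimate density final}, which controls the Radon-Nikodym derivative of the time-$\ell$ marginal of inhomogeneous against homogeneous Brownian TASEP, and the transfer Lemma~\ref{lemma: ptwise bound Radon-Nikodym  Warren}, which converts such a marginal bound into a pathwise bound for inhomogeneous BLPP against Brownian motion on $[\ell,r]$. By Theorem~\ref{thm: Radon-Nikodym  density inhom BLPP} the Radon-Nikodym derivative $\tfrac{\diff\mathrm{Law}(H_i)_{i=1}^m(\ell)}{\diff\mathrm{Law}(G_i)_{i=1}^m(\ell)}$ is exactly the ratio $q_\ell(\,\cdot\,;\underline b)/q_\ell(\,\cdot\,;\underline 0)$ appearing in Proposition~\ref{prop: estimate density final} with its time parameter set equal to $\ell$. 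If $\underline b=\underline 0$ then $H$ is the top line of an $m$-Dyson Brownian motion and the conclusion follows at once from Proposition~\ref{prop: top line pitman pointwise bound general case} (whose pointwise bound is integrated against $\mu$ exactly as below), so we may assume $b_1>0$.

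First I would put the bound of Proposition~\ref{prop: estimate density final} into the shape required by the hypothesis of Lemma~\ref{lemma: ptwise bound Radon-Nikodym  Warren}, under the identification $x^n_n\leftrightarrow x_1$ (top line) and $x^1_1\leftrightarrow x_m$ (bottom line), so that $(x^n_n)_+$ plays the role of $(x_1)_+$ and $(x^1_1)_-$ that of $(x_m)_-$. Using the elementary inequality $a+b\le 2ab$ for $a,b\ge 1$, the sum $\exp\big(\tfrac{m(x_1)_+}{\sqrt{2\ell}}\big)+\exp\big(\tfrac{(x_m)_-}{\sqrt{2\ell}}\big)$ is at most twice the product of the two exponentials; since $\underline b\in\R^m_{\ge}$ and $b_m=0$ force $b_i\ge 0$, one has $\exp\big(\tfrac{x_1\sum_i b_i}{2\ell}\big)\le\exp\big(\tfrac{(x_1)_+\sum_i b_i}{2\ell}\big)$; and the degree-$m^2$ polynomial factor obeys $\big(1+\tfrac{(x_1)_+}{\sqrt{4\ell}}\big)^{m^2}\le M_{m,\ell,b_1}\,\big((x_1)_++(x_m)_-+b_1\big)^{m^2}$ with $M_{m,\ell,b_1}=\max\!\big(b_1^{-1},(4\ell)^{-1/2}\big)^{m^2}$, because $\tfrac{1+s/\sqrt{4\ell}}{s+b_1}$ is bounded on $[0,\infty)$ by $\max(b_1^{-1},(4\ell)^{-1/2})$. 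Collecting the $(x_1)_+$-dependent factors into
\[
g(s)\ :=\ C_{m,\ell,\underline b}\,M_{m,\ell,b_1}\,\exp\!\Big(s\Big(\tfrac{m}{\sqrt{2\ell}}+\tfrac{\sum_i b_i}{2\ell}\Big)\Big),\qquad C_{m,\ell,\underline b}=O_\ell\big(\mathrm e^{d m^2\log m}\big)\big(\tfrac{b_1}{2\ell}\lor 1\big)^{m^2}\prod_i\mathrm e^{-b_i^2/(4\ell)},
\]
which is non-negative and non-decreasing, the bound is exactly of the required form with this $g$, with $N=m^2$, and with $d_m=(2\ell)^{-1/2}$.

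Feeding this into Lemma~\ref{lemma: ptwise bound Radon-Nikodym  Warren} (applied at time $\ell$) yields an almost sure pointwise bound, on paths $\xi$ on $[\ell,r]$,
\[
\frac{\diff\xi_{\ell,r,m,\underline b}}{\diff\mu\vert_{[\ell,r]}}(\xi)\ \le\ c^{m^2}(m^2)^{m^2/2}\,\mathrm e^{1/(4\ell)}\,\frac{c^{m(m-1)}m^{m(m-1)}}{\prod_{j=1}^{m-1}j!}\,g\big(\xi(\ell)_+\big)\big(\xi(\ell)_++cm\sqrt\ell+b_1\big)^{m^2}\big(\tfrac{\xi(\ell)_+}{\sqrt\ell}+1\big)^m\big(\tfrac{\xi(r)_+}{\sqrt\ell}+1\big)^m,
\]
which, being a finite deterministic function of $(\xi(\ell),\xi(r))$, already establishes absolute continuity. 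To bound the $L^p(\mu\vert_{[\ell,r]})$-norm I would use that under $\mu$ the pair $(\xi(\ell),\xi(r))$ is jointly Gaussian with $\xi(\ell)\sim N(0,2\ell)$ and $\xi(r)\sim N(0,2r)$, apply H\"older to split off the $\xi(r)$ factor, and then apply H\"older once more to the $\xi(\ell)$-dependent part to separate the exponential factor in $g(\xi(\ell)_+)$ (whose rate is $\kappa_m=O_{\ell,b_1}(m)$) from the polynomial factors of degree $O(m^2)$; each resulting expectation is a finite Gaussian exponential or polynomial moment, so every $L^p$ norm is finite and the derivative lies in $L^{\infty-}(\mu\vert_{[\ell,r]})$. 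For the quantitative bound I would track the $m$-dependence: by Stirling $\log\prod_{j=1}^{m-1}j!=\tfrac12m^2\log m+O(m^2)$ and $m^{m(m-1)}=\mathrm e^{m^2\log m+O(m\log m)}$, so $\tfrac{c^{m(m-1)}m^{m(m-1)}}{\prod_{j=1}^{m-1}j!}=\mathrm e^{O(m^2\log m)}$; likewise $(m^2)^{m^2/2}=\mathrm e^{m^2\log m}$, $C_{m,\ell,\underline b}M_{m,\ell,b_1}=O_\ell(\mathrm e^{d m^2\log m})\mathrm e^{O_{\ell,b_1}(m^2)}$; the $2p$-th Gaussian moment of a degree-$m^2$ polynomial, raised to $1/(2p)$, is $\mathrm e^{O_p(m^2\log m)}$ via the asymptotics $(\mathbb E|Z|^{k})^{1/p}=\mathrm e^{O_p(k\log k)}$ with $k=O_p(m^2)$; the exponential factor contributes an exponential moment of size $\mathrm e^{O_{p,\ell,b_1}(m^2)}$; and the degree-$m$ polynomial factors only $\mathrm e^{O_p(m\log m)}$. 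Multiplying gives $\big\|\tfrac{\diff\xi_{\ell,r,m,\underline b}}{\diff\mu\vert_{[\ell,r]}}\big\|_{L^p(\mu\vert_{[\ell,r]})}=O_p(\mathrm e^{d_pm^2\log m})$ for a $d_p>0$ independent of $m$ (one enlarges $d_p$ to cover the finitely many $m$ with $\log m<1$).

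Almost all of the substance has been absorbed into Proposition~\ref{prop: estimate density final} (which rests on Lemma~\ref{lemma: estimates G}, Proposition~\ref{prop: density estimate Warren}, and the Appendix integral estimates) and into Lemma~\ref{lemma: ptwise bound Radon-Nikodym  Warren}, so the remaining work is bookkeeping. The one place where care is genuinely needed is the final Gaussian integration: the pointwise bound carries polynomial factors of degree $\Theta(m^2)$ in $\xi(\ell)_+$ together with the combinatorial prefactor $(m^2)^{m^2/2}$, and the point is that taking $p$-th moments of degree-$\Theta(m^2)$ polynomials against a Gaussian costs only $\mathrm e^{O_p(m^2\log m)}$ rather than, say, $\mathrm e^{O(m^3)}$ — this, together with the $\tfrac12 m^2\log m$ saving coming from $\prod_{j=1}^{m-1}j!$ against $m^{m(m-1)}$, is precisely what keeps the $L^p$-growth at the advertised order $\mathrm e^{d_pm^2\log m}$.
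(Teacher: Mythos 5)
Your proposal is correct and it reconstructs the paper's intended argument: apply Proposition~\ref{prop: estimate density final} at time $\ell$ to get the marginal density-ratio bound, massage it into the hypothesis of Lemma~\ref{lemma: ptwise bound Radon-Nikodym  Warren}, feed it through to get the pathwise bound, and integrate Gaussian moments. In fact your organisation is a touch cleaner than the paper's own proof text, which appears to cite the two ingredients in reversed roles, and you explicitly dispatch the $b_1=0$ (i.e.\ $\underline b=\underline 0$) edge case, which the paper leaves implicit.

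One mild inefficiency worth flagging: to fit the polynomial factor $(1+(x_1)_+/\sqrt{4\ell})^{m^2}$ into the hypothesis form $\big((x_1)_++(x_m)_-+b_1\big)^{N}$, you introduce the constant $M_{m,\ell,b_1}=\max\big(b_1^{-1},(4\ell)^{-1/2}\big)^{m^2}$, which degenerates as $b_1\searrow 0$ even though, for fixed $b_1>0$, it is only $\mathrm{e}^{O_{\ell,b_1}(m^2)}$ and therefore harmless for the advertised $\mathrm{e}^{d_p m^2\log m}$ rate. You could avoid this entirely by taking $N=0$ and absorbing the polynomial $(1+s/\sqrt{4\ell})^{m^2}$ directly into the non-decreasing function $g(s)$, which is permitted by the hypothesis of Lemma~\ref{lemma: ptwise bound Radon-Nikodym  Warren}; this matches the shape of the paper's own pathwise bound, where the factor $(\xi(\ell)_++cm\sqrt{\ell})^{m^2}$ appears without any $b_1^{-1}$ dependence. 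Your $m$-dependence bookkeeping (Stirling for $\prod j!$ against $m^{m(m-1)}$, $(m^2)^{m^2/2}=\mathrm{e}^{m^2\log m}$, Gaussian moments $(\mathbb E|Z|^k)^{1/p}=\mathrm{e}^{O_p(k\log k)}$ with $k=O_p(m^2)$) is exactly the accounting the paper compresses into the sentence ``this clearly gives the desired growth estimates,'' and it is correct.
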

\begin{proof}

     First recall from Definition \ref{def: brownian tasep} that $H(\cdot)$ is the top line $H_1(\cdot)$ of Brownian TASEP $(H_1, H_2, \cdots, H_m)(\cdot)$ started from $\underline{b}$.
     
     Now, suppose first that $b_m = 0$. Then, using Lemma \ref{lemma: ptwise bound Radon-Nikodym  Warren} we conclude that the Radon-Nikodym  derivative of 
\begin{equation*}
    \begin{array}{cc}
         & \frac{\diff \mathrm{Law}(H_i)_{i =1}^m(\cdot)}{\diff\mathrm{Law}(G_i)_{i =1}^m(\cdot)}\le O(\mathrm{e}^{dm^2\log m})\left( \exp\left(\frac{m(\omega_1(\ell))_+}{(2\ell)^{1/2}}\big)+\exp\big(-\frac{(\omega_m(\ell))_-}{(2\ell)^{1/2}}\right)\right)\\
        & \cdot\left(\frac{b_m}{2\ell}\lor 1\right)^{m^2}\exp\big(\frac{m(\omega_m(\ell))_+ \cdot b_m}{2\ell}\big)\left( 1 + \frac{(\omega_m(\ell))_+}{\sqrt{4\ell}}\right)^{m^2}
    \end{array}
\end{equation*}
on paths $\omega\in C^{m}_{*,*}([\ell,r])$ such that $\omega_{i}(\cdot) \le \omega_j (\cdot)$ for all $1\le i < j\le m$ for some universal constant $d> 0$. Now, using Proposition \ref{prop: estimate density final} we obtain the following pointwise bound
    \begin{equation*}
        \begin{array}{cc}
             & \frac{\diff \xi_{\ell, r, m, \underline{b}}}{\diff\mu\vert_{[\ell, r]}} \le  O_\ell(\mathrm{e}^{dm^2\log m})\cdot\left(\frac{b_m}{2\ell}\lor 1\right)^{m^2}\cdot \exp\big(\frac{(\xi(\ell))_+ \cdot \sum_{i=1}^m b_i}{2\ell}\big)\cdot\left( \exp\left(\frac{m(\xi(\ell))_+}{(2\ell)^{1/2}}\big)+\mathrm{e}^{\frac{1}{4\ell}}\right)\right)\\
             & \cdot (\xi(\ell)_++ cm\sqrt{\ell})^{m^2}\prod_{1\le i < j\le m}(\xi(\ell)_+/\sqrt{\ell}+1)\cdot (\xi(r)_+/\sqrt{\ell}+1)
        \end{array}
    \end{equation*}
    on paths $\xi$ on $[\ell, r]$ for some universal constants $c, d> 0$. Now this clearly gives the desired growth estimates and $L^{\infty-}(\mu\vert_{[\ell, r]})$ control on the Radon-Nikodym  derivative.

    Now for the general case, observe that we obtain inductively using the definition of the Pitman transform that
     \[
H_{k-1}(y) = b_1 + W\tilde{L}_1(y)\,,\quad y\ge 0
    \]
    and $H_m = B_m-b_m$, where $\tilde{L} = (B_{k-1}-b_m, H^{k})$, $(H^{\ell})^m_{\ell=1}$ is the ensemble obtain by consecutively reflecting upwards the family $(B_\ell-b_m)_{\ell=1}^m$ of independent rate two Brownian motions starting from $(b_\ell-b_m)_{\ell=1}^m$. Thus, we need to chain the Radon-Nikodym  derivative of $\xi_{\ell, r, m, \underline{b}}$ against a rate two Brownian motion starting at $b_m$ with that of a rate two Brownian motion starting from the origin. We thus obtain as above
     \begin{equation*}
        \begin{array}{cc}
             & \frac{\diff \xi_{\ell, r, m, \underline{b}}}{\diff\mu\vert_{[\ell, r]}} \le  O_\ell(\mathrm{e}^{dm^2\log m})\cdot\displaystyle\prod^m_{i=1}\exp\big(-(b_i-b_m)^2/(4\ell)\big)\cdot \left(\frac{(b_1-b_m)}{2\ell}\lor 1\right)^{m^2}\\
             & \cdot \exp\big(\frac{(\xi(\ell)-b_m) \cdot\sum_{i=1}^m(b_i-b_m)}{2\ell}\big)\cdot \exp\left(\frac{m(\xi(\ell)-b_1)_+}{(2\ell)^{1/2}}\right)\\
             & \cdot ((\xi(\ell)-b_m)_++ m\sqrt{\ell})^{m^2+m}\cdot ((\xi(r)-b_m)_+/\sqrt{\ell}+1)^m\cdot \mathrm{e}^{\frac{\xi(\ell)b_m}{2\ell}}\exp\big(-b_m^2/(4\ell)\big)
        \end{array}
    \end{equation*}
    on paths $\xi$ on $[\ell, r]$ for some universal constants $c, d> 0$ which, again, gives the desired growth estimates and $L^{\infty-}(\mu\vert_{[\ell, r]})$ control on the Radon-Nikodym  derivative of the law of $H_1$ as defined above on $[\ell, r]$ against a rate two Brownian motion starting from the origin on $[\ell,r]$, concluding the proof.
\end{proof}
\begin{remark}
\begin{itemize}
    \item Note that by translation, the increment process $H(\cdot)-H(\ell)$ on $[\ell,r]$ has a Radon-Nikodym  derivative against rate two Brownian motion on $[\ell,r]$ that only depends on the values $(b_\ell-b_m)_{\ell = 1}^m$. In particular, applying Proposition \ref{prop: top line pitman pointwise bound general case} with $\tilde{\xi}_{\ell, r, m, \underline{b}}$ denoting the law of $H(\cdot+\ell)-H(\ell)$ on $[0,r-\ell]$ we obtain
     \begin{equation*}
     \begin{array}{cc}
         \norm{\frac{\diff \tilde{\xi}_{\ell, r, m, \underline{b}}}{\diff\mu}}_{L^p(\mu)} &= \displaystyle\prod^m_{i=1}\exp\big(-(b_i-b_m)^2/(4\ell)\big)\cdot \left(\frac{(b_1-b_m)}{2\ell}\lor 1\right)^{m^2}\\
         & \quad \cdot O_{p,\ell, r}\Bigg(\mathrm{e}^{d m^2\log m + c_{\ell} \big(\sum_{i=1}^m (b_i-b_m)\big)^2}\Bigg),
    \end{array}
     \end{equation*}
     for some constants $c_{\ell,r},d>0$ independent of $m\in \N$ and all $p>1$. The same can be said if one shifts downward, that is, start with initial conditions $(b_\ell-b_1)_{\ell = 1}^m$.
    \item In the special case of `almost' homogeneous boundary data, by a special coupling to Brownian motion in the Gelfand-Tsetslin cone, one can obtain much simpler pathwise estimates on the Radon-Nikodym derivative of inhomogeneous BLPP, see Proposition \ref{prop: top line pitman pointwise bound general case special coupling} in the Appendix.
\end{itemize}
\end{remark}

\section{Future directions and applications}\label{sec: future directions and applications}\label{sec: misc}

In this section, we will discuss some applications of the pathwise and $L^{\infty-}$ estimates obtained thus far for inhomogeneous Brownian LPP. A first application will be to consider a simplified model for the KPZ fixed point, see \cite{Sarkar2021Brownian} and obtain Radon-Nikodym derivative estimates; this is the content of Theorem \ref{thm: tasep bm abs cont}. This model is relevant because the KPZ fixed point can be realised as inhomogeneous \textbf{Airy} LPP of `random depth' and initial data, see \cite{Sarkar2021Brownian}. The definition of the above is the same as that for inhomogeneous Brownian LPP, save for the random environment which is the parabolic Airy line ensemble. One can exploit the Brownian nature of the parabolic Airy line ensemble obtained in \cite{Corwin2014Brownian} and study the KPZ fixed point within the framework of Brownian TASEP.

A first step in this direction providing some quantitative control for the actual KPZ fixed point starting from some appropriate initial data is the main result of \cite{tassopoulos2025quantitativebrownianregularitykpz}. In that paper, we use the estimates in Theorem \ref{thm: tasep bm abs cont} as a crucial technical input.

\subsection{RN derivative of BLPP with ``random depth'' with unbounded support}\label{subsec: BLPP rand depth RN}
We now obtain pathwise and $L^{\infty-}$ estimates for the Radon-Nikodym derivative of inhomogeneous BLPP of `random depth', which we use as a toy model for the KPZ fixed point as defined in \cite{Sarkar2021Brownian}. This model is inspired by a reduction of the KPZ fixed point to BLPP of ``random depth'' since one can exploit the Brownian structure of the random ensembles involved in its construction, particularly, the Brownian Gibbs property of the Airy line ensemble, see \cite{Corwin2014Brownian} to reduce the full problem to that of inhomogeneous BLPP incurring the cost given by an appropriate Radon-Nikodym derivative.

\begin{theorem}\label{thm: KPZ rn}
Fix $r>0$, and let $B = (B_1, B_2, \cdots)\in C^{\N}([0,\infty))$ be an ensemble of independent rate two Brownian motions starting from the origin. Furthermore, let $(G_\ell)_{\ell =1}^\infty$ be an almost surely non-increasing family of random variables independent from $B$. Suppose further that there exists a $\sigma\big(G_\ell: \ell\in \N\big)$-measurable random positive integer $L_0$ that is a stopping time with respect to the filtration  $\sigma\big(G_\ell: \ell\in \llbracket 1, n\rrbracket \big)_{n\in \N}$  and a universal constant $c>0$ such that 
\[
\displaystyle\sup_{r\in [0,\infty)}\mathrm{e}^{cr^3}\PP(L_0\ge r)<\infty\,.
\]
Moreover, suppose there exists an $\epsilon> 0$ such that for all $m>0$ on the event $\{L_0\le m\}$,
\begin{equation*}
\displaystyle\max_{1\le \ell\le m}(G_{\ell})_- \le Cm^{\frac{1}{2}-\epsilon},
\end{equation*}
for some universal positive constant $C>0$. Finally, suppose that
there exists another positive constant such that 
\begin{equation}\label{eq: geod conc}
\displaystyle\sup_{r\in [0,\infty)}\mathrm{e}^{dr^3}\PP((G_1)_+\ge r)<\infty\,.
\end{equation}
Now, suppose $H(\cdot)$ is a continuous stochastic process on the positive reals such that for all $m\in \N$, on the event $\{L_0\le m\}$
\[
H(y) = \displaystyle\max_{1\leq \ell\leq m}(G_\ell+B[(0,\ell)\to(y,1)]) \,,\quad y\ge 0\,.
 \]
Then, for all $0<\ell<r<\infty$, we have that the Radon-Nikodym 
     of the law of $H(\cdot)$ against a rate two Brownian motion starting from the origin $\mu$ on $[\ell,r]$ is in $L^{\infty-}(\mu)$.
\end{theorem}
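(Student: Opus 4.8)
The plan is to condition on the random environment and transfer Theorem \ref{thm: tasep bm abs cont}.

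Set $\mathcal{G} := \sigma(G_\ell : \ell \in \mathbb{N})$. Since $L_0$ and the initial data $\underline G$ are $\mathcal{G}$-measurable while $B$ is independent of $\mathcal{G}$, on the $\mathcal{G}$-measurable event $\{L_0 = m\}$ the process $H$ equals, by hypothesis, the inhomogeneous BLPP of depth $m$ driven by $B_1,\dots,B_m$ with the now-deterministic non-increasing data $(G_1,\dots,G_m)\in\mathbb{R}^m_\ge$; hence $\mathrm{Law}(H\mid\mathcal G)=\xi_{\ell,r,L_0,\underline G}$ almost surely and Theorem \ref{thm: tasep bm abs cont} applies conditionally on $\{L_0=m\}$. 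Because $\mu\vert_{[\ell,r]}$ does not depend on $\mathcal{G}$, Fubini--Tonelli gives $\frac{\diff\mathrm{Law}(H)}{\diff\mu\vert_{[\ell,r]}}(\xi)=\mathbb{E}\!\big[\frac{\diff\mathrm{Law}(H\mid\mathcal{G})}{\diff\mu\vert_{[\ell,r]}}(\xi)\big]$, so by Minkowski's integral inequality it suffices to prove, for each fixed $p<\infty$,
\[
\sum_{m\ge1}\mathbb{E}\Big[\mathbf 1_{\{L_0=m\}}\,\Big\|\tfrac{\diff\mathrm{Law}(H\mid\mathcal G)}{\diff\mu\vert_{[\ell,r]}}\Big\|_{L^p(\mu\vert_{[\ell,r]})}\Big]<\infty .
\]

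For the summand, the starting point is the explicit form of $\tfrac{\diff\mathrm{Law}(H\mid\mathcal G)}{\diff\mu\vert_{[\ell,r]}}$ from Theorem \ref{thm: Radon-Nikodym  density inhom BLPP} chained with Proposition \ref{prop: top line pitman pointwise bound general case}: a Dyson-type prefactor of size $e^{O(m^2\log m)}$ times a ratio of Warren densities in which the inhomogeneous numerator $q_\ell(\,\cdot\,;\underline b)$ carries Gaussian factors $\prod_i\phi_\ell(\,\cdot-b_i)$. The point is to retain these Gaussian factors rather than pass to the crude pointwise majorant of Theorem \ref{thm: tasep bm abs cont}: when the expectation over $\mathcal G$ is taken, each $\phi_\ell(\,\cdot-G_i)$ is averaged against the law of $G_i$, and since $(G_1)_+$ (hence every $(G_i)_+\le(G_1)_+$, $\underline G$ being non-increasing) has the super-cubic tail \eqref{eq: geod conc}, this averaging more than compensates the exponential tilt $\exp\!\big(\tfrac{\xi(\ell)\sum_i b_i}{2\ell}\big)$ that the inhomogeneous density otherwise exhibits; the negative parts contribute only $e^{O(m^{3-2\epsilon})}=e^{o(m^3)}$ by the bound $\max_{1\le\ell\le m}(G_\ell)_-\le Cm^{1/2-\epsilon}$ available on $\{L_0=m\}$. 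The remaining $m$-dependence is thus $e^{O_{p,\ell,r}(m^2\log m)}\cdot e^{o(m^3)}$, and multiplying by the depth tail $\PP(L_0=m)\le C_0e^{-cm^3}$ makes the series converge; letting $p\uparrow\infty$ gives $\tfrac{\diff\mathrm{Law}(H)}{\diff\mu\vert_{[\ell,r]}}\in L^{\infty-}(\mu)$.

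The main obstacle is exactly this summation estimate, and in particular the need to avoid lossy majorisation: the pointwise bound of Theorem \ref{thm: tasep bm abs cont}, specialised to a fixed but atypically large initial datum, grows too fast in the depth to be absorbed by the cubic depth tail, so one must carry the Gaussian suppression of the inhomogeneous density through the conditioning--averaging argument and balance it against the super-cubic tail of $(G_1)_+$ and the $m^{1/2-\epsilon}$ control of the negative parts. An alternative route for the same step is to split $\{L_0=m\}$ according to $\{(G_1)_+\le m^{1/2-\epsilon}\}$ --- where all data entries are $O(m^{1/2-\epsilon})$, so the data is near-homogeneous and the sharper majorant of Proposition \ref{prop: top line pitman pointwise bound general case special coupling} applies --- versus its complement, on which one pays the small probability $\PP((G_1)_+>m^{1/2-\epsilon})\le C'e^{-dm^{3(1/2-\epsilon)}}$ while controlling the Radon--Nikodym derivative through the near-Brownian structure of $H$ there, namely $H=B_1+\alpha^1$ with $\alpha^1$ non-decreasing, from Theorem \ref{thm: semi-mg decomp blpp}.
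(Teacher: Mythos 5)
Your proposal takes a genuinely different route from the paper. Where the paper fixes a Borel set $A$, truncates the depth at $m$, applies H\"older, and then \emph{optimises the truncation level $m$ as a function of $\mu(A)$} to deduce a weak-$L^p$ bound (for every $p$), you condition on $\mathcal{G}=\sigma(G_\ell:\ell\in\N)$, invoke Minkowski's integral inequality, and try to show the \emph{series} $\sum_m\mathbb{E}[\mathbf 1_{\{L_0=m\}}\|\cdot\|_{L^p}]$ converges. This is in principle cleaner and, if correct, would give a strong $L^p$ conclusion directly. However, your justification of the summand bound rests on a cancellation that does not occur. The conditional $L^p$ norm from Theorem \ref{thm: tasep bm abs cont} (and the subsequent remark) carries, after the $L^p$ normalisation of the Gaussian tilt $\exp\!\big(\tfrac{(\xi(\ell)-b_m)\sum_i(b_i-b_m)}{2\ell}\big)$, a factor of order $\exp\!\big(c_{p,\ell}(\sum_i(b_i-b_m))^2\big)$; in the worst case $(\sum_i(b_i-b_m))^2\sim m^2(b_1-b_m)^2$, whereas the Gaussian prefactor $\prod_i e^{-(b_i-b_m)^2/(4\ell)}$ contributes at most $\exp(-m(b_1-b_m)^2/(4\ell))$. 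The two scale differently ($m^2$ against $m$), so for $m$ large the combined exponent is positive and growing, and "this averaging more than compensates the exponential tilt" is not true. Averaging $(G_1)_+$ against the cubic tail \eqref{eq: geod conc} then yields something of order $\mathbb{E}[e^{c m^2(G_1)_+^2}]\sim e^{O(m^6)}$, which the depth tail $\PP(L_0=m)\lesssim e^{-cm^3}$ cannot absorb, so your series diverges.

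The paper's argument is insensitive to this precisely because it never sums: choosing $m$ as a function of $\mu(A)$ so that $\mu(A)\in[e^{-m^{3-\delta/2}},e^{-(m-1)^{3-\delta/2}})$ only requires the \emph{supremum} $\sup_m\{e^{d_p m^{3-\delta}}e^{-c(m-1)^{3-\delta/2}}+e^{cm^3}\PP(L_0\geq m)\}$ to be finite, which is strictly weaker than summability. Your alternative route --- splitting $\{L_0=m\}$ on $\{(G_1)_+\le m^{1/2-\epsilon}\}$ so that all data entries are $O(m^{1/2-\epsilon})$ and the conditional $L^p$ bound is $e^{O(m^{3-2\epsilon})}$ --- is the right fix for the good branch and would make the Minkowski sum converge there; but on the complementary event you propose to pay $\PP((G_1)_+>m^{1/2-\epsilon})\lesssim e^{-dm^{3/2-3\epsilon}}$, which is a raw probability independent of $A$. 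That is useless inside a Minkowski sum: to trade such a probability against $\mu(A)^\alpha$ you must again make $m$ depend on $\mu(A)$ and argue via $\PP(H\in A)$ rather than via $\|\cdot\|_{L^p}$, i.e.\ you are forced back onto the paper's optimal-$m$ argument. The "near-Brownian structure" $H=B_1+\alpha^1$ you mention for that branch does not by itself bound the Radon--Nikodym derivative of $H$ against $\mu$: a drift of unbounded variation can produce a singular law.
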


\begin{proof}
Fix $A\subseteq C_{*,*}([\ell,r])$ Borel measurable. Then we estimate for all $m\in \N$ using Theorem \ref{thm: tasep bm abs cont}
\begin{equation*}
\begin{array}{cc}
     &\PP(H(\cdot)\in A)= \PP(H(\cdot)\in A, L_0\le m) + \PP(L_0\geq m+1) \\
     & \le \PP(H_m(\cdot)\in A, L_0\le m) + \PP(L_0\geq m)\\
     & \le \mathbb{E}\left[\norm{\frac{\diff \xi_{\ell, r, m} \,\mathrm{conditioning}\, (G_\ell)^m_{\ell=1}}{\diff\mu}}_{L^p(\mu)}\cdot \mathbf{1}(L_0\le m)\right]\mu(A)^{1-\frac{1}{p}} + \PP(L_0\geq m)\\
     & \le \mathbb{E}\left[ O_p(\mathrm{e}^{d_p \cdot  m^{3-\delta}})\right]\mu(A)^{1-\frac{1}{p}} + \PP(L_0\geq m)\\
\end{array}
\end{equation*}
for some universal in $m\in \N$ (though possibly $p$-dependent) constant $d_p>0$ for all $p>1$ and $\delta\in (0,3)$. Now, without loss of generality, assume that $\mu(A)>0$ and let $m\in \N$ be the unique positive integer such that $\mu(A)\in [1/\mathrm{e}^{m^{(3-\frac{\delta}{2})}}, 1/\mathrm{e}^{(m-1)^{(3-\frac{\delta}{2})}})$. Now fix $1<r<p$ and estimate, 
\begin{equation*}
    \begin{array}{cc}
     \PP(H(\cdot)\in A)&\le \mathbb{E}\left[ O_p(\mathrm{e}^{d_p \cdot  m^{3-\delta}})\right]\mu(A)^{1-\frac{1}{p}} + \mathrm{e}^{m^{3-\frac{\delta}{2}}}\cdot \PP(L_0\geq m)\cdot \mathrm{e}^{-m^{(3-\frac{\delta}{2})}} \\
     &\le  O_p(\mathrm{e}^{d_p \cdot  m^{3-\delta}})\mu(A)^{1-\frac{1}{p}} + \mathrm{e}^{m^{3-\frac{\delta}{2}}}\cdot \PP(L_0\geq m)\cdot \mu(A)\\
     &\le  O_p(\mathrm{e}^{d_p \cdot  m^{3-\delta}})\mu(A)^{\frac{p-r}{pr}} + \mathrm{e}^{m^{3-\frac{\delta}{2}}}\cdot \PP(L_0\geq m)\cdot \mu(A)^{1-\frac{1}{r}}\\
     &\le  O_p\Bigg(\displaystyle\sup_{m\in \N}\left\{\mathrm{e}^{d_p \cdot  m^{3-\delta}}\mathrm{e}^{-\frac{p-r}{pr}(m-1)^{(3-\frac{\delta}{2})}}+\mathrm{e}^{cm^3}\cdot \PP(L_0\geq m)\right\}\Bigg)\mu(A)^{1-\frac{1}{r}}\tag{*}.\\
\end{array}
\end{equation*}
Thus, for all $0<\ell<r<\infty$, we have that the Radon-Nikodym  derivative of the law of $H(\cdot)$ against a rate two Brownian motion starting from the origin $\mu$ on $[\ell,r]$ is in $L^{p-}(\mu)$. Since, $p$ was arbitrary then allows us to conclude the proof of the theorem.
\end{proof}

Note that here $L_0$, the `random depth' of the inhomogeneous BLPP in Theorem \ref{thm: tasep bm abs cont} is meant to stand for the input from geodesic geometry on the Airy line ensemble. More specifically, it is the intercept of semi-inifinite geodesics in the parabolic Airy line ensemble, see \cite[Definition 4.2 and Lemma 4.2]{tassopoulos2025quantitativebrownianregularitykpz}. At present, we are only able to obtain that these geodesic intercepts have exponentially stretched tails, and not the tails in (\ref{eq: geod conc}). However, we do believe that the latter is achievable and the tail bound is consistent with similar results in the KPZ universality class regarding transversal fluctuations of geodesics in other models, including \cite[Proposition 6.2]{basu2018coalescencegeodesicsexactlysolvable} for Poisson LPP and \cite[Lemma 3.13]{RahmanGeodInterface} in the directed landscape.

Notwithstanding, with the estimates obtained in Theorem \ref{thm: tasep bm abs cont}, we are still able to obtain a form of quantitative Brownian regularity for the increments of the KPZ fixed point started from finitary initial data in the following theorem. 

\begin{theorem}(Quantitative Brownian regularity, \cite[Theorem 6.6]{tassopoulos2025quantitativebrownianregularitykpz})\label{thm: KPZ reg finitary}
     Let $h_t(\cdot):=\mathcal{L}(t;h_0), t\ge 0$ be the KPZ fixed point started from initial data $h_0$ which is $t$-finitary. Then, for any fixed $\ell<r$ with $|\ell|+ |r|\le y_0$ for some $y_0>0$, there exists some universal $\theta> 0$ such that the estimates for all $A$ Borel measurable $A\subseteq C_{0,*}([0,r-\ell])$ with rate two Wiener measure $\mu(A) > 0$,
\begin{equation*}
\begin{array}{cc}
     &\PP(h_t(\cdot + \ell)-h_t(\ell)\in A)\le c_{t, y_0, h_0}\exp\left( -d_{t, y_0, h_0}\log^{\theta}\log\big(1/\mu(A)\big)\right)\,,
\end{array}
\end{equation*} 
hold for some $c_{t, y_0, h_0}, d_{t, y_0, h_0}>0$.
\end{theorem}

\section{Appendix}\label{sec: Appendix}

\subsection{Integral estimates}\label{app: int est}
In this subsection, we control the growth rate of `cumulants' of the integrands used to estimate (\ref{eq: nu}) where the flavour of most arguments is inductive, owing to the recursive nature of the iterated integrals under consideration.

\begin{lemma}
    Fix $k\in \N, \ell> 0$, $m_1, m_2, \cdots, m_k \in \N$ and $\lambda > 0$ and consider the functions
    \begin{equation*}
        \begin{array}{cc}
            h_{m_1, \cdots, m_k}(b) = \displaystyle\int_{x_1\le x_2\le \cdots\le x_k\le b}\mathrm{e}^{-\lambda x_1}\prod_{i=1}^{k}\mathrm{e}^{-\frac{x_i^2}{2}}\cdot(x_2-x_1)^{m_1}\cdot \cdots \cdot (b-x_k)^{m_k}\diff x_1\cdots \diff x_{k},\, b\in \R\,.
        \end{array}
    \end{equation*}
    and 
    \begin{equation*}
        \begin{array}{cc}
            g_{m_1, \cdots, m_k}(b) = \displaystyle\int_{x_1\le x_2\le \cdots\le x_k\le b}\prod_{i=1}^{k}\mathrm{e}^{-\frac{x_i^2}{2}}\cdot(x_2-x_1)^{m_1}\cdot \cdots \cdot (b-x_k)^{m_k}\diff x_1\cdots \diff x_{k},\, b\in \R\,.
        \end{array}
    \end{equation*}
    Then there exists a universal constant $d>0$ such that 
    \[
     h_{m_1, \cdots, m_k}(b)\le \mathrm{e}^{\lambda^2/2}\mathrm{e}^{-(k-1)\lambda^2/2} \cdot \mathrm{e}^{(k-1)\lambda b}\cdot g_{m_1, \cdots, m_k}(b+\lambda), \quad b\in \R\,.
    \]
\end{lemma}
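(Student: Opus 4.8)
The plan is to induct on $k$, using the recursive structure obtained by integrating out the variables one at a time. For $j\ge 1$ and $v\in\R$ set
\[
Q_j(v):=\int_{x_1\le\cdots\le x_j\le v}\prod_{i=1}^{j}\mathrm{e}^{-x_i^2/2}\prod_{i=1}^{j-1}(x_{i+1}-x_i)^{m_i}\,(v-x_j)^{m_j}\,\diff x_1\cdots\diff x_j ,
\]
and let $R_j(v)$ be the same integral with the extra weight $\mathrm{e}^{-\lambda x_1}$ inserted, so that $g_{m_1,\ldots,m_k}=Q_k$ and $h_{m_1,\ldots,m_k}=R_k$. Integrating out $x_j$ gives $Q_j(v)=\int_{-\infty}^{v}\mathrm{e}^{-x^2/2}(v-x)^{m_j}Q_{j-1}(x)\,\diff x$, the same recursion for $R_j$ when $j\ge 2$, and $R_1(v)=\int_{-\infty}^{v}\mathrm{e}^{-\lambda x}\mathrm{e}^{-x^2/2}(v-x)^{m_1}\,\diff x$. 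For the base case $k=1$ one completes the square, $\mathrm{e}^{-\lambda x}\mathrm{e}^{-x^2/2}=\mathrm{e}^{\lambda^2/2}\mathrm{e}^{-(x+\lambda)^2/2}$, and substitutes $t=x+\lambda$ to obtain $R_1(v)=\mathrm{e}^{\lambda^2/2}Q_1(v+\lambda)$ --- the asserted bound, with equality.

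For general $k$ I would unfold all the recursions and run the global substitution $y_i=x_i+\lambda$ in the $k$-fold integral $R_k(b)$. Since
\[
\mathrm{e}^{-\lambda x_1}\prod_{i=1}^{k}\mathrm{e}^{-x_i^2/2}=\mathrm{e}^{\lambda^2}\mathrm{e}^{-\lambda y_1}\cdot\mathrm{e}^{-k\lambda^2/2}\,\mathrm{e}^{\lambda(y_1+\cdots+y_k)}\prod_{i=1}^{k}\mathrm{e}^{-y_i^2/2}=\mathrm{e}^{(2-k)\lambda^2/2}\,\mathrm{e}^{\lambda(y_2+\cdots+y_k)}\prod_{i=1}^{k}\mathrm{e}^{-y_i^2/2},
\]
and the polynomial factors and the ordering are unchanged except that the outermost constraint becomes $y_k\le b+\lambda$, this yields the exact identity
\[
h_{m_1,\ldots,m_k}(b)=\mathrm{e}^{(2-k)\lambda^2/2}\int_{y_1\le\cdots\le y_k\le b+\lambda}\mathrm{e}^{\lambda(y_2+\cdots+y_k)}\prod_{i=1}^{k}\mathrm{e}^{-y_i^2/2}\prod_{i=1}^{k-1}(y_{i+1}-y_i)^{m_i}(b+\lambda-y_k)^{m_k}\,\diff y ,
\]
so it remains to dominate the displayed integral by $\mathrm{e}^{(k-1)\lambda b}\,g_{m_1,\ldots,m_k}(b+\lambda)$.

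For this last step I would pass to the gap coordinates $s_i:=y_{i+1}-y_i$ ($1\le i\le k-1$) and $s_k:=(b+\lambda)-y_k$, a bijection of $\{y_1\le\cdots\le y_k\le b+\lambda\}$ onto $(0,\infty)^k$ with unit Jacobian, under which the polynomial weight collapses to $\prod_{i=1}^{k}s_i^{m_i}$, one has $y_j=(b+\lambda)-\sum_{i=j}^{k}s_i$, and $\mathrm{e}^{\lambda(y_2+\cdots+y_k)}=\mathrm{e}^{(k-1)\lambda(b+\lambda)}\,\mathrm{e}^{-\lambda\sum_{j=2}^{k}(j-1)s_j}$. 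Substituting, the claimed inequality becomes equivalent to
\[
\int_{(0,\infty)^k}\mathrm{e}^{-\lambda\sum_{j=2}^{k}(j-1)s_j}\,\prod_{i=1}^{k}s_i^{m_i}\prod_{i=1}^{k}\mathrm{e}^{-y_i(s)^2/2}\,\diff s\ \le\ \mathrm{e}^{-(k-1)\lambda^2}\int_{(0,\infty)^k}\prod_{i=1}^{k}s_i^{m_i}\prod_{i=1}^{k}\mathrm{e}^{-y_i(s)^2/2}\,\diff s ,
\]
which I would establish by integrating out $s_k,s_{k-1},\ldots,s_2$ in turn via a one-dimensional estimate, using that the polynomial weights $s_i^{m_i}$ (here $m_i\ge 1$) together with the Gaussian factors keep the conditional law of each gap supported away from the degenerate edge $s_i=0$.

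The main obstacle is exactly this last inequality. A naive telescoping of the two recursions --- equivalently, the crude bound $\mathrm{e}^{\lambda y_i}\le\mathrm{e}^{\lambda(b+\lambda)}$ applied in the identity above --- only produces the prefactor $\mathrm{e}^{k\lambda^2/2}$ rather than the sharper $\mathrm{e}^{(2-k)\lambda^2/2}$, so the gain of the factor $\mathrm{e}^{-(k-1)\lambda^2}$ has to come from genuinely exploiting the decay of the Gibbs weight $\mathrm{e}^{-\lambda\sum_{j}(j-1)s_j}$ against the polynomially weighted Gaussian measure, i.e.\ from the macroscopic positivity of the typical gaps forced by the vanishing of the factors $(y_{i+1}-y_i)^{m_i}$ and $(b+\lambda-y_k)^{m_k}$ on the boundary of the simplex. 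The remaining ingredients --- the two recursions, the $k=1$ base case, the global change of variables producing the identity, and the gap-coordinate rewriting --- are routine.
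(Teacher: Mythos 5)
Your preparatory steps are correct and are essentially the paper's computation in unrolled form: the paper proves this lemma by induction on $k$, completing the square $\mathrm{e}^{-\lambda x}\mathrm{e}^{-x^2/2}=\mathrm{e}^{\lambda^2/2}\mathrm{e}^{-(x+\lambda)^2/2}$ and shifting one variable at a time, while you perform the single global shift $y_i=x_i+\lambda$; your base case $R_1(v)=\mathrm{e}^{\lambda^2/2}Q_1(v+\lambda)$ and your exact identity with prefactor $\mathrm{e}^{(2-k)\lambda^2/2}$ and weight $\mathrm{e}^{\lambda(y_2+\cdots+y_k)}$ are both right. The genuine gap is the step you flag yourself and never prove: that the normalized average of $\mathrm{e}^{-\lambda\sum_{j=2}^{k}(j-1)s_j}$ against the gap measure is at most $\mathrm{e}^{-(k-1)\lambda^2}$ uniformly in $b$. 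This cannot be repaired, because that inequality is false. As $b\to-\infty$ the polynomially weighted Gaussian measure forces all gaps to be of order $1/|b|$ (the factor $s_j^{m_j}$ only keeps $s_j$ of size about $m_j/|b|$ away from the edge, not of size comparable to $\lambda$), so the average of the Gibbs weight tends to $1$, not to anything $\le \mathrm{e}^{-(k-1)\lambda^2}<1$. Equivalently, your identity shows
\[
h_{m_1,\ldots,m_k}(b)\sim \mathrm{e}^{k\lambda^2/2}\,\mathrm{e}^{(k-1)\lambda b}\,g_{m_1,\ldots,m_k}(b+\lambda)\qquad\text{as } b\to-\infty
\]
(one can check this directly for $k=2$, $m_1=m_2=\lambda=1$), so the constant $\mathrm{e}^{\lambda^2/2}\mathrm{e}^{-(k-1)\lambda^2/2}$ in the statement is not attainable for $k\ge 2$ and very negative $b$.

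So the honest conclusion is not merely that your last step is missing, but that the route (and the stated constant) cannot be completed; the heuristic that the polynomial weights keep the gaps macroscopically positive fails uniformly in $b$. Note that the same issue is hidden in the paper's own induction: the passage from its second to its third displayed line gains a factor $\mathrm{e}^{-\lambda^2}$ per step that the substitution and the endpoint bound $\mathrm{e}^{k\lambda y}\le \mathrm{e}^{k\lambda b}$ do not justify, and a careful tracking of its constants produces exactly the bound your identity gives after the crude estimate $\mathrm{e}^{\lambda(y_2+\cdots+y_k)}\le \mathrm{e}^{(k-1)\lambda(b+\lambda)}$, namely $h_{m_1,\ldots,m_k}(b)\le \mathrm{e}^{k\lambda^2/2}\mathrm{e}^{(k-1)\lambda b}g_{m_1,\ldots,m_k}(b+\lambda)$. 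That weaker constant is all the later corollaries in this appendix use, since factors of size $\mathrm{e}^{O(k\lambda^2)}$ are absorbed into $C_{k,\underline{m}}$; the productive fix is therefore to stop at your identity plus the endpoint bound and record the lemma with $\mathrm{e}^{k\lambda^2/2}$ in place of $\mathrm{e}^{\lambda^2/2}\mathrm{e}^{-(k-1)\lambda^2/2}$, rather than chase the extra $\mathrm{e}^{-(k-1)\lambda^2}$.
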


\begin{proof}
    We proceed again by induction. The base case is clear upon performing a change of variables. Indeed, 
    \begin{align*}
            h_{m_1}(b) &= \displaystyle\int_{y\le b}\mathrm{e}^{-\lambda y}\mathrm{e}^{-\frac{y^2}{2}}\cdot(b-y)^{m_1}\diff y\\
            &= \mathrm{e}^{\lambda^2/2} \displaystyle\int_{y\le b}\mathrm{e}^{-\frac{(y+\lambda)^2}{2}}\cdot(b+\lambda-(y+\lambda))^{m_1}\diff y\\
            &= \mathrm{e}^{\lambda^2/2} \displaystyle\int_{y\le b+\lambda}\mathrm{e}^{-\frac{(y)^2}{2}}\cdot(b+\lambda-y)^{m_1}\diff y\\
            & = \mathrm{e}^{\lambda^2/2}g_{m_1}(b+\lambda),\quad b\in \R\,.
    \end{align*}
    Suppose the claim were true for some $k\ge 2$. Then we have that
    \begin{align*}
         h_{m_1, \cdots, m_{k+1}}(b) &= \displaystyle\int^b_{-\infty}\mathrm{e}^{-\frac{(y)^2}{2}} h_{m_1, \cdots, m_{k}}(y)(b-y)^{m_{k+1}}\diff y\\
         & \le \mathrm{e}^{\lambda^2/2}\mathrm{e}^{-(k-1)\lambda^2/2}\displaystyle\int^b_{-\infty}\mathrm{e}^{-\frac{(y)^2}{2}}  \mathrm{e}^{(k-1)\lambda y}\cdot g_{m_1, \cdots, m_k}(y+\lambda)(b-y)^{m_{k+1}}\diff y\\
         & \le  \mathrm{e}^{\lambda^2/2}\mathrm{e}^{-k\lambda^2/2}\mathrm{e}^{k\lambda b}\displaystyle\int^{b+\lambda}_{-\infty} \mathrm{e}^{-\frac{(y)^2}{2}}\cdot g_{m_1, \cdots, m_k}(y)(b+\lambda-y)^{m_{k+1}}\diff y\\
         &=  \mathrm{e}^{\lambda^2/2}\mathrm{e}^{-(k-1)\lambda^2/2}\mathrm{e}^{k\lambda b}\cdot g_{m_1, \cdots, m_{k+1}}(b+\lambda),\quad b\in \R.
    \end{align*}
    completing the induction.
\end{proof}

\begin{lemma}\label{lemma: difference bound} Fix $k\in \N, \ell> 0$ and $m_1, m_2, \cdots, m_k, m_{k+1} \in \N$ and consider the function
\begin{align*}
g_{m_1, \cdots, m_k}(b) = \displaystyle\int_{x_1\le x_2\le \cdots\le x_k\le b}\prod_{i=1}^{k}\mathrm{e}^{-\frac{x_i^2}{2}}\cdot(x_2-x_1)^{m_1}\cdot \cdots \cdot (b-x_k)^{m_k}\diff x_1\cdots \diff x_{k}\,, b\in \R.
\end{align*}
Then there exists a universal constant $d>0$ such that 
    $$
     g_{m_1, \cdots, m_k}(b+1)\le C_{k, m_1, \cdots, m_k} \cdot (1+\mathrm{e}^{-kb})\cdot g_{m_1, \cdots, m_k}(b), \quad b\in \R
    $$
    where $C_{k+1, m_1, \cdots, m_{k+1}} = 2^{m_{k+1}}(C_{k,\underline{m}}\mathrm{e}^{k+1}\lor \mathrm{e}^{d\sum_{i=1}^k m_i\log m_i})$, and $C_{1,m_1} = O(2^{m_1})$.
\end{lemma}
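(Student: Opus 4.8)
I would prove this by induction on the number $k$ of indices, the engine being the elementary identity, obtained from Fubini,
\begin{equation*}
g_{m_1,\ldots,m_k}(b)=\int_{-\infty}^{b}\mathrm{e}^{-y^2/2}(b-y)^{m_k}\,g_{m_1,\ldots,m_{k-1}}(y)\,\diff y\,,\qquad g_{\emptyset}\equiv 1\,,
\end{equation*}
obtained by integrating out the outermost variable $x_k$ (the one with $x_{k-1}\le x_k\le b$) and recognising the remaining integral over $x_1\le\cdots\le x_{k-1}\le x_k$, whose top difference is $(x_k-x_{k-1})^{m_{k-1}}$, as $g_{m_1,\ldots,m_{k-1}}(x_k)$. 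Each $g_{m_1,\ldots,m_k}$ is strictly positive and finite (the Gaussians beat the polynomial factors), so the ratios below are well defined; the case $k=1$ is this identity with $g_{\emptyset}\equiv 1$ and is covered by the computation below with the trivial hypothesis $g_{\emptyset}(z+1)=1$, which yields $C_{1,m_1}=O(2^{m_1})$.

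For the inductive step write $\underline m=(m_1,\ldots,m_k)$, $m=m_{k+1}$, and split the range of integration in $g_{\underline m,m}(b+1)=\int_{-\infty}^{b+1}\mathrm{e}^{-y^2/2}(b+1-y)^{m}g_{\underline m}(y)\,\diff y$ at $y=b-1$. On $\{y\le b-1\}$ one has $b-y\ge 1$, hence $b+1-y\le 2(b-y)$ and $(b+1-y)^{m}\le 2^{m}(b-y)^{m}$, so that piece is at most $2^{m}\int_{-\infty}^{b}\mathrm{e}^{-y^2/2}(b-y)^{m}g_{\underline m}(y)\,\diff y=2^{m}g_{\underline m,m}(b)$. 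For $\int_{b-1}^{b+1}$, substitute $y\mapsto z+1$ (so $b+1-y=b-z$), apply the inductive hypothesis $g_{\underline m}(z+1)\le C_{k,\underline m}(1+\mathrm{e}^{-kz})g_{\underline m}(z)$, and use $\mathrm{e}^{-(z+1)^2/2}(1+\mathrm{e}^{-kz})=\mathrm{e}^{-z^2/2}\mathrm{e}^{-1/2}\bigl(\mathrm{e}^{-z}+\mathrm{e}^{-(k+1)z}\bigr)$ together with the pointwise bound, valid for all $z\ge b-2$,
\begin{equation*}
\mathrm{e}^{-z}+\mathrm{e}^{-(k+1)z}\le \mathrm{e}^{2-b}+\mathrm{e}^{2(k+1)}\mathrm{e}^{-(k+1)b}\le 2\mathrm{e}^{2(k+1)}\bigl(1+\mathrm{e}^{-(k+1)b}\bigr)
\end{equation*}
(the last step just uses $\mathrm{e}^{-b}\le 1+\mathrm{e}^{-(k+1)b}$). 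This bounds $\int_{b-1}^{b+1}$ by $C\,C_{k,\underline m}\,\mathrm{e}^{2(k+1)}\bigl(1+\mathrm{e}^{-(k+1)b}\bigr)\int_{b-2}^{b}\mathrm{e}^{-z^2/2}(b-z)^{m}g_{\underline m}(z)\,\diff z$, which is at most $C\,C_{k,\underline m}\,\mathrm{e}^{2(k+1)}\bigl(1+\mathrm{e}^{-(k+1)b}\bigr)g_{\underline m,m}(b)$ by enlarging the domain. Adding the two pieces gives $g_{\underline m,m}(b+1)\le 2^{m}\bigl(1+C'C_{k,\underline m}\mathrm{e}^{2(k+1)}\bigr)\bigl(1+\mathrm{e}^{-(k+1)b}\bigr)g_{\underline m,m}(b)$, a recursion of the stated form.

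Every inequality above is a genuine pointwise bound valid for all $b\in\R$, so there is no single hard step; the work is the constant bookkeeping needed to present the outcome in exactly the recursive form of the statement — matching, up to universal factors and with $d$ taken large enough, the $\mathrm{e}^{k+1}$ there against the $\mathrm{e}^{O(k)}$ the crude argument above produces, the $\lor\,\mathrm{e}^{d\sum_{i=1}^{k}m_i\log m_i}$ alternative being there to absorb the degenerate cases. The design choice that keeps everything elementary — and that I would emphasise in the write-up — is to always apply the inductive hypothesis on a fixed-length window ($[b-2,b]$) on which the exponential weights $\mathrm{e}^{-z},\mathrm{e}^{-kz}$ are uniformly dominated by $\mathrm{e}^{O(k)}\mathrm{e}^{-(k+1)b}$; this sidesteps any separate asymptotic analysis of the iterated Gaussian integrals as $b\to-\infty$, while the polynomial factor $(b-z)^{m}$ is simply carried along and reabsorbed into $g_{\underline m,m}(b)$ by enlarging the domain of integration.
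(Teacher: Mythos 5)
Your argument is correct, and it uses the same inductive skeleton as the paper --- the one-step recursion $g_{m_1,\ldots,m_{k+1}}(b)=\int_{-\infty}^{b}\mathrm{e}^{-y^2/2}(b-y)^{m_{k+1}}g_{m_1,\ldots,m_k}(y)\,\diff y$ and a split of $g_{\underline m,m}(b+1)$ into a ``far'' piece and a ``near'' piece, with the inductive hypothesis applied on the near piece after the shift $y\mapsto z+1$. The one substantive difference is the choice of split point: you cut at $y=b-1$, which makes $b-y\ge 1$ on the far piece and lets you use the multiplicative bound $(b+1-y)^{m}\le 2^{m}(b-y)^{m}$ directly. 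The paper instead cuts at $y=b$, where $b-y$ can be arbitrarily small, and is forced to use the additive bound $(b+1-y)^{m}\le 2^{m-1}\bigl(1+(b-y)^{m}\bigr)$; the ``$+1$'' then produces a ratio of the form $\int_{-\infty}^{b}\mathrm{e}^{-y^2/2}g_{\underline m}(y)\,\diff y\,/\,\int_{-\infty}^{-(b)_--1}\mathrm{e}^{-y^2/2}g_{\underline m}(y)\,\diff y$, which the paper controls using a lower bound $\int_{-\infty}^{-1}g_{\underline m}\ge C_{k,\underline m}$ (via repeated applications of the auxiliary integral lemma) and a Gaussian-moment upper bound $\int_{\R}\mathrm{e}^{-y^2/2}g_{\underline m}(y)\,\diff y\le 2^{\sum m_i+k}\mathrm{e}^{d\sum m_i\log m_i}$. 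Your split avoids that detour entirely, and correspondingly your recursion constant $C_{k+1}=2^{m_{k+1}}\bigl(1+C'\,C_{k,\underline m}\mathrm{e}^{2(k+1)}\bigr)$ is actually slightly simpler than the stated $2^{m_{k+1}}\bigl(C_{k,\underline m}\mathrm{e}^{k+1}\lor\mathrm{e}^{d\sum m_i\log m_i}\bigr)$ --- the $\lor\,\mathrm{e}^{d\sum m_i\log m_i}$ in the paper's version is precisely the artefact of its moment/lower-bound comparison, and you never need it. Both versions yield the same order $C_n=O\bigl(\exp(O(n^2\log n))\bigr)$ that the downstream Corollary and Remark rely on (yours in fact gives $O(\exp(O(n^2)))$ given $\sum m_i=O(n^2)$), so the discrepancy in the exact recursion is immaterial. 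The only thing I would tighten in the write-up is the base case: you should state explicitly that $g_{\emptyset}\equiv 1$ trivially satisfies $g_{\emptyset}(z+1)\le C_0(1+1)g_{\emptyset}(z)$ with $C_0=\tfrac12$, so that the $k=1$ case is genuinely a special instance of the inductive step rather than a hand-wave.
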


\begin{proof}
    Indeed, we proceed by induction. To see this note that for the inductive step
    \begin{equation*}
    \begin{array}{cc}
         g_{m_1, \cdots, m_{k+1}}(b) &= \displaystyle\int^b_{-\infty}\mathrm{e}^{-\frac{(y)^2}{2}} g_{m_1, \cdots, m_{k}}(y)(b-y)^{m_{k+1}}\diff y\\
         & = \displaystyle\int^\infty_{0}\mathrm{e}^{-\frac{(b-y)^2}{2}} g_{m_1, \cdots, m_{k}}(b-y)(y)^{m_{k+1}}\diff y,\quad b\in \R
    \end{array}
    \end{equation*}
    and
\begin{align*}
& \frac{g_{m_1, \cdots, m_{k+1}}(b+1)}{g_{m_1, \cdots, m_{k+1}}(b)} =  \frac{
    \displaystyle \int_{-\infty}^b \mathrm{e}^{-\frac{y^2}{2}} g_{m_1, \cdots, m_k}(y)
    (b+1 - y)^{m_{k+1}} \, \diff y
}{
    \displaystyle \int_{-\infty}^b \mathrm{e}^{-\frac{y^2}{2}} g_{m_1, \cdots, m_k}(y)
    (b - y)^{m_{k+1}} \, \diff y
}
\\
& + \frac{
    \displaystyle \int_b^{b+1} \mathrm{e}^{-\frac{y^2}{2}} g_{m_1, \cdots, m_k}(y)
    (b+1 - y)^{m_{k+1}} \, \diff y
}{
    \displaystyle \int_{-\infty}^b \mathrm{e}^{-\frac{y^2}{2}} g_{m_1, \cdots, m_k}(y)
    (b - y)^{m_{k+1}} \, \diff y
} \\
\le\ & 2^{m_{k+1} - 1} \frac{
    \displaystyle \int_{-\infty}^b \mathrm{e}^{-\frac{y^2}{2}} g_{m_1, \cdots, m_k}(y)
    \bigl(1 + (b - y)^{m_{k+1}}\bigr) \, \diff y
}{
    \displaystyle \int_{-\infty}^b \mathrm{e}^{-\frac{y^2}{2}} g_{m_1, \cdots, m_k}(y)
    (b - y)^{m_{k+1}} \, \diff y
} \\
& + \frac{
    \displaystyle \int_0^1 \mathrm{e}^{-\frac{(b+1 - y)^2}{2}} g_{m_1, \cdots, m_k}(b+1 - y)
    y^{m_{k+1}} \, \diff y
}{
    \displaystyle \int_0^\infty \mathrm{e}^{-\frac{(b - y)^2}{2}} g_{m_1, \cdots, m_k}(b - y)
    y^{m_{k+1}} \, \diff y
} \\
\le\ & 2^{m_{k+1} - 1} + 2^{m_{k+1} - 1} 
\frac{
    \displaystyle \int_{-\infty}^b \mathrm{e}^{-\frac{y^2}{2}} g_{m_1, \cdots, m_k}(y) \, \diff y
}{
    \displaystyle \int_{-\infty}^{-(b)_- - 1} \mathrm{e}^{-\frac{y^2}{2}} g_{m_1, \cdots, m_k}(y) \, \diff y
} \\
& + C_k \frac{
    \displaystyle \int_0^1 \mathrm{e}^{-\frac{(b+1 - y)^2}{2}} (1 + \mathrm{e}^{-k(b - y)}) g_{m_1, \cdots, m_k}(b - y)
    y^{m_{k+1}} \, \diff y
}{
    \displaystyle \int_0^\infty \mathrm{e}^{-\frac{(b - y)^2}{2}} g_{m_1, \cdots, m_{k-1}}(b - y)
    y^{m_{k+1}} \, \diff y
}\\
&\displaystyle\le 2^{m_{k+1}-1} + 2^{m_{k+1}-1}\frac{\left(\int^b_{-(b)_- -1}+\int^{-(b)_- -1}_{-\infty}\right)\mathrm{e}^{-\frac{(y)^2}{2}} g_{m_1, \cdots, m_{k}}(y)\diff y}{\int^{-(b)_- -1}_{-\infty}\mathrm{e}^{-\frac{(y)^2}{2}} g_{m_1, \cdots, m_{k}}(y)\diff y}\\
         &\displaystyle +C_k\cdot\frac{\mathrm{e}^{k+1}(1+\mathrm{e}^{-kb}) \mathrm{e}^{-b}\int^1_{0}\mathrm{e}^{by-\frac{y^2}{2}}g_{m_1, \cdots, m_{k}}(b-y)(y)^{m_{k+1}}\diff y}{\int^\infty_{0}\mathrm{e}^{by-\frac{y^2}{2}} g_{m_1, \cdots, m_{k-1}}(b-y)(y)^{m_{k+1}}\diff y}\\
         &\displaystyle\le 2^{m_{k+1}} +C_k\mathrm{e}^{k+1-b} (1+\mathrm{e}^{-kb})\\
         &\displaystyle+\mathbf{1}_{b\le 0}2^{m_{k+1}-1}C_k\mathrm{e}^{k-b} (1+\mathrm{e}^{-k(b-1)})+\mathbf{1}_{b\ge0}\frac{2^{m_{k+1}-1}\int^b_{ -1}\mathrm{e}^{-\frac{(y)^2}{2}} g_{m_1, \cdots, m_{k}}(y)\diff y}{\int^{-1}_{-\infty}\mathrm{e}^{-\frac{(y)^2}{2}} g_{m_1, \cdots, m_{k}}(y)\diff y}\\
         &\displaystyle\le 2^{m_{k+1}} + C_k\mathrm{e}^{k+1-b} (1+\mathrm{e}^{-kb})+ \mathbf{1}_{b\le 0}2^{m_{k+1}-1}C_k\mathrm{e}^{k-b} (1+\mathrm{e}^{-k(b-1)})\\
         &+ 2^{m_{k+1}-1}\mathbf{1}_{b\ge0}\frac{\int^\infty_{ -\infty}\mathrm{e}^{-\frac{(y)^2}{2}} g_{m_1, \cdots, m_{k}}(y)\diff y}{\int^{-1}_{-\infty}\mathrm{e}^{-\frac{(y)^2}{2}} g_{m_1, \cdots, m_{k}}(y)\diff y}\,. 
\end{align*}

    Now observe that we can estimate
    \begin{equation*}
    \begin{array}{cc}
        &\displaystyle\int_{\R}\mathrm{e}^{-\frac{(y)^2}{2}} g_{m_1, \cdots, m_{k}}(y)\diff y\\
        & = \displaystyle \int_{\R}\int_{x_1\le x_2\le \cdots\le x_k\le y}\mathrm{e}^{-\frac{y^2}{2}}\prod_{i=1}^{k}\mathrm{e}^{-\frac{x_i^2}{2}}\cdot(x_2-x_1)^{m_1}\cdot \cdots \cdot (y-x_k)^{m_k}\diff x_1\cdots \diff x_{k}\diff y\\
        & \le \displaystyle \int_{\R^{k+1}}\mathrm{e}^{-\frac{y^2}{2}}\prod_{i=1}^{k}\mathrm{e}^{-\frac{x_i^2}{2}}\cdot|x_2-x_1|^{m_1}\cdot \cdots \cdot |y-x_k|^{m_k}\diff x_1\cdots \diff x_{k}\diff y\\
        & \le 2^{\sum_{i=1}^k m_i }\displaystyle \int_{\R^{k+1}}\prod_{i=1}^{k+1}\mathrm{e}^{-\frac{x_i^2}{2}}\cdot\big(|x_2|^{m_1}+|x_1|^{m_1}\big)\cdot \cdots \cdot \big(|x_{k+1}|^{m_k}+|x_k|^{m_k}\big)\diff x_1\cdots \diff x_{k+1}\\
        & \le 2^{\sum_{i=1}^k m_i + k}\mathrm{e}^{d\sum_{i=1}^k m_i\log m_i}\\
    \end{array}
    \end{equation*}
    for a universal constant $d>0$ using independence and the asymptotics of moments of a gaussian random variable. Furthermore, repeated applications of Lemma \ref{lemma: integral estimate} give the lower bound 
    \begin{equation*}
    \begin{array}{cc}
          &C_{k, \underline{m}}\le \displaystyle\int^{-1}_{-\infty}g_{m_1, \cdots, m_{k}}(y) \diff y\,,
    \end{array}
    \end{equation*}
for $C_{k, \underline{m}} = c^k\prod_{i=1}^k\frac{1}{i^{m_{i}}}$ for some universal constant $c>0$. Combining the above, we arrive at
\begin{equation*}
\begin{array}{cc}
     & \frac{g_{m_1, \cdots, m_{k+1}}(b+1)}{g_{m_1, \cdots, m_{k+1}}(b)}\le 2^{m_{k+1}} + \mathbf{1}_{b\le 0}2^{m_{k+1}-1}C_k\mathrm{e}^{k-b} (1+\mathrm{e}^{-k(b-1)})\\
     & +C_k\mathrm{e}^{k+1-b} (1+\mathrm{e}^{-kb}) + 2^{m_{k+1}-1}\mathbf{1}_{b\ge 0}c^k \cdot \mathrm{e}^{d\sum_{i=1}^k m_i\log m_i} \\
     & \le C_{k+1}(1+\mathrm{e}^{-(k+1)b})
\end{array}
\end{equation*}
    for universal constants $c,d>0$ where $C_{k+1} = 2^{m_{k+1}}(C_{k}\mathrm{e}^{k+1}\lor \mathrm{e}^{d\sum_{i=1}^k m_i\log m_i})$. This concludes the induction.
\end{proof}

As an easy corollary of Lemma \ref{lemma: difference bound} we obtain the following lemma. 
\begin{corollary}
     Fix $k\in \N, \ell> 0$ and $m_1, m_2, \cdots, m_k \in \N$ and consider the functions
    \begin{equation*}
        \begin{array}{rr}
            h_{m_1, \cdots, m_k}(b) &= \displaystyle\int_{x_1\le x_2\le \cdots\le x_k\le b}\mathrm{e}^{-x_1}\prod_{i=1}^{k}\mathrm{e}^{-\frac{x_i^2}{2}}\cdot(x_2-x_1)^{m_1}\cdot \cdots \cdot (b-x_k)^{m_k}\diff x_1\cdots \diff x_{k},\, b\in \R.
        \end{array}
    \end{equation*}
    and 
    \begin{equation*}
        \begin{array}{rr}
            g_{m_1, \cdots, m_k}(b) &= \displaystyle\int_{x_1\le x_2\le \cdots\le x_k\le b}\prod_{i=1}^{k}\mathrm{e}^{-\frac{x_i^2}{2}}\cdot(x_2-x_1)^{m_1}\cdot \cdots \cdot (b-x_k)^{m_k}\diff x_1\cdots \diff x_{k},\, b\in \R.
        \end{array}
    \end{equation*}
    Then there exists a universal constant $d>0$ such that 
    \[
     \frac{h^{\ell}_{m_1, \cdots, m_k}}{g^{\ell}_{m_1, \cdots, m_k}}(b)\le  C_{k,\underline{m}} \cdot (\mathrm{e}^{(k-1)b}+\mathrm{e}^{-b}), \quad b\in \R.
    \]
    where $C_{k+1, \underline{m},m_{k+1}} = 2^{m_{k+1}}(C_{k,\underline{m}}\mathrm{e}^{k+1}\lor \mathrm{e}^{d\sum_{i=1}^k m_i\log m_i})$ and $C_{1, m_1} = O(2^{m_1})$.
\end{corollary}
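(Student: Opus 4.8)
The plan is to derive the Corollary directly by chaining the two preceding lemmas of this subsection, so there is no new analysis to perform. First I would apply the first lemma of this subsection with the choice $\lambda=1$; since $\ell$ does not actually enter the integrands there, this is legitimate verbatim. It yields the pointwise comparison
\[
h_{m_1,\cdots,m_k}(b)\ \le\ \mathrm{e}^{1/2}\,\mathrm{e}^{-(k-1)/2}\,\mathrm{e}^{(k-1)b}\,g_{m_1,\cdots,m_k}(b+1)\,,\qquad b\in\R\,,
\]
which trades the numerator at $b$ for the denominator at the shifted point $b+1$, up to an explicit exponential prefactor in $b$ and a constant bounded by $\mathrm{e}^{1/2}$ uniformly in $k$.

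Next I would invoke Lemma~\ref{lemma: difference bound}, which converts the shift $g_{m_1,\cdots,m_k}(b+1)$ back to $g_{m_1,\cdots,m_k}(b)$ at the cost of the factor $C_{k,\underline{m}}\,(1+\mathrm{e}^{-kb})$. Combining the two displays and dividing through by $g_{m_1,\cdots,m_k}(b)$ — which is strictly positive for every $b\in\R$, being the integral of a strictly positive integrand over a set of positive Lebesgue measure — gives
\[
\frac{h_{m_1,\cdots,m_k}}{g_{m_1,\cdots,m_k}}(b)\ \le\ C'_{k,\underline{m}}\,\mathrm{e}^{(k-1)b}\bigl(1+\mathrm{e}^{-kb}\bigr)\ =\ C'_{k,\underline{m}}\bigl(\mathrm{e}^{(k-1)b}+\mathrm{e}^{-b}\bigr)\,,
\]
which is exactly the asserted form. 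The recursion $C_{k+1,\underline{m},m_{k+1}}=2^{m_{k+1}}\bigl(C_{k,\underline{m}}\mathrm{e}^{k+1}\vee \mathrm{e}^{d\sum_{i=1}^k m_i\log m_i}\bigr)$ and the base case $C_{1,m_1}=O(2^{m_1})$ are then inherited directly from Lemma~\ref{lemma: difference bound}, since the extra factor $\mathrm{e}^{1/2-(k-1)/2}\le\mathrm{e}^{1/2}$ produced in the first step can be absorbed into $C_{k,\underline{m}}$ without affecting the stated recursive bound.

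For the $\ell$-dependent versions $h^{\ell}$ and $g^{\ell}$ — in which the standard Gaussian weight $\mathrm{e}^{-x_i^2/2}$ is replaced by the rate-$\ell$ density and the linear exponential weight $\mathrm{e}^{-x_1}$ is adjusted accordingly — I would reduce to the case treated above by the substitution $x_i=\sqrt{2\ell}\,u_i$, $b=\sqrt{2\ell}\,c$. Under this rescaling both the Jacobian and the polynomial factors $(x_{i+1}-x_i)^{m_i}$ contribute the same overall power of $\sqrt{2\ell}$ to numerator and denominator, so the ratio is unchanged up to replacing the argument $b$ by $b/\sqrt{2\ell}$; the bound then follows with constants of the same shape (possibly after relabelling $d$ and the $C$'s), which is consistent with the way these estimates are fed into Lemma~\ref{lemma: estimates G} and Proposition~\ref{prop: integral ratio estimate}.

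There is essentially no analytic obstacle: the entire content sits in the two preceding lemmas, and the only point requiring a little care is the constant-bookkeeping — specifically, verifying that the prefactor generated by setting $\lambda=1$ in the first lemma is bounded uniformly in $k$ (it is, by $\mathrm{e}^{1/2}$) so that it does not corrupt the recursion for $C_{k,\underline{m}}$, and that the reduction of the $\ell$-dependent statement to the $\ell=1$ one indeed leaves the ratio scale-invariant.
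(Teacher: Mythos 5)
Your proposal is correct and is precisely the argument the paper intends: chain the unnamed shift lemma (with $\lambda=1$) against Lemma~\ref{lemma: difference bound}, then divide by $g$ and absorb the uniformly bounded prefactor $\mathrm{e}^{(2-k)/2}\le\mathrm{e}^{1/2}$ into $C_{k,\underline{m}}$. The only thing worth noting is that the superscript $\ell$ in the displayed conclusion of the Corollary is not reflected in the integrals defined in its statement (those are the $\lambda=1$, unscaled versions); the genuinely $\ell$-dependent rescaled claim, with $b/(2\ell)^{1/2}$ appearing in the exponents, is deferred to the next corollary, so your separate scaling remark is redundant here but harmless.
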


We also quickly deduce the following corollary by scaling. 
\begin{corollary}\label{cor: 1 int estimates}
      Fix $k\in \N, \ell> 0$, $\ell>0$ and $m_1, m_2, \cdots, m_k \in \N$ and consider the functions
    \begin{equation*}
        \begin{array}{rr}
            h^\ell_{m_1, \cdots, m_k}(b) &= \displaystyle\int_{x_1\le x_2\le \cdots\le x_k\le b}\mathrm{e}^{-\frac{x_1}{2\ell}}\prod_{i=1}^{k}\mathrm{e}^{-\frac{x_i^2}{4\ell}}\cdot(x_2-x_1)^{m_1}\cdot \cdots \cdot (b-x_k)^{m_k}\diff x_1\cdots \diff x_{k},\, b\in \R.
        \end{array}
    \end{equation*}
    and 
    \begin{equation*}
        \begin{array}{rr}
            g^{\ell}_{m_1, \cdots, m_k}(b) &= \displaystyle\int_{x_1\le x_2\le \cdots\le x_k\le b}\prod_{i=1}^{k}\mathrm{e}^{-\frac{x_i^2}{4\ell}}\cdot(x_2-x_1)^{m_1}\cdot \cdots \cdot (b-x_k)^{m_k}\diff x_1\cdots \diff x_{k},\, b\in \R.
        \end{array}
    \end{equation*}
    Then there exists a universal constant $d>0$ such that 
    \[
     \frac{h^{\ell}_{m_1, \cdots, m_k}}{g^{\ell}_{m_1, \cdots, m_k}}(b)=\frac{h^{1/2}_{m_1, \cdots, m_k}}{g^{1/2}_{m_1, \cdots, m_k}}(b/(2\ell)^{1/2}) \le  C_{k,\underline{m}} \cdot \left(\mathrm{e}^{\frac{(k-1)b}{(2\ell)^{1/2}}}+\mathrm{e}^{-\frac{b}{(2\ell)^{1/2}}}\right), \quad b\in \R.
    \]
    where $C_{k+1, \underline{m},m_{k+1}} = 2^{m_{k+1}}(C_{k,\underline{m}}\mathrm{e}^{k+1}\lor \mathrm{e}^{d\sum_{i=1}^k m_i\log m_i})$ and $C_{1, m_1} = O(2^{m_1})$.
\end{corollary}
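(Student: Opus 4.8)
The plan is to obtain Corollary~\ref{cor: 1 int estimates} from the preceding corollary by a single affine change of variables, which is what ``by scaling'' refers to. In each of the integrals defining $h^\ell_{m_1,\dots,m_k}(b)$ and $g^\ell_{m_1,\dots,m_k}(b)$ I would substitute $x_i = (2\ell)^{1/2}\, u_i$ for $1\le i\le k$. The ordered region $\{x_1\le\cdots\le x_k\le b\}$ transforms into $\{u_1\le\cdots\le u_k\le b/(2\ell)^{1/2}\}$; each Gaussian weight $\mathrm{e}^{-x_i^2/(4\ell)}$ becomes the standard weight $\mathrm{e}^{-u_i^2/2}$; the linear weight in the definition of $h^\ell$ becomes the linear weight $\mathrm{e}^{-u_1}$ present in $h^{1/2}$; each gap factor $(x_{i+1}-x_i)^{m_i}$ contributes a factor $(2\ell)^{m_i/2}$; and the Jacobian of the substitution contributes $(2\ell)^{k/2}$. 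Collecting these,
\[
h^\ell_{m_1,\dots,m_k}(b) \;=\; (2\ell)^{\frac{1}{2}\left(k+\sum_{i}m_i\right)}\, h^{1/2}_{m_1,\dots,m_k}\!\big(b/(2\ell)^{1/2}\big),
\]
and the identical computation (dropping the linear weight) gives the same relation with $g$ in place of $h$. Since the prefactor $(2\ell)^{(k+\sum_i m_i)/2}$ is the same in both, it cancels in the quotient, and this is precisely the first asserted equality $\frac{h^\ell_{\underline m}}{g^\ell_{\underline m}}(b)=\frac{h^{1/2}_{\underline m}}{g^{1/2}_{\underline m}}\big(b/(2\ell)^{1/2}\big)$.

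The second step is then immediate: apply the bound of the previous corollary at the point $c=b/(2\ell)^{1/2}$, namely $\frac{h^{1/2}_{\underline m}}{g^{1/2}_{\underline m}}(c)\le C_{k,\underline m}\big(\mathrm{e}^{(k-1)c}+\mathrm{e}^{-c}\big)$, to obtain the bound $C_{k,\underline m}\big(\mathrm{e}^{(k-1)b/(2\ell)^{1/2}}+\mathrm{e}^{-b/(2\ell)^{1/2}}\big)$, with the constant $C_{k,\underline m}$ and its recursion $C_{k+1,\underline m,m_{k+1}}=2^{m_{k+1}}\big(C_{k,\underline m}\,\mathrm{e}^{k+1}\lor \mathrm{e}^{d\sum_i m_i\log m_i}\big)$, $C_{1,m_1}=O(2^{m_1})$, carried over unchanged, since it depends only on $k$ and $\underline m$ and not on $\ell$.

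I do not anticipate any real obstacle here: the argument is pure bookkeeping. The one point worth double-checking is that the powers of $2\ell$ produced by the change of variables --- the Jacobian $(2\ell)^{k/2}$ and the gap powers $(2\ell)^{\sum_i m_i/2}$ --- appear identically in the numerator and the denominator and hence cancel; this holds because $h^\ell$ and $g^\ell$ differ only by the linear factor $\mathrm{e}^{-x_1/(2\ell)^{1/2}}$, which itself contributes no power of $2\ell$ under the scaling. Once this cancellation is verified, the corollary follows verbatim from the preceding one.
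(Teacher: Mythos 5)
Your argument is essentially the paper's: a single affine change of variables $x_i = (2\ell)^{1/2}u_i$ reduces the ratio at parameter $\ell$ to the ratio at parameter $1/2$ evaluated at $b/(2\ell)^{1/2}$, after which the preceding corollary applies directly and the power-of-$(2\ell)$ prefactors $(2\ell)^{(k+\sum_i m_i)/2}$ cancel between numerator and denominator. The bookkeeping for the region, the Gaussian weights, the gap factors, and the Jacobian is all correct.

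There is, however, one point you glossed over that is worth making explicit. As literally written in the statement, $h^\ell$ carries the linear weight $\mathrm{e}^{-x_1/(2\ell)}$, and under $x_1 = (2\ell)^{1/2}u_1$ this becomes $\mathrm{e}^{-u_1/(2\ell)^{1/2}}$, \emph{not} $\mathrm{e}^{-u_1}$; so the claimed identity $h^\ell_{\underline m}(b)/g^\ell_{\underline m}(b) = h^{1/2}_{\underline m}(b/(2\ell)^{1/2})/g^{1/2}_{\underline m}(b/(2\ell)^{1/2})$ would fail unless $\ell=1/2$. Your closing paragraph quietly writes the linear factor as $\mathrm{e}^{-x_1/(2\ell)^{1/2}}$, which is the normalization that actually makes the scaling work and is consistent with how the bound is consumed downstream (for instance the $\mathrm{e}^{|x|/(2\ell)^{1/2}}$ factor in Corollary~\ref{cor: explicit constants good bound} and the exponentials in Proposition~\ref{prop: integral ratio estimate}). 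In other words, the displayed definition of $h^\ell$ contains a typo --- the exponent should read $-x_1/(2\ell)^{1/2}$ --- and you implicitly corrected it mid-argument without flagging the discrepancy. The substance of your proof is right, but you should state that you are reading $h^\ell$ with the square-root normalization, since otherwise the first sentence of your argument (``the linear weight \dots\ becomes $\mathrm{e}^{-u_1}$'') is false as written.
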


\begin{proof}
    By a change of variables in both integrals and Corollary \ref{cor: 1 int estimates} we have 
    \[
     \frac{h^{\ell}_{m_1, \cdots, m_k}}{g^{\ell}_{m_1, \cdots, m_k}}(b)=\frac{h^{1/2}_{m_1, \cdots, m_k}}{g^{1/2}_{m_1, \cdots, m_k}}(b/(2\ell)^{1/2}) \le  N!(2\ell)^{N/2} \cdot C_{k,\underline{m}} \cdot \left( \mathrm{e}^{\frac{(k-1)b}{(2\ell)^{1/2}}}+\mathrm{e}^{-\frac{b}{(2\ell)^{1/2}}}\right), \quad b\in \R.
    \]
    where $C_{k+1, \underline{m},m_{k+1}} = 2^{m_{k+1}}(C_{k,\underline{m}}\mathrm{e}^{k+1}\lor \mathrm{e}^{d\sum_{i=1}^k m_i\log m_i})$ and $C_{1, m_1} = O(2^{m_1})$.
\end{proof}

Combining the above integral estimates gives the following corollary.

\begin{corollary}\label{cor: explicit constants good bound}
 Fix $N,k\in \N, \ell> 0$, and $m_1, m_2, \cdots, m_k \in \N$ and consider the functions
    \begin{equation*}
        \begin{array}{rr}
            f^\ell_{m_1, \cdots, m_k}(b) &= \displaystyle\int_{x_1\le x_2\le \cdots\le x_k\le b}|x_1|^N\prod_{i=1}^{k}\mathrm{e}^{-\frac{x_i^2}{4\ell}}\cdot(x_2-x_1)^{m_1}\cdot \cdots \cdot (b-x_k)^{m_k}\diff x_1\cdots \diff x_{k},\, b\in \R.
        \end{array}
    \end{equation*}
    and 
    \begin{equation*}
        \begin{array}{rr}
            g^{\ell}_{m_1, \cdots, m_k}(b) &= \displaystyle\int_{x_1\le x_2\le \cdots\le x_k\le b}\prod_{i=1}^{k}\mathrm{e}^{-\frac{x_i^2}{4\ell}}\cdot(x_2-x_1)^{m_1}\cdot \cdots \cdot (b-x_k)^{m_k}\diff x_1\cdots \diff x_{k},\, b\in \R.
        \end{array}
    \end{equation*}
    Then there exists a universal constant $d>0$ such that 
    \[
     \frac{f^{\ell}_{m_1, \cdots, m_k}}{g^{\ell}_{m_1, \cdots, m_k}}(b) \le  N!(2\ell)^{N/2} \cdot C_{k,\underline{m}} \cdot \left( \exp\left(\frac{kb}{(2\ell)^{1/2}}\big)+\exp\big(-\frac{b}{(2\ell)^{1/2}}\right)\right), \quad b\in \R.
    \]
    where $C_{k+1, \underline{m},m_{k+1}} = 2^{m_{k+1}}(C_{k,\underline{m}}\mathrm{e}^{k+1}\lor \mathrm{e}^{d\sum_{i=1}^k m_i\log m_i})$ and $C_{1, m_1} = O(2^{m_1})$.
\end{corollary}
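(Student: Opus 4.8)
The plan is to reduce the statement to the exponential-weight estimate already established in Corollary \ref{cor: 1 int estimates} (specialised to $\ell = 1/2$). First I would rescale every variable by $x_i = (2\ell)^{1/2}y_i$ and set $b' := b/(2\ell)^{1/2}$. This turns each Gaussian factor $\mathrm{e}^{-x_i^2/(4\ell)}$ into $\mathrm{e}^{-y_i^2/2}$, each gap into $(2\ell)^{m_j/2}$ times the corresponding gap in the $y$-variables (with $(b-x_k)^{m_k}$ becoming $(2\ell)^{m_k/2}(b'-y_k)^{m_k}$), the measure into $(2\ell)^{k/2}\prod_i\diff y_i$, and the weight $|x_1|^N$ into $(2\ell)^{N/2}|y_1|^N$. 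The powers of $2\ell$ coming from the gaps and the measure appear identically in numerator and denominator and cancel, leaving
\[
\frac{f^\ell_{m_1,\cdots,m_k}(b)}{g^\ell_{m_1,\cdots,m_k}(b)} = (2\ell)^{N/2}\,\frac{\widetilde f(b')}{\widetilde g(b')},
\]
where $\widetilde f,\widetilde g$ are the same integrals with Gaussian factor $\mathrm{e}^{-y_i^2/2}$, the former carrying the extra weight $|y_1|^N$.

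Next I would dominate the polynomial weight by exponentials: from $t^N\le N!\,\mathrm{e}^t$ for $t\ge 0$ one gets $|y_1|^N\le N!\,\mathrm{e}^{|y_1|}\le N!\,(\mathrm{e}^{y_1}+\mathrm{e}^{-y_1})$, hence $\widetilde f(b')\le N!\,\bigl(\widetilde h_+(b')+\widetilde h_-(b')\bigr)$, where $\widetilde h_\pm$ are the variance-two integrals carrying weight $\mathrm{e}^{\pm y_1}$ on the first variable. For $\widetilde h_+$ one uses that on the domain of integration $y_1\le y_2\le\cdots\le y_k\le b'$, so $\mathrm{e}^{y_1}\le \mathrm{e}^{b'}$ and $\widetilde h_+(b')\le \mathrm{e}^{b'}\widetilde g(b')\le\bigl(\mathrm{e}^{kb'}+\mathrm{e}^{-b'}\bigr)\widetilde g(b')$, the last step because $\mathrm{e}^{b'}\le\mathrm{e}^{kb'}$ when $b'\ge 0$ (as $k\ge 1$) and $\mathrm{e}^{b'}\le 1\le\mathrm{e}^{-b'}$ when $b'<0$. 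The term $\widetilde h_-$ is exactly the quantity bounded by Corollary \ref{cor: 1 int estimates} at $\ell=1/2$, giving $\widetilde h_-(b')\le C_{k,\underline{m}}\bigl(\mathrm{e}^{(k-1)b'}+\mathrm{e}^{-b'}\bigr)\widetilde g(b')$ with $C_{k,\underline{m}}$ obeying the advertised recursion (which is where the inductive bookkeeping of Lemma \ref{lemma: difference bound} enters).

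Combining the two estimates gives $\widetilde f(b')\le N!\,\bigl(1+C_{k,\underline{m}}\bigr)\bigl(\mathrm{e}^{kb'}+\mathrm{e}^{-b'}\bigr)\widetilde g(b')$, and re-inserting the factor $(2\ell)^{N/2}$ together with $b'=b/(2\ell)^{1/2}$ produces the claimed bound, after renaming the universal constant $d$ and absorbing the harmless additive $1$ into $C_{k,\underline{m}}$. No single step here is hard; the point requiring care is the final bookkeeping, namely checking that the constant obtained this way still satisfies $C_{k+1,\underline{m},m_{k+1}}=2^{m_{k+1}}\bigl(C_{k,\underline{m}}\mathrm{e}^{k+1}\lor\mathrm{e}^{d\sum_{i=1}^k m_i\log m_i}\bigr)$ with $C_{1,m_1}=O(2^{m_1})$. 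This is inherited verbatim from Lemma \ref{lemma: difference bound} via Corollary \ref{cor: 1 int estimates}, since the extra operations performed here — the rescaling, the crude bound on $\widetilde h_+$, and the factor $N!$ — contribute only universal constants and the explicitly displayed prefactor $N!(2\ell)^{N/2}$.
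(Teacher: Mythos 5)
Your proposal is correct and follows essentially the same route as the paper: the core idea in both is to dominate $|x_1|^N$ by exponentials via $t^N \le N!\,\mathrm e^t$, then treat the $\mathrm e^{+x_1}$-type term trivially (since $x_1 \le b$ on the domain of integration) and the $\mathrm e^{-x_1}$-type term by invoking Corollary \ref{cor: 1 int estimates}. The only cosmetic difference is that you rescale to the normalised ($\ell = 1/2$) picture up front and re-insert $(2\ell)^{N/2}$ at the end, whereas the paper carries the scaling through by bounding $|x|^N \le N!(2\ell)^{N/2}\bigl(\mathrm e^{x/(2\ell)^{1/2}}+\mathrm e^{-x/(2\ell)^{1/2}}\bigr)$ and identifying the negative-weight integral directly with $h^\ell_{m_1,\ldots,m_k}$; the treatment of the positive-sign term, the appeal to Corollary \ref{cor: 1 int estimates}, and the absorption of the additive $1$ into $C_{k,\underline m}$ are all as in the paper.
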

\begin{proof}
First observe that
\begin{equation*}
    |x|^N \le N!(2\ell)^{N/2} \cdot \mathrm{e}^{|x|/(2\ell)^{1/2}}\le N!(2\ell)^{N/2} \cdot \big(\mathrm{e}^{x/(2\ell)^{1/2}}+\mathrm{e}^{-x/(2\ell)^{1/2}}\big),\, x\in \R.
\end{equation*}
And so we estimate
    \begin{equation*}
        \begin{array}{cc}
            \displaystyle\frac{f^\ell_{m_1, \cdots, m_k}}{g^\ell_{m_1, \cdots, m_k}(b)}(b) \le N!(2\ell)^{N/2} \cdot \displaystyle\frac{\mathrm{e}^{b/(2\ell)^{1/2}}g^\ell_{m_1, \cdots, m_k}(b) + h^\ell_{m_1, \cdots, m_k}(b)}{g^\ell_{m_1, \cdots, m_k}(b)}
        \end{array}
    \end{equation*}
    with $ h^\ell_{m_1, \cdots, m_k}(b)$ as in Corollary \ref{cor: 1 int estimates} and so we have
    \begin{equation*}
        \begin{array}{cc}
            &\displaystyle\frac{f^\ell_{m_1, \cdots, m_k}}{g^\ell_{m_1, \cdots, m_k}}(b) \le N!(2\ell)^{N/2} \cdot \big( \mathrm{e}^{b/(2\ell)^{1/2}} + \frac{h^\ell_{m_1, \cdots, m_k}(b)}{g^\ell_{m_1, \cdots, m_k}(b)}\\
            & \le  N!(2\ell)^{N/2} \cdot \big(\mathrm{e}^{b/(2\ell)^{1/2}} + C_{k,\underline{m}} \cdot (\mathrm{e}^{\frac{(k-1)b}{(2\ell)^{1/2}}}+\mathrm{e}^{-\frac{b}{(2\ell)^{1/2}}})\big)\\
            & \le N!(2\ell)^{N/2} \cdot C_{k,\underline{m}} \cdot \big( \exp\big(\frac{kb}{(2\ell)^{1/2}}\big)+\exp\big(-\frac{b}{(2\ell)^{1/2}}\big)\big)
        \end{array}
    \end{equation*}
    where $C_{k+1, \underline{m},m_{k+1}} = 2^{m_{k+1}}(C_{k,\underline{m}}\mathrm{e}^{k+1}\lor \mathrm{e}^{d\sum_{i=1}^k m_i\log m_i})$ and $C_{1, m_1} = O(2^{m_1})$.
\end{proof}
\begin{remark}
    Observe that $C_k = O(\mathrm{e}^{ck^2\log k})$ for some universal constant $c>0$ given the constraint that $\sum_{i=1}^k m_i= O(k^2)$.
\end{remark}

The following lemma is a stability result for the structural form of pointwise estimates of terms that appear in estimates of the inhomogeneous transition densities \ref{eq: nu}.
\begin{lemma}\label{lemma: integral estimate}
Fix $m\in \N$, let $g:\R\to \R $ be a smooth function such that there exists an $N\in \N$ with the following estimate
\begin{equation*}\label{eq: poly-gaussian bounds}
    C_1(\theta, \beta)\langle y\rangle^{-1}\cdot \mathrm{e}^{-\beta(y)_-}\cdot \mathrm{e}^{-\theta(y)^2_-} \le g(y) \le C_2(N, \theta, \beta) \langle y\rangle^{N}\cdot \mathrm{e}^{-\theta(y)^2_-},\quad y\in \R.
\end{equation*}
for some $\theta >0$, $C_1(\theta, \beta), C_2(N, \theta, \beta)>0$. Then, we have the upper and lower bounds
    \begin{align*}
          &C^\prime_1(\theta, \beta)\langle (x)_-\lor 1 \rangle ^{-1} \mathrm{e}^{-(\theta + 1/2)(x)_-^2}\mathrm{e}^{-(x)_-\big(\beta+3\big)}\le \displaystyle\int^x_{-\infty}\mathrm{e}^{-\frac{(y)^2}{2}} g(y)(x-y)^m\diff y \\
         & \le C^\prime_2(N, m, \theta, \beta)\langle x\rangle ^{N+m+1}\mathrm{e}^{-(\theta+1/2)(x)_-^2},\quad x\in \R.
    \end{align*}
for $C^\prime_1(\theta, \beta) = C_1(\theta, \beta)\frac{c}{(2\theta+1)^m}\frac{2^{1/2}}{\frac{\beta+3}{2\theta +1} +  2^{1/2}}\mathrm{e}^{(\theta + 1/2)(\frac{\beta+2}{2\theta +1})^2}\mathrm{e}^{-(\theta + 1/2)\big(\frac{\beta+3}{2\theta +1}\big)^2}$ and\\ $C^\prime_2(N, m, \theta, \beta) =  d^{(m+N)\log (m+N)}\cdot C_2(N, \theta, \beta)$, for a universal constant $c>0$, that is of the same form as \ref{eq: poly-gaussian bounds}. Furthermore, for all $\lambda>0$, $K\in \N$
\begin{align*}
    \displaystyle\int^x_{-\infty} \langle y \rangle^K \mathrm{e}^{\lambda(y)_-}\mathrm{e}^{-\frac{y^2}{2}} g(y)(x-y)^m\diff y &\le C_{N,K, M, m, \theta, \beta, \lambda}  \langle x\rangle ^{N+K+m+1}\mathrm{e}^{\big(\beta +3\big)\cdot (x)_-}\\
    & \cdot\displaystyle\int^x_{-\infty}\mathrm{e}^{-\frac{(y)^2}{2}} g(y)(x-y)^m\diff y\, \quad x\in \R.
\end{align*}
where 
\begin{align*}
         & C_{N,K,M,m,\theta, \beta, \lambda}=C_1(\theta, \beta) \cdot C_2(N, \theta, \beta) d^{(N+K+m)\log(N+K+m)}\cdot(2\theta+1)^m  \\
         & \cdot \mathrm{e}^{\frac{\lambda^2}{2\theta + 1}} \mathrm{e}^{(\theta + 1/2)\big(\frac{\beta+3}{2\theta +1}\big)^2} \mathrm{e}^{-(\theta + 1/2)\big(\frac{\beta+2}{2\theta +1}\big)^2}
    \end{align*}
for a universal constant $d>0$ and $\langle\cdot \rangle = (\cdot ^2 + 1)^{1/2}$. 
\end{lemma}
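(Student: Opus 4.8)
The plan is to read Lemma~\ref{lemma: integral estimate} as a stability statement: the linear map $g\mapsto\int_{-\infty}^{x}\mathrm{e}^{-y^{2}/2}g(y)(x-y)^{m}\,\diff y$ is positivity preserving and preserves the structural form of the hypothesised two‑sided bound on $g$ (with $\theta\mapsto\theta+\tfrac12$, $N\mapsto N+m+1$, $\beta\mapsto\beta+3$), and the $\lambda$--$K$ estimate is then a routine consequence. All of this I would prove by one device: substitute $y=x-u$ with $u\ge 0$, split into $x\ge 0$ and $x<0$, peel off the factors $\langle\,\cdot\,\rangle^{N}$, $(x-y)^{m}=u^{m}$ and the exponentials one at a time, and reduce to the scalar Gaussian moments $M_{j}:=\int_{0}^{\infty}\mathrm{e}^{-(\theta+1/2)u^{2}}u^{j}\,\diff u=O(d^{\,j\log j})$ for a universal $d>0$. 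For the \emph{upper bound} on $\int_{-\infty}^{x}\mathrm{e}^{-y^{2}/2}g(y)(x-y)^{m}\diff y$ I would insert $g(y)\le C_{2}\langle y\rangle^{N}\mathrm{e}^{-\theta(y)_{-}^{2}}$, use $\mathrm{e}^{-y^{2}/2}\mathrm{e}^{-\theta(y)_{-}^{2}}\le\mathrm{e}^{-(\theta+1/2)(y)_{-}^{2}}$, and, after the substitution and the pointwise inequalities $(|x|+u)^{2}\ge x^{2}+u^{2}$ (needed only when $x\le 0$), $\langle|x|+u\rangle^{N}\le 2^{N}(\langle x\rangle^{N}+u^{N})$ and $\langle|x|+u\rangle\le\langle x\rangle+u$, integrate against $M_{m}$ and $M_{N+m}$; this yields $C_{2}'\langle x\rangle^{N+m+1}\mathrm{e}^{-(\theta+1/2)(x)_{-}^{2}}$ with $C_{2}'=d^{(m+N)\log(m+N)}C_{2}$, the case $x\ge 0$ being identical but simpler since there $(x)_{-}=0$.

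For the \emph{lower bound} I would keep only one $x$-independent sub-interval of the domain. When $x\ge 0$, integrate over a fixed window of negative $y$, e.g.\ $[-1,-\tfrac12]\subseteq(-\infty,x]$, on which $\mathrm{e}^{-y^{2}/2}$, $g(y)$, $\langle y\rangle^{-1}$ and $(x-y)^{m}\ge 2^{-m}$ are all bounded below by positive constants; since $(x)_{-}=0$ this is exactly the required estimate. When $x<0$, put $s:=(x)_{-}=|x|$, substitute $y=x-u$, pull out $C_{1}\mathrm{e}^{-(\theta+1/2)s^{2}}\mathrm{e}^{-\beta s}$, and restrict $u$ to $\bigl[\tfrac{\beta+2}{2\theta+1},\tfrac{\beta+3}{2\theta+1}\bigr]$, an interval of length $\tfrac{1}{2\theta+1}$; there $(2\theta+1)u\le\beta+3$ forces $\mathrm{e}^{-(2\theta+1)su}\ge\mathrm{e}^{-(\beta+3)s}$, the factors $\mathrm{e}^{-(\theta+1/2)u^{2}}$, $\mathrm{e}^{-\beta u}$ and $u^{m}$ are bounded below by explicit constants, and $\langle s+u\rangle^{-1}\ge\frac{2^{1/2}}{2^{1/2}+(\beta+3)/(2\theta+1)}\langle s\rangle^{-1}$ follows from $\langle s+u\rangle\le\langle s\rangle+u$ and $\langle s\rangle\ge 2^{1/2}$ (the range $0<s<1$ being absorbed into the $\langle(x)_{-}\lor 1\rangle^{-1}$ normalisation). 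Multiplying these bounds by the window length reproduces the constant $C_{1}'$; combining this with the previous paragraph shows that $\int_{-\infty}^{x}\mathrm{e}^{-y^{2}/2}g(y)(x-y)^{m}\diff y$ again obeys a two-sided bound of the same structural form.

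For the \emph{last display} I would bound the left integrand by $C_{2}\langle y\rangle^{N+K}\mathrm{e}^{\lambda(y)_{-}}\mathrm{e}^{-(\theta+1/2)(y)_{-}^{2}}(x-y)^{m}$ and complete the square $\lambda(y)_{-}-(\theta+\tfrac12)(y)_{-}^{2}=\tfrac{\lambda^{2}}{2(2\theta+1)}-(\theta+\tfrac12)\bigl((y)_{-}-\tfrac{\lambda}{2\theta+1}\bigr)^{2}$; for $s=(x)_{-}>\tfrac{\lambda}{2\theta+1}$ the quadratic exponent is monotone in $u$ after $y=x-u$, so the integral is at most a constant of the displayed type times $\langle x\rangle^{N+K+m+1}\mathrm{e}^{\lambda(x)_{-}}\mathrm{e}^{-(\theta+1/2)(x)_{-}^{2}}$, and the remaining range $s\le\tfrac{\lambda}{2\theta+1}$ contributes only a bounded constant. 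Dividing by the lower bound of the middle display, the $\mathrm{e}^{-(\theta+1/2)(x)_{-}^{2}}$ factors cancel, the $\lambda^{2}$-exponentials assemble into the advertised $\mathrm{e}^{\lambda^{2}/(2\theta+1)}$, and the residual exponential growth in $(x)_{-}$ is dominated by $\mathrm{e}^{(\beta+3)(x)_{-}}$ (the constant $3$ being there precisely to absorb the small $\lambda$'s that occur in the subsequent applications), leaving $\langle x\rangle^{N+K+m+1}\mathrm{e}^{(\beta+3)(x)_{-}}\int_{-\infty}^{x}\mathrm{e}^{-y^{2}/2}g(y)(x-y)^{m}\diff y$ with $C_{N,K,M,m,\theta,\beta,\lambda}$ of the stated shape. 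The step I expect to be the genuine obstacle is the lower bound of the middle display: one must commit to a single $x$-independent window on which the drift factor $\mathrm{e}^{-(2\theta+1)(x)_{-}u}$ never beats the target $\mathrm{e}^{-(\beta+3)(x)_{-}}$ while the polynomial weight $u^{m}$, the Gaussian $\mathrm{e}^{-(\theta+1/2)u^{2}}$ and the comparison $\langle(x)_{-}+u\rangle^{-1}\asymp\langle(x)_{-}\rangle^{-1}$ all survive with explicit constants — and it is exactly this simultaneous balancing, together with tracking every constant through it, for which the endpoints $\tfrac{\beta+2}{2\theta+1}$ and $\tfrac{\beta+3}{2\theta+1}$ are designed.
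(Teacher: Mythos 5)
Your upper bound for the middle display is essentially the paper's argument, reorganised by the substitution $y=x-u$; that part is fine. The genuine gap is in the lower bound, and it sits precisely at the step you flag as the obstacle. After $y=-s-u$ with $s=(x)_->0$, the integrand is $C_1\langle s+u\rangle^{-1}\mathrm{e}^{-\beta(s+u)}\mathrm{e}^{-(\theta+1/2)(s+u)^2}u^m$. You pull $\mathrm{e}^{-\beta s}\mathrm{e}^{-(\theta+1/2)s^2}$ out of the integral, and what remains contains the cross term $\mathrm{e}^{-(2\theta+1)su}$. On your window $u\in\big[\frac{\beta+2}{2\theta+1},\frac{\beta+3}{2\theta+1}\big]$ you bound this below by $\mathrm{e}^{-(\beta+3)s}$, so reassembling gives $\mathrm{e}^{-\beta s}\mathrm{e}^{-(\beta+3)s}=\mathrm{e}^{-(2\beta+3)s}$, which is strictly weaker than the claimed $\mathrm{e}^{-(\beta+3)(x)_-}$ for any $\beta>0$ and so does not prove the lemma. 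The $\beta$ has been paid for twice: once via the pulled-out $\mathrm{e}^{-\beta s}$, once via the $\beta$-dependent window. For your route to work the window must be $\beta$-independent --- say $u\in\big[\frac{2}{2\theta+1},\frac{3}{2\theta+1}\big]$, so that $(2\theta+1)u\le 3$, the cross term yields only $\mathrm{e}^{-3s}$, and combined with the pulled-out $\mathrm{e}^{-\beta s}$ this gives the desired $\mathrm{e}^{-(\beta+3)s}$. The paper avoids the pitfall altogether by never factoring $\mathrm{e}^{-\beta s}$ out: it restricts to $y\le-(x)_- -\frac{1}{2\theta+1}$ to control $(x-y)^m$, uses the elementary bound $\langle y\rangle^{-1}\ge c\,\mathrm{e}^{-2|y|}$ to promote $\beta$ to $\beta+2$, completes the square on $\mathrm{e}^{-(\beta+2)y-(\theta+1/2)y^2}$ after flipping $y\mapsto-y$, and reads the growth off the Gaussian-tail asymptotics of Section~\ref{app: erf asymptotics}; the final $+1$ (so $\beta+2\mapsto\beta+3$) comes from shifting the lower integration limit by $\frac{1}{2\theta+1}$ --- the $\beta$ lives inside the integrand throughout and there is no double count.

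A secondary imprecision in the last display: the upper bound on the numerator necessarily carries an $\mathrm{e}^{\lambda(x)_-}$ factor, so after dividing by the lower bound of the middle display you obtain $\mathrm{e}^{(\lambda+\beta+3)(x)_-}$ growth, not $\mathrm{e}^{(\beta+3)(x)_-}$. Your parenthetical acknowledgement (that the $3$ is there to "absorb the small $\lambda$'s") shows you noticed this, but it is not a valid step for the lemma as quantified over all $\lambda>0$; what you actually prove is the $\mathrm{e}^{(\lambda+\beta+3)(x)_-}$ bound, which happens to suffice for the downstream applications where $\lambda$ is a fixed constant of order $\ell^{-1/2}$.
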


\begin{proof}
Using the pointwise bounds in \ref{eq: poly-gaussian bounds}
    \begin{align*}
             & C_1(\theta, \beta)\displaystyle\int^x_{-\infty}\mathrm{e}^{-\frac{y^2}{2}}\langle y\rangle^{-1}\cdot\mathrm{e}^{-\beta(y)_-}\cdot \mathrm{e}^{-\theta(y)^2_-} (x-y)^m\diff y\\
             & \le\displaystyle\int^x_{-\infty}\mathrm{e}^{-\frac{y^2}{2}}g(y)(x-y)^m\diff y\le C_2(N, \theta, \beta)\displaystyle\int^x_{-\infty}\langle y\rangle ^N\mathrm{e}^{-\frac{y^2}{2}}\cdot \mathrm{e}^{-\theta(y)^2_-} (x-y)^m\diff y\,.
    \end{align*}
The lower bound can be further estimated from below
    \begin{align*}
         &\displaystyle\int^x_{-\infty}\mathrm{e}^{-\frac{y^2}{2}}\langle y\rangle^{-1}\cdot \mathrm{e}^{-\beta(y)_-}\cdot\mathrm{e}^{-\theta(y)^2_-} (x-y)^m\diff y \\
         & \ge \frac{1}{(2\theta+1)^m}\displaystyle\int^{-(x)_- -1/(2\theta + 1)}_{-\infty}\langle y\rangle^{-1}\cdot \mathrm{e}^{\beta y}\cdot \mathrm{e}^{-(\theta + 1/2)(y)^2_-}\diff y\\
         &\ge 2^{1/2} \frac{1}{(2\theta+1)^m}\displaystyle\int_{(x)_- +1/(2\theta + 1)}^{\infty} \frac{1}{y\lor 1}\mathrm{e}^{-\beta y}\cdot\mathrm{e}^{-(\theta + 1/2)y^2}\diff y\\
         & \ge2^{3/2}\frac{c}{(2\theta+1)^m}\displaystyle\int_{(x)_- + 1/(2\theta + 1)}^{\infty}\mathrm{e}^{-(\beta + 2) y} \mathrm{e}^{-(\theta + 1/2)(y)^2}\diff y\\
         & \ge \frac{c}{(2\theta+1)^m}\mathrm{e}^{(\theta + 1/2)(\frac{\beta+2}{2\theta +1})^2}\displaystyle\int_{(x)_- }^{\infty}\mathrm{e}^{-(\theta + 1/2)\big(y +\frac{\beta+3}{2\theta +1}\big)^2}\diff y\\
         & \ge \frac{c}{(2\theta+1)^m}\mathrm{e}^{(\theta + 1/2)(\frac{\beta+2}{2\theta +1})^2}\langle \big((x)_-+\frac{\beta+3}{2\theta +1}\big)\lor 1 \rangle ^{-1} \mathrm{e}^{-(\theta + 1/2)\big((x)_- +\frac{\beta+3}{2\theta +1}\big)^2}\\
         & = \frac{c}{(2\theta+1)^m}\mathrm{e}^{(\theta + 1/2)(\frac{\beta+2}{2\theta +1})^2}\mathrm{e}^{-(\theta + 1/2)\big(\frac{\beta+3}{2\theta +1}\big)^2}\cdot \frac{2^{1/2}}{\frac{\beta+3}{2\theta +1} +  2^{1/2}}\langle (x)_-\lor 1 \rangle ^{-1} \mathrm{e}^{-(\theta + 1/2)(x)_-^2}\mathrm{e}^{-(x)_-\big(\beta+3\big)}
    \end{align*}
    for some universal constant $c>0$, using the asymptotics of the error function, see Section \ref{app: erf asymptotics} in the Appendix, and the easily checked fact 
    \[
    \langle \big(x+\alpha\big)\lor 1 \rangle ^{-1}\ge \frac{2^{1/2}}{\alpha +  2^{1/2}}\langle x\lor 1 \rangle ^{-1} ,\; x,\alpha \ge 0\,,
    \]
    in the last inequality. Now the upper bound is also estimated
    \begin{align*}
         &\displaystyle\int^x_{-\infty}\mathrm{e}^{-\frac{y^2}{2}}\langle y\rangle^{N}\cdot \mathrm{e}^{-\theta(y)^2_-}(x-y)^m\diff y \le  \displaystyle\int^x_{-\infty}\langle y\rangle^{N}\cdot \mathrm{e}^{-(\theta+1/2)(y)^2_-}(x-y)^m\diff y\\
         & \le \displaystyle\int^x_{-\infty}\langle y\rangle^{N}\cdot \mathrm{e}^{-(\theta+1/2)(y)^2_-}(\langle x\rangle+\langle y\rangle)^m\diff y\\
         & \le 2^m\displaystyle \sum_{k=0}^m \langle x\rangle^{m-k}\displaystyle\int^x_{-\infty}\langle y\rangle^{N+k}\cdot \mathrm{e}^{-(\theta+1/2)(y)^2_-}\diff y\\
         & \le 2^m\displaystyle \sum_{k=0}^m \langle x_-\rangle^{m-k}\left(\displaystyle\int^{-(x)_-}_{-\infty}+\displaystyle\int^x_{-(x)_-}\right)\langle y\rangle^{N+k}\cdot \mathrm{e}^{-(\theta+1/2)(y)^2_-}\diff y\\
         & \le 2^m\displaystyle \sum_{k=0}^m \langle x\rangle^{m-k}\left(\displaystyle\int^{-(x)_-}_{-\infty}\langle y\rangle^{N+k}\cdot \mathrm{e}^{-(\theta+1/2)(y)^2_-}\diff y+ \mathbf{1}_{x\ge 0}\langle x\rangle^{N+k+1}\right) \\
         & \le 2^{2m+N}\displaystyle \sum_{k=0}^m \langle x\rangle^{m-k}\left(\displaystyle\int^{\infty}_{(x)_-}(y+1)^{N+k}\cdot \mathrm{e}^{-(\theta+1/2)(y)^2}\diff y + \mathbf{1}_{x\ge 0}\langle x\rangle^{N+k+1}\right) \\
         &\le d^{(m+N)\log (m+N)} \langle x\rangle^{N+m+1} \mathrm{e}^{-(\theta+1/2)(x)_-^2}\\
    \end{align*}
      for a universal $d>0$, where in the last line the comparison $(|y|\lor1)\le \langle y\rangle \le 2^{1/2}(|y|\lor 1)$ and repeated integration by parts. Thus, we have the upper and lower bounds
    \begin{align*}
          & C^\prime(\theta, \beta)\langle (x)_-\lor 1 \rangle ^{-1} \mathrm{e}^{-(\theta + 1/2)(x)_-^2}\mathrm{e}^{-(2\theta + 1)(x)_-\big(\frac{\beta}{2\theta +1}+ 2\big)}\\
          & \le\displaystyle\int^x_{-\infty}\mathrm{e}^{-\frac{y^2}{2}}g(y)(x-y)^m\diff y. \\
         & \le C^\prime_2(N, m, \theta, \beta)\langle x\rangle ^{N+m+1}\mathrm{e}^{-(\theta+1/2)(x)_-^2},\quad x\in \R.
    \end{align*}
for \[C^\prime(\theta, \beta) = C_1(\theta, \beta)\frac{1}{(2\theta+1)^m}\mathrm{e}^{(\theta + 1/2)(\frac{\beta+2}{2\theta +1})^2}\mathrm{e}^{-(\theta + 1/2)\big(\frac{\beta+3}{2\theta +1}\big)^2}
\]
and 
\[
C^\prime_2(N, m, \theta, \beta) =  d^{(m+N)\log (m+N)}\cdot C_2(N, \theta, \beta)\,,
\]
for a universal constant $c>0$. The bounds on $g$ and some analogous manipulations give 
    \begin{align*}
         &\displaystyle\int^x_{-\infty} \mathrm{e}^{\lambda (y)_-}\mathrm{e}^{-\frac{y^2}{2}}g(y)(x-y)^m\diff y \le C_2(N, \theta, \beta) \displaystyle\int^x_{-\infty} \langle y\rangle ^N\mathrm{e}^{\lambda (y)_-}\mathrm{e}^{-\frac{y^2}{2}}\mathrm{e}^{-\theta(y)^2_-}(x-y)^m\diff y \\
         &\le C_2(N, \theta, \beta)\displaystyle\int^x_{-\infty}\langle y\rangle ^N \mathrm{e}^{\lambda (y)_-}\mathrm{e}^{-(\theta+1/2)(y)^2_-}(x-y)^m\diff y\\
         & = C_2(N, \theta, \beta)\cdot\mathrm{e}^{\frac{\lambda^2}{2\theta + 1}}\displaystyle\int^x_{-\infty}  \langle y\rangle ^N\mathrm{e}^{-(\theta+1/2)(y_-+\frac{\lambda}{2\theta + 1})^2}(x-y)^m\diff y\\
         & = C_2(N, \theta, \beta)\cdot\mathrm{e}^{\frac{\lambda^2}{2\theta + 1}}\displaystyle\left(\int^{-(x)_-}_{-\infty} + \int^x_{-(x)_-}\right)  \langle y\rangle ^N\mathrm{e}^{-(\theta+1/2)(y_-+\frac{\lambda}{2\theta + 1})^2}(x-y)^m\diff y\\
         & = C_2(N, \theta, \beta)\cdot\mathrm{e}^{\frac{\lambda^2}{2\theta + 1}}\displaystyle\left(\int^{\infty}_{(x)_-} \langle y\rangle ^N\mathrm{e}^{-(\theta+1/2)(y+\frac{\lambda}{2\theta + 1})^2}(x+y)^m\diff y\right.\\
         &\left. + \mathbf{1}_{x\ge 0}\int^x_{-(x)_-}  \langle y\rangle^N \mathrm{e}^{-(\theta+1/2)(y+\frac{\lambda}{2\theta + 1})^2}(x-y)^m\diff y\right)\\
         & \le d^{(N+m)\log(N+m)}C_2(N, \theta, \beta)\cdot\mathrm{e}^{\frac{\lambda^2}{2\theta + 1}}\langle (x)_+\rangle ^{N+m}\langle (x)_-\rangle ^{N+m}\mathrm{e}^{(x)_-}\mathrm{e}^{-(\theta+1/2)(x)_-^2}\\
         & = d^{(N+m)\log(N+m)}C_2(N, \theta, \beta)\cdot\mathrm{e}^{\frac{\lambda^2}{2\theta + 1}} \mathrm{e}^{(x)_-}\mathrm{e}^{(\theta + 1/2)\big(\frac{\beta}{2\theta +1}+ 2\big)^2}\\
         & \cdot \langle (x)_+\rangle ^{N+m}\langle (x)_-\rangle ^{N+m+1}\langle (x)_-\lor 1 \rangle ^{-1}\mathrm{e}^{(2\theta+1)\big(\frac{\beta+3}{2\theta +1}\big) \cdot (x)_-}\mathrm{e}^{(x)_-}\mathrm{e}^{-(\theta + 1/2)\big((x)_- +\frac{\beta+3}{2\theta +1}\big)^2}\\
         &\le d^{(N+m)\log(N+m)}C_2(N, \theta, \beta)\cdot\mathrm{e}^{\frac{\lambda^2}{2\theta + 1}} \mathrm{e}^{(\theta + 1/2)\big(\frac{\beta+3}{2\theta +1}\big)^2}\\
         & \cdot\frac{(2\theta+1)^m}{c\mathrm{e}^{(\theta + 1/2){(\frac{\beta+2}{2\theta +1})^2}}} \langle x\rangle ^{N+m+1}\mathrm{e}^{\big(\beta + 4\big)\cdot (x)_-} \displaystyle\int^x_{-\infty}\mathrm{e}^{-\frac{y^2}{2}}\langle y \rangle^{-1} \mathrm{e}^{-\beta (y)_-}\mathrm{e}^{-\theta(y)^2_-}(x-y)^m\diff y\\
         &\le C_{N,M, m, \theta, \beta, \lambda}  \langle x\rangle ^{N+m+1}\mathrm{e}^{\big(\beta+4\big)\cdot (x)_-} \displaystyle\int^x_{-\infty}\mathrm{e}^{-\frac{y^2}{2}}g(y)(x-y)^m\diff y
    \end{align*}   
    for some universal constant $d>0$, and  
    \[C_{N,M, m, \theta, \beta, \lambda} = C_1(\theta, \beta) \cdot C_2(N, \theta, \beta) d^{(N+m)\log(N+m)}\cdot(2\theta+1)^m\cdot \mathrm{e}^{\frac{\lambda^2}{2\theta + 1}} \mathrm{e}^{(\theta + 1/2)\big(\frac{\beta+3}{2\theta +1}\big)^2} \mathrm{e}^{-(\theta + 1/2)\big(\frac{\beta+2}{2\theta +1}\big)^2}.
    \]
\end{proof}

\subsection{Uniform pathwise Radon-Nikodym deriative estimates for homogeneous BLPP}\label{sec: uniform path blpp hom}
Using the pathwise estimates from Proposition \ref{prop: top line pitman pointwise bound general case}, we are now in a position to prove a uniform of spatial increments of homogeneous BLPP against Brownian motion on compacts; this is the content of the following proposition.

\begin{proposition}\label{prop: uniform pathwise est blpp}
Fix a time horizon $T > 0$ and depth $m\in \N$ and consider the Brownian LPP process on $[\ell,r]$ for $0<\ell<r$
\[
B[(-T, m)\rightarrow (\cdot, 1)].
\]
Now, the increment process 
\[
h^T(\cdot+\ell) = B[(-T, m)\rightarrow (\cdot+\ell, 1)]-B[(-T, m)\rightarrow (\ell, 1)]
\]
 has a Radon-Nikodym derivative $X_T$ on $[0,r-\ell]$ against a rate two Brownian motion with almost sure pointwise bound 
    \begin{equation*}
    \begin{array}{cc}
    \frac{c^{m(m-1)}n^{m(m-1)}}{\prod_{j=1}^{m-1}j!}&\cdot \mathbb{E}_{Z}\Bigg[(Z_+ +1)^{m(m-1)/2}\cdot ((\xi(r-\ell)+\sqrt{T+\ell}Z)_+/\sqrt{T+r}+1)^{m(m-1)/2}\Bigg]
    \end{array}
\end{equation*}
on paths $\xi$ on $[0,r-\ell]$ where $Z$ is an independent centred variance $2$ Gaussian random variable for some universal constant $c>1$.

Furthermore, it follows that there is a non-negative $Y\in L^{\infty-}(\mu)$ such that $\sup_{T\ge 0} X_T \le Y$ a.s. In other words, the family of Radon-Nikodym Derivatives $(X_T)_{T\le 0}$ is tight with respect to the rate two Wiener measure on $[0,r-\ell]$.
\end{proposition}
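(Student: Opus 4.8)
The plan is to reduce to Proposition~\ref{prop: top line pitman pointwise bound general case} by translation invariance, disintegrate over the value at the left endpoint of the window, and then extract a majorant independent of $T$. First I would identify the process: since a last passage value depends on its environment only through increments (Definition~\ref{def: length}), translating the time axis gives $B[(-T,m)\to(\cdot,1)]\stackrel{d}{=}\tilde H(\cdot+T)$, where $\tilde H$ is the top line of an $m$-level Dyson Brownian motion started from the origin, i.e.\ the top line of the Brownian $m$-melon (Lemma~\ref{lemma: Pitman melon} and Subsection~\ref{subsec: melons}). Writing $s_1:=\ell+T$, $s_2:=r+T$, the increment process $h^T(\cdot+\ell)$ on $[0,r-\ell]$ therefore has the law of $\big(\tilde H(u+s_1)-\tilde H(s_1)\big)_{u\in[0,r-\ell]}$. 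Applying Proposition~\ref{prop: top line pitman pointwise bound general case} with $n=m$ on the interval $[s_1,s_2]$, the law of $\tilde H|_{[s_1,s_2]}$ is absolutely continuous with respect to that of a rate two Brownian motion $W$ from the origin restricted to $[s_1,s_2]$, with Radon--Nikodym derivative at a path $\zeta$ bounded by
\[
D(\zeta(s_1),\zeta(s_2)):=\frac{c^{m(m-1)}m^{m(m-1)}}{\prod_{j=1}^{m-1}j!}\left(\frac{\zeta(s_1)_+}{\sqrt{s_1}}+1\right)^{m-1}\left(\frac{\zeta(s_2)_+}{\sqrt{s_2}}+1\right)^{m-1}.
\]

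Next I would disintegrate. Decompose $W$ on $[s_1,s_2]$ into the pair $W_{s_1}$ (centred Gaussian of variance $2s_1$) and the independent fresh increment process $\tilde W:=(W_{u+s_1}-W_{s_1})_{u\in[0,r-\ell]}$, which has law $\mu$; note $W_{s_2}=W_{s_1}+\tilde W(r-\ell)$. For a nonnegative test functional $\Psi$ on $C_{0,*}([0,r-\ell])$, changing measure on $[s_1,s_2]$ and then conditioning on $\tilde W$ gives
\[
\mathbb{E}\big[\Psi(h^T(\cdot+\ell))\big]\ \le\ \mathbb{E}_{\tilde W}\Big[\Psi(\tilde W)\,\mathbb{E}_{W_{s_1}}\big[D\big(W_{s_1},\,W_{s_1}+\tilde W(r-\ell)\big)\big]\Big].
\]
Hence, writing $W_{s_1}=\sqrt{s_1}\,Z$ with $Z$ a centred variance two Gaussian and using $(\sqrt{s_1}\,Z)_+/\sqrt{s_1}=Z_+$, the Radon--Nikodym derivative $X_T$ of $\mathrm{Law}(h^T(\cdot+\ell))$ against $\mu$ satisfies, $\mu$-a.e.,
\[
X_T(\xi)\ \le\ \frac{c^{m(m-1)}m^{m(m-1)}}{\prod_{j=1}^{m-1}j!}\,\mathbb{E}_Z\left[(Z_++1)^{m-1}\left(\frac{(\xi(r-\ell)+\sqrt{T+\ell}\,Z)_+}{\sqrt{T+r}}+1\right)^{m-1}\right];
\]
since both factors in the expectation are $\ge 1$ one may replace the exponent $m-1$ by $m(m-1)/2$, which is the asserted pointwise bound (the prefactor is the $n=m$ specialisation of that in Proposition~\ref{prop: top line pitman pointwise bound general case}).

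Finally, for the $T$-uniform majorant I would use the elementary facts $\sqrt{T+r}\ge\sqrt r$ and $\sqrt{T+\ell}/\sqrt{T+r}\le 1$ for all $T\ge 0$, which give $\big(\xi(r-\ell)+\sqrt{T+\ell}\,Z\big)_+/\sqrt{T+r}\le |\xi(r-\ell)|/\sqrt r+|Z|$; consequently, uniformly in $T\ge 0$,
\[
X_T(\xi)\ \le\ Y(\xi):=\frac{c^{m(m-1)}m^{m(m-1)}}{\prod_{j=1}^{m-1}j!}\,\mathbb{E}_Z\left[(Z_++1)^{m(m-1)/2}\left(\frac{|\xi(r-\ell)|}{\sqrt r}+|Z|+1\right)^{m(m-1)/2}\right].
\]
Expanding the expectation, $Y$ is a polynomial in $|\xi(r-\ell)|$ of degree $m(m-1)/2$ with finite coefficients (moments of $|Z|$), and under $\mu$ the variable $\xi(r-\ell)$ is centred Gaussian of variance $2(r-\ell)$, so has moments of all orders; thus $Y\in L^p(\mu)$ for every $p<\infty$, i.e.\ $Y\in L^{\infty-}(\mu)$, and tightness of $(X_T)_{T\ge 0}$ follows from $\mu(X_T>K)\le\mu(Y>K)\le K^{-1}\|Y\|_{L^1(\mu)}$. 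The only genuinely delicate point I anticipate is the \emph{simultaneous} inequality $\sup_{T\ge 0}X_T\le Y$ a.s., an uncountable supremum of a.e.-defined densities: I would handle it by working with the canonical density-ratio versions of $X_T$, which depend continuously on $T$ (the Dyson transition kernels entering Proposition~\ref{prop: top line pitman pointwise bound general case}, and the explicit majorant above, are continuous in $T$), verifying the bound along a countable dense set of $T$ and passing to the limit. This continuity bookkeeping, rather than any analytic estimate, is the main obstacle; everything else is an immediate consequence of Proposition~\ref{prop: top line pitman pointwise bound general case} together with Gaussian moment bounds.
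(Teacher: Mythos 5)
Your proof is correct and follows essentially the same route as the paper: apply Proposition~\ref{prop: top line pitman pointwise bound general case} to the Dyson top line on the shifted window $[T+\ell,T+r]$, then disintegrate over the value at the left endpoint to transfer the bound to the increment process. The paper outsources the disintegration step to Proposition~\ref{prop: 3} (increment regularisation), which you re-derive inline, and you also spell out the extraction of the $T$-uniform majorant $Y$ and the version/measurability point for $\sup_{T\ge 0}X_T$, both of which the paper leaves implicit.
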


\begin{proof}
First observe that using Proposition \ref{prop: top line pitman pointwise bound general case} the process $B[(0, m)\rightarrow (\cdot, 1)]$ on $[T+\ell,T+r]$ has a Radon-Nikodym  against rate two Brownian motion with pointwise bound
\begin{equation*}
    \frac{c^{n(n-1)}n^{n(n-1)}}{\prod_{j=1}^{n-1}j!}\cdot (\xi(T+\ell)_+/\sqrt{T+\ell}+1)^{m(m-1)/2}\cdot (\xi(T+r)_+/\sqrt{-T+r}+1)^{m(m-1)/2}
\end{equation*}
for some universal constant $c>1$ on paths $\xi$ on $[T+\ell, T+r]$. Now, using Proposition \ref{prop: 3} the process $h^T(\cdot)$ on $[0,r-\ell]$ has a Radon-Nikodym  against rate two Brownian motion with pointwise bound
 \begin{equation*}
    \begin{array}{cc}
    \frac{c^{m(m-1)}n^{m(m-1)}}{\prod_{j=1}^{m-1}j!}&\cdot \mathbb{E}_{Z}\Bigg[(Z_+ +1)^{m(m-1)/2}\cdot ((\xi(r-\ell)+\sqrt{T+\ell}Z)_+/\sqrt{T+r}+1)^{m(m-1)/2}\Bigg]
    \end{array}
\end{equation*}
where $Z$ is an independent centred variance $2$ Gaussian random variable for some universal constant $c>1$, as required.
\end{proof}
\subsection{Increment regularisation}\label{sec: inc reg}
We record a regularisation lemma for the Radon-Nikodym derivatives of a continuous process against the Wiener measure on compacts away from zero and its increments. In particular for $0<\ell<r$, one sees that under the map 
$$\Phi : C_{*,*}([\ell,r])\to  C_{0,*}([0,r-\ell]) : \xi(\cdot) \mapsto \xi(\cdot + \ell)-\xi (\ell)\label{eq: pushforward}$$
the induced map mapping Radon-Nikodym derivatives is `contractive' under suitable assumptions, in the sense described below.

\begin{proposition}(Increment regularisation)\label{prop: 3} Let $0<\ell<r$ and $b\in \R$. Let $B$ be a continuous process on $[0,\infty)$ such that the law of $B$ is absolutely continuous against rate two Brownian motion starting from the origin away from zero. Suppose furthermore that for $0<\ell<r$, the Radon-Nikodym derivative is pointwise almost surely bounded by
$$
f(\xi(\ell), \xi(r))
$$
for some non-negative measurable function $f:\R^2 \to \R$ on paths $\xi$ on $[\ell, r]$. Then, the joint law of $(B(\ell), B(\cdot + \ell)-B(\ell))$ restricted to $[0,r-\ell]$ is absolutely continuous with respect to the measure $\lambda\times \mu$, where $\lambda$ is the one-dimensional Lebesgue measure and $\mu$ denotes the law of a rate two Brownian motion starting from $(0,0)$ restricted to $[0,r-\ell]$ with a.e.-pointwise bound on points $(y, \xi)\in \R_\ge\times C_{*,*}([0,r-\ell])$,
$$
g(y,\xi(r-\ell))
$$
for some non-negative measurable function $g$ (non-decreasing in its last argument if $f$ is). Furthermore, the following `contractivity' is observed for all $p>1$
\[
\norm{\frac{\diff \Phi_* B}{\diff \Phi_*\mu}}_{L^p(\Phi_*\mu)}\le \norm{\frac{\diff  B}{\diff \mu}}_{L^p(\mu)},\qquad p>1\,,
\]
where $\Phi_*$ denotes the pushforward under the map $\Phi$ as in (\ref{eq: pushforward}).
\end{proposition}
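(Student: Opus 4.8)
The plan is to reduce the statement to two elementary facts: that the rate two Wiener measure on $[\ell,r]$ disintegrates as an independent product of its marginal at time $\ell$ (a Gaussian) and the increment process (again a rate two Wiener measure), and that conditional expectation is an $L^p$-contraction. Write $\mu_{\ell,r}$ for the law on $C_{*,*}([\ell,r])$ of a rate two Brownian motion from the origin restricted to $[\ell,r]$, and $\mu_{0,r-\ell}$ for the rate two Wiener measure on $C_{0,*}([0,r-\ell])$. Introduce $\Psi\colon C_{*,*}([\ell,r])\to\R\times C_{0,*}([0,r-\ell])$, $\Psi(\xi)=\big(\xi(\ell),\ \xi(\cdot+\ell)-\xi(\ell)\big)$, which is a homeomorphism (hence a Borel isomorphism) with inverse $(y,\eta)\mapsto\big(t\mapsto y+\eta(t-\ell)\big)$, and set $\Phi=\pi_2\circ\Psi$ with $\pi_2$ the coordinate projection onto the path factor. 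All measure-theoretic steps below are legitimate because the path spaces involved are Polish (Section~\ref{sec: notation}).

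\emph{Step 1: the joint law.} First I would record that under $\mu_{\ell,r}$ the pair $\big(W(\ell),\ W(\cdot+\ell)-W(\ell)\big)$ has independent coordinates, $W(\ell)\sim N(0,2\ell)$ and the increment process distributed as $\mu_{0,r-\ell}$ — this is just independence of Brownian increments — so that $\Psi_*\mu_{\ell,r}=N(0,2\ell)\otimes\mu_{0,r-\ell}$, which is absolutely continuous with respect to $\lambda\otimes\mu_{0,r-\ell}$ with density $\varphi_\ell(y)$, the Lebesgue density of $N(0,2\ell)$. Transporting the hypothesis through the isomorphism $\Psi$ then gives $\Psi_*\mathrm{Law}(B|_{[\ell,r]})\ll\Psi_*\mu_{\ell,r}$ with density $D(y,\eta):=\frac{\diff\mathrm{Law}(B|_{[\ell,r]})}{\diff\mu_{\ell,r}}\big(\Psi^{-1}(y,\eta)\big)\le f\big(y,\ y+\eta(r-\ell)\big)$ for $\Psi_*\mu_{\ell,r}$-almost every $(y,\eta)$, since $\Psi^{-1}(y,\eta)$ has value $y$ at $\ell$ and $y+\eta(r-\ell)$ at $r$. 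Chaining the two absolute continuities yields $\mathrm{Law}\big(B(\ell),\ B(\cdot+\ell)-B(\ell)\big)\ll\lambda\otimes\mu_{0,r-\ell}$ with density bounded $(\lambda\otimes\mu_{0,r-\ell})$-almost everywhere by $g\big(y,\eta(r-\ell)\big)$, where $g(y,z):=\varphi_\ell(y)\,f(y,y+z)$; this $g$ is non-decreasing in $z$ whenever $f$ is non-decreasing in its last argument, as $z\mapsto y+z$ is increasing and $\varphi_\ell(y)\ge0$.

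\emph{Step 2: contractivity.} Here I would invoke the general fact that pushforward contracts $L^p$-norms of Radon-Nikodym derivatives. Put $h:=\frac{\diff\mathrm{Law}(B|_{[\ell,r]})}{\diff\mu_{\ell,r}}$. The Doob--Dynkin factorisation lemma provides a $\sigma(\Phi)$-measurable version of $\mathbb{E}_{\mu_{\ell,r}}[\,h\mid\sigma(\Phi)\,]$ of the form $\bar h\circ\Phi$, and unwinding the definitions shows $\Phi_*\mathrm{Law}(B|_{[\ell,r]})\ll\Phi_*\mu_{\ell,r}$ with $\bar h=\frac{\diff\big(\Phi_*\mathrm{Law}(B|_{[\ell,r]})\big)}{\diff\big(\Phi_*\mu_{\ell,r}\big)}$. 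By the conditional Jensen inequality for $t\mapsto t^p$ ($p>1$), $\mathbb{E}_{\mu_{\ell,r}}\big[(\bar h\circ\Phi)^p\big]=\mathbb{E}_{\mu_{\ell,r}}\big[\big(\mathbb{E}_{\mu_{\ell,r}}[\,h\mid\sigma(\Phi)\,]\big)^p\big]\le\mathbb{E}_{\mu_{\ell,r}}[h^p]$, and the left-hand side is exactly $\norm{\bar h}_{L^p(\Phi_*\mu_{\ell,r})}^p$, so taking $p$-th roots gives $\norm{\frac{\diff\Phi_*B}{\diff\Phi_*\mu}}_{L^p(\Phi_*\mu)}\le\norm{\frac{\diff B}{\diff\mu}}_{L^p(\mu)}$. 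Finally I would observe that $\Phi_*\mu_{\ell,r}=\mu_{0,r-\ell}$ (again independence of Brownian increments), so the bound is genuinely a comparison against the reference Wiener measure on $[0,r-\ell]$.

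\emph{The main obstacle.} There is no analytic difficulty in any of this; the only point that deserves care is the measure-theoretic identity that the Radon-Nikodym derivative of a pushforward measure is (the factorisation of) the conditional expectation of the original derivative, which is standard on Polish spaces and for which the relevant setup is already in place (Section~\ref{sec: notation}). If one prefers to avoid conditional-expectation language altogether, Step 2 follows instead directly from Step 1: the marginal density of the increment process against $\mu_{0,r-\ell}$ is $\eta\mapsto\int_{\R}D(y,\eta)\,\varphi_\ell(y)\,\diff y$, so by Minkowski's integral inequality and then Jensen's inequality applied to the probability measure $\varphi_\ell(y)\,\diff y$ — using Tonelli's theorem together with $\Psi_*\mu_{\ell,r}=N(0,2\ell)\otimes\mu_{0,r-\ell}$ — its $L^p(\mu_{0,r-\ell})$-norm is at most $\big(\int_{\R}\norm{D(y,\cdot)}_{L^p(\mu_{0,r-\ell})}^p\,\varphi_\ell(y)\,\diff y\big)^{1/p}=\norm{h}_{L^p(\mu_{\ell,r})}$, which is again the claimed inequality.
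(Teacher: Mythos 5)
Your proof is correct and follows essentially the same route as the paper: disintegrate the rate two Wiener measure at time $\ell$ into the independent product $N(0,2\ell)\otimes\mu_{0,r-\ell}$ via independence of Brownian increments, then deduce the contraction from Jensen's inequality. Your Step~2, phrased via the general fact that pushforward of an absolutely continuous measure produces the conditional expectation of the Radon--Nikodym derivative, is a cleaner and slightly more careful rendering of the paper's direct Hölder/Jensen computation, in particular because it works with the actual derivative $h$ throughout rather than tacitly identifying $h$ with the endpoint bound $f$.
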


\begin{proof}
Fix $A\subseteq \R_\ge\times C_{*,*}([0,r-\ell])$ Borel measurable and estimate
\begin{equation*}
    \begin{array}{ll}
     & \PP((B(\ell), B(\cdot + \ell)-B(\ell))\in A) \\
     & \le \displaystyle\int_{C_{*,*}([\ell,r])} \mathbf{1}(\xi(\ell), \xi(\cdot+\ell)-\xi(\ell))\in A)\cdot f(\xi(\ell), \xi(r))\mu(\diff \xi)
\end{array}
\end{equation*}
And by the Markov property (independent increments) enjoyed by Brownian motion we have
\begin{equation*}
    \begin{array}{ll}
     & \PP((B(\ell), B(\cdot + \ell)-B(\ell))\in A) \\
     & \le \displaystyle\int_{C_{*,*}([\ell,r])} \mathbf{1}(\xi(\ell), \xi(\cdot+\ell)-\xi(\ell))\in A)\cdot f(\xi(\ell), \xi(r))\mu(\diff \xi)\\
     & = \displaystyle\int_{C_{*,*}([0,r-\ell])}\int_{\R} \mathbf{1}(x, \xi(\cdot))\in A)\cdot f(x, \xi(r-\ell)+x)\phi_{\ell}(x)\lambda(\diff x)\mu(\diff \xi)\\
\end{array}
\end{equation*}
and we thus arrive at the a.e.-pointwise bound on points $(y, \xi)\in \R_\ge\times C_{*,*}([0,r-\ell])$,
$$
g(y,\xi(r-\ell)) = f(y, \xi(r-\ell)+y)\phi_{\ell}(y)
$$
for some function $g$ non-decreasing in its last argument. This means that we have the following norm estimates for all $p>1$
\begin{equation*}
\begin{array}{ll}
    &\norm{\frac{\diff \Phi_* B}{\diff \Phi_*\mu}}_{L^p(\Phi_*\mu)} = \left(\mathbb{E}_0[\mathbb{E}_{\hat B (r-\ell)}[f(B(\ell), \hat B(r-\ell)+ B(\ell))]^p]\right)^{1/p}\\
    &\norm{\frac{\diff  B}{\diff \mu}}_{L^p(\mu)} = \left(\mathbb{E}_0[f(B(\ell), B(r))^p]\right)^{1/p}\,.
\end{array}
\end{equation*}
Now, by H\"{o}lder, we observe that
\[
\norm{\frac{\diff \Phi_* B}{\diff \Phi_*\mu}}_{L^p(\Phi_*\mu)}\le \norm{\frac{\diff  B}{\diff \mu}}_{L^p(\mu)}\,,\qquad p>1\,,
\]
which concludes the proof.
\end{proof}

Note that Proposition \ref{prop: 3} suggests the estimates in Section \ref{sec: BM tasep rn estimates} can be improved, though in the case of inhomogeneous BLPP it is not clear how one might proceed, as one needs a refinement of the ratios of densities coming from \ref{lemma: density Warren}.

\subsection{Monotonicity properties of Radon-Nikodym derivatives of inhomogeneous BLPP}\label{subsec: monotonicity properties of RN of reflections}
We now briefly consider inhomogeneous Brownian LPP with `almost' homogeneous initial data (that is only the first entry is non-vanishing). In the following proposition, we obtain Radon-Nikodym derivative estimates of inhomogeneous BLPP with the above data against Brownian motion on compacts. This argument notably bypasses the technical proof of Theorem \ref{thm: Radon-Nikodym  density inhom BLPP}, relying only on a coupling of the Brownian motion in the Gelfand-Tsetslin cone to a larger collection of interlaced diffusions.  

\begin{proposition}\label{prop: top line pitman pointwise bound general case special coupling} Let $0<\ell<r$ and $B_{1}$ be a Brownian motion starting from $b>0$ and let $B_2,B_2\cdots, B_{n+1}$ be Brownian motions on $[0,\infty)$ starting from the origin where $B_{1:n+1}$ are mutually independent. Set $\underline{g}=(b,0,\cdots, 0)\in\R^{n+1}_\ge$ and let $H(\cdot)$ denote an inhomogeneous Brownian LPP started from initial data $\underline{g}$. Then, for all $\ell>0$, the joint law of $(H_1(\ell/2), H_2 (\ell/2), H_2(\cdot + \ell/2)-H_2(\ell/2))$ restricted to $[0,r-\frac{\ell}{2}]$ is absolutely continuous with respect to the measure $\lambda^2\times \mu$, where $\lambda$ is the one-dimensional Lebesgue measure and $\mu$ denotes the law of a rate two Brownian motion starting from the origin restricted to $[0,r-\frac{\ell}{2}]$ with a.e.-pointwise bound on points $(y, z,  \xi)\in \R^2_\ge\times C_{*,*}([0,r-\ell/2])$,
$$
g(y,z,\xi(r-\ell/2))
$$
for some function $g$ non-decreasing in its last argument.
\end{proposition}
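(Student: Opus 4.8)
The plan is to condition on the whole path of $H_2$ and thereby split the statement into (i) an estimate for the single random variable $H_1(\ell/2)$ and (ii) the joint description of $(H_2(\ell/2),\,H_2(\cdot+\ell/2)-H_2(\ell/2))$ that is already available from Proposition \ref{prop: top line pitman pointwise bound general case} and Proposition \ref{prop: 3}. First observe that the ensemble $(H_2,H_3,\dots,H_{n+1})$ does not involve $b$ at all — it is built from $B_2,\dots,B_{n+1}$ alone — so it is a homogeneous Brownian TASEP of depth $n$ started from the origin; in particular $H_2$ is the top line of an $n$-level Dyson Brownian motion, and by Lemma \ref{lemma: iterated skorokhod} one has $H_1 = W(B_1,H_2)_1$ with $B_1$ a rate two Brownian motion started at $b$, independent of $H_2$. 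This is precisely the announced coupling of Brownian motion in the Gelfand--Tsetlin cone to a larger collection of interlaced diffusions: $(H_1,\dots,H_{n+1})$ is realised as the reversed diagonal $(X^{n+1}_{n+1},X^n_n,\dots,X^1_1)$ of a Brownian motion $\mathbf{X}$ in the depth-$(n+1)$ cone started from $(0,\dots,0,b)$ — a legitimate configuration, since $b>0$ makes it interlace with the origin — whose depth-$n$ sub-array realises $(H_2,\dots,H_{n+1})$ exactly as in Proposition \ref{prop: interlacing inhm blpp equality}, while the corner coordinate $X^{n+1}_{n+1}$ is, by the Skorokhod reflection lemma applied to its defining SDE, equal to $W(B_1,X^n_n)_1$.

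Now condition on $\mathcal{G}:=\sigma(H_2(s):s\ge 0)$. Writing $\eta:=H_2(\cdot+\ell/2)-H_2(\ell/2)$ for the increment process on $[0,r-\ell/2]$, given $\mathcal{G}$ the second and third coordinates of the triple are determined, while $H_1(\ell/2)=B_1(\ell/2)+\big(\max_{0\le s\le\ell/2}(H_2(s)-B_1(s))\big)\vee 0$ is a measurable functional of the $\mathcal{G}$-independent Brownian path $B_1|_{[0,\ell/2]}$. Suppose — this is the crux, treated below — that the conditional law of $H_1(\ell/2)$ given $H_2$ is absolutely continuous with respect to Lebesgue measure with an a.e.\ density $p_{H_2}$ satisfying $p_{H_2}(y)\le P\big(y,H_2(\ell/2)\big)$ for some explicit $P\colon\R^2\to[0,\infty]$ that depends on the path $H_2$ only through the value $H_2(\ell/2)$. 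Recall also, from Proposition \ref{prop: top line pitman pointwise bound general case} applied to $H_2$ on $[\ell/2,r]$ together with the increment-regularisation Proposition \ref{prop: 3}, that the joint law of $(H_2(\ell/2),\eta)$ is absolutely continuous with respect to $\lambda\times\mu$ with density bounded by $g_0\big(H_2(\ell/2),\xi(r-\ell/2)\big)$, where $g_0$ is non-decreasing in its last argument (as the $f$ supplied by Proposition \ref{prop: top line pitman pointwise bound general case} is non-decreasing in its last argument). Since the reflection is upward we have $H_1(\ell/2)\ge H_2(\ell/2)$ a.s., so for Borel $A\subseteq\R^2_\ge\times C_{0,*}([0,r-\ell/2])$,
\begin{align*}
\PP\big((H_1(\tfrac{\ell}{2}),H_2(\tfrac{\ell}{2}),\eta)\in A\big)
&= \mathbb{E}\Big[\int_{\R}\mathbf{1}_A\big(y,H_2(\tfrac{\ell}{2}),\eta\big)\,p_{H_2}(y)\,\diff y\Big]
\le \int_{\R}\mathbb{E}\Big[\mathbf{1}_A\big(y,H_2(\tfrac{\ell}{2}),\eta\big)\,P\big(y,H_2(\tfrac{\ell}{2})\big)\Big]\diff y\\
&\le \int_{\R^2_\ge}\!\int_{C_{0,*}}\mathbf{1}_A(y,z,\xi)\,P(y,z)\,g_0\big(z,\xi(r-\tfrac{\ell}{2})\big)\,\mu(\diff\xi)\,\lambda^2\big(\diff(y,z)\big),
\end{align*}
using Tonelli and then the density bound for $(H_2(\ell/2),\eta)$. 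Hence $g(y,z,w):=P(y,z)\,g_0(z,w)$ does the job and is non-decreasing in $w$.

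It remains to establish the conditional-density estimate, and this is where the real work lies. I would condition additionally on $B_1|_{[0,\ell/4]}$: on $[\ell/4,\ell/2]$ the process $H_1$ is then the Skorokhod reflection of a Brownian motion with known starting value $B_1(\ell/4)$ off the now-frozen barrier $H_2|_{[\ell/4,\ell/2]}$, started from the known value $H_1(\ell/4)\ge H_2(\ell/4)$; the explicit (reflection-principle) formula for the joint law, at a fixed time, of a reflected Brownian motion and its driver then yields a density for $H_1(\ell/2)$ that is Gaussian-tailed in $y$ and depends on the barrier only through $\max_{\ell/4\le s\le\ell/2}H_2(s)$, and the excess of this running maximum over $H_2(\ell/2)$ is a Brownian fluctuation that can be absorbed into the Gaussian decay in $y$ using $H_1(\ell/2)\ge H_2(\ell/2)$; integrating out $(H_1(\ell/4),B_1(\ell/4))$ against their (absolutely continuous) joint law produces a bound of the required form $P(y,H_2(\ell/2))$. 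The main obstacle is exactly this: obtaining a density bound for $H_1(\ell/2)$ that is uniform over all admissible barrier paths $H_2$ while still being expressible through the single value $H_2(\ell/2)$, so that the combination with $g_0$ has the advertised product form. An alternative route — at the cost of not fully bypassing Section \ref{sec: BM tasep rn estimates} — is to use instead the explicit determinantal density $q_{\ell/2}(\cdot\,;(b,0,\dots,0))$ of the diagonal array (the lemma following Lemma \ref{lemma: Warren Markov density}) together with its homogeneous counterpart $q_{\ell/2}(\cdot\,;\underline 0)$, marginalise over all but the top two coordinates, and estimate the resulting two-variable ratio exactly as in Lemma \ref{lemma: ptwise bound Radon-Nikodym  Warren} and Lemma \ref{lemma: estimates G}, which is considerably lighter in this two-variable setting.
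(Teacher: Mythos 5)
Your high-level strategy — factor the triple as (scalar $H_1(\ell/2)$) $\times$ (pair $(H_2(\ell/2), \eta)$), dispatch the pair via Proposition \ref{prop: top line pitman pointwise bound general case} and Proposition \ref{prop: 3}, then control the scalar conditionally — is a plausible-looking shortcut, but it hinges entirely on the claim that the conditional density $p_{H_2}$ of $H_1(\ell/2)$ given the \emph{full barrier path} $H_2|_{[0,\ell/2]}$ is bounded a.s.\ by $P(y, H_2(\ell/2))$, a function of $y$ and the \emph{terminal value} of the barrier only. That claim is the crux, you say so yourself, and it does not hold as a pathwise inequality: $H_1(\ell/2) = B_1(\ell/2) + \max\bigl(\max_{s\le \ell/2}(H_2(s)-B_1(s)),0\bigr)$ is pushed up by the running maximum of $H_2$ on $[0,\ell/2]$, which can be arbitrarily large relative to $H_2(\ell/2)$. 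Once you have conditioned on $\mathcal{G}=\sigma(H_2)$, that excess is a fixed number, not a ``Brownian fluctuation'' available to be absorbed; to absorb it you would have to \emph{un-condition}, i.e.\ replace the pathwise bound by a bound on $\mathbb{E}[p_{H_2}(y)\mid H_2(\ell/2), \eta]$. But that averaged bound runs into a second obstruction you have not addressed: $H_2$ alone is \emph{not} Markov (only the full vector $H_{2:n+1}$ is), so $H_2|_{[0,\ell/2]}$ and $\eta$ are not conditionally independent given the single value $H_2(\ell/2)$, and the product structure $P(y,z)\cdot g_0(z,w)$ in your last display does not follow from the chain of inequalities as written once $P$ becomes an averaged quantity rather than a pathwise bound.

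The paper sidesteps both of these issues at once. It never conditions on the full barrier path, and it never needs a single-variable reduction of the barrier dependence. Instead it (i) splits on the event $\mathrm{NoInt}=\{B_1>H_2 \text{ on }[0,\ell/2]\}$: on $\mathrm{NoInt}$ one simply has $H_1(\ell/2)=B_1(\ell/2)$, a Gaussian started at $b$ and independent of $H_{2:n+1}$; on $\mathrm{NoInt}^c$ one may couple $H_1(\ell/2)$ from above by the corresponding quantity in the \emph{homogeneous} ($b=0$) problem; and (ii) on the homogeneous side it uses the explicit entry density $q^n_{\ell/2}(x,y)$ of the Warren process on $W^{n+1,n}$ (the Gelfand--Tsetlin pair $(X,Y)$) for the joint law of $(H_1(\ell/2), H_{2:n+1}(\ell/2))$ at time $\ell/2$, together with the $h$-transform estimate for the increment $\eta$ given the \emph{full Markov state} $H_{2:n+1}(\ell/2)$. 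The marginalisation down to $(H_1(\ell/2), H_2(\ell/2), \eta(r-\ell/2))$ is then performed at the level of the explicit Vandermonde densities, where the resulting $g$ is manifestly non-decreasing in its last argument. If you want to keep a two-step structure close to your proposal, your ``alternative route'' at the end — using the determinantal densities $q_{\ell/2}(\cdot;(b,0,\dots,0))$ versus $q_{\ell/2}(\cdot;\underline{0})$ and estimating the two-variable marginal ratio — is essentially sound and indeed closer to what the paper does, but it is a different argument, not a filling of the gap in the conditioning strategy.
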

\begin{proof}

Fix $C\subseteq \R^2_\ge$, $A\subseteq C_{*,*}([0,r-\ell/2])$ Borel measurable and upon conditioning estimate using an argument analogous to that in Proposition \ref{prop: top line pitman pointwise bound general case}
\begin{equation*}
    \begin{array}{cc}
         & \PP((H_1(\ell/2), H_2 (\ell/2))\in C, (H_2(\cdot + \ell/2)-H_2(\ell/2))\in A) \\
         & =\PP((H_1(\ell/2), H_2 (\ell/2))\in C, (H_2(\cdot + \ell/2)-H_2(\ell/2))\in A, \mathrm{NoInt}) \\
         & +  \PP((H_1(\ell/2), H_2 (\ell/2))\in C, (H_2(\cdot + \ell/2)-H_2(\ell/2))\in A, \mathrm{NoInt}^c) \\
    \end{array}
\end{equation*}
where the event $\mathrm{NoInt}$ is the event that $\{B_{1}>H_2\, \mathrm{on}\, [0,\ell/2]\}$. On $\mathrm{NoInt}^c$, $H_1(\ell/2)$ can be coupled with the homogeneous ensemble generated by $H_{2:n+1}$ and a rate two Brownian motion starting from the origin (in place of $B_{1}$); on $\mathrm{NoInt}$, $H_1(\ell/2) = B_{1}(\ell/2)$ and we thus have the estimate 
\begin{equation*}
    \begin{array}{cc}
         & \PP((H_1(\ell/2), H_2 (\ell/2))\in C, (H_2(\cdot + \ell/2)-H_2(\ell/2))\in A) \\
         & \le \PP((B_{n+1}(\ell/2), H_2 (\ell/2))\in C, (H_2(\cdot + \ell/2)-H_2(\ell/2))\in A) \\
         & +  \PP((H_1(\ell/2), H_2 (\ell/2))\in C, (H_2(\cdot + \ell/2)-H_2(\ell/2))\in A) \\
    \end{array}
\end{equation*}
where in the second term, we take $b=0$ (in a slight abuse of notation). Concentrating on the second term, we obtain using the fact that $H_2$ can be realised as the top line of a Dyson Brownian motion (which under another abuse of notation in the conditioning below, we will write as $H_{2:n+1}$)
\begin{equation*}
    \begin{array}{cc}
         & = \mathbb{E}\left[\mathbf{1}((H_1(\ell/2), H_2 (\ell/2))\in C)\cdot\mathbb{E}\left[\mathbf{1}((H_2(\cdot + \ell/2)-H_2(\ell/2))\in A)\lvert H_{2:n+1}|_{[0, \ell/2]}\right]\right]\\
         & \le \mathbb{E}\left[\mathbf{1}(H_1(\ell/2), H_2 (\ell/2))\in C)\cdot\mathbb{E}_{H_{2:n+1}(\ell/2)}\left[\frac{h(\hat{B}+H_{2:n+1}(\ell/2))}{h(H_{2:n+1}(\ell/2))}\mathbf{1}(\hat{B}_1(\cdot))\in A)\right]\right]\,,
    \end{array}
\end{equation*}
where $\hat{B}_{1:n}(\cdot)$ are independent rate two Brownian motions starting from the origin. We further estimate using independence
\begin{equation*}
    \begin{array}{cc}
         & \PP((H_1(\ell/2), H_2 (\ell/2))\in C, (H_2(\cdot + \ell/2)-H_2(\ell/2))\in A) \\
         & \le \mathbb{E}\left[\mathbf{1}((H_1(\ell/2), H_2 (\ell/2))\in C)\cdot\mathbb{E}_{\hat{B}_{2:n}}\left[\frac{h(\tilde{B}+H_{2:n+1}(\ell/2))}{h(H_{2:n+1}(\ell/2))}\right]\mathbf{1}(\hat{B}_1(\cdot))\in A)\right]\,.
    \end{array}
\end{equation*}

Here we take the coupling of $(W(B_{1}, H_2)_1, H_2)(\cdot)$ on paths given in \cite{warren2007dyson} under the measure $Q^{n,+}_{0,0}$ on paths on $W^{n+1, n}=\{ (x,y) \in {\R}^{n+1} \times {\R}^n: x_{1} \leq y_1 \leq x_2\leq \ldots \leq y_n \leq x_{n+1} \}$ where the canonical coordinate process $(X,Y)$ evolves as interlaced Brownian motions such that, crucially for us, we have that $(X_{n+1}, Y_{n})\stackrel{d}{=} (W(B_{1}, \Gamma^n_n)_1, \Gamma^n_n)$ on paths (using the deterministic result of \cite[lemma 2.1]{revuz2013continuous}), where $\Gamma^n_n$ denotes the top curve of an $n-$dimensional Dyson Brownian motion starting from the origin. The reason this is done is there is a nice form for the entry law of $(X,Y)$ under $Q^{n,+}_{0,0}$ and will be exploited below.

Particularly, we can compute the density of $(H_1(\ell/2), H_{2:n+1} (\ell/2))$ to be equal to 
\begin{equation*}
q^n_{\ell/2}(x,y)=\frac{n!}{Z_{n+1}} (\ell/2)^{-(n+1)^2/2} \exp \left\{ -\sum_{i} x_i^2/(2\ell)\right\} \left\{ \prod_{i<j} (x_j-x_i) \right\}\left\{ 
\prod_{i<j} (y_j-y_i) \right\},
\end{equation*}
where the normalisation constant $Z_{n+1}= (2\pi)^{(n+1)/2}\prod_{j<n+1} j!$. For ease of notation, we define the function $f: \R^{n+1}_{\ge}\to \R_{\ge 0}$
\[
h(x_{1:n+1}) = \prod_{1\le i<j\le n+1} (x_{j}-x_{i}), \quad (x_1, \cdots, x_{n+1})\in \R^{n+1}_\ge\,.
\]
We are now able to estimate the above probability using equation (\ref{eq: dyson rn basic estimate}) and thus obtain the contribution to the pointwise upper bound on the density of the joint law (absolute continuity has already been established, see \cite{Sarkar2021Brownian})
\begin{equation*}
    \begin{array}{cc}
         & \PP((H_1(\ell/2), H_2 (\ell/2))\in C, (H_2(\cdot + \ell/2)-H_2(\ell/2))\in A) \\
         & \le \displaystyle\int_{A}\int_{W^{n+1, n}}\mathbf{1}((x_{n+1}, y_n)\in C)\frac{n!}{Z_{n+1}} (\ell/2)^{-(n+1)^2/2}\prod_{i=1}^{n+1}\phi_{\ell/2}(x_i)\cdot h(x_{1:n+1})\\
         & \cdot\mathbb{E}_{\hat{B}_{2:n}}\left[h((\xi, \hat{B}_{2:n})(r-\ell/2)+y_{1:n})\right]\diff x_{1:n}\diff y_{1:n}\mu(\diff\xi)\\
         & = \displaystyle\int_{A}\int_{C}g(y,x,\xi(r-\ell/2))\diff x_{1:n}\diff y_{1:n}\mu(\diff\xi)
    \end{array}
\end{equation*}
$g$ is non-decreasing in its last argument and $\hat{B}_{1:n}$ is a Brownian motion starting from the origin. The contribution from the non-intersection term is analogously derived by estimating and using independence
\begin{equation*}
    \begin{array}{cc}
         & \PP((B_{1}(\ell/2), H_2 (\ell/2))\in C, (H_2(\cdot + \ell/2)-H_2(\ell/2))\in A) \\
         & \le \mathbb{E}\left[\mathbf{1}((B_{1}(\ell/2), H_2 (\ell/2))\in C)\cdot\mathbb{E}_{\hat{B}_{2:n}}\left[\frac{h(\hat{B}(\ell/2)+H_{2:n+1}(\ell/2))}{h(H_{2:n+1}(\ell/2))}\right]\mathbf{1}(\hat{B}_1(\cdot))\in A)\right]\\
         & \le C_{\ell/2}\mathbb{E}\left[\mathbf{1}((B_{1}(\ell/2), W_1 (\ell/2))\in C)\cdot h(W_{1:n}(\ell/2))\cdot\mathbb{E}_{\hat{B}_{2:n}}\left[h(\hat{B}(\ell/2)+W_{1:n}(\ell/2))\right]\mathbf{1}(\hat{B}_1(\cdot))\in A)\right]\\
         & = \displaystyle\int_{A}\int_{\R^{n+1}}\mathbf{1}((y, x_n)\in C)h(x_{1:n})\phi_{\ell/2}(y-b)\displaystyle\prod_{i=1}^n\phi_{\ell/2}(x_i)\cdot\mathbb{E}\left[h((\xi,\hat{B}_{2:n})(r-\ell/2)+x_{1:n})\right]\\
         & \diff x_{1:n}\diff y_{1:n}\mu(\diff\xi)\\
         & = \displaystyle\int_{A}\int_{C}g'(y,x,\xi(r-\ell/2))\diff x_{1:n}\diff y_{1:n}\mu(\diff\xi)\,,
    \end{array}
\end{equation*}
where $C_{\ell/2}$ is as in proposition \ref{prop: top line pitman pointwise bound general case}, for $g'$ non-decreasing in its last argument and $W_{1:n}, \hat{B}_{1:n}$ are Brownian motion starting from the origin. Combining the above, and using that sets of the form $C\times A$ generate the product Borel sigma algebra we conclude.
\end{proof}

\subsection{Error function asymptotics}\label{app: erf asymptotics}
Set  
\begin{equation*}
    \mathrm{erf}(x) = \frac{2}{\sqrt{\pi}}\displaystyle\int_{0}^x\mathrm{e}^{-x^2}dx, \quad x\in \R.
\end{equation*}
Now observe first that for all $z_2\in \R$, $\ell > 0$
\begin{equation*}
    \frac{\sqrt{\pi \ell}}{2} z_2  \left[1-\operatorname{erf}\left(\frac{ -z_2}{2 \sqrt{\ell}}\right)\right]+ \ell\mathrm{e}^{- \frac{z_2^{2}}{4L} } > 0.
\end{equation*}
We also have by the asymptotics of the error function (see \cite{abramowitz1948handbook}) for $r>0$
\begin{equation*}\label{eq: asymptotics erf}
    \frac{\sqrt{\pi \ell}}{2}  \left[1-\operatorname{erf}\left(\frac{ r}{2 \sqrt{\ell}}\right)\right] =  \frac{\sqrt{\pi \ell}}{2} \left[\frac{2\sqrt{\ell}}{\sqrt{\pi}r}\mathrm{e}^{-\frac{r^2}{4L}} + R\left(\frac{r}{2\sqrt{\ell}}\right) \right]
\end{equation*}
where $R(x)$ is a remainder bounded by
\begin{equation*}
    |R(x)|\le \frac{3}{4\sqrt{\pi}}\frac{\mathrm{e}^{-x^2}}{x^3}
\end{equation*}
for all $x>0$. A quick computation shows that for $r^2\ge 6L$
\begin{equation*}
    \frac{\sqrt{\pi \ell}}{2}  \left[1-\operatorname{erf}\left(\frac{ r}{2 \sqrt{\ell}}\right)\right] \ge  \frac{\ell}{2r}\mathrm{e}^{-\frac{r^2}{4L}}.
\end{equation*}

Note that we can further refine the asymptotic expansion \ref{eq: asymptotics erf} for the error function to obtain for $r>0$
\begin{equation*}
    \frac{\sqrt{\pi \ell}}{2}  \left[1-\operatorname{erf}\left(\frac{ r}{2 \sqrt{\ell}}\right)\right] =  \frac{\sqrt{\pi \ell}}{2} \left[\frac{2\sqrt{\ell}}{\sqrt{\pi}r}\mathrm{e}^{-\frac{r^2}{4L}} -\frac{8L^{\frac{3}{2}}}{2\sqrt{\pi}r^3}\mathrm{e}^{-\frac{r^2}{4L}}+  \tilde{R}\left(\frac{r}{2\sqrt{\ell}}\right) \right]
\end{equation*}
where $\tilde{R}(x)$ is a remainder bounded by
\begin{equation*}
    |\tilde{R}(x)|\le \frac{15}{8\sqrt{\pi}}\frac{\mathrm{e}^{-x^2}}{x^5}
\end{equation*}
for all $x>0$. A quick computation shows that for $r^2\ge 30L$
\begin{equation*}
    -\frac{\sqrt{\pi \ell}}{2} r  \left[1-\operatorname{erf}\left(\frac{ -r}{2 \sqrt{\ell}}\right)\right]+ \ell\mathrm{e}^{- \frac{r^{2}}{4L} }\ge \frac{\ell^2}{r^2}\mathrm{e}^{-\frac{r^2}{4L}}.
\end{equation*}

\bibliographystyle{alpha} 
\bibliography{refs} 
\end{document}